\documentclass[12pt]{amsart}
\usepackage{amsmath}
\usepackage{amsthm}
\usepackage{amsfonts}
\usepackage{amssymb}
\usepackage{mathrsfs}
\usepackage{graphicx}
\usepackage{stmaryrd}
\usepackage{dsfont}
\usepackage[all]{xy}
\usepackage[shortlabels]{enumitem}
\usepackage[dvipsnames]{xcolor}
\usepackage[margin=1in]{geometry} 
\usepackage[bookmarks, bookmarksdepth=2, colorlinks=true, linkcolor=blue, citecolor=blue, urlcolor=blue]{hyperref}
\usepackage{eucal}
\usepackage{contour}
\usepackage[normalem]{ulem}
\usepackage{tikz}
\usetikzlibrary{decorations.pathreplacing}

\setcounter{tocdepth}{1}

%% comments
%\newcommand{\Acom}[1]{\textcolor{red}{[#1]}}
%\newcommand{\nate}[1]{\textcolor{Mahogany}{[#1]}}
%\newcommand{\noah}[1]{\textcolor{orange}{[#1]}}

%% equations and theorems
\makeatletter
\@addtoreset{equation}{section}
\makeatother

\numberwithin{equation}{section}
\newtheorem{theorem}[equation]{Theorem}

\newtheorem{proposition}[equation]{Proposition}
\newtheorem{lemma}[equation]{Lemma}
\newtheorem{corollary}[equation]{Corollary}

\theoremstyle{definition}
\newtheorem{rmk}[equation]{Remark}
\newenvironment{remark}[1][]{\begin{rmk}[#1] \pushQED{\qed}}{\popQED \end{rmk}}
\newtheorem{eg}[equation]{Example}
\newenvironment{example}[1][]{\begin{eg}[#1] \pushQED{\qed}}{\popQED \end{eg}}
\newtheorem{defnaux}[equation]{Definition}
\newenvironment{definition}[1][]{\begin{defnaux}[#1]\pushQED{\qed}}{\popQED \end{defnaux}}

\newcommand{\arxiv}[1]{\href{http://arxiv.org/abs/#1}{{\tiny\tt arXiv:#1}}}
\newcommand{\DOI}[1]{\href{http://doi.org/#1}{\color{purple}{\tiny\tt DOI:#1}}}

%% letters

\newcommand{\cC}{\mathcal{C}}

\newcommand{\cD}{\mathcal{D}}

\newcommand{\bG}{\mathbf{G}}

\newcommand{\bI}{\mathbf{I}}

\newcommand{\bJ}{\mathbf{J}}

\newcommand{\bK}{\mathbf{K}}

\newcommand{\cN}{\mathcal{N}}

\newcommand{\cP}{\mathcal{P}}

\newcommand{\cQ}{\mathcal{Q}}

\newcommand{\bR}{\mathbf{R}}

\newcommand{\bS}{\mathbf{S}}

\newcommand{\fS}{\mathfrak{S}}

\newcommand{\cT}{\mathcal{T}}

\newcommand{\bZ}{\mathbf{Z}}

\newcommand{\rf}{\mathrm{f}}

%\renewcommand{\fi}{\mathfrak{i}}

%\renewcommand{\rm}{\mathrm{m}}

%\renewcommand{\rq}{\mathrm{q}}

%% let
\let\ol\overline
\let\ul\underline

%% definitions
\newcommand{\defn}[1]{\emph{#1}}
\renewcommand{\phi}{\varphi}
\renewcommand{\emptyset}{\varnothing}
\newcommand{\lw}{{\textstyle \bigwedge}}

\DeclareMathOperator{\coker}{coker}

\DeclareMathOperator{\End}{End}

\DeclareMathOperator{\Aut}{Aut}

\DeclareMathOperator{\Mod}{Mod}
\DeclareMathOperator{\Ind}{Ind}

\DeclareMathOperator{\Hom}{Hom}

\DeclareMathOperator{\Ext}{Ext}

\DeclareMathOperator{\Rep}{Rep}
\DeclareMathOperator{\vol}{vol}

\DeclareMathOperator{\utr}{\ul{tr}}

\DeclareMathOperator{\Res}{Res}
\newcommand{\GL}{\mathbf{GL}}

\newcommand{\ev}{\mathrm{ev}}

\newcommand{\bbone}{\mathds{1}}
\newcommand{\uotimes}{\mathbin{\ul{\otimes}}}

%% alphabet for weights
\newcommand{\wa}{{\bullet}}
\newcommand{\wb}{{\circ}}

%% definitions for \uRep underlining
\contourlength{1pt}

\contourlength{0.8pt}
\newcommand{\myuline}[1]{%
  \uline{\phantom{#1}}%
  \llap{\contour{white}{#1}}%
}
\DeclareMathOperator{\uRep}{\text{\myuline{\rm Rep}}}
\DeclareMathOperator{\uPerm}{\ul{Perm}}

\title{The circular Delannoy category}
\author{Nate Harman}
\author{Andrew Snowden}
\author{Noah Snyder}

\date{March 18, 2023}

\begin{document}

\begin{abstract}
Let $G$ (resp.\ $H$) be the group of orientation preserving self-homeomorphisms of the unit circle (resp.\ real line). In previous work, the first two authors constructed pre-Tannakian categories $\uRep(G)$ and $\uRep(H)$ associated to these groups. In the predecessor to this paper, we analyzed the category $\uRep(H)$ (which we named the ``Delannoy category'') in great detail, and found it to have many special properties. In this paper, we study $\uRep(G)$. The primary difference between these two categories is that $\uRep(H)$ is semi-simple, while $\uRep(G)$ is not; this introduces new complications in the present case. We find that $\uRep(G)$ is closely related to the combinatorics of objects we call Delannoy loops, which seem to have not previously been studied.
\end{abstract}

\maketitle
\tableofcontents

\section{Introduction}

\subsection{Background}

A common practice in mathematics is to identify the key features of a set of examples, and take these as the defining axioms for a more general class of objects. In the setting of tensor categories, representation categories of algebraic groups are a fundamental set of examples. Taking their key features\footnote{Of course, pre-Tannakian categories are not the only generalization of classical representation categories. For instance, one can relax the symmetry of the tensor product to a braiding. However, of the different generalizations, pre-Tannakian categories seem to be closest to group representations.} as axioms leads to the notion of \defn{pre-Tannakian category} (see \cite[\S 2.1]{ComesOstrik} for a precise definition). Understanding these categories is an important problem within the field of tensor categories. The last twenty years has seen important progress, but the picture is still far from clear.

One difficulty in the study of pre-Tannakian categories is that few examples are known, beyond those coming from (super)groups. In recent work \cite{repst}, the first two authors gave a general construction of pre-Tannakian categories. Given an oligomorphic group $G$ equipped with a piece of data called a measure $\mu$, we constructed a tensor category $\uPerm(G; \mu)$ analogous to the category of permutation representations. This category sometimes has a pre-Tannakian abelian envelope $\uRep(G; \mu)$. The basic ideas are reviewed in \S \ref{s:olig}. In forthcoming work \cite{discrete}, we show that any discrete pre-Tannakian category (i.e., one generated by an \'etale algebra) comes from this construction. If pre-Tannakian categories are an abstraction of algebraic groups, then the discrete ones are an abstraction of finite groups. Thus this is a very natural class of examples.

The simplest example of an oligomorphic group is the infinite symmetric group. In this case, the theory of \cite{repst} leads to Deligne's interpolation category $\uRep(\fS_t)$ \cite{Deligne3}; we note that Deligne's work predated (and motivated) \cite{repst}. This category (and related examples) has received much attention in the literature, e.g., \cite{ComesOstrik1, ComesOstrik, Etingof1, Etingof2, EntovaAizenbudHeidersdorf, Harman, Harman2, Knop, Knop2}.

Currently, there are only (essentially) three known examples of pre-Tannakian categories coming from oligomorphic groups that fall outside the setting of Deligne interpolation:
\begin{enumerate}
\item The group $H$ of all orientation-preserving self-homeomorphisms of the real line carries four measures; one of these is known to yield a pre-Tannakian category $\uRep(H)$.
\item The group $G$ of all orientation-preserving self-homeomorphisms of the unit circle carries a unique measure, and this yields a pre-Tannakian category $\uRep(G)$.
\item The automorphism group of the universal boron tree carries two measures, one of which is known to yield a pre-Tannakian category; see \cite[\S 18]{repst}.
\end{enumerate}
We do not know if the other three measures for $H$, or the other measure in case (c), can be associated to pre-Tannakian categories. This is an important problem.

Deligne's categories have been well-studied, but since they are closely tied to finite groups they may not be representative of general pre-Tannakian categories. We therefore feel it is important to carefully study the above three examples. In our previous paper \cite{line}, we studied $\uRep(H)$ in detail. We named it the \emph{Delannoy category}, due to its connection to Delannoy paths, and found that it has numerous special properties. In this paper, we study $\uRep(G)$. While $\uRep(G)$ is closely related to $\uRep(H)$, the latter is semi-simple and the former is not, and this introduces a number of new difficulties. In the rest of the introduction, we recall some background on these categories and then state our main results.

\subsection{Representations of $H$}

We briefly recall the construction of $\uRep(H)$, and some results from \cite{line}. See \S \ref{s:olig} and \S \ref{s:setup} for details. We fix an algebraically closed field $k$ of characteristic~0 in what follows.

Let $\bR^{(n)}$ be the subset of $\bR^n$ consisting of tuples $(x_1, \ldots, x_n)$ with $x_1<\cdots<x_n$. The group $H$ acts transitively on this set. We say that a function $\phi \colon \bR^{(n)} \to k$ is \defn{Schwartz} if it assumes finitely many values and its level sets can be defined by first-order formulas using $<$, $=$, and finitely many real constants; equivalently, the stabilizer in $H$ of $\phi$ is open in the natural topology. We define the \defn{Schwartz space} $\cC(\bR^{(n)})$ to be the space of all Schwartz functions. An important feature of this space is that there is a notion of integral, namely, integration with respect to Euler characteristic (as defined by Schapira and Viro \cite{Viro}).

We define a category $\uPerm(H)$ by taking the objects to be formal sums of $\cC(\bR^{(n)})$'s, and the morphisms between basic objects to be $H$-invariant integral operators. This category admits a tensor structure $\uotimes$ by
\begin{displaymath}
\cC(\bR^{(n)}) \uotimes \cC(\bR^{(m)}) = \cC(\bR^{(n)} \times \bR^{(m)}),
\end{displaymath}
where on the left side we decompose $\bR^{(n)} \times \bR^{(m)}$ into orbits (which have the form $\bR^{(s)}$ for various $s$), and then take the corresponding formal sum of Schwartz spaces. The category $\uRep(H)$ is equivalent to (the ind-completion of) the Karoubian envelope of $\uPerm(H)$. This characterization of $\uRep(H)$ relies on non-trivial results of \cite{repst}.

In \cite{line}, we determined the structure of $\uRep(H)$ in greater detail. The construction and classification of its simple objects will play an important role in this paper. A \defn{weight} is a word in the alphabet $\{\wa, \wb\}$. Given a weight $\lambda$, we define $L_{\lambda}$ to be the submodule of $\cC(\bR^{(n)})$ generated by a certain explicit family of Schwartz functions (see \S \ref{ss:RepH}). We showed that these account for all the simple objects of $\uRep(H)$. As we mentioned above, the category $\uRep(H)$ is semi-simple; this follows from general results from \cite{repst}.

\subsection{Representations of $G$}

We now recall the construction of $\uRep(G)$, which is similar to the above. Let $\bS^{\{n\}}$ be the subset of $\bS^n$ consisting of tuples $(x_1, \ldots, x_n)$ such that $x_i$ is strictly between $x_{i-1}$ and $x_{i+1}$ (in counterclockwise cyclic order) for all $i \in \bZ/n$. The group $G$ acts transitively on $\bS^{\{n\}}$, and this space plays an analogous role to $\bR^{(n)}$ from the $H$ theory. We have a notion of Schwartz space $\cC(\bS^{\{n\}})$ and integration just as before, and this leads to a category\footnote{The category $\uPerm(G)$ defined in \cite{repst} is a little bigger than the $\uPerm^{\circ}(G)$ category defined here.} $\uPerm^{\circ}(G)$. The category $\uRep(G)$ is equivalent to the (ind-completion of) the abelian envelope of $\uPerm^{\circ}(G)$. Again, this relies on non-trivial results from \cite{repst}. Since $\uRep(G)$ is not semi-simple, we can no longer simply take the Karoubian envelope to obtain the abelian envelope.

Fix a point $\infty \in \bS$ and identify $\bR$ with $\bS \setminus \{\infty\}$. Then $H$ is identified with the stabilizer of $\infty$ in $G$, and as such is an open subgroup of $G$. There is thus a restriction functor $\uRep(G) \to \uRep(H)$, which admits a two-sided adjoint called induction. Since we already know the structure of $\uRep(H)$ from \cite{line}, restriction and induction will be very useful in the study of $\uRep(G)$.

\subsection{Results} \label{ss:results}

We now discuss our main results about the category $\uRep(G)$.

\textit{(a) Classification of simples.} Given a non-empty weight $\lambda$, we define $\Delta_{\lambda}$ to be the $\ul{G}$-submodule of $\cC(\bS^{\{n\}})$ generated by an explicit set of Schwartz functions. The group $\bZ/n$ acts on $\cC(\bS^{\{n\}})$ by cyclicly permuting co-ordinates, and $\Delta_{\lambda}$ is stable under the action of the subgroup $\Aut(\lambda)$ consisting of cyclic symmetries of $\lambda$. We let $\Delta_{\lambda,\zeta}$ be the $\zeta$-eigenspace of the generator of $\Aut(\lambda)$, where $\zeta$ is an appropriate root of unity. We show that $\Delta_{\lambda,\zeta}$ has a unique maximal proper $\ul{G}$-submodule, and thus a unique simple quotient $M_{\lambda,\zeta}$. We show that the $M_{\lambda,\zeta}$, together with the trivial module, account for all simple $\ul{G}$-modules. The simples $M_{\lambda,\zeta}$ and $M_{\mu,\omega}$ are isomorphic if and only if $\mu$ is a cyclic shift of $\lambda$ and $\omega=\zeta$.

We say that $(\lambda,\zeta)$ is \defn{special} if $\lambda$ consists only of $\wa$'s, or only of $\wb$'s, and $\zeta=(-1)^{\ell(\lambda)+1}$; otherwise, we say that $(\lambda,\zeta)$ is \defn{generic}. In the generic case, $\Delta_{\lambda,\zeta}$ is in fact already simple. In the special case, the module $\Delta_{\lambda,\zeta}$ has length two. The generic simples have categorical dimension~0, while the special simples have dimension $\pm 1$.

\textit{(b) Structure of general modules.} Let $\cC=\uRep^{\rf}(G)$. Define $\cC_{\rm gen}$ (resp.\ $\cC_{\rm sp}$) to be the full subcategory of $\cC$ spanned by modules whose simple constituents are generic (resp.\ special). We prove the following three statements:
\begin{itemize}
\item The category $\cC$ is the direct sum of the subcategories $\cC_{\rm gen}$ and $\cC_{\rm sp}$.
\item The category $\cC_{\rm gen}$ is semi-simple.
\item The category $\cC_{\rm sp}$ is equivalent to the category of finitely generated $\bZ$-graded $R$-modules, where $R=k[x,y]/(x^2,y^2)$, and $x$ and $y$ have degrees $-1$ and $+1$.
\end{itemize}
We note that the above results are just statements about $\cC$ as an abelian category, and ignore the tensor structure. Using the above results, we classify the indecomposable $\ul{G}$-modules; they are ``zigzag modules,'' which appear frequently across representation theory.

\textit{(c) Branching rules.}
We determine the induction and restriction rules between $G$ and $H$. For a simple $L_\lambda$ in $H$, we determine the decomposition of its induction $I_{\lambda}$ into indecomposable projectives (Proposition~\ref{prop:I-decomp}); this decomposition is multiplicity-free, and at most one of the projectives belongs to the special block (the rest are generic simples). Since $\uRep(H)$ is semisimple, this determines the induction of any object in $\uRep(H)$. Additionally, for simple object $M_{\lambda,\zeta}$ in $\uRep(G)$ we determine the irreducible decomposition of $\Res^G_H(M_{\lambda,\zeta})$.  This determines the restriction of any object in $\uRep(G)$, provided we know its simple constituents.

\textit{(d) Semisimplification.} The \defn{semisimplification} of a symmetric tensor category is the category obtained by killing the so-called negligible morphisms; see \cite{EtingofOstrik} for general background. We show that the semisimplification of $\uRep^{\rf}(G)$ is equivalent (as a symmetric tensor category) to the category of bi-graded vector spaces equipped with a symmetric structure similar to that of super vector spaces. We note that the final answer and the proof are quite similar to the computation of the semisimplification of $\Rep(\GL(1|1))$ (see \cite{Heidersdorf}); it would be interesting if there were a conceptual explanation for this.

\textit{(e) Loop model.} We show that $\uRep(G)$ is closely related to a category constructed from combinatorial objects we introduce called \defn{Delannoy loops}. These are similar to the well-known Delannoy paths, but (as far as we can tell) have not been previously studied.

\subsection{Notation}

We list some of the most important notation here:
\begin{description}[align=right,labelwidth=2.25cm,leftmargin=!]
\item[ $k$ ] the coefficient field (algebraically closed of characteristic~0)
\item [$\bbone$ ] the trivial representation
\item[ $\bS$ ] the circle
\item[ $x<y<z$  ] the cyclic order on $\bS$, a ternary relation
\item[ $\bR$ ] the real line, identified with $\bS \setminus \{\infty\}$ (see \S \ref{ss:groups})
\item[ $\bS^{\{n\}}$ ] the subset of $\bS^n$ consisting of cyclicly ordered tuples (see \S \ref{ss:groups})
\item[ $\bR^{(n)}$ ] the subset of $\bR^n$ consisting of totally ordered tuples (see \S \ref{ss:groups})
\item[ $G$ ] the group $\Aut(\bS,<)$ (except in \S \ref{s:olig})
\item[ $H$ ] the group $\Aut(\bR,<)$ (except in \S \ref{s:olig})
\item[ $G(b)$ ] the subgroup of $G$ fixing each element of $b$
\item[ {$G[b]$} ] the subgroup of $G$ fixing $b$ as a set
\item[ $\sigma$ ] the generator of $\bZ/n$, which acts on $\bS^{\{n\}}$ by permuting the subscripts (see \S \ref{ss:groups}) and on weights of length $n$ by cyclically permuting the letters (see \S \ref{ss:weights})
\item[ $\tau$ ] the standard generator of $G[b]/G(b)$ (see \S \ref{ss:groups})
\item[ $\lambda$ ] a weight (see \S \ref{ss:weights})
\item[ $\gamma_i$ ] the $i$th cyclic contraction (see \S \ref{ss:weights})
\item[ $g(\lambda)$ ] the order of $\Aut(\lambda)$ (see \S \ref{ss:weights})
\item[ $N(\lambda)$ ] $\ell(\lambda)/g(\lambda)$ (see \S \ref{ss:weights})
\item[ $L_{\lambda}$ ] a simple $H$-module (see \S \ref{ss:RepH})
\item[ $(-)^{\dag}$ ] the transpose functor (see \S \ref{ss:transp})
\item[ $I_{\lambda}$ ] the induction of $L_{\lambda}$ from $H$ to $G$ (see \S \ref{ss:ind})
\item[ $\Delta_{\lambda}$ ] a standard $G$-module (see Definition~\ref{defn:std})
\item[ $M_{\lambda,\zeta}$ ] a simple $G$-module (see Definition~\ref{defn:M})
\item[ $\pi(n)$ ] the weight with all $\wa$'s or $\wb$'s (see \S \ref{ss:fine-intro})
\end{description}

\subsection*{Acknowledgments}

NS was supported in part by NSF grant DMS-2000093.

\section{Generalities on oligomorphic groups} \label{s:olig}

In this section, we recall some key definitions and results from \cite{repst}. Since we already provided an overview of this theory in \cite[\S 2]{line}, we keep this discussion very brief. We close with a short discussion of Mackey theory, which is new.

\subsection{Admissible groups}

An \defn{admissible group} is a topological group $G$ that is Hausdorff, non-archimedean (open subgroups form a neighborhood basis of the identity), and Roelcke pre-compact (if $U$ and $V$ are open subgroups then $U \backslash G /V$ is finite). An \defn{oligomorphic group} is a permutation group $(G, \Omega)$ such that $G$ has finitely many orbits on $\Omega^n$ for all $n \ge 0$. Suppose that $(G,\Omega)$ is oligomorphic. For a finite subset $a$ of $\Omega$, we let $G(a)$ be the subgroup of $G$ fixing each element of $a$. The $G(a)$'s form a neighborhood basis for an admissible topology on $G$. This is the main source of admissible groups.

Fix an admissible group $G$. We say that an action of $G$ on a set is \defn{smooth} if every stabilizer is open, and \defn{finitary} if there are finitely many orbits. We use the term ``$G$-set'' for ``set equipped with a smooth action of $G$.'' A $\hat{G}$-set is a set equipped with a smooth action of some (unspecified) open subgroup of $G$; shrinking the subgroup does not change the $\hat{G}$-set.

\subsection{Integration} \label{ss:olig-int}

Fix an admissible group $G$ and a commutative ring $k$. A \defn{measure} for $G$ with values in $k$ is a rule assigning to each finitary $\hat{G}$-set $X$ a value $\mu(X)$ in $k$ such that a number of axioms hold; see \cite[\S 2.2]{line}. 

Let $X$ be a $G$-set. A function $\phi \colon X \to k$ is called \defn{Schwartz} if it is smooth (i.e., its stabilizer in $G$ is open) and has finitary support. We write $\cC(X)$ for the space of all Schwartz functions on $X$, which we call \defn{Scwhartz space}. Suppose that $\phi$ is a Schwartz function on $X$, and we have a measure $\mu$ for $G$. Let $a_1, \ldots, a_n$ be the non-zero values attained by $\phi$, and let $X_i=\phi^{-1}(a_i)$. We define the \defn{integral} of $\phi$ by
\begin{displaymath}
\int_X \phi(x) dx = \sum_{i=1}^n a_i \mu(X_i).
\end{displaymath}
Integration defines a $k$-linear map $\cC(X) \to k$. More generally, if $Y$ is a second $G$-set and $f \colon X \to Y$ is a smooth function (i.e., equivariant for some open subgroup) then there is a push-forward map $f_* \colon \cC(X) \to \cC(Y)$ defined by integrating over fibers.

There are several niceness conditions that can imposed on a measure $\mu$. We mention a few here. We say that $\mu$ is \defn{regular} if $\mu(X)$ is a unit of $k$ for all transitive $G$-sets $X$. We say that $\mu$ is \defn{quasi-regular} if there is some open subgroup $U$ such that $\mu$ restricts to a regular measure on $U$. Finally, there is a more technical condition called \defn{property~(P)} that roughly means $\mu$ is valued in some subring of $k$ that has enough maps to fields of positive characteristic; see \cite[Definition~7.17]{repst}.

\subsection{Representations} \label{ss:olig-rep}

Fix an admissible group $G$ with a $k$-valued measure $\mu$. We assume for simplicity that $k$ is a field and $\mu$ is quasi-regular and satisfies property~(P), though it is possible to be more general. In \cite[\S 10]{repst}, we introduce the \defn{completed group algebra} $A(G)$ of $G$. As a $k$-vector space, $A(G)$ is the inverse limit of the Schwartz spaces $\cC(G/U)$ over open subgroups $U$. The multiplication on $A(G)$ is defined by convolution; this uses integration, and hence depends on the choice of measure $\mu$. See \cite[\S 10.3]{repst} or \cite[\S 2.4]{line} for details. An $A(G)$-module $M$ is called \defn{smooth} if the action of $A(G)$ is continuous, with respect to the discrete topology on $M$; concretely, this means that for $x \in M$ there is some open subgroup $U$ of $G$ such that the action of $A(G)$ on $x$ factors through $\cC(G/U)$. For any $G$-set $X$, the Schwartz space $\cC(X)$ carries the structure of a smooth $A(G)$-module in a natural manner.

We define $\uRep(G)$ to be the category of smooth $A(G)$-modules; this is a Grothendieck abelian category. In \cite[\S 12]{repst}, we construct a tensor product $\uotimes$ on $\uRep(G)$. This tensor product is $k$-bilinear, bi-cocontinuous, and satisfies $\cC(X) \uotimes \cC(Y) = \cC(X \times Y)$, and these properties uniquely characterize it; additionally, it is exact. In \cite[\S 13]{repst}, we show that every object of $\uRep(G)$ is the union of its finite length subobjects, and that the category $\uRep^{\rf}(G)$ of finite length objects is pre-Tannakian. Moreover, we show that $\uRep(G)$ is semi-simple if $\mu$ is regular.

\subsection{Induction and restriction}

Let $G$ and $\mu$ be as above, and let $H$ be an open subgroup of $G$. Then $\mu$ restricts to a quasi-regular measure on $H$ which still satisfies property~(P). There is a natural algebra homomorphism $A(H) \to A(G)$ that induces a restriction functor
\begin{displaymath}
\Res^G_H \colon \uRep(G) \to \uRep(H).
\end{displaymath}
Suppose now that $N$ is a smooth $A(H)$-module. Define $\Ind_H^G(N)$ to be the space of all functions $\phi \colon G \to N$ satsifying the following two conditions:
\begin{itemize}
\item $\phi$ is left $H$-equivariant, i.e., $\phi(hg)=h\phi(g)$ for all $h \in H$ and $g \in G$.
\item $\phi$ is right $G$-smooth, i.e., there is an open subgroup $U$ of $G$ such that $\phi(gu)=\phi(g)$ for all $g \in G$ and $u \in U$.
\end{itemize}
In \cite[\S 2.5]{line}, we defined a natural smooth $A(G)$-module structure on this space. We call $\Ind_H^G(N)$ the \defn{induction} of $N$. Induction defines a functor
\begin{displaymath}
\Ind_H^G \colon \uRep(H) \to \uRep(G).
\end{displaymath}
In \cite[Proposition~2.13]{line}, we showed that induction is left and right adjoint to restriction (and thus continuous, co-continuous, and exact) and preserves finite length objects.

We now formulate an analog of Mackey's theorem:

\begin{proposition} \label{prop:mackey}
Let $H$ and $K$ be open subgroups of $G$. For a smooth $H$-module $N$, we have a natural isomorphism of $A(K)$-modules
\begin{displaymath}
\Res^G_K(\Ind_H^G(N)) = \bigoplus_{g \in H \backslash G/K} \Ind^K_{H^g \cap K}(\Res^{H^g}_{H^g \cap K}(N^g)).
\end{displaymath}
Here $H^g=gHg^{-1}$ and $N^g$ is the $A(H^g)$-module with underlying vector space $N$ and action obtained by twisting the action on $N$ with conjugation by $g$.
\end{proposition}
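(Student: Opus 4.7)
The plan is to adapt the classical Mackey decomposition to the smooth setting, with Roelcke pre-compactness playing the role that finiteness plays for finite groups. The starting observation is that since $H$ and $K$ are open in $G$, the double coset space $H \backslash G / K$ is finite; fix representatives $g_1, \ldots, g_n$. Each double coset $H g_i K$ is a union of open left $H$-cosets, hence open, and therefore also closed in $G$ since the remaining double cosets form its open complement. This produces a finite clopen partition $G = \bigsqcup_i H g_i K$ by subsets that are left $H$-invariant and right $K$-invariant.

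First I would use this partition to split $\Ind_H^G(N)$ as a $K$-module. For $\phi \in \Ind_H^G(N)$, let $\phi_i$ agree with $\phi$ on $H g_i K$ and vanish elsewhere. Because each $H g_i K$ is clopen and left $H$-invariant, $\phi_i$ still satisfies the left $H$-equivariance and right $G$-smoothness defining $\Ind_H^G(N)$. The equality $\phi = \sum_i \phi_i$ is a finite sum, and is compatible with the right $K$-action since each $H g_i K$ is right $K$-invariant. This yields $\Res^G_K \Ind_H^G(N) = \bigoplus_i V_i$ as smooth $K$-modules, where $V_i$ is the subspace of functions supported on $H g_i K$.

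Next I would identify each $V_i$ with $\Ind^K_{H^{g_i} \cap K}(N^{g_i})$. Writing $g = g_i$, I would use the evaluation substitution $\psi(k) := \phi(g k)$ (or $\psi(k) := \phi(kg)$, whichever matches the convention pinning down $H^g = gHg^{-1}$ and the twist $N \mapsto N^g$). The left $H$-equivariance of $\phi$ determines its values on $HgK$ from its values on $gK$ (resp.\ $Kg$), and the consistency that arises when $hgk = h' g k'$ for distinct decompositions translates exactly into the $(H^g \cap K)$-equivariance of $\psi$ with respect to the twisted $N^g$-action; the inverse assignment extends $\psi$ back across $HgK$ by $H$-equivariance. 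Smoothness transfers through the substitution because an open subgroup witnessing smoothness of $\phi$ remains a witness for $\psi$, and the construction is visibly $K$-equivariant on both sides.

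Finally, the identification is automatically one of $A(K)$-modules: both sides are built from Schwartz spaces, induction from open subgroups, and the measure $\mu|_K$, and the bijection above is constructed entirely from indicator functions of clopen subsets and left translation by elements of $G$, all of which respect the Schwartz/integration structure underlying $A(K)$. The main obstacle is Step~3's bookkeeping: one must track carefully whether the substitution uses $g$ or $g^{-1}$ and on which side the conjugation defining $N^g$ acts, so that the resulting equivariant subgroup is precisely $H^g \cap K$ rather than $g^{-1} H g \cap K$. Once the conventions are aligned, the remainder is the standard Mackey verification.
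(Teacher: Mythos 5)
Your proof takes essentially the same approach as the paper's: finitely many double cosets (clopen because $H,K$ are open), decompose functions in $\Ind_H^G(N)$ according to their support on each $H g_i K$, then identify the piece supported on $H g_i K$ with $\Ind^K_{H^{g_i}\cap K}(N^{g_i})$ via $\psi(k)=\phi(g_i k)$. The paper collapses your two steps into the single assignment $\phi \mapsto (\phi_i)_i$ with $\phi_i(g)=\phi(x_ig)$, and likewise defers the $A(K)$-linearity check to the reader, so the content is the same.
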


\begin{proof}
Write $G=\bigsqcup_{i=1}^n Hx_iK$; note that there are finitely many double cosets since $G$ is admissible. Suppose that $\phi$ is an element of $\Ind_H^G(N)$. Define $\phi_i \colon K \to N^{x_i}$ by $\phi_i(g)=\phi(x_i g)$. One easily sees that $\phi \mapsto (\phi_i)_{1 \le i \le n}$ defines an isomorphism of $k$-vector spaces
\begin{displaymath}
\Res^G_K(\Ind_H^G(N)) \to \bigoplus_{i=1}^n \Ind^K_{H^{x_i} \cap K}(\Res^{H^{x_i}}_{H^{x_i} \cap K}(N^{x_i})).
\end{displaymath}
One then verifies that this map is $A(K)$-linear; we omit the details.
\end{proof}

\section{Set-up and basic results} \label{s:setup}

In this section, we introduce the groups $G=\Aut(\bS,<)$ and $H=\Aut(\bR,<)$ and review their basic structure. We recall some results on the representation theory of $H$ from \cite{line}, and prove a few simple results about representations of $G$.

\subsection{Automorphisms of the circle and line} \label{ss:groups}

Let $\bS$ be the circle; to be definitive, we take $\bS$ to be the unit circle in the plane. We let $<$ be the cyclic order on $\bS$ defined by the ternary relation $x<y<z$ if $y$ is strictly between $x$ and $z$ when one moves from $x$ to $z$ counterclockwise. (Note that this notation can be confusing since this ternary relation is not made up of two binary relations!) We let $\infty$ be the north pole of $\bS$. The set $\bS \setminus \{\infty\}$ is totally ordered by $x<y$ if $\infty<x<y$.  As a totally ordered set, it is isomorphic to the real line $\bR$, and we identify $\bS \setminus \{\infty\}$ with $\bR$ in what follows. 

We let $G=\Aut(\bS,<)$ be the group of permutations of $\bS$ preserving the cyclic order; equivalently, $G$ is the group of orientation-preserving homeomorphisms of $\bS$. We let $H=\Aut(\bR,<)$ be the group of permutations of $\bR$ preserving the total order; equivalently, $H$ is the group of orientation-preserving homeomorphisms of $\bR$. The group $H$ is identified with the stabilizer of $\infty$ in $G$. Both $G$ and $H$ are oligomorphic permutation groups.

Let $a$ be an $n$-element subset of $\bR$. Let $H(a)$ be the subgroup of $H$ fixing each element of $a$; this coincides with the subgroup $H[a]$ fixing $a$ as a set. The $H(a)$'s define the admissible topology on $H$. In fact, every open subgroup of $H$ is of the form $H(a)$ for some $a$ \cite[Proposition~17.1]{repst}. For a totally ordered set $I$ isomorphic to $\bR$, let $H_I=\Aut(I,<)$, which is isomorphic to $H$. Let $I_1, \ldots, I_n$ be the connected components of $\bR \setminus a$, listed in order. Then $H(a)$ preserves these intervals, and the natural map $H(a) \to H_{I_1} \times \cdots \times H_{I_n}$ is an isomorphism; thus $H(a)$ is isomorphic to $H^n$.

Let $a$ be an $n$-element subset of $\bS$, and enumerate $a$ as $\{a_1,\ldots,a_n\}$ in cyclic order. Let $G(a)$ (resp.\ $G[a]$) be the subgroup of $G$ fixing each element of $a$ (resp.\ the set $a$). Let $I_1, \ldots, I_n$ be the connected components of $\bS \setminus a$, where $I_i$ is between $a_i$ and $a_{i+1}$. Then $G(a)$ preserves each $I_i$ and the natural map $G(a) \to H_{I_1} \times \cdots \times H_{I_n}$ is an isomorphism; thus $G(a) \cong H^n$. The group $G[a]$ preserves the set $\{I_1, \ldots, I_n\}$, and in fact cyclically permutes it; one thus finds $G[a] \cong \bZ/n \ltimes H^n$, where $\bZ/n$ cyclically permutes the factors. In particular, $G[a]/G(a) \cong \bZ/n$. This group has a standard generator $\tau$, which satisfies $\tau(a_i)=a_{i-1}$.

One can show that if $U$ is any open subgroup of $G$ then there is some $a \in \bS^{\{n\}}$ such that $G(a) \subset U \subset G[a]$; we omit the proof as we will not use this.

We let $\bR^{(n)}$ be the subset of $\bR^n$ consisting of tuples $(x_1, \ldots, x_n)$ with $x_1<\cdots<x_n$. This is a transitive $H$-set, and these account for all transitive $H$-sets. We let $\bS^{\{n\}}$ be the subset of $\bS^n$ consisting of tuples $(x_1, \ldots, x_n)$ with $x_1<\cdots<x_n<x_1$. This is a transitive $G$-set.  We define $\sigma \colon \bS^{\{n\}} \to \bS^{\{n\}}$ by $\sigma(x_1, \ldots, x_n)=(x_n, x_1, \ldots, x_{n-1})$. This defines an action of $\bZ/n$ on $\bS^{\{n\}}$ that commutes with the action of $G$. It follows from the classification of open subgroups of $G$ that every transitive $G$-set has the form $\bS^{\{n\}}/\Gamma$ for some $n$ and some subgroup $\Gamma$ of $\bZ/n$.

\subsection{Integration}

Fix an algebraically closed field $k$ of characteristic~0 for the remainder of the paper. By \cite[Theorem~17.7]{repst}, there is a unique $k$-valued measure $\mu$ for $H$ satisfying
\begin{displaymath}
\mu(I_1^{(n_1)} \times \cdots \times I_r^{(n_r)}) = (-1)^{n_1+\cdots+n_r},
\end{displaymath}
where $I_1, \ldots, I_r$ are disjoint open intervals in $\bR$, and $I^{(n)}$ is defined just like $\bR^{(n)}$. Note that $I_1^{(n_1)} \times \cdots \times I_r^{(n_r)}$ is naturally an $\hat{H}$-set; in fact, every $\hat{H}$-set is isomorphic to a disjoint union of ones of this form. We call this measure the \defn{principal measure} for $H$. (There are three other $k$-valued measures for $H$, but they will not be relevant to us.) We write $\vol(X)$ in place of $\mu(X)$ in what follows.

The $\hat{H}$-set $I_1^{(n_1)} \times \cdots \times I_r^{(n_r)}$ is naturally a smooth manifold, and its volume (with respect to the principal measure) is its compactly supported Euler characteristic. It follows that the integration with respect to this measure (as defined in \S \ref{ss:olig-int}) coincides with integration with respect to Euler characteristic, as developed by Schapira and Viro (see, e.g., \cite{Viro}).

The principal measure for $H$ uniquely extends to a $k$-valued measure for $G$ (see \cite[Theorem~17.13]{repst}, or \cite{colored} for more details); in fact, its extension is the unique $k$-valued measure for $G$. This measure can again be described using Euler characteristic; in particular, we find $\vol(\bS)=0$.

\subsection{Representation categories}

We let $\uRep(H)$ and $\uRep(G)$ be the representation categories defined in \S \ref{ss:olig-int} for $H$ and $G$ with respect to the principal measure. This measure is clearly regular on $H$, and satisfies property~(P) (as it is $\bZ$-valued), and so $\uRep^{\rf}(H)$ is a semi-simple pre-Tannakian category. The principal measure is not regular on $G$, since $\vol(\bS)=0$. However, it is quasi-regular (because it is regular on $H$) and still satisfies (P), and so $\uRep^{\rf}(G)$ is a pre-Tannakian category. It is easily seen that $\uRep(G)$ is not semi-simple: indeed, the natural map $\epsilon \colon \cC(\bS) \to \bbone$ does not split, as there is a unique map $\eta \colon \bbone \to \cC(\bS)$ (up to scalars) and $\epsilon \circ \eta=\vol(\bS)=0$. (Note that $\epsilon(\phi)=\int_{\bS} \phi(x) dx$ and $\eta$ is the inclusion of constant functions.)

We use the term ``$\ul{G}$-module'' for ``smooth $A(G)$-module'' and the adjective ``$\ul{G}$-linear'' for ``$A(G)$-linear.'' Thus the objects of $\uRep(G)$ are $\ul{G}$-modules, and the morphisms are $\ul{G}$-linear maps. We similarly use $\ul{H}$-module and $\ul{H}$-linear.

\subsection{Weights} \label{ss:weights}

A \defn{weight} is a word in the alphabet $\{\wa,\wb\}$. Weights will be the main combinatorial objects we use to describe representations of $G$ and $H$.

Let $\lambda=\lambda_1 \cdots \lambda_n$ be a weight. We define the \defn{length} of $\lambda$, denoted $\ell(\lambda)$, to be $n$. We define\footnote{We regard the symbol $\sigma$ as a general cyclic shift operator, which is why we use the same notation for this action as the one on $\bS^{\{n\}}$.} $\sigma(\lambda)=\lambda_n \lambda_1 \lambda_2 \cdots \lambda_{n-1}$. This construction defines an action of $\bZ/n$ on the set of weights of length $n$. We refer to $\sigma^i(\lambda)$ as the $i$th \defn{cyclic shift} of $\lambda$. We let $\Aut(\lambda) \subset \bZ/n$ be the stabilizer of $\lambda$, i.e., the set of $i \in \bZ/n$ such that $\sigma^i(\lambda)=\lambda$. We put $g(\lambda)=\# \Aut(\lambda)$ and $N(\lambda)=\# [\lambda]$; these numbers multiply to $n$. Note that $\sigma^{N(\lambda)}$ generates $\Aut(\lambda)$ (if $\sigma$ is the generator of $\bZ/n$).

% As explained in Remark~\ref{r:sigma-not-confusing}, this action of $\bZ/n$ on weights and the action of $\bZ/n$ on $\bS^{\{n\}}$ from \S\ref{ss:groups} are compatible in a natural way, which is why we use the same notation for both.

Let $\lambda$ be as above, and assume $n>0$. Define $\gamma_1(\lambda)=\lambda_2 \cdots \lambda_n$, which is a weight of length $n-1$. For $i \in \bZ/n$, define $\gamma_i(\lambda)=\gamma_1(\sigma^{i-1}(\lambda))$. Explicitly, $\gamma_i(\lambda)=\lambda_{i+1} \cdots \lambda_n \lambda_1 \cdots \lambda_{i-1}$. We refer to $\gamma_i(\lambda)$ as the $i$th \defn{cyclic contraction} of $\lambda$. We note that if $\gamma_i(\lambda)=\gamma_j(\lambda)$ then $\sigma^i(\lambda)=\sigma^j(\lambda)$. Indeed, the hypothesis shows that $\sigma^i(\lambda)$ and $\sigma^j(\lambda)$ agree except for perhaps the first letter, but these have to be the same by counting the number of $\wa$'s and $\wb$'s in $\lambda$.

\subsection{Representations of $H$} \label{ss:RepH}

The predecessor to this paper \cite{line} studied the category $\uRep(H)$ in detail. We will use several results from this paper, the most important of which is the description of the simple objects, which we now recall.

A \defn{half-open interval} in $\bR$ is a non-empty interval of the form $(b,a]$ or $[a,b)$ where $a \in \bR$ and $b \in \bR \cup \{\pm \infty\}$. We define the \defn{type} of a half-open interval to be $\wa$ if its right endpoint is included, and $\wb$ if its left endpoint is included. For two intervals $I$ and $J$, we write $I<J$ to mean $x<y$ holds for all $x \in I$ and $y \in J$. We also write $I \ll J$ to mean $\ol{I}<\ol{J}$, where the bar denotes closure. Thus, for instance, $[0,1)<[1,2)$ is true, while $[0,1) \ll [1,2)$ is false.

Let $\bI=(I_1, \ldots, I_n)$ be a tuple of half-open intervals. We assume that $\bI$ is \defn{ordered}, meaning $I_1<\cdots<I_n$. We identify $\bI$ with the cube $I_1 \times \cdots \times I_n$ in $\bR^{(n)}$. We let $\phi_{\bI} \in \cC(\bR^{(n)})$ be the characteristic function of $\bI$. We define the \defn{type} of $\bI$ to be the weight $\lambda_1 \cdots \lambda_n$, where $\lambda_i$ is the type of $I_i$.

For a weight $\lambda$ of length $n$, we define $L_{\lambda}$ to be the $\ul{H}$-submodule of $\cC(\bR^{(n)})$ generated by the functions $\phi_{\bI}$ where $\bI$ is an ordered tuple of type $\lambda$. The module $L_{\lambda}$ is simple \cite[Theorem~4.3(a)]{line}, if $\lambda \ne \mu$ then $L_{\lambda}$ and $L_{\mu}$ are non-isomorphic \cite[Theorem~4.3(b)]{line}, and every simple is isomorphic to some $L_{\lambda}$ \cite[Corollary~4.12]{line}. Note that since $L_{\lambda}$ is simple, it is generated as an $\ul{H}$-module by any non-zero element; in particular, we could use generators $\phi_{\bI}$ where $I_1 \ll \cdots \ll I_n$. The module $L_{\lambda}$ has categorical dimension $(-1)^n$ \cite[Corollary~5.7]{line}.

Suppose that $a \in \bR^{(n)}$. Then the $\ul{H}(a)$-invariants and $H(a)$-invariants in any $\ul{H}$-module are the same \cite[Proposition~11.17]{repst}, and naturally identified with the $\ul{H}(a)$-coinvariants by semi-simplicity. If $\lambda$ has length $m$ then the space $L_{\lambda}^{H(a)}$ has dimension $\binom{m}{n}$ \cite[Corollary~4.11]{line}. This is an important result that we will often use. It is possible to write down an explicit basis for the invariants (see \cite[Corollary~5.5]{line}), but is much easier to describe the co-invariants. For $x \in \bR^{(n)}$, let $\ev_x \colon \cC(\bR^{(n)}) \to k$ be the map $\ev_x(\phi)=\phi(x)$, which is $\ul{H}(x)$-equivariant. The maps $\ev_x$, with $x$ an $m$-element subset of $a$, give a basis for the $\ul{H}(a)$-coinvariants of $L_{\lambda}$ (see \cite[Proposition~4.6]{line}). We note that if $b \in \bS^{\{n+1\}}$ is the point $(a_1, \ldots, a_n, \infty)$ then $H(a)=G(b)$, and so the above results can be phrased in terms of $G(b)$-invariants too.

\subsection{Transpose} \label{ss:transp}

Let $r \colon \bS \to \bS$ be the reflection about the $y$-axis (recall $\bS$ is the unit circle in the plane). Conjugation by $r$ induces a continuous outer automorphism of the group $G$. For a $\ul{G}$-module $M$, we let $M^{\dag}$ be its conjugate under $r$; we call this the \defn{transpose} of $M$. Transpose defines a covariant involutive auto-equivalence
\begin{displaymath}
(-)^{\dag} \colon \uRep(G) \to \uRep(G).
\end{displaymath}
The map $r$ induces an isomorphism $\bS^n \to \bS^n$ of $G$-sets, though it does not map $\bS^{\{n\}}$ into itself for $n \ge 3$. However, letting $\sigma$ be the longest element of the symmetric group $\fS_n$, the composition $r'=\sigma \circ r$ does map $\bS^{\{n\}}$ to itself. Pull-back by $r'$ is an isomorphism $\cC(\bS^{\{n\}})^{\dag} \cong \cC(\bS^{\{n\}})$ of $\ul{G}$-modules.

We take $\infty$ to be the north pole of $\bS$. Thus $r$ fixes $\infty$, and so normalizes $H$. Thus we can define the transpose of an $\ul{H}$-module as well. As above, $r'$ induces an isomorphism $\cC(\bR^{(n)})^{\dag} \cong \cC(\bR^{(n)})$. This isomorphism takes $\phi_{\bI}$ to $\phi_{\bJ}$, where $\bJ=(r(I_n), \ldots, r(I_1))$. If $\bI$ has type $\lambda$ then $\bJ$ has type $\lambda^{\dag}$, where $\lambda^{\dag}$ is the result of reversing $\lambda$ and changing each $\wa$ to $\wb$. In particular, we find $L_{\lambda}^{\dag} \cong L_{\lambda^{\dag}}$, as noted in \cite[Remark~4.17]{line}.

\subsection{Induced modules} \label{ss:ind}

For a weight $\lambda$, let $I_{\lambda}=\Ind_H^G(L_{\lambda})$. These modules will play an important role in our analysis of $\uRep(G)$. We prove a few simple results about them here.

\begin{proposition}
The module $I_{\lambda}$ is both projective and injective in $\uRep(G)$.
\end{proposition}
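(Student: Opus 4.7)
The plan is to deduce projectivity and injectivity purely formally from two ingredients already at our disposal: the two-sided adjunction between $\Ind_H^G$ and $\Res^G_H$ recorded in \cite[Proposition~2.13]{line}, and the fact that $\uRep(H)$ is semi-simple (so every object, in particular $L_\lambda$, is both projective and injective in $\uRep(H)$).

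More precisely, I would proceed as follows. First, recall from \S\ref{ss:olig-rep} that $\Res^G_H$ is exact (this is immediate: it is even continuous and co-continuous, since it admits both adjoints, namely $\Ind_H^G$ on both sides). For projectivity, apply the left adjunction
\[
\Hom_G(\Ind_H^G L_\lambda, -) \;=\; \Hom_H(L_\lambda, \Res^G_H(-)).
\]
The right-hand side is a composition of two exact functors: $\Res^G_H$ is exact, and $\Hom_H(L_\lambda, -)$ is exact because $L_\lambda$ is projective in the semi-simple category $\uRep(H)$. Hence $\Hom_G(\Ind_H^G L_\lambda, -)$ is exact, i.e.\ $I_\lambda = \Ind_H^G L_\lambda$ is projective in $\uRep(G)$.

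For injectivity, apply the right adjunction
\[
\Hom_G(-, \Ind_H^G L_\lambda) \;=\; \Hom_H(\Res^G_H(-), L_\lambda).
\]
Exactly the same argument works: $\Res^G_H$ is exact, and $\Hom_H(-, L_\lambda)$ is exact because $L_\lambda$ is injective in $\uRep(H)$ (again by semi-simplicity). Hence $I_\lambda$ is injective in $\uRep(G)$.

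There is no real obstacle here; the whole point is that the two-sided adjunction transports projectivity and injectivity from $\uRep(H)$ to $\uRep(G)$, and semi-simplicity of $\uRep(H)$ gives both for free at the simple module $L_\lambda$. The only thing one should be careful about is explicitly invoking exactness of $\Res^G_H$, but this is automatic from the two-sided adjunction (or, alternatively, from the fact that $H$ is an open subgroup of $G$, so that restriction is essentially a forgetful operation at the level of underlying data).
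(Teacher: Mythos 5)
Your argument is correct and is essentially the same as the paper's: both deduce that $\Ind_H^G$ preserves projectives and injectives from the two-sided adjunction with the exact functor $\Res^G_H$, and then invoke semi-simplicity of $\uRep(H)$ to conclude that $L_\lambda$ is projective and injective. You have simply spelled out the adjunction isomorphisms explicitly where the paper states the consequence in one line.
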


\begin{proof}
Since restriction is exact and induction is both its left and right adjoint, it follows that induction preserves injective and projective objects. Since $\uRep(H)$ is semi-simple, all objects are injective and projective.
\end{proof}

We next determine the decomposition of $I_{\lambda}$ into simple $\ul{H}$-modules. For this, we introduce some notation. Let $\lambda$ be a weight of length $n \ge 0$. Put
\begin{displaymath}
A_1(\lambda) = \bigoplus_{1 \le i \le n} L_{\sigma^i(\lambda)}, \qquad
A_2(\lambda) = \bigoplus_{1 \le i \le n} L_{\gamma_i(\lambda)}, \qquad
A(\lambda) = A_1(\lambda) \oplus A_2(\lambda).
\end{displaymath}
Note that for $\lambda = \emptyset$ we have empty sums, and so $A(\emptyset)$ is the zero module. Also, put $\lambda^+=\lambda \wa$ and $\lambda^-=\lambda \wb$.

\begin{proposition} \label{prop:ind}
We have an isomorphism of $\ul{H}$-modules
\begin{displaymath}
I_{\lambda} = A(\lambda) \oplus A(\lambda^+) \oplus A(\lambda^-).
\end{displaymath}
\end{proposition}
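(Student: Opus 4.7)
My plan is to apply Mackey's theorem (Proposition~\ref{prop:mackey}) to $\Res^G_H I_\lambda = \Res^G_H \Ind_H^G L_\lambda$, then to unpack the resulting two-term sum using a projection formula and the tensor product decompositions available via \cite{line}. Since $H$ is the stabilizer of $\infty$ in $G$, the double cosets $H\backslash G/H$ are in bijection with $H$-orbits on $\bS \cong G/H$, and there are exactly two: $\{\infty\}$ and $\bR$. I pick representatives $e$ and some $g \in G$ with $g\infty = 0$; then $H^g\cap H = G(\{0,\infty\}) = H(\{0\}) =: K$. The $g=e$ coset contributes $L_\lambda$, and the other coset (after identifying the twisted module $L_\lambda^g$ restricted to $H^g\cap H$ with $\Res^H_K L_\lambda$, which is harmless because all one-point stabilizers in $H$ are $H$-conjugate) contributes $\Ind^H_K \Res^H_K L_\lambda$.

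Next, I would invoke the projection formula
\[
\Ind^H_K \Res^H_K L_\lambda \;\cong\; L_\lambda \uotimes \Ind^H_K\bbone \;=\; L_\lambda \uotimes \cC(\bR).
\]
This is valid in our pre-Tannakian setting because the tensor product satisfies $\cC(X)\uotimes\cC(Y)=\cC(X\times Y)$, so the standard finite-group argument (using the natural bijection $H \times_K X \cong X \times H/K$) carries over. From \cite{line} -- or directly by counting $K$-invariants -- one has $\cC(\bR) = \bbone \oplus L_\wa \oplus L_\wb$, hence
\[
\Res^G_H I_\lambda \;\cong\; L_\lambda^{\oplus 2} \;\oplus\; (L_\lambda \uotimes L_\wa) \;\oplus\; (L_\lambda \uotimes L_\wb).
\]

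The remaining work is to decompose $L_\lambda \uotimes L_\wa$ and $L_\lambda \uotimes L_\wb$ as sums of simples. Geometrically, these live inside $\cC(\bR^{(n)}\times\bR)$, whose $\ul H$-decomposition is governed by the $H$-orbits on $\bR^{(n)}\times\bR$: these come in two flavors, namely $(n+1)$ ``insertion'' orbits isomorphic to $\bR^{(n+1)}$ (where the new coordinate $y$ falls strictly between two consecutive $x_i$'s or at an extreme) and $n$ ``merging'' orbits isomorphic to $\bR^{(n)}$ (where $y = x_j$ for some $j$). Restricting $\phi_{\bI}\otimes\phi_J$ to each orbit and tracking the types of the resulting half-open-interval cubes -- using that the intersection of a $\wa$- and a $\wb$-type half-open interval can be open, closed, or half-open of either type -- gives explicit formulas for each tensor, essentially the tensor-product formulas already established in \cite{line}.

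Finally, I assemble the pieces and match them with $A(\lambda) \oplus A(\lambda^+) \oplus A(\lambda^-)$: the length-$(n+1)$ insertion contributions from $L_\wa$ and $L_\wb$ assemble, after a cyclic reindexing, into $A_1(\lambda^+) \oplus A_1(\lambda^-)$; the two copies of $L_\lambda$ together with the length-$n$ merging contributions from both tensors form $A_1(\lambda) \oplus A_2(\lambda^+) \oplus A_2(\lambda^-)$; and length-$(n-1)$ contributions arising from degenerate intersections (where $I_i \cap J$ collapses) yield $A_2(\lambda)$. The main obstacle will be this final combinatorial matching: the natural tensor-product indexing is by linear positions of the extra coordinate relative to $\bI$, while the target is indexed by cyclic shifts and cyclic contractions, so the real heart of the proof is the reorganization that makes the cyclic structure manifest.
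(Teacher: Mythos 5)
Your overall strategy (Mackey decomposition over the two $H$-double cosets, then unwinding the non-trivial coset via branching between $H$ and $K=H(0)$) is the same as the paper's, and the projection-formula reformulation $\Ind^H_K\Res^H_K L_\lambda \cong L_\lambda\uotimes\cC(\bR)$ would be a legitimate way to organize that computation. But there is a genuine error in the step you flag as ``harmless,'' and it is precisely the point the paper singles out as the crux of the argument.

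You cannot identify $\Res^{H^g}_{H^g\cap H}(L_\lambda^g)$ with $\Res^H_K L_\lambda$. It is true that all one-point stabilizers in $H$ are $H$-conjugate, so $H(p)$ and $H(0)$ are isomorphic as subgroups of $H$ and restrictions of $L_\lambda$ to them correspond. However, the isomorphism $K\to g^{-1}Kg=H(p)$ given by $k\mapsto g^{-1}kg$ is \emph{not} the same as the isomorphism coming from an $H$-conjugation. Composing the two yields conjugation by an element of $G$ that fixes $\infty$ and maps $\infty\mapsto 0$, i.e.\ an element of $G$ swapping $\{0,\infty\}$; since this element is orientation-preserving, it swaps the two arcs of $\bS\setminus\{0,\infty\}$, hence induces the factor-swapping outer automorphism of $K\cong H_{(-\infty,0)}\times H_{(0,\infty)}$. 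So $\Res^{H^g}_K(L_\lambda^g)$ is the \emph{swap} of $\Res^H_K L_\lambda$, not $\Res^H_K L_\lambda$ itself. This is exactly the ``key point'' in the paper's proof: it replaces $L_{\lambda[1,i]}\boxtimes L_{\lambda(i,n]}$ by $L_{\lambda(i,n]}\boxtimes L_{\lambda[1,i]}$ in the branching formula, which is what turns ``linear insertions'' into genuine cyclic rotations and contractions of $\lambda$, $\lambda^+$, $\lambda^-$.

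This is not something a final ``combinatorial reorganization'' can repair, because the untwisted answer has the wrong multiplicities, not merely the wrong indexing. Concretely, take $\lambda=\wa\wb$. In $A(\lambda)\oplus A(\lambda^+)\oplus A(\lambda^-)$ the summand $L_{\wa\wa\wb}$ appears with multiplicity $1$ and $L_{\wb\wa}$ with multiplicity $3$, whereas the untwisted $L_\lambda \oplus \Ind^H_K\Res^H_K L_\lambda$ gives $L_{\wa\wa\wb}$ with multiplicity $2$ and $L_{\wb\wa}$ with multiplicity $0$. So your plan, executed as written, would produce a module that is not isomorphic to $A(\lambda)\oplus A(\lambda^+)\oplus A(\lambda^-)$. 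The fix is to carry along the factor swap (or, equivalently, to recognize that the ``projection formula'' picture must use a $G$-twisted version of $\cC(\bR)$ rather than plain $\cC(\bR)$), after which the argument goes through exactly as in the paper.
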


\begin{proof}
By Mackey decomposition (Proposition~\ref{prop:mackey}),
\begin{displaymath}
\Res^G_H(\Ind_H^G(L_\lambda)) = \bigoplus_{g \in H \backslash G/H} \Ind^H_{H^g \cap H}(\Res^{H^g}_{H^g \cap H}(L_\lambda^g)).
\end{displaymath}

We first analyze the double cosets and the corresponding subgroups. Since $G/H = \bS$, the double cosets $H \backslash G /H$ are identified with the $H$ orbits on $\bS$, of which there are exactly two. For the trivial double coset, $H^e \cap H = H$, and $L_\lambda^g = L_\lambda$. So this double coset contributes exactly $L_\lambda$. For the non-trivial double coset we pick a representative $g$ sending $\infty$ to $0$, so $H^g \cap H = H(0)$. Since $\bR - \{0\} = \bR_{< 0} \cup \bR_{> 0}$, we have that $H(0) \cong H \times H$, and for clarity we will denote these subgroups $H_{(-\infty,0)}$ and $H_{(0,+\infty)}$. Now we come to the key point, which is that conjugation by $g$ interchanges the two subgroups $H_{(-\infty,0)}$ and $H_{(0,+\infty)}$.

Before considering representations contributed by the non-trivial double coset, we recall the branching rules between $H$ and $H(0)$. For this we will need a piece of notation: for a weight $\lambda=\lambda_1 \cdots \lambda_n$ of length $n$, we let $\lambda[i,j]$ denote the substring of $\lambda$ between indices $i$ and $j$ (inclusively), i.e., $\lambda_i \cdots \lambda_j$. As with intervals, we use parentheses to exclude the edge values, e.g., $\lambda[i,j)=\lambda_i \cdots \lambda_{j-1}$. Now, \cite[Theorem~6.1]{line} states
\begin{displaymath}
\Res_{H(0)}^H L_\lambda \cong \bigoplus_{i=0}^n \big( L_{\lambda[1,i]} \boxtimes L_{\lambda(i,n]} \big) \oplus \bigoplus_{i=1}^n \big( L_{\lambda[1,i)} \boxtimes L_{\lambda(i,n]} \big)
\end{displaymath}
and \cite[Theorem~6.9]{line} states
\begin{displaymath}
\Ind_{H(0)}^H \left(L_{\lambda_1} \boxtimes L_{\lambda_2}\right) \cong L_{\lambda_1 \wa \lambda_2} \oplus L_{\lambda_1 \wb \lambda_2} \oplus L_{\lambda_1 \lambda_2}
\end{displaymath}

Keeping in mind that conjugation by $g$ interchanges the two subgroups $H_{(-\infty,0)}$ and $H_{(0,+\infty)}$, we have
\begin{displaymath}
\Res^{H^g}_{H(0)}(L_\lambda^g) \cong \bigoplus_{i=0}^n  \big( L_{\lambda(i,n]} \boxtimes L_{\lambda[1,i]} \big) \oplus \bigoplus_{i=1}^n \big( L_{\lambda(i,n]} \boxtimes L_{\lambda[1,i)} \big),
\end{displaymath}
and so, 
\begin{align*}
\Ind_{H(0)}^H \Res^{H^g}_{H(0)}(L_\lambda^g) &\cong
\bigoplus_{i=0}^n  \Ind_{H(0)}^H \left(L_{\lambda(i,n]} \boxtimes L_{\lambda[1,i]}\right)  \oplus \bigoplus_{i=1}^n \Ind_{H(0)}^H \left(L_{\lambda(i,n]} \boxtimes L_{\lambda[1,i)} \right)\\
& \cong  \bigoplus_{i=0}^n  L_{\lambda(i,n] \wa \lambda[1,i]} \oplus
\bigoplus_{i=0}^n  L_{\lambda(i,n] \wb \lambda[1,i]} \oplus
\bigoplus_{i=0}^n  L_{\lambda(i,n] \lambda[1,i]}  \\ 
&\quad \oplus \bigoplus_{i=1}^n  L_{\lambda(i,n] \wa \lambda[1,i)}
\oplus \bigoplus_{i=1}^n  L_{\lambda(i,n] \wb \lambda[1,i)}
\oplus \bigoplus_{i=1}^n  L_{\lambda(i,n] \lambda[1,i)}
\end{align*}

Now note that these summands are exactly $L_{\sigma^i(\lambda \wa)}$, $L_{\sigma^i(\lambda \wb)}$, $L_{\sigma^i(\lambda)}$, $L_{\gamma_i(\lambda \wa)}$, $L_{\gamma_i(\lambda \wb)}$, and $L_{\gamma_i(\lambda)}$, respectively. This gives exactly one copy of each cyclic rotation and each cyclic contraction of $\lambda \wa$, $\lambda \wb$, and $\lambda$, except that we miss $L_{\gamma_{n+1}(\lambda \wa)} = L_\lambda$ and $L_{\gamma_{n+1}(\lambda \wb)} = L_\lambda$, and have an extra copy of $L_{\sigma^n(\lambda)} = L_\lambda$. Finally, including the additional copy of $L_\lambda$ from the trivial double coset, we get exactly $A(\lambda) \oplus A(\lambda^+) \oplus A(\lambda^-)$.
\end{proof}

We finally observe that we can decompose Schwartz space $\cC(\bS^{\{n\}})$ into induced modules.

\begin{proposition} \label{prop:schwartz-ind}
For $n \ge 0$, we have an isomorphism of $\ul{G}$-modules
\begin{displaymath}
\cC(\bS^{\{n+1\}}) = \bigoplus_{\ell(\lambda) \le n} I_{\lambda}^{\oplus m_{\lambda}}, \qquad m_{\lambda} = \binom{n}{\ell(\lambda)}.
\end{displaymath}
\end{proposition}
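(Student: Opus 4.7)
\textit{Proof plan.} The strategy is to realize $\cC(\bS^{\{n+1\}})$ as induced from $H$, then decompose the underlying $\ul H$-module using the invariants formula recalled in \S \ref{ss:RepH}.

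First, since $G$ acts transitively on $\bS^{\{n+1\}}$, I choose the basepoint $a=(a_1,\ldots,a_n,\infty) \in \bS^{\{n+1\}}$; then $\bS^{\{n+1\}} \cong G/G(a)$, so $\cC(\bS^{\{n+1\}}) \cong \Ind_{G(a)}^{G}\bone$. Because $\infty \in a$, the stabilizer $G(a)$ is contained in $H$ and coincides with $H(a')$ for $a'=(a_1,\ldots,a_n) \in \bR^{(n)}$ (as in \S \ref{ss:groups}). Transitivity of induction therefore gives
\begin{displaymath}
\cC(\bS^{\{n+1\}}) \;\cong\; \Ind_H^G \Ind_{H(a')}^H \bone \;\cong\; \Ind_H^G \cC(\bR^{(n)}).
\end{displaymath}

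Second, I decompose $\cC(\bR^{(n)})$ in $\uRep(H)$. Since $\uRep(H)$ is semisimple, $\cC(\bR^{(n)}) \cong \bigoplus_\lambda L_\lambda^{\oplus m_\lambda}$, and by Frobenius reciprocity
\begin{displaymath}
m_\lambda \;=\; \dim \Hom_H\bigl(\Ind_{H(a')}^H \bone,\, L_\lambda\bigr) \;=\; \dim L_\lambda^{H(a')}.
\end{displaymath}
The formula for dimensions of $H(a)$-invariants in $L_\lambda$ recalled in \S \ref{ss:RepH} then yields $m_\lambda=\binom{n}{\ell(\lambda)}$, which vanishes for $\ell(\lambda)>n$; hence
\begin{displaymath}
\cC(\bR^{(n)}) \;\cong\; \bigoplus_{\ell(\lambda)\le n} L_\lambda^{\oplus \binom{n}{\ell(\lambda)}}.
\end{displaymath}

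Finally, I apply $\Ind_H^G$. It preserves direct sums, being a left adjoint to $\Res^G_H$, and $\Ind_H^G L_\lambda = I_\lambda$ by definition. Combining with the first step,
\begin{displaymath}
\cC(\bS^{\{n+1\}}) \;\cong\; \Ind_H^G \cC(\bR^{(n)}) \;\cong\; \bigoplus_{\ell(\lambda)\le n} I_\lambda^{\oplus \binom{n}{\ell(\lambda)}},
\end{displaymath}
as required. There is no serious obstacle: the argument is a straightforward assembly of transitivity of induction, Frobenius reciprocity, semisimplicity of $\uRep(H)$, and the invariants count from \cite{line}.
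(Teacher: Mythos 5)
Your proof is correct and follows essentially the same path as the paper's: identify $\cC(\bS^{\{n+1\}})$ with $\Ind_H^G\cC(\bR^{(n)})$ (you via transitivity of induction from $G(a)=H(a')$, the paper via the $G$-set isomorphism $\bS^{\{n+1\}}\cong G\times^H\bR^{(n)}$ and \cite[Proposition~2.13]{line}, which amount to the same thing), then decompose $\cC(\bR^{(n)})$ over $\ul H$ and apply exactness/additivity of induction. The only cosmetic difference is that you re-derive the $\ul H$-decomposition of $\cC(\bR^{(n)})$ from Frobenius reciprocity and the invariants count, whereas the paper cites \cite[Theorem~4.7]{line} directly.
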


\begin{proof}
We have an isomorphism of $G$-sets $\bS^{\{n+1\}} \cong G \times^H \bR^{(n)}$, and so a $\ul{G}$-isomorphism
\begin{displaymath}
\cC(\bS^{\{n+1\}}) = \Ind_H^G(\cC(\bR^{(n)}))
\end{displaymath}
by \cite[Proposition~2.13]{line}. We have an $\ul{H}$-isomorphism
\begin{displaymath}
\cC(\bR^{(n)}) = \bigoplus_{\ell(\lambda) \le n} L_{\lambda}^{\oplus m_{\lambda}}
\end{displaymath}
by \cite[Theorem~4.7]{line}. Thus the result follows.
\end{proof}

\section{Standard modules} \label{s:std}

\subsection{Construction of standard modules}

For distinct $a,b \in \bS$, we let $[a,b]$ denote the interval consisting of points $x \in \bS$ such that $a \le x \le b$. We use the usual parenthesis notation to omit endpoints. A \defn{half-open interval} is one of the form $(a,b]$ or $[a,b)$, with $a \ne b$. We define the \defn{type} of a half-open interval to be $\wa$ in the $(a,b]$ case and $\wb$ in the $[a,b)$ case. Suppose that $I_1, \ldots, I_n$ are subsets of $\bS$. We write $I_1<\cdots<I_n<I_1$ to mean that $x_1<\cdots<x_n<x_1$ for all $x_i \in I_i$, and we write $I_1 \ll \cdots \ll I_n \ll I_1$ to mean that $\ol{I}_1<\cdots<\ol{I}_n<\ol{I}_1$, where the bar denotes topological closure.

Let $\bI=(I_1, \ldots, I_n)$ be a tuple of half-open intervals in $\bS$. We say that $\bI$ is \defn{ordered} if $I_1<\cdots<I_n<I_1$, and \defn{strictly ordered} if $I_1 \ll \cdots \ll I_n \ll I_1$. Assuming $\bI$ is ordered, we identify $\bI$ with the cube $I_1 \times \cdots \times I_n$ in $\bS^{\{n\}}$, and write $\psi_{\bI} \in \cC(\bS^{\{n\}})$ for its characteristic function. We define the \defn{type} of $\bI$ to be the word $\lambda_1 \cdots \lambda_n$ where $\lambda_i$ is the type of $I_i$. We can now introduce an important class of modules.

\begin{definition} \label{defn:std}
For a word $\lambda$ of length $n>0$, we define the \defn{standard module} $\Delta_{\lambda}$ to be the $\ul{G}$-submodule of $\cC(\bS^{\{n\}})$ generated by the functions $\psi_{\bI}$, with $\bI$ a strictly ordered tuple of type $\lambda$.
\end{definition}

Recall that $\bZ/n=\langle \sigma \rangle$ acts on $\bS^{\{n\}}$ by cyclicly permuting coordinates. We have $\sigma^*(\psi_{\bI})=\psi_{\sigma^{-1} \bI}$. We thus see that $\sigma^*(\Delta_{\lambda})=\Delta_{\sigma^{-1}\lambda}$. In particular, $\Aut(\lambda)$ acts on $\Delta_{\lambda}$. For a $g(\lambda)$-root of unity $\zeta$, let $\Delta_{\lambda,\zeta}$ be the $\zeta$-eigenspace of the action of $\sigma^{N(\lambda)}$ on $\Delta_{\lambda}$. If $\mu$ is a cyclic shift of $\lambda$ then $\Delta_{\lambda}$ and $\Delta_{\mu}$ are isomorphic, as are $\Delta_{\lambda,\zeta}$ and $\Delta_{\mu,\zeta}$, via an appropriate power of $\sigma^*$.

\begin{remark}
In our previous paper \cite{line}, we worked over an arbitrary field. In this paper, we only work with an algebraically closed field $k$ of characteristic~0. Without this assumption we would need to take more care in analyzing the representation theory of $\bZ/n$ in the construction of $\Delta_{\lambda, \zeta}$ and throughout the rest of the paper. With more work, one should be able to extend many of our results to more general fields.
\end{remark}

%\begin{remark}\label{r:sigma-not-confusing}
%From this definition we see that the actions of $\bZ/n$ on weights of length $n$ is compatible with the action of $\bZ/n$ on $\bS^{\{n\}}$ in the sense that the restriction of $\sigma^*: \cC(\bS^{\{n\}}) \rightarrow \cC(\bS^{\{n\}})$ to $\Delta_\lambda$ lands in $\Delta_{\sigma(\lambda)}$. For this reason, no confusion will be caused by identifying these two cyclic groups.
%\end{remark}

We defined $\Delta_{\lambda}$ using strictly ordered tuples, since they form a single $G$-orbit. The following proposition shows we get the same result using all ordered tuples.

\begin{proposition} \label{prop:merely-ordered}
If $\bI$ is ordered and has type $\lambda$, then $\psi_\bI \in \Delta_\lambda$
\end{proposition}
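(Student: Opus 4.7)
The strategy is to exhibit $\psi_{\bI}$ as the image of a $\psi_{\bJ}$, with $\bJ$ strictly ordered of type $\lambda$, under a carefully chosen element of $A(G)$. The key numerical input is that the Euler-characteristic measure of an open arc in $\bS$ equals $-1$, while that of a half-open arc equals $0$.

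I would induct on the number of \emph{touching} positions, i.e., indices $i$ with $\overline{I_i} \cap \overline{I_{i+1}} \ne \emptyset$, reducing to the case of a single touching pair at some position $i$, sharing a common point $b$. According to whether $b$ belongs to $I_i$, to $I_{i+1}$, or to neither, there are three subcases, and in each one exactly one endpoint of $\{I_i, I_{i+1}\}$ equals $b$ and can be slid into the neighboring gap without changing the type of its interval. Fix some $t_0$ strictly inside that adjacent gap and let $\bJ$ be obtained from $\bI$ by replacing the offending endpoint $b$ with $t_0$; by construction $\bJ$ is strictly ordered of type $\lambda$, so $\psi_{\bJ} \in \Delta_{\lambda}$.

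For $t$ ranging in the open arc $U$ strictly between $b$ and $t_0$, pick $g_t \in G$ fixing every other endpoint of $\bI$ and sending $t_0 \mapsto t$; then $g_t \bJ$ is again strictly ordered of type $\lambda$. The assignment $t \mapsto g_t U$ cuts out a definable Schwartz subset of $G / G(a_1, \dots, a_m)$, where $a_1, \dots, a_m$ are the fixed endpoints, and hence defines an element $\phi \in A(G)$. The convolution action works out to
\[
(\phi \cdot \psi_{\bJ})(x_1, \dots, x_n) \;=\; \mu\bigl(\{t \in U : (x_1, \dots, x_n) \in g_t \bJ\}\bigr),
\]
and a case-by-case check shows this equals $-1$ on the support of $\psi_{\bI}$ and $0$ elsewhere: the three subcases above are precisely the configurations in which the relevant sub-arc of $U$ turns out to be open (rather than half-open, which would give $0$). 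Hence $\phi \cdot \psi_{\bJ} = -\psi_{\bI}$, and since $\Delta_{\lambda}$ is stable under $A(G)$, this gives $\psi_{\bI} \in \Delta_{\lambda}$.

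The main obstacle is matching the direction of the perturbation to the location of $b$ in each of the three subcases: moving the wrong endpoint makes the key set $\{t \in U : (x_1, \dots, x_n) \in g_t \bJ\}$ a half-open arc, so that its Euler-characteristic measure vanishes. Once the correct perturbation is identified, the computation is a one-line Euler-characteristic check in each subcase, and the multi-touching case follows either by the same induction or by iterating the perturbation over each touching position independently (contributing one factor of $-1$ per step).
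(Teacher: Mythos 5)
Your approach is correct and rests on the same underlying idea as the paper's proof — realize $\psi_{\bI}$ as an Euler-characteristic integral of characteristic functions of \emph{strictly} ordered tuples, using the fact that an open arc has measure $-1$ while a half-open one has measure $0$. The packaging, however, differs in a way worth noting. The paper slides \emph{all} open endpoints into the interiors at once: it sets $J_i(t)$ to be the half-open interval of type $\lambda_i$ with the same closed endpoint as $I_i$ and open endpoint $t \in I_i^\circ$, observes that $\bJ(t)$ is automatically strictly ordered for every $t \in \bI^\circ$, and proves the single identity $\psi_{\bI} = (-1)^n \int_{\bI^\circ} \psi_{\bJ(t)}\,dt$ by checking values pointwise. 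This gives a one-line verification (the fiber over each $a \in \bI$ is a product of nonempty open intervals, of volume $(-1)^n$) and requires no induction and no case analysis of touching configurations. Your version perturbs one touching endpoint at a time and inducts on the number of touchings, which forces you into the three-case analysis of where the shared closure point $b$ sits, and into the somewhat delicate construction of the one-parameter family $g_t \in A(G)$ -- which, while doable, needs more care than you give it (you need to say precisely what Schwartz function on $G/G(a)$ you are writing down and verify the convolution formula). Two small slips: in the case $b \notin I_i$ and $b \notin I_{i+1}$, \emph{both} open endpoints equal $b$ and either can be slid, so ``exactly one'' is not right (harmless — just pick one); and the sign you accumulate is $(-1)^k$ for $k$ touchings, which is fine for membership in $\Delta_\lambda$ but should be stated if you want a clean identity. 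Overall your proof works, but the paper's simultaneous perturbation avoids the case analysis entirely and is preferable.
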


\begin{proof}
Let $I_i^{\circ}$ be the interior of $I_i$, and let $\bI^{\circ}$ be their product. For $t \in I_i^{\circ}$, let $J_i(t)$ be the half-open interval of type $\lambda_i$ with the same closed endpoint as $I_i$, and with open endpoint $t$. We claim that
\begin{displaymath}
\psi_{\bI} = (-1)^n \int_{\bI^{\circ}}\psi_{\bJ(t)} \; dt.
\end{displaymath}
Indeed, it suffices to check that the values agree for each point $a \in \bS^{\{n\}}$. If $a \not\in \bI$ then $a \not\in \bJ(t)$ for any $t$, and so the integral is zero. Now suppose that $a \in \bI$. Then the value of the integral, evaluated at $a$, is the volume of the set $\{ t \in \bI^{\circ} \mid a \in \bJ(t) \}$. This is a product of non-empty open intervals, and thus has volume $(-1)^n$. Since $\bJ(t)$ belongs to $\Delta_{\lambda}$ for all $t \in \bI^{\circ}$, the integral belongs to $\Delta_{\lambda}$, and thus so does $\psi_{\bI}$.
\end{proof}

\begin{example} \label{ex:std}
The module $\Delta_{\wb}$ is generated by functions of the form $\psi_{[a,b)}$ with $a,b \in \bS$ distinct. From the expression
\begin{displaymath}
1=\psi_{[\infty,0)}+\psi_{[0,\infty)},
\end{displaymath}
we see that the constant function~1 belongs to $\Delta_{\wb}$. This generates a trivial subrepresentation of $\Delta_{\wb}$, and it is clearly the unique trivial subrepresentation (as a $G$-invariant function on $\bS$ is constant). We will see in Theorem~\ref{thm:std} below that the quotient is simple.
\end{example}

\begin{remark} \label{rmk:std-transp}
The discussion in \S \ref{ss:transp} shows that $\Delta_{\lambda}^{\dag} \cong \Delta_{\lambda^{\dag}}$, and also that $\Delta_{\lambda,\zeta}^{\dag} \cong \Delta_{\lambda^{\dag},\zeta^{-1}}$. Note that the isomorphism $\cC(\bS^{\{n\}})^{\dag} \cong \cC(\bS^{\{n\}})$ intertwines the actions of $\sigma$ and $\sigma^{-1}$; this is why we get $\zeta^{-1}$.
\end{remark}

\subsection{Statement of results}

Fix a weight $\lambda$ of length $n>0$, and a $g(\lambda)$-root of unity $\zeta$. Recall from \S \ref{ss:ind} that we have defined $\ul{H}$-modules
\begin{displaymath}
A_1(\lambda) = \bigoplus_{1 \le i \le n} L_{\sigma^i(\lambda)}, \qquad
A_2(\lambda) = \bigoplus_{1 \le i \le n} L_{\gamma_i(\lambda)}, \qquad
A(\lambda) = A_1(\lambda) \oplus A_2(\lambda).
\end{displaymath}
We note that $\Aut(\lambda)$ permutes the summands in $A_1(\lambda)$ and $A_2(\lambda)$, and so the above modules can be viewed as $\ul{H} \times \Aut(\lambda)$ modules. We put
\begin{displaymath}
\ol{A}_1(\lambda) = \bigoplus_{1 \le i \le N(\lambda)} L_{\sigma^i(\lambda)}, \qquad
\ol{A}_2(\lambda) = \bigoplus_{1 \le i \le N(\lambda)} L_{\gamma_i(\lambda)}, \qquad
\ol{A}(\lambda) = \ol{A}_1(\lambda) \oplus \ol{A}_2(\lambda).
\end{displaymath}
Thus $A(\lambda) = \ol{A}(\lambda) \boxtimes k[\Aut(\lambda)]$ as a $\ul{H} \times \Aut(\lambda)$ module. The following is the main theorem of this section.

\begin{theorem} \label{thm:std}
Let $\lambda$ and $\zeta$ be as above.
\begin{enumerate}
\item We have $\Delta_{\lambda} \cong A(\lambda)$ as an $\ul{H} \times \Aut(\lambda)$ module.
\item We have $\Delta_{\lambda,\zeta} \cong \ol{A}(\lambda)$ as an $\ul{H}$-module.
\item The $\ul{G}$-module $\Delta_{\lambda,\zeta}$ has length at most two. If it is not simple then $\ol{A}_2(\lambda)$ is its unique proper non-zero submodule.
\item Let $b \in \bS^{\{n\}}$ and let $\tau$ be the standard generator of $G[b]/G(b) \cong \bZ/n$. Then $\Delta^{G(b)}_{\lambda,\zeta}$ has dimension $N(\lambda)$, and $\tau^{N(\lambda)}$ acts on it by $\zeta$.
\end{enumerate}
\end{theorem}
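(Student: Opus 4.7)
The plan is to derive part (a) from a Frobenius/Mackey analysis of a surjection $I_\lambda \twoheadrightarrow \Delta_\lambda$, deduce (b) formally, handle (c) via the $\ul{G}$-submodule lattice, and establish (d) by a dimension count together with a point-evaluation comparison of $\tau^{N(\lambda)}$ and $\sigma^{*N(\lambda)}$.

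For (a), I would construct a $\ul{G}$-equivariant surjection $\alpha \colon I_\lambda = \Ind_H^G L_\lambda \twoheadrightarrow \Delta_\lambda$ via Frobenius reciprocity from the $\ul{H}$-equivariant map $L_\lambda \hookrightarrow \cC(\bS^{\{n\}})$ given by composing $L_\lambda \subset \cC(\bR^{(n)})$ with extension by zero along the open $H$-orbit $\bR^{(n)} \subset \bS^{\{n\}}$. This sends $\phi_\bI$ to $\psi_\bI$ for $\bI$ strictly ordered in $\bR$, so lands in $\Delta_\lambda$; surjectivity follows since $G$ acts transitively on strictly ordered tuples of type $\lambda$ in $\bS$. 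By Proposition~\ref{prop:ind}, $\Res^G_H I_\lambda \cong A(\lambda) \oplus A(\lambda^+) \oplus A(\lambda^-)$ as $\ul{H}$-modules, reducing (a) to identifying $\ker \alpha = A(\lambda^+) \oplus A(\lambda^-)$. The summands $A_1(\lambda^{\pm})$ consist of simples $L_\mu$ of length $n+1$, which cannot embed into $\cC(\bS^{\{n\}}) \cong \bigoplus_i \cC(\bR^{(n)})$ (whose constituents have length at most $n$), and so lie in $\ker\alpha$. To kill $A_2(\lambda^{\pm})$ (length $n$), I would compute the multiplicity $\dim \Hom_{\ul H}(L_\mu, \Delta_\lambda)$ for each simple $\mu$: for $\mu$ a cyclic shift of $\lambda$, explicit Case-A embeddings (parametrized by the position of $\infty$ in the gaps between intervals of a strictly ordered $\bI$) produce exactly the $g(\lambda)$ copies demanded by $A_1(\lambda)$; for $\mu = \gamma_i(\lambda)$, Case-B embeddings (with $\infty$ in the interior of some $I_j$) combined with their point-supported correction terms contribute the $g(\lambda)$ copies demanded by $A_2(\lambda)$, and no others. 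The $\Aut(\lambda)$-equivariance is manifest from the $\sigma^*$-compatibility of the construction.

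Part (b) is then formal: $A(\lambda) \cong \ol{A}(\lambda) \boxtimes k[\Aut(\lambda)]$ as an $\ul{H}\times \Aut(\lambda)$-module (the $\Aut(\lambda)$-action permutes the summands $L_{\sigma^i \lambda}$ and $L_{\gamma_i \lambda}$ freely within orbits), so the $\zeta$-eigenspace of the generator is $\ol{A}(\lambda)$. For (c), $\ol{A}_1(\lambda)$ and $\ol{A}_2(\lambda)$ are disjoint $\ul{H}$-isotypic pieces of $\Delta_{\lambda,\zeta}$ (lengths $n$ and $n-1$), so any $\ul{G}$-submodule $N$ decomposes as $N = (N\cap\ol{A}_1) \oplus (N\cap\ol{A}_2)$ under $\ul{H}$. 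Each summand $L_{\sigma^j \lambda} \subset \ol{A}_1$ contains some $\psi_{\bI,\zeta}$ (choose $\bI$ so that $\infty$ sits in the gap sending $\psi_\bI$ into the desired summand under Case A), and any such $\psi_{\bI,\zeta}$ $\ul{G}$-generates $\Delta_{\lambda,\zeta}$. Hence $N\cap\ol{A}_1 \ne 0$ forces $N = \Delta_{\lambda,\zeta}$, so every proper nonzero $\ul{G}$-submodule lies in $\ol{A}_2$; a parallel generation argument inside $\ol{A}_2$ shows the only candidate is $\ol{A}_2$ itself, bounding the length by two.

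For (d), $G$-transitivity on $\bS^{\{n\}}$ lets us choose $b$ with $\infty \in b$, whereupon $G(b) = H(a)$ with $|a|=n-1$; then by (b), $\dim \Delta_{\lambda,\zeta}^{G(b)} = \dim \ol{A}_2(\lambda)^{H(a)} = N(\lambda)$ (the $\ol{A}_1$-part vanishes since length $n > |a|$). For the $\tau^{N(\lambda)}$-action, the key observation is the pointwise identity $\tau \cdot b = \sigma \cdot b$ in $\bS^{\{n\}}$, a direct consequence of $\tau(a_i) = a_{i-1}$; iterating gives $\tau^k b = \sigma^k b$ for all $k$, and for $\psi \in \Delta_{\lambda,\zeta}^{G(b)}$ one obtains $(\tau^{N(\lambda)}\psi)(b) = \psi(\sigma^{-N(\lambda)} b) = (\sigma^{*N(\lambda)}\psi)(b) = \zeta\,\psi(b)$. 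The same identity at the $N(\lambda)$ distinct points $\sigma^0 b, \sigma b, \ldots, \sigma^{N(\lambda)-1} b$ provides a separating family of evaluations on $\Delta_{\lambda,\zeta}^{G(b)}$, lifting the equality $\tau^{N(\lambda)}\psi = \zeta\psi$ from a single-point statement to an operator identity on the full invariant space. The hardest step is the kernel analysis in (a), specifically tracking the Case-B contributions with the precision needed to show $A_2(\lambda^{\pm}) \subset \ker\alpha$ rather than surviving as extra length-$n$ constituents in $\Delta_\lambda$; the parallel generation argument inside $\ol{A}_2$ for (c) is similarly delicate, since one must rule out intermediate proper submodules strictly between $0$ and $\ol{A}_2$.
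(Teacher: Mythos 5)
Your strategy is broadly aligned with the paper's --- both use the surjection $I_\lambda \twoheadrightarrow \Delta_\lambda$ coming from Frobenius reciprocity, the Mackey decomposition of $\Res^G_H I_\lambda$, and a comparison of $\tau$ with the cyclic shift $\sigma$ --- but several of the load-bearing steps are not carried out, and one of them, as structured, would not close.

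The most serious gap is in part~(a). The surjection $I_\lambda \to \Delta_\lambda$ together with $\Res^G_H I_\lambda \cong A(\lambda) \oplus A(\lambda^+) \oplus A(\lambda^-)$ gives only an \emph{upper} bound on the multiplicity of each $L_\mu$ in $\Delta_\lambda$, and that bound is strictly larger than what $A_1(\lambda) \oplus A_2(\lambda)$ allows, because $A_2(\lambda^\pm)$ consists of simples of length $n$ and can share isomorphism types with $A_1(\lambda)$ (for instance $\gamma_{n+1}(\lambda^+) = \lambda$). Your ``Case-A/Case-B embeddings'' supply the lower bound $\geq g(\lambda)$, but ``and no others'' is precisely the hard part, and nothing in your outline establishes it. The paper handles this with an orthogonality computation: it shows (Lemma~\ref{lem:std-2}) that restricting any generator $\psi_{\bI}$ of $\Delta_\lambda$ along $\alpha_0^* \colon \cC(\bS^{\{n\}}) \to \cC(\bR^{(n)})$ lands in $L_\lambda + (\text{length} < n)$, by pairing against all $L_\mu$ with $\ell(\mu) = n$; combined with the injectivity of $\alpha^*$ on $\Delta_\lambda[n]$, this gives the upper bound $\ell_{\ul H}(\Delta_\lambda[n]) \le n$. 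You would need something of this kind; a multiplicity count from the induced module alone cannot do it.

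Your part~(c) has a logical ordering problem. You propose to ``bound the length by two'' by a ``parallel generation argument inside $\ol{A}_2$,'' but the issue is exactly that such a naive generation argument doesn't rule out a submodule $N$ with $0 \ne N \subsetneq \ol{A}_2(\lambda)$ when $N(\lambda) > 1$: $\ol{A}_2(\lambda)$ is multiplicity-free with $N(\lambda)$ distinct constituents $L_{\gamma_i\lambda}$, and you must show that a $\ul G$-submodule containing one of them contains all of them. The paper does this by exploiting the $\tau$-action on $G(b)$-invariants (Lemmas~\ref{lem:std-7}--\ref{lem:std-9}): $N^{G(b)}$ is $\tau$-stable, $\tau$ cyclically permutes the lines $\ol{X}_i = L_{\gamma_i\lambda}^{G(b)}$, hence containment of one forces containment of all. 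In other words, the paper proves the content of~(d) \emph{before} it can finish~(c); your proposed order is backwards, and without the $\tau$-cycling mechanism the length bound doesn't follow.

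For part~(d), the observation $\tau(\sigma^j b) = \sigma^{j+1}b$ is correct and is exactly the paper's Lemma~\ref{lem:std-7}. However, your ``separating family of evaluations'' claim --- that $\ev_{\sigma^0 b}, \ldots, \ev_{\sigma^{N(\lambda)-1}b}$ are linearly independent on $\Delta_{\lambda,\zeta}^{G(b)}$ --- needs a proof. The paper establishes it via the identifications $\ev_{\sigma^j b} = \ev_a^j \circ \beta^*$ and the fact that a non-zero $H(a)$-invariant of $L_{\gamma_i\lambda}$ has non-zero evaluation, yielding $X_i = \bigcap_{j\ne i} \ker \ev_{\sigma^j b}$; your sketch takes this for granted. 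There is also a sign subtlety in the step $\psi(\sigma^{-N(\lambda)}b) = (\sigma^{*N(\lambda)}\psi)(b)$: literal pull-back gives $(\sigma^{*N(\lambda)}\psi)(b) = \psi(\sigma^{N(\lambda)}b)$, so as written you get $\zeta^{-1}\psi(b)$ rather than $\zeta\psi(b)$ --- this is a convention issue (whether the $\bZ/n$-action on $\cC$ is $\sigma^*$ or $\sigma_*= (\sigma^{-1})^*$), and you should pin it down, because Corollary~\ref{cor:std-isom} and Theorem~\ref{thm:fine} both depend on the sign of the $\tau$-eigenvalue.
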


We offer a word of clarification on (c). Technically speaking, $\ol{A}_2(\lambda)$ is not a subspace of $\Delta_{\lambda,\zeta}$. However, there is a unique copy of $\ol{A}_2(\lambda)$ in $\Delta_{\lambda,\zeta}$, namely, the sum of the $L_{\gamma_i(\lambda)}$ isotypic pieces. This is what we mean in (c).

We observe a few corollaries of the theorem.

\begin{corollary}
We have $\End_{\ul{G}}(\Delta_{\lambda,\zeta})=k$. In particular, $\Delta_{\lambda,\zeta}$ is indecomposable.
\end{corollary}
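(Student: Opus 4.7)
The plan is to split into two cases based on whether $\Delta_{\lambda,\zeta}$ is simple, using Theorem~\ref{thm:std}(b) and (c) in each case. First, if $\Delta_{\lambda,\zeta}$ is simple, Schur's lemma applied in the pre-Tannakian (hence $\Hom$-finite) category $\uRep^{\rf}(G)$ over the algebraically closed field $k$ gives $\End_{\ul{G}}(\Delta_{\lambda,\zeta}) = k$ immediately.

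Now suppose $\Delta_{\lambda,\zeta}$ is not simple. By Theorem~\ref{thm:std}(c), it has length two with $\ol{A}_2(\lambda)$ as its unique proper nonzero submodule. Uniqueness forces $\ol{A}_2(\lambda)$ to itself be simple as a $\ul{G}$-module (any proper $\ul{G}$-submodule of $\ol{A}_2(\lambda)$ would be a proper submodule of $\Delta_{\lambda,\zeta}$ strictly smaller than $\ol{A}_2(\lambda)$). Let $T = \Delta_{\lambda,\zeta}/\ol{A}_2(\lambda)$ denote the (simple) top. Given any $\phi \in \End_{\ul{G}}(\Delta_{\lambda,\zeta})$, uniqueness of the submodule gives $\phi(\ol{A}_2(\lambda)) \subseteq \ol{A}_2(\lambda)$, and Schur on the simple module $\ol{A}_2(\lambda)$ yields $\phi|_{\ol{A}_2(\lambda)} = a \cdot \id$ for some $a \in k$. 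Then $\phi - a \cdot \id$ vanishes on $\ol{A}_2(\lambda)$, so it factors through $T$ to produce a $\ul{G}$-linear map $\tilde{\phi} \colon T \to \Delta_{\lambda,\zeta}$.

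The key step is to show $\tilde{\phi} = 0$. Since $T$ is simple, $\tilde{\phi}$ is either zero or injective; if injective, its image is a nonzero simple submodule of $\Delta_{\lambda,\zeta}$, which by the length-two structure must coincide with $\ol{A}_2(\lambda)$, yielding an isomorphism $T \cong \ol{A}_2(\lambda)$ of $\ul{G}$-modules, hence of $\ul{H}$-modules. I will rule this out using Theorem~\ref{thm:std}(b): as an $\ul{H}$-module, $\Delta_{\lambda,\zeta} \cong \ol{A}_1(\lambda) \oplus \ol{A}_2(\lambda)$, so $T \cong \ol{A}_1(\lambda)$ has all composition factors of the form $L_{\sigma^i(\lambda)}$ (weights of length $\ell(\lambda)$), while $\ol{A}_2(\lambda)$ has composition factors $L_{\gamma_i(\lambda)}$ (weights of length $\ell(\lambda)-1$). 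Since simple $\ul{H}$-modules are classified by weights (Theorem~4.3(b) of \cite{line}), and weights of different lengths are distinct, these sets of simple constituents are disjoint; thus no $\ul{H}$-linear isomorphism $T \cong \ol{A}_2(\lambda)$ can exist. Therefore $\tilde{\phi} = 0$, giving $\phi = a \cdot \id$, so $\End_{\ul{G}}(\Delta_{\lambda,\zeta}) = k$.

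Indecomposability is then automatic: a ring whose only idempotents are $0$ and $1$ (as is any local ring, and in particular $k$) admits no nontrivial direct sum decomposition of the module. The main obstacle is the non-simple case, where one must combine the unique-submodule structure from Theorem~\ref{thm:std}(c) with the $\ul{H}$-module decomposition from Theorem~\ref{thm:std}(b) to rule out the potential ``extension'' endomorphisms; the disjointness of $\ul{H}$-composition factors (which holds for the clean reason that cyclic shifts and cyclic contractions live in different lengths) is what makes the argument go through cleanly.
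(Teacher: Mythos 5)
Your proof is correct. It takes a slightly different route from the paper's, though both rest on the same two facts from Theorem~\ref{thm:std}: the $\ul{H}$-module decomposition $\Delta_{\lambda,\zeta} \cong \ol{A}_1(\lambda)\oplus\ol{A}_2(\lambda)$ in part~(b), and the unique proper nonzero submodule (if any) being $\ol{A}_2(\lambda)$ in part~(c). Where you split into simple and non-simple cases and, in the latter, analyze the length-two structure (uniqueness of the submodule, Schur on sub and top, non-isomorphism of top and socle via disjoint $\ul{H}$-composition factors), the paper takes a shortcut: since $L_\lambda$ occurs with multiplicity one as an $\ul{H}$-constituent of $\Delta_{\lambda,\zeta}$, restriction to that isotypic piece gives a map $\End_{\ul{G}}(\Delta_{\lambda,\zeta})\to\End_{\ul{H}}(L_\lambda)=k$, and injectivity is then a one-line observation: if $f$ kills $L_\lambda$, then $\ker f$ is a $\ul{G}$-submodule containing $L_\lambda$, which by part~(c) forces $\ker f=\Delta_{\lambda,\zeta}$ since $\ol{A}_2(\lambda)$ contains only length-$(n-1)$ constituents. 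This handles the simple and non-simple cases uniformly and avoids the case split. Your argument is more explicit about the ring-theoretic structure of $\End$ in the length-two case, which is a bit longer but makes the ``no hidden nilpotent'' mechanism more visible; the paper's argument is shorter because the multiplicity-one projection already knows that mechanism implicitly.
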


\begin{proof}
The $\ul{H}$-module $L_{\lambda}$ appears with multiplicity one in $\Delta_{\lambda,\zeta}$. We therefore have a map
\begin{displaymath}
\End_{\ul{G}}(\Delta_{\lambda,\zeta}) \to \End_{\ul{H}}(L_{\lambda}) = k.
\end{displaymath}
We claim that this map is injective. Indeed, suppose that $f$ is in the kernel, i.e., $f$ is a $\ul{G}$-endomorphism of $\Delta_{\lambda,\zeta}$ that kills $L_{\lambda}$. Then $\ker(f)$ is a $\ul{G}$-submodule of $\Delta_{\lambda,\zeta}$ containing $L_{\lambda}$, and is thus all of $\Delta_{\lambda,\zeta}$ by Theorem~\ref{thm:std}(c). Hence $f=0$, which proves the claim, and completes the proof.
\end{proof}

\begin{corollary} \label{cor:std-isom}
The $\ul{G}$-modules $\Delta_{\lambda,\zeta}$ and $\Delta_{\mu,\omega}$ are isomorphic if and only if $\omega=\zeta$ and $\mu$ is a cyclic shift of $\lambda$ (i.e., $\mu=\sigma^i(\lambda)$ for some $i$).
\end{corollary}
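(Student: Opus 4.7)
The plan is to leverage Theorem~\ref{thm:std} parts (b) and (d): part (b) will pin down the weight $\lambda$ up to cyclic shift using only the $\ul{H}$-module structure, and part (d) will pin down the eigenvalue $\zeta$ using the extra action of $G[b]/G(b)$ on $G(b)$-invariants.

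The ``if'' direction is already recorded in the paragraph immediately following Definition~\ref{defn:std}: an appropriate power of $\sigma^*$ furnishes the desired $\ul{G}$-isomorphism $\Delta_{\lambda,\zeta} \cong \Delta_{\sigma^i(\lambda),\zeta}$.

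For the ``only if'' direction, suppose $\Delta_{\lambda,\zeta} \cong \Delta_{\mu,\omega}$ as $\ul{G}$-modules. Restricting to $H$ and applying Theorem~\ref{thm:std}(b), I get an isomorphism $\ol{A}(\lambda) \cong \ol{A}(\mu)$ of $\ul{H}$-modules. Since $\uRep(H)$ is semi-simple and the $L_\nu$ are pairwise non-isomorphic simples indexed by weights $\nu$, this is an equality of multisets of weights. The summands of $\ol{A}(\lambda)$ are the distinct cyclic shifts of $\lambda$ (of length $\ell(\lambda)$) together with the distinct cyclic contractions (of length $\ell(\lambda)-1$), and similarly for $\mu$. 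Comparing the maximum length of a weight appearing forces $\ell(\lambda)=\ell(\mu)$, and then the cyclic shift orbit of $\lambda$ equals that of $\mu$, so $\mu = \sigma^i(\lambda)$ for some $i$.

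Using the ``if'' direction, I may then replace $\Delta_{\mu,\omega}$ by an isomorphic copy and assume $\mu=\lambda$, reducing to showing that $\Delta_{\lambda,\zeta} \cong \Delta_{\lambda,\omega}$ implies $\zeta=\omega$. Pick any $b \in \bS^{\{n\}}$. A $\ul{G}$-isomorphism induces an isomorphism of $G(b)$-invariants that is equivariant for $G[b]/G(b) \cong \bZ/n$, and in particular intertwines the action of $\tau^{N(\lambda)}$. By Theorem~\ref{thm:std}(d) this operator acts as the scalar $\zeta$ on $\Delta_{\lambda,\zeta}^{G(b)}$ and as the scalar $\omega$ on $\Delta_{\lambda,\omega}^{G(b)}$, and these invariant spaces are non-zero (of dimension $N(\lambda) \ge 1$), so $\zeta=\omega$.

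No step looks difficult; the whole argument is essentially bookkeeping given Theorem~\ref{thm:std}. The mildest subtlety is the final step, where I need that the $G(b)$-invariants of a $\ul{G}$-module carry a well-defined action of $G[b]/G(b)$ compatible with $\ul{G}$-isomorphisms, which is immediate from the fact that $G[b]$ normalizes $G(b)$.
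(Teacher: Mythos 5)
Your proof is correct and follows essentially the same route as the paper: compare maximal lengths of $\ul{H}$-constituents to force $\ell(\lambda)=\ell(\mu)$, compare the length-$n$ piece $\ol{A}_1$ to conclude $\mu$ is a cyclic shift of $\lambda$, and then use the action of $\tau^{N(\lambda)}$ on $G(b)$-invariants (Theorem~\ref{thm:std}(d)) to identify $\zeta$ and $\omega$. The paper skips the explicit reduction to $\mu=\lambda$ (it simply notes $N(\mu)=N(\lambda)$ implicitly since they share the same cyclic symmetry), but that is an expository difference, not a mathematical one.
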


\begin{proof}
We have already seen that $\Delta_{\lambda,\zeta}$ and $\Delta_{\sigma^i(\lambda),\zeta}$ are isomorphic. Suppose now that we have an isomorphism $f \colon \Delta_{\lambda,\zeta} \to \Delta_{\mu,\omega}$. Let $n=\ell(\lambda)$ and $m=\ell(\mu)$. Since $\Delta_{\lambda,\zeta}$ only has simple $\ul{H}$-modules of lengths $n$ and $n-1$, and $\Delta_{\mu,\omega}$ only has ones of lengths $m$ and $m-1$, we must have $m=n$. Looking at the $\ul{H}$-submodules of length $n$, we find $A_1(\lambda) \cong A_1(\mu)$ as $\ul{H}$-modules, which implies that $\mu$ is a cyclic shift of $\lambda$. Let $b \in \bS^{\{n\}}$, and let $\tau$ be the generator of $G[b]/G(b)$. We have an isomorphism $f \colon \Delta_{\lambda,\zeta}^{G(b)} \to \Delta_{\mu,\omega}^{G(b)}$ that commutes with the action of $\tau$. Since $\tau^{N(\lambda)}$ acts by $\zeta$ on the source and $\omega$ on the target, we conclude that $\omega=\zeta$.
\end{proof}

\begin{corollary}
$\Delta_{\lambda,\zeta}$ has categorical dimension~0.
\end{corollary}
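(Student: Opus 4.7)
The plan is to use the fact that the restriction functor $\Res^G_H \colon \uRep(G) \to \uRep(H)$ is a symmetric tensor functor and therefore preserves categorical dimensions. This lets us compute the dimension of $\Delta_{\lambda,\zeta}$ as an $\ul{H}$-module, where the simple summands have known dimensions.

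Concretely, by Theorem~\ref{thm:std}(b) we have the $\ul{H}$-module decomposition
\begin{displaymath}
\Res^G_H(\Delta_{\lambda,\zeta}) \cong \ol{A}(\lambda) = \ol{A}_1(\lambda) \oplus \ol{A}_2(\lambda) = \bigoplus_{1 \le i \le N(\lambda)} L_{\sigma^i(\lambda)} \oplus \bigoplus_{1 \le i \le N(\lambda)} L_{\gamma_i(\lambda)}.
\end{displaymath}
Set $n=\ell(\lambda)$. Recall from \S\ref{ss:RepH} that $L_\mu$ has categorical dimension $(-1)^{\ell(\mu)}$. Each summand of $\ol{A}_1(\lambda)$ is some $L_{\sigma^i(\lambda)}$ of length $n$, and each summand of $\ol{A}_2(\lambda)$ is some $L_{\gamma_i(\lambda)}$ of length $n-1$. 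Hence
\begin{displaymath}
\dim_{\ul{H}}(\ol{A}(\lambda)) = N(\lambda)\cdot(-1)^n + N(\lambda)\cdot(-1)^{n-1} = 0.
\end{displaymath}
Since restriction preserves dimension, we conclude $\dim_{\ul{G}}(\Delta_{\lambda,\zeta})=0$.

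There is essentially no obstacle here; the only thing one must justify is that $\Res^G_H$ preserves categorical dimension, which is a general fact about symmetric tensor functors between rigid categories applied to the inclusion $A(H) \hookrightarrow A(G)$ (the induced restriction functor is symmetric monoidal because it comes from a subgroup inclusion). Everything else is a direct count using the simple constituents already computed in Theorem~\ref{thm:std}(b).
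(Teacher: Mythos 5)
Your proof is correct and follows essentially the same route as the paper's: both compute the categorical dimension via the $\ul{H}$-module decomposition $\ol{A}(\lambda)=\ol{A}_1(\lambda)\oplus\ol{A}_2(\lambda)$ from Theorem~\ref{thm:std}, using that each $L_\mu$ has dimension $(-1)^{\ell(\mu)}$ so the two pieces of size $N(\lambda)$ cancel. You simply make explicit the (standard) fact that restriction along $H\subset G$ is a symmetric tensor functor and hence preserves dimension, which the paper leaves implicit.
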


\begin{proof}
The modules $\ol{A}_1(\lambda)$ and $\ol{A}_2(\lambda)$ have dimension of absolute value $N(\lambda)$, but with different signs, by \cite[Corollary~5.7]{line}.
\end{proof}

Since $\Delta_{\lambda,\zeta}$ has a unique maximal proper submodule (possibly zero), it has a unique simple quotient. We give it a name:

\begin{definition} \label{defn:M}
For a non-empty weight $\lambda$ and a $g(\lambda)$ root of unity $\zeta$, we define $M_{\lambda,\zeta}$ to be the unique simple quotient of $\Delta_{\lambda,\zeta}$.
\end{definition}

We note that $M_{\lambda,\zeta}$ contains $L_{\lambda}$ with multiplicity one, and that all simple $\ul{H}$-modules appearing in $M_{\lambda,\zeta}$ have length $n$ or $n-1$ where $n=\ell(\lambda)$. It follows from Remark~\ref{rmk:std-transp} that $M^{\dag}_{\lambda,\zeta} \cong M_{\lambda^{\dag}, \zeta^{-1}}$.

\subsection{Set-up} \label{ss:std-setup}

The proof of Theorem~\ref{thm:std} will take the remainder of \S \ref{s:std}. We now set-up some notation and make a few initial observations.

Fix a weight $\lambda$ of length $n>0$, and a $g(\lambda)$-root of unity $\zeta$. For an $\ul{H}$-module $X$, we let $X[r]$ be the sum of the $L_{\mu}$-isotypic components of $X$ with $\ell(\mu)=r$. We also use this notation when $X$ is a $\ul{G}$-module, by simply restricting to $\ul{H}$. We note that $\Delta_{\lambda}[r]$ is an $\ul{H} \times \Aut(\lambda)$ module. In this notation, Theorem~\ref{thm:std}(a) states that $\Delta_{\lambda}[n] \cong A_1(\lambda)$ and $\Delta_{\lambda}[n-1] \cong A_2(\lambda)$, as $\ul{H} \times \Aut(\lambda)$ modules, and also $\Delta_{\lambda}[r]=0$ for $r \ne n, n-1$.

Let $\alpha_0 \colon \bR^{(n)} \to \bS^{\{n\}}$ be the standard inclusion, and let $\beta_0 \colon \bR^{(n-1)} \to \bS^{\{n\}}$ be defined by
\begin{displaymath}
\beta_0(x_1, \ldots, x_{n-1}) = (x_1, \ldots, x_{n-1}, \infty).
\end{displaymath}
For $i \in \bZ/n$, let $\alpha_i=\sigma^i \alpha_0$ and $\beta_i=\sigma^i \beta_0$. These maps are $H$-equivariant. We let
\begin{displaymath}
\alpha_* \colon \cC(\bR^{(n)})^{\oplus n} \to \cC(\bS^{\{n\}}), \qquad
\beta_* \colon \cC(\bR^{(n-1)})^{\oplus n} \to \cC(\bS^{\{n\}})
\end{displaymath}
be the direct sum of the $(\alpha_i)_*$ and $(\beta_i)_*$, and similarly define $\alpha^*$ and $\beta^*$. These maps are $\ul{H} \times \bZ/n$ equivariant, where $\bZ/n$ permutes the summands in the domain and acts by shifting on the target. We note that $\alpha_i^*$ simply restricts a function on $\bS^{\{n\}}$ to the $i$th copy of $\bR^{(n)}$, while $(\alpha_i)_*$ extends a function on this $\bR^{(n)}$ to all of $\bS^{\{n\}}$ by zero. The map
\begin{displaymath}
\big( \coprod_{i \in \bZ/n} \bR^{(n)} \big) \amalg \big( \coprod_{i \in \bZ/n} \bR^{(n-1)} \big) \to \bS^{\{n\}}
\end{displaymath}
defined by the $\alpha_i$'s and $\beta_i$'s is an isomorphism of $H$-sets. It follows that the map
\begin{displaymath}
\alpha_* \oplus \beta_* \colon \cC(\bR^{(n)})^{\oplus n} \oplus \cC(\bR^{(n-1)})^{\oplus n} \to \cC(\bS^{\{n\}
})
\end{displaymath}
is an isomorphism of $\ul{H}$-modules; the inverse map is $\alpha^* \oplus \beta^*$. 
We have $\cC(\bR^{(n)})[r]=\cC(\bR^{(n-1)})[r]=0$ for $r>n$ \cite[Theorem~4.7]{line}, and so $\Delta_{\lambda}[r]=0$ for $r>n$ as well.

\subsection{The length $n$ piece}

We now examine $\Delta_{\lambda}[n]$.

\begin{lemma} \label{lem:std-1}
For $i \in \bZ/n$, we have $(\alpha_{-i})_*(L_{\sigma^i(\lambda)}) \subset \Delta_{\lambda}$, and any non-zero element of this space generates $\Delta_{\lambda}$ as a $\ul{G}$-module.
\end{lemma}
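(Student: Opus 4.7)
The plan is to unwind what $(\alpha_{-i})_*$ does to the explicit generators $\phi_\bI$ of $L_{\sigma^i(\lambda)}$ and then identify the result inside $\Delta_\lambda$ via Proposition~\ref{prop:merely-ordered}. Since $\alpha_{-i}=\sigma^{-i}\alpha_0$ is an $H$-equivariant injection $\bR^{(n)}\hookrightarrow\bS^{\{n\}}$, the pushforward $(\alpha_{-i})_*$ is simply extension by zero of functions (each fiber is a single point of volume $1$, or empty of volume $0$).

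First I would take $\bI=(I_1,\ldots,I_n)$ ordered in $\bR^{(n)}$ of type $\sigma^i(\lambda)$, so the type of $I_k$ is $\sigma^i(\lambda)_k=\lambda_{k-i\bmod n}$. Directly from the definitions, $(\alpha_{-i})_*\phi_\bI=\psi_\bJ$ where $\bJ=(I_{i+1},\ldots,I_n,I_1,\ldots,I_i)$ is the cyclically shifted tuple inside $\bS^{\{n\}}$. Two easy checks finish this step: (i) $\bJ$ is ordered in $\bS^{\{n\}}$, because $I_1,\ldots,I_n$ are in cyclic order in $\bS$ (as $\infty\notin\bigcup I_k$) and cyclic shifts preserve cyclic order; (ii) the $k$th entry of $\bJ$ is $I_{k+i\bmod n}$, of type $\lambda_{(k+i)-i\bmod n}=\lambda_k$, so $\bJ$ has type $\lambda$. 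Proposition~\ref{prop:merely-ordered} then gives $\psi_\bJ\in\Delta_\lambda$. Since $(\alpha_{-i})_*$ is $\ul H$-linear and such $\phi_\bI$ generate $L_{\sigma^i(\lambda)}$ as an $\ul H$-module, we conclude $(\alpha_{-i})_*(L_{\sigma^i(\lambda)})\subset\Delta_\lambda$.

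For the generation statement, $(\alpha_{-i})_*$ is injective (being extension by zero), so $(\alpha_{-i})_*(L_{\sigma^i(\lambda)})$ is a nonzero, and hence simple, $\ul H$-submodule of $\Delta_\lambda$. Any nonzero $v$ in it therefore $\ul H$-generates the whole image, so the $\ul G$-submodule $\langle v\rangle$ contains $(\alpha_{-i})_*\phi_\bI$ for every choice of $\bI$; in particular we may take $\bI$ strictly ordered with all intervals bounded, and the same cyclic-shift computation then shows that the resulting $\bJ$ is strictly ordered in $\bS^{\{n\}}$ of type $\lambda$. Since $G$ acts transitively on the set of strictly ordered tuples of a given type (as noted right after Definition~\ref{defn:std}), such a single $\psi_\bJ$ $\ul G$-generates $\Delta_\lambda$ by definition, so $\langle v\rangle=\Delta_\lambda$. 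The main (very mild) obstacle is keeping straight the two cyclic shifts---one on positions in $\bS^{\{n\}}$, one on letters in the weight---which conveniently cancel to produce type $\lambda$; everything else is formal.
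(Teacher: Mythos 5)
Your proposal is correct and follows essentially the same route as the paper: push forward the explicit generators $\phi_{\bI}$, observe that cyclic shifting converts a $\bR$-ordered tuple of type $\sigma^i(\lambda)$ into a $\bS$-ordered tuple of type $\lambda$, and use simplicity of $L_{\sigma^i(\lambda)}$ together with $\ul H$-equivariance of $(\alpha_{-i})_*$ to get the generation statement. The only cosmetic differences are that the paper reduces to $i=0$ and works only with strictly ordered bounded tuples (avoiding the need for Proposition~\ref{prop:merely-ordered}), whereas you carry the index $i$ through the argument and invoke Proposition~\ref{prop:merely-ordered} for the containment; both are fine.
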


\begin{proof}
It suffices to treat the $i=0$ case. Let $\bI=(I_1,\ldots,I_n)$ be a tuple of bounded half-open intervals in $\bR$ of type $\lambda$, with $I_1 \ll \cdots \ll I_n$. We can also regard $\bI$ as a tuple of intervals in $\bS$. As such, it is strictly ordered of type $\lambda$; here it is important that the $I_i$'s are bounded. It follows that $(\alpha_i)_*(\phi_{\bI})=\psi_{\bI}$ is a generator for $\Delta_{\lambda}$. Since $\phi_{\bI}$ generates $L_{\lambda}$ as an $\ul{H}$-module and $\alpha_i$ is $\ul{H}$-equivariant, it follows that $(\alpha_i)_*$ carries $L_{\lambda}$ into $\Delta_{\lambda}$. Any non-zero element of the image will generate the entire image as an $\ul{H}$-module (since $L_{\lambda}$ is irreducible), and thus generate $\psi_{\bI}$, which in turn generates $\Delta_{\lambda}$ as a $\ul{G}$-module.
\end{proof}

Let $Q = \bigoplus_{i<n} \subset \cC(\bR^{(n)})[i]$ be the sum of all $\ul{H}$-simple submodules of length $<n$.

\begin{lemma} \label{lem:std-2}
For $i \in \bZ/n$ we have $\alpha_{-i}^*(\Delta_{\lambda}) \subset L_{\sigma^i(\lambda)}+Q$.
\end{lemma}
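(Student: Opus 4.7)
The plan is to reduce to the case $i=0$ via the $\ul{G}$-equivariant cyclic-shift automorphism $\sigma^* \colon \cC(\bS^{\{n\}}) \to \cC(\bS^{\{n\}})$. Since $(\sigma^{-i})^*(\Delta_\lambda) = \Delta_{\sigma^i(\lambda)}$ and $\alpha_{-i}^* = \alpha_0^* \circ (\sigma^{-i})^*$, the desired containment $\alpha_{-i}^*(\Delta_\lambda) \subset L_{\sigma^i(\lambda)} + Q$ reduces to $\alpha_0^*(\Delta_\mu) \subset L_\mu + Q$ for $\mu = \sigma^i(\lambda)$, and it suffices to establish this for every length-$n$ weight $\mu$. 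Since $L_\mu + Q$ is an $\ul{H}$-submodule of $\cC(\bR^{(n)})$ and each $G$-translate $g\cdot\psi_\bI = \psi_{g\bI}$ of a strict generator is again strictly ordered of type $\mu$, it is enough to check $\alpha_0^*(\psi_\bI) \in L_\mu + Q$ on each such generator.

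Next I would analyze $\alpha_0^*(\psi_\bI)$ by cases on the position of $\infty$ relative to $\bI$. The image of $\alpha_0$ is the $H$-orbit of tuples where $\infty$ lies in the cyclic gap between the last and first coordinates, so the pullback vanishes unless $\infty$ lies in the ``big gap'' between $I_n$ and $I_1$, or inside one of the two boundary intervals $I_1, I_n$. In the big-gap case, the $I_i$'s form an ordered tuple of type $\mu$ on $\bR$, giving $\alpha_0^*(\psi_\bI) = \phi_\bI \in L_\mu$. The case $\infty \in I_n$ reduces to $\infty \in I_1$ via the transpose (cf.\ Remark~\ref{rmk:std-transp}), so I focus on $\infty \in I_1$. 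Let $b_1$ denote the cyclic endpoint of $I_1$ encountered just after $\infty$. If $\mu_1 = \wa$, so $I_1 = (a_1, b_1]$, the pullback is supported on $x_1 \in (-\infty, b_1]$, which is a half-open interval of type $\wa$; hence $\alpha_0^*(\psi_\bI) = \phi_\bJ \in L_\mu$ for $\bJ = ((-\infty, b_1], I_2, \ldots, I_n)$ ordered of type $\mu$ on $\bR$.

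The remaining subcase is $\infty \in I_1$ with $\mu_1 = \wb$, so $I_1 = [a_1, b_1)$ and the pullback is supported on $x_1$ in the open interval $(-\infty, b_1)$. Here I apply the $\cC(\bR)$-identity $\phi_{(-\infty, b_1)} = \mathbf{1} - \phi_{[b_1, +\infty)}$ to obtain
\[
\alpha_0^*(\psi_\bI) = p^*\phi_{I_2 \times \cdots \times I_n} - \phi_{[b_1, +\infty) \times I_2 \times \cdots \times I_n}|_{\bR^{(n)}},
\]
where $p \colon \bR^{(n)} \to \bR^{(n-1)}$ is the projection $(x_1, x_2, \ldots, x_n) \mapsto (x_2, \ldots, x_n)$. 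The first summand lies in the length-$(n-1)$ isotypic of $\cC(\bR^{(n)})$, hence in $Q$. The second summand is the main obstacle, since $[b_1, +\infty)$ overlaps with $I_2, \ldots, I_n$ so the candidate cube is not ordered. I would split $[b_1, +\infty) = [b_1, c_2) \sqcup [c_2, +\infty)$ with $c_2$ the left endpoint of $I_2$: the first piece yields an ordered tuple $([b_1, c_2), I_2, \ldots, I_n)$ of type $\mu$, contributing an element of $L_\mu$ via Proposition~\ref{prop:merely-ordered}. The genuinely overlapping second piece must be reduced inductively into $L_\mu + Q$, exploiting that $\vol = 0$ on half-open intervals to force the residuals into $Q$; this final step is the most delicate.
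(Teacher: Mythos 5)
Your initial reductions (to $i=0$, then to generators $\psi_{\bI}$, then the case analysis on where $\infty$ sits) are correct and match the paper in spirit; the paper achieves the same thing slightly more cleanly by first splitting each $\psi_{\bI}$ so that $\infty$ is never in the interior of any interval, after which the only contributing configurations are those where $\infty$ lies in the gap $I_n<\infty<I_1$ or is an included endpoint of $I_1$ or $I_n$. The genuinely different part is how you handle the critical case $\mu_1=\wb$, where the pullback has an open first factor $(-\infty,b_1)$. The paper disposes of this in a single stroke by citing the orthogonality structure from \cite[\S 4.5]{line}: one shows $\langle \alpha_0^*\psi_{\bI},\phi_{\bJ}\rangle=0$ for every $\bJ$ of type $\nu$ with $\ell(\nu)=n$ and $\nu\ne \mu^{\vee}$, by writing the pairing as $\prod_i \vol(I_i\cap J_i)$ and noting one factor is a half-open interval (hence has volume~$0$) whenever $\nu_i=\mu_i$ for some $i$. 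You instead write $\phi_{(-\infty,b_1)}=\mathbf 1-\phi_{[b_1,+\infty)}$ and try to peel off pieces iteratively.

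Two concrete gaps remain in your route. First, the assertion that $p^*\phi_{I_2\times\cdots\times I_n}$ ``lies in the length-$(n-1)$ isotypic, hence in $Q$'' is stated but not justified: pullback along the non-proper projection $p$ is not obviously a morphism $\cC(\bR^{(n-1)})\to\cC(\bR^{(n)})$ of $\ul H$-modules (one must check image functions are Schwartz and that $\ul H$-linearity holds), and in any case the function typically has components at several lengths $\le n-1$, so the right claim is only $\in Q$. This is true, and can be established by exactly the pairing computation the paper uses (again one finds a factor $\vol(J_1)=0$), but as written it is asserted rather than proved. Second, and more seriously, the ``genuinely overlapping second piece'' $\phi_{[c_2,+\infty)\times I_2\times\cdots\times I_n}|_{\bR^{(n)}}$ is not dealt with --- you flag the inductive reduction as ``the most delicate'' step without giving it, and it is not routine: the overlap propagates to later coordinates, the sub-rectangles produced are not of type $\mu$, and the length parameter against which one would induct is unclear since $Q$ is defined by length $<n$, not $<n-1$. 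The paper's orthogonality argument absorbs all of this at once, because it never needs to express $\alpha_0^*\psi_{\bI}$ as a combination of $\phi_{\bJ}$'s; it only needs to test pairings, where the trivial fact $\vol(\text{half-open})=0$ closes every case. You should either carry out the induction explicitly, or (better) replace the last two steps with the paper's orthogonality argument.
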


\begin{proof}
It suffices to treat the $i=0$ case, so we assume this in what follows. It suffices to show that $\alpha_0^*(\psi_{\bI})$ is contained in $L_{\lambda}+Q$ for each generator $\psi_{\bI}$ of $\Delta_{\lambda}$. Note that if $\infty$ belongs to the interior of some interval in $\bI$ then we can write $\psi_{\bI}=\psi_{\bJ}+\psi_{\bK}$, where $\psi_{\bJ}$ and $\psi_{\bK}$ are generators of $\Delta_{\lambda}$ and $\infty$ is not in the interior of any interval in $\bJ$ or $\bK$. Thus we assume $\infty$ is not in the interior of any interval in $\bI$ in what follows.

We consider four cases:
\begin{enumerate}
\item We have $I_n<\infty<I_1$.
\item $\infty$ is the included left endpoint of $I_1$.
\item $\infty$ is the included right endpoint of $I_n$.
\item All other cases.
\end{enumerate}
In case (a), $\alpha_0^*(\psi_{\bI})$ is one of the generators of $L_{\lambda}$, and so the statement holds. In case (d), we have $\alpha_0^*(\phi_{\bI})=0$. Cases (b) and (c) are essentially the same, and we treat (b).

Assume we are in case (b); in particular, $\lambda_1=\wb$. We regard $\psi_{\bI}$ as an element of $\cC(\bR^{(n)})$; note that $I_1$ was of the form $[-\infty,a)$, but (now that we're ignoring $\infty$) is the open interval $(\infty,a)$. To show that $\psi_{\bI}$ belongs to $L_{\lambda}+Q$, it suffices (by \cite[\S 4.5]{line}) to show that it is orthogonal to the spaces $L_{\mu}$ where $\ell(\mu)=n$ and $\mu \ne \lambda^{\vee}$. Thus let $\mu$ be given, and let $\bJ$ be a tuple of type $\bJ$; we show that $\langle \psi_{\bI}, \phi_{\bJ} \rangle=0$.

This pairing is equal to the product of $\vol(I_i \cap J_i)$ for $1 \le i \le n$, so we must show one of these vanishes. Since $\mu \ne \lambda^{\vee}$, there is some $i$ such that $\mu_i=\lambda_i$. If $i>1$ then this means that $I_i \cap J_i$ is a half-open interval (or empty), and thus has volume~0. In fact, the same is true for $i=1$. Indeed, we have $\mu_1=\lambda_1=\wb$, and so $J_1=[c,d)$ for some $c<d$. If $c>a$ then $I_1 \cap J_1=\emptyset$, while otherwise the intersection is half-open of type $\wb$ with left endpoint $c$.
\end{proof}

\begin{example}
Here is the main example to keep in mind for Lemma~\ref{lem:std-2}. The module $\Delta_{\wb}$ contains the function $\psi_{[\infty,0)} \in \cC(\bS)$. When we apply $\alpha^*_0$ to this, we get the function $\psi_{(-\infty,0)} \in \cC(\bR)$. This belongs to (and generates) $L_{\wb}+L_{\emptyset}$. To see this explicitly, simply note that $\psi_{(-\infty,0)}=\psi_{\bR}-\psi_{[0,\infty)}$ and the two terms belong to $L_{\emptyset}$ and $L_{\wb}$.
\end{example}

\begin{lemma} \label{lem:std-3}
We have $\Delta_{\lambda}[n] \cong A_1(\lambda)$ as $\ul{H} \times \Aut(\lambda)$ modules.
\end{lemma}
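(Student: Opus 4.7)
The plan is to pin down $\Delta_\lambda[n]$ via the $\ul{H}$-module isomorphism $\cC(\bS^{\{n\}}) \cong \cC(\bR^{(n)})^{\oplus n} \oplus \cC(\bR^{(n-1)})^{\oplus n}$ established in \S\ref{ss:std-setup}. The key observation is that $\cC(\bR^{(n-1)})$ contains no simple constituent of length $n$ (by \cite[Theorem~4.7]{line}), so passing to the length-$n$ isotypic component collapses the decomposition to an isomorphism $\alpha^* \colon \cC(\bS^{\{n\}})[n] \to \cC(\bR^{(n)})^{\oplus n}[n]$. It then suffices to determine the image $\alpha^*(\Delta_\lambda[n])$ inside $\cC(\bR^{(n)})^{\oplus n}$.

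For the lower bound, Lemma~\ref{lem:std-1} says that $(\alpha_{-i})_*(L_{\sigma^i(\lambda)}) \subseteq \Delta_\lambda$ for each $i \in \bZ/n$. Because the open $H$-orbits $\alpha_i(\bR^{(n)}) \subseteq \bS^{\{n\}}$ are pairwise disjoint, we have $\alpha_{-j}^* \circ (\alpha_{-i})_* = \delta_{ij}\,\id$, so $\alpha^*(\Delta_\lambda[n])$ contains an isomorphic copy of $L_{\sigma^i(\lambda)}$ in the $(-i)$th summand for every $i$, and hence contains $\bigoplus_i L_{\sigma^i(\lambda)}$. For the matching upper bound, Lemma~\ref{lem:std-2} gives $\alpha_{-i}^*(\Delta_\lambda) \subseteq L_{\sigma^i(\lambda)} + Q$ where $Q$ has no length-$n$ constituents; taking length-$n$ parts coordinatewise shows $\alpha^*(\Delta_\lambda[n]) \subseteq \bigoplus_i L_{\sigma^i(\lambda)}$. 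Combining the two bounds and using that $\alpha^*$ is injective on $\cC(\bS^{\{n\}})[n]$ yields $\Delta_\lambda[n] \cong A_1(\lambda)$ as $\ul{H}$-modules.

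For the $\Aut(\lambda)$-equivariance, I would invoke the $\ul{H} \times \bZ/n$-equivariance of $\alpha_* \oplus \beta_*$ noted in \S\ref{ss:std-setup}, where $\bZ/n$ cyclically permutes the $n$ factors of the source and acts by $\sigma^*$ on the target. Since $\Aut(\lambda) = \langle \sigma^{N(\lambda)} \rangle \subseteq \bZ/n$ stabilizes $\lambda$, it preserves $\Delta_\lambda$, and the induced $\Aut(\lambda)$-action on $A_1(\lambda)$ is precisely the one shifting the summand $L_{\sigma^i(\lambda)}$ to $L_{\sigma^{i+N(\lambda)}(\lambda)}$ by cyclic permutation of summands; this is exactly the $\Aut(\lambda)$-structure on $A_1(\lambda)$ described in \S\ref{ss:ind}.

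I do not anticipate any serious obstacle: Lemmas~\ref{lem:std-1} and \ref{lem:std-2} do all the real work, and the rest is bookkeeping. The only delicate point is to check the identity $\alpha_{-j}^* \circ (\alpha_{-i})_* = \delta_{ij}\,\id$ (immediate from the orbit decomposition of $\bS^{\{n\}}$) and to track the indexing conventions carefully so that the $\Aut(\lambda)$-actions on $\Delta_\lambda[n]$ and $A_1(\lambda)$ really do coincide under the isomorphism.
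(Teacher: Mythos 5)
Your proposal is correct and follows essentially the same approach as the paper: both use the orbit decomposition $\cC(\bS^{\{n\}}) \cong \cC(\bR^{(n)})^{\oplus n} \oplus \cC(\bR^{(n-1)})^{\oplus n}$, invoke Lemma~\ref{lem:std-1} for the lower bound (via $\alpha_*$), invoke Lemma~\ref{lem:std-2} for the upper bound (via $\alpha^*$), and conclude by the $\ul{H} \times \bZ/n$-equivariance of the setup. The paper phrases the matching upper bound more tersely as an $\ul{H}$-length count ("$\Delta_\lambda[n]$ has length at most $n$") rather than spelling out the coordinatewise containment in $\bigoplus_i L_{\sigma^i(\lambda)}$, but this is bookkeeping, not a different idea.
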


\begin{proof}
 Since $\cC(\bR^{(n-1)})[n]=0$, it follows that $\alpha^*$ is injective on $\Delta_{\lambda}[n]$. Thus $\Delta_{\lambda}[n]$ has $\ul{H}$-length at most $n$ by Lemma~\ref{lem:std-2}. Now, regard $A_1(\lambda)$ as a submodule of $\cC(\bR^{(n)})^{\oplus n}$ by including the $i$th summand of the former into the $i$th summand of the latter. Then $\alpha_*$ defines an injective map $A_1(\lambda) \to \Delta(\lambda)[n]$ by Lemma~\ref{lem:std-1}, which is clearly $\ul{H} \times \Aut(\lambda)$ equivariant. Since $A_1(\lambda)$ has $\ul{H}$-length $n$, this map is an isomorphism.
\end{proof}

\begin{lemma} \label{lem:std-4}
We have $\Delta_{\lambda,\zeta}[n] \cong \ol{A}_1(\lambda)$ as $\ul{H}$-modules, and any non-zero element of this space generates $\Delta_{\lambda,\zeta}$ as a $\ul{G}$-module.
\end{lemma}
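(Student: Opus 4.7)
The plan is to handle the isomorphism $\Delta_{\lambda,\zeta}[n] \cong \ol{A}_1(\lambda)$ directly from Lemma \ref{lem:std-3}: since $A_1(\lambda) \cong \ol{A}_1(\lambda) \boxtimes k[\Aut(\lambda)]$ as $\ul{H} \times \Aut(\lambda)$-modules, the $\zeta$-eigenspace of the generator $\sigma^{N(\lambda)}$ of $\Aut(\lambda)$ on $k[\Aut(\lambda)]$ is one-dimensional, leaving $\ol{A}_1(\lambda)$ as the $\zeta$-eigenspace of $A_1(\lambda)$.

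For the generation claim, fix a non-zero $x \in \Delta_{\lambda,\zeta}[n]$ and expand $x = \sum_{i \in \bZ/n} x_i$ in the $\ul{H}$-decomposition $A_1(\lambda) = \bigoplus_i L_{\sigma^i(\lambda)}$. Pick $i_0$ with $x_{i_0} \neq 0$; the $\zeta$-eigenvector condition forces $x_{i_0+kN(\lambda)} = \zeta^{-k}\sigma^{kN(\lambda)}(x_{i_0})$ for all $k$, so the ``orbit piece''
\begin{displaymath}
\tilde x := \sum_{k=0}^{g(\lambda)-1} \zeta^{-k}\sigma^{kN(\lambda)}(x_{i_0})
\end{displaymath}
is the portion of $x$ supported on the $\sigma^{N(\lambda)}$-orbit of $i_0$, and is non-zero. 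Set $M = \langle x \rangle_{\ul{G}} \subset \Delta_{\lambda,\zeta}$. I would first show that $\tilde x \in M$: since $\cC(\bS^{\{n\}})$ is $\ul{H}$-semisimple, so is $M$; and since $L_{\sigma^{i_0}(\lambda)}$ has length $n$, it appears in $\Delta_{\lambda,\zeta}$ only in the length-$n$ piece $\ol{A}_1(\lambda) = \bigoplus_{j=1}^{N(\lambda)} L_{\sigma^j(\lambda)}$, with multiplicity one. Thus the $L_{\sigma^{i_0}(\lambda)}$-isotypic component of $M$ is a sub-$\ul{H}$-module of a single simple copy of $L_{\sigma^{i_0}(\lambda)}$; since the $L_{\sigma^{i_0}(\lambda)}$-isotypic component of $x$ equals $\tilde x \neq 0$, this sub-$\ul{H}$-module is the whole simple copy, and in particular contains $\tilde x$.

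To conclude, consider the projection $p_\zeta \colon \Delta_\lambda \to \Delta_{\lambda,\zeta}$ defined by $p_\zeta(v) = \frac{1}{g(\lambda)}\sum_{k=0}^{g(\lambda)-1} \zeta^{-k}\sigma^{kN(\lambda)}(v)$, which is $\ul{G}$-linear because the $\bZ/n$-action on $\bS^{\{n\}}$ commutes with $G$, and is surjective as a direct summand projection. A direct computation yields $p_\zeta(x_{i_0}) = \frac{1}{g(\lambda)}\tilde x$. By Lemma \ref{lem:std-1}, $x_{i_0}$ generates $\Delta_\lambda$ as a $\ul{G}$-module; applying the $\ul{G}$-linear surjection $p_\zeta$ then gives $\Delta_{\lambda,\zeta} = p_\zeta(\Delta_\lambda) = \ul{G} \cdot p_\zeta(x_{i_0}) \subset M$, hence $M = \Delta_{\lambda,\zeta}$. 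The main obstacle is that $x$ lies in the proper $\ul{G}$-submodule $\Delta_{\lambda,\zeta} \subsetneq \Delta_\lambda$, so Lemma \ref{lem:std-1} cannot be applied to $x$ directly; the key maneuver is extracting $\tilde x$ via $\ul{H}$-isotypic decomposition and recognizing it as $g(\lambda) p_\zeta(x_{i_0})$.
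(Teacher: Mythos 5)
Your proof is correct and takes essentially the same route as the paper: both reduce to the observation that the $\ul{G}$-linear eigenprojection $p_\zeta \colon \Delta_\lambda \to \Delta_{\lambda,\zeta}$ is nonzero on a simple $\ul{H}$-constituent of $\Delta_\lambda[n]$, then invoke Lemma~\ref{lem:std-1} to propagate generation through $p_\zeta$. Your version makes the nonvanishing explicit via the element computation $p_\zeta(x_{i_0}) = \tfrac{1}{g(\lambda)}\tilde x$, where the paper phrases the same fact via the identification of the multiplicity space with $k[\Aut(\lambda)]$; these are equivalent.
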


\begin{proof}
The first statement follows from taking $\zeta$-eigenspaces in Lemma~\ref{lem:std-3}; recall that $A_1(\lambda) \cong \ol{A}_1(\lambda) \boxtimes k[\Aut(\lambda)]$. To prove the second, it suffices to show that the unique copy of $L_{\sigma^i(\lambda)}$ in $\Delta_{\lambda,\zeta}$ generates it as a $\ul{G}$-module, for any $i \in \bZ/n$. It suffices to treat the $i=0$ case, so we assume this. Consider the composition $L_{\lambda} \to \Delta_{\lambda} \to \Delta_{\lambda,\zeta}$, where the first map is $\alpha_0^*$ and the second is the natural quotient map. The composition is non-zero: indeed, the multiplicity space of $L_{\lambda}$ in $\Delta_{\lambda}$ is identified with $k[\Aut(\lambda)]$, and $\alpha_0^*$ (regarded as an element of this space) is identified with $1 \in \Aut(\lambda)$, which has non-zero projection to the $\Aut(\lambda)$-invariants of the regular representation. Since the image of $L_{\lambda}$ in $\Delta_{\lambda}$ generates it as a $\ul{G}$-module (Lemma~\ref{lem:std-1}), the same is true for its image in $\Delta_{\lambda,\zeta}$.
\end{proof}

\subsection{The length $n-1$ piece}

So far, we have shown that $\Delta_{\lambda}[r]$ vanishes for $r>n$ and is what we want when $r=n$. The following lemma handles the $r<n-1$ cases, and gives some information in the $r=n-1$ case.

\begin{lemma} \label{lem:std-5}
We have $\Delta_{\lambda}[r]=0$ for $r<n-1$. Additionally, $\Delta_{\lambda}[n-1]$ is isomorphic to a quotient of $A_2(\lambda)$.
\end{lemma}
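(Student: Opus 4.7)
The plan is to realize $\Delta_\lambda$ as a $\ul G$-quotient of the induced module $I_\lambda = \Ind_H^G L_\lambda$ and then read off the restrictions on its $\ul H$-constituents directly from the decomposition of $I_\lambda$ in Proposition~\ref{prop:ind}.

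First, I would set up the quotient map. The simple inclusion $L_\lambda \hookrightarrow \cC(\bR^{(n)})$ from \S\ref{ss:RepH}, composed with the $\ul H$-equivariant map $(\alpha_0)_* \colon \cC(\bR^{(n)}) \to \cC(\bS^{\{n\}})$, gives an $\ul H$-linear map $L_\lambda \to \cC(\bS^{\{n\}})$. By Frobenius reciprocity this corresponds to a $\ul G$-linear map $\theta \colon I_\lambda \to \cC(\bS^{\{n\}})$ whose image is the $\ul G$-submodule of $\cC(\bS^{\{n\}})$ generated by $(\alpha_0)_*(L_\lambda)$. By Lemma~\ref{lem:std-1}, that submodule is exactly $\Delta_\lambda$, so $\Delta_\lambda \cong I_\lambda/\ker(\theta)$ is a $\ul G$-quotient (and hence also an $\ul H$-quotient) of $I_\lambda$.

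Next I would plug in Proposition~\ref{prop:ind}: as an $\ul H$-module,
\[
I_\lambda \cong A_1(\lambda) \oplus A_2(\lambda) \oplus A_1(\lambda^+) \oplus A_2(\lambda^+) \oplus A_1(\lambda^-) \oplus A_2(\lambda^-),
\]
whose simple constituents lie in lengths $n-1$ (exclusively from $A_2(\lambda)$), $n$ (from $A_1(\lambda)$ and $A_2(\lambda^\pm)$), and $n+1$ (from $A_1(\lambda^\pm)$). On the other hand, the observation at the end of \S\ref{ss:std-setup} gives $\Delta_\lambda[r]=0$ for $r > n$, and being a quotient of $I_\lambda$ forces the multiset of composition factors of $\Delta_\lambda$ to sit inside that of $I_\lambda$. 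Combining these two bounds, only lengths $n-1$ and $n$ can occur in $\Delta_\lambda$, which establishes $\Delta_\lambda[r]=0$ for $r < n-1$.

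Finally, $I_\lambda[n-1]=A_2(\lambda)$, since every other summand above is concentrated in lengths $n$ or $n+1$; restricting the surjection $\theta \colon I_\lambda \twoheadrightarrow \Delta_\lambda$ to this isotypic piece yields a surjection $A_2(\lambda) \twoheadrightarrow \Delta_\lambda[n-1]$, exhibiting $\Delta_\lambda[n-1]$ as a quotient of $A_2(\lambda)$. The only step in the argument that requires a substantive input is the identification $\im(\theta)=\Delta_\lambda$, and this is precisely the content of Lemma~\ref{lem:std-1}; the rest is formal bookkeeping with the decomposition of $I_\lambda$.
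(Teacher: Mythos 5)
Your proof is correct and takes essentially the same route as the paper: realize $\Delta_\lambda$ as a $\ul G$-quotient of $I_\lambda$ via Lemma~\ref{lem:std-1} (the paper invokes this through the universal property of induction, you spell out the Frobenius reciprocity step explicitly) and then read off the length constraints from Proposition~\ref{prop:ind}. The only superfluous step is invoking the $\Delta_\lambda[r]=0$ for $r>n$ bound from \S\ref{ss:std-setup}; the quotient argument alone already gives everything the lemma asserts.
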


\begin{proof}
By Lemma~\ref{lem:std-1}, $\Delta_{\lambda}$ is generated as a $\ul{G}$-module by a copy of $L_{\lambda}$, and so there is a surjection $I_{\lambda} \to \Delta_{\lambda}$ of $\ul{G}$-modules. Since $I_{\lambda}[r]=0$ for $r<n-1$ and $I_{\lambda}[n-1]=A_2(\lambda)$ by Proposition~\ref{prop:ind}, the result follows.
\end{proof}

We next turn our attention to $\Delta_{\lambda}[n-1]$. We regard $A_2(\lambda)$ as a subspace of $\cC(\bR^{(n-1)})$ by identifying the $i$th summand of the former with a subspace of the $i$th summand of the latter.

\begin{lemma} \label{lem:std-6}
The map $\beta^*$ defines an isomorphism $\Delta_{\lambda}[n-1] \to A_2(\lambda)$ of $\ul{H} \times \Aut(\lambda)$-modules.
\end{lemma}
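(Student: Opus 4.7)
The plan is a length-comparison argument combined with an explicit construction of preimages. First, I would observe that $\beta^*$ automatically vanishes on $\Delta_{\lambda}[n]$: every simple $\ul{H}$-constituent there has length $n$, while the target $\cC(\bR^{(n-1)})^{\oplus n}$ contains only simples of length $\le n-1$, so any $\ul{H}$-equivariant map between them is zero. Thus $\beta^*|_{\Delta_{\lambda}}$ factors through $\Delta_{\lambda}[n-1]$, which by Lemma~\ref{lem:std-5} has $\ul{H}$-length at most $n$.

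Next I would show that $\beta^*(\Delta_{\lambda}[n-1]) \supseteq A_2(\lambda)$ by exhibiting explicit preimages. Fix $j$ and let $\bI'$ be any ordered tuple of half-open intervals in $\bR$ of type $\gamma_j(\lambda)$. Lift $\bI'$ to an ordered tuple $\bI$ in $\bS$ of type $\lambda$ by inserting into the $j$th slot a small half-open interval of type $\lambda_j$ whose interior contains $\infty$. Then $\psi_{\bI} \in \Delta_{\lambda}$ by Proposition~\ref{prop:merely-ordered}, and a short pointwise computation shows $\beta_{k}^*(\psi_{\bI}) = 0$ for $k \ne j$ (because $\infty$ is not in the closure of any other $I_k$) and $\beta_{j}^*(\psi_{\bI}) = \phi_{\bI'}$. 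As $\bI'$ varies, the image $\beta^*(\Delta_{\lambda}[n-1])$ contains every $\phi_{\bI'}$ placed in the $j$th summand, and hence by $\ul{H}$-equivariance the full copy of $L_{\gamma_j(\lambda)}$ in that summand. Summing over $j$ gives $A_2(\lambda) \subseteq \beta^*(\Delta_{\lambda}[n-1])$.

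These two facts combine at once. In the semisimple category $\uRep(H)$ the image has $\ul{H}$-length at most $n$ (bounded by the source) and at least $n$ (containing $A_2(\lambda)$); equality of lengths forces $\beta^*(\Delta_{\lambda}[n-1]) = A_2(\lambda)$ and simultaneously forces $\beta^*|_{\Delta_{\lambda}[n-1]}$ to be injective. Equivariance for $\Aut(\lambda)$ is automatic, since $\beta^*$ is $\ul{H} \times \bZ/n$-equivariant on all of $\cC(\bS^{\{n\}})$ by the set-up in \S\ref{ss:std-setup} and $\Aut(\lambda) \subseteq \bZ/n$ acts on $A_2(\lambda)$ by the standard cyclic permutation of summands, matching its action on $\Delta_{\lambda}[n-1]$ through the inclusion $\Delta_\lambda \hookrightarrow \cC(\bS^{\{n\}})$.

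The main obstacle, such as it is, is purely bookkeeping: one has to match the cyclic indexing on the contractions $\gamma_j(\lambda)$ with the slot of $\bS^{\{n\}}$ that $\beta_j$ places $\infty$ into, so that $\beta_{j}^*(\psi_\bI)$ lands in the $j$th summand of $A_2(\lambda)$ rather than some cyclically shifted one. Once this identification is set up consistently with the conventions of \S\ref{ss:weights} and \S\ref{ss:std-setup}, the whole argument reduces to the length comparison plus the single pointwise calculation above, and no genuine analytic difficulty remains.
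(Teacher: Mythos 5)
Your proof is correct and follows essentially the same route as the paper's: exhibit explicit preimages (the paper does the $j=0$ case with $\infty \in I_n$ and then cites symmetry, you do all $j$ at once) to see the image contains $A_2(\lambda)$, use the fact that $\cC(\bR^{(n-1)})$ has no simples of length $\ge n$ to reduce to $\Delta_\lambda[n-1]$, and then conclude by comparing $\ul{H}$-lengths via Lemma~\ref{lem:std-5} and semisimplicity of $\uRep(H)$.
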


\begin{proof}
Consider a generator $\psi_{\bI}$ of $\Delta_{\lambda}$, where $\infty \in I_n$. Let $\bJ=(I_1, \ldots, I_{n-1})$, which is an ordered tuple of half-open intervals in $\bR$ of type $\gamma_0(\lambda)$. We have $\beta_0^*(\psi_{\bI})=\phi_{\bJ}$, and $\beta_i^*(\psi_{\bI})=0$ for $i \ne 0$. It follows that $\beta^*(\Delta_{\lambda})$ contains the 0th summand of $A_2(\lambda)$ (as this summand is $\ul{H}$-irreducible). By symmetry, it contains each summand of $A_2(\lambda)$, and so it contains all of $A_2(\lambda)$. Since $\cC(\bR^{(n-1)})[r]=0$ for $r \ge n$ and $\Delta_{\lambda}[r]=0$ for $r<n-1$, it follows that $\beta^*(\Delta_{\lambda})=\beta^*(\Delta_{\lambda}[n-1])$. Since $\Delta_{\lambda}[n-1]$ has length $\le n$ by Lemma~\ref{lem:std-5} and $A_2(\lambda)$ has length $n$, it follows that $\beta^*$ maps $\Delta_{\lambda}[n-1]$ isomorphically to $A_2(\lambda)$. As this map is $\ul{H} \times \Aut(\lambda)$ equivariant, the result follows.
\end{proof}

Since $A_2(\lambda) \cong \ol{A}_2(\lambda) \boxtimes k[\Aut(\lambda)]$, its $\zeta$-eigenspace is $\ol{A}_2(\lambda)$. We thus have $\Delta_{\lambda,\zeta}[n-1] \cong \ol{A}_2(\lambda)$, which completes the proof of Theorem~\ref{thm:std}(a,b).

\subsection{Action of normalizers}

We now prove Theorem~\ref{thm:std}(c,d). Fix $b \in \bS^{\{n\}}$; without loss of generality, we take $b_n=\infty$. Let $a=(b_1,\ldots,b_{n-1}) \in \bR^{(n-1)}$, so that $G(b)=H(a)$. If $\mu$ is weight of length $\ge n$ then $L_{\mu}^{G(b)}=0$, while if $\mu$ has length $n-1$ then $L_{\mu}^{G(b)}$ is one-dimensional \cite[Corollary~4.11]{line}. For $1 \le i \le n$, let $Y_i=L_{\gamma_i(\lambda)}^{G(b)}$, regarded as a one-dimensional subspace of $A_2(\lambda)$, and let $X_i \subset \Delta_{\lambda}$ be its inverse image under $\beta^*$. Thus $\Delta_{\lambda}^{G(b)}$ is the direct sum of the lines $X_i$ for $i \in \bZ/n$. Let $\tau$ be the standard generator of $G[b]/G(b) \cong \bZ/n$. Recall that $\sigma^{N(\lambda)}$ is the generator of $\Aut(\lambda) \subset \bZ/n$.

\begin{lemma} \label{lem:std-7}
We have $\tau(X_i)=X_{i+1}$ and $\sigma^{N(\lambda)}(X_i)=X_{i+N(\lambda)}$ for $i \in \bZ/n$.
\end{lemma}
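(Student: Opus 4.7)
The plan is to construct an explicit dual basis to $\{X_i\}$ inside $(\Delta_\lambda^{G(b)})^*$ using evaluation functionals at the points $\sigma^j b$, then transfer the actions of $\tau$ and $\sigma^{N(\lambda)}$ to the dual side, where they become transparent cyclic shifts. The starting observation is the geometric identity $\sigma^j(b) = \beta_j(a)$ for all $j \in \bZ/n$, which is immediate from $\beta_0(a) = (b_1, \ldots, b_{n-1}, \infty) = b$ and the definition of $\beta_j$. Therefore $\ev_{\sigma^j b} = \ev_a \circ \beta_j^*$ as functionals on $\cC(\bS^{\{n\}})$; and since the set of coordinates of $\sigma^j b$ is $\{b_1, \ldots, b_n\}$, its stabilizer equals $G(b)$, so each $\ev_{\sigma^j b}$ descends to a functional on $\Delta_\lambda^{G(b)}$.

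The key computation is that the pairing matrix $\bigl(\ev_{\sigma^j b}(X_i)\bigr)_{i,j \in \bZ/n}$ is diagonal and non-degenerate. For $\psi \in X_i$, the defining property $\beta_j^*(\psi) = 0$ for $j \neq i$ immediately yields $\ev_{\sigma^j b}(\psi) = 0$. For $j = i$, $\beta_i^*(\psi)$ is a non-zero vector in the one-dimensional space $L_{\gamma_i(\lambda)}^{H(a)}$; by \cite[Proposition~4.6]{line}, $\ev_a$ is a basis for the $H(a)$-coinvariants of $L_{\gamma_i(\lambda)}$, and via semi-simplicity of $\uRep(H)$ the natural composition $L_{\gamma_i(\lambda)}^{H(a)} \hookrightarrow L_{\gamma_i(\lambda)} \twoheadrightarrow (L_{\gamma_i(\lambda)})_{H(a)}$ is an isomorphism, so $\ev_a$ does not vanish on $L_{\gamma_i(\lambda)}^{H(a)}$. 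Hence $\ev_{\sigma^i b}(X_i) \neq 0$ and the family $(\ev_{\sigma^j b})_{j \in \bZ/n}$ is, up to scalars, the dual basis to $(X_i)_{i \in \bZ/n}$.

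With this in hand, the $\tau$- and $\sigma^{N(\lambda)}$-actions are essentially formal: any group element $g$ acting on $\bS^{\{n\}}$ induces $g \cdot \ev_x = \ev_{g \cdot x}$ on evaluation functionals. The defining property $\tau(b_i) = b_{i-1}$ yields $\tau \cdot b = (b_n, b_1, \ldots, b_{n-1}) = \sigma(b)$; combined with the fact that the diagonal $G$-action and the coordinate-permutation $\bZ/n$-action commute on $\bS^{\{n\}}$, we obtain $\tau \cdot \sigma^j b = \sigma^{j+1} b$ and $\sigma^{N(\lambda)} \cdot \sigma^j b = \sigma^{j+N(\lambda)} b$. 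Thus $\tau$ and $\sigma^{N(\lambda)}$ shift the dual basis by $+1$ and $+N(\lambda)$ respectively; a direct dualizing computation shows that a cyclic shift of the dual basis by $c$ induces the same cyclic shift on the primal basis, giving $\tau(X_i) = X_{i+1}$ and $\sigma^{N(\lambda)}(X_i) = X_{i+N(\lambda)}$.

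The main obstacle is the non-vanishing step in the pairing computation: asserting $\ev_a \neq 0$ on $L_{\gamma_i(\lambda)}^{H(a)}$ requires invoking semi-simplicity of $\uRep(H)$ to identify invariants with coinvariants, whereupon \cite[Proposition~4.6]{line} provides the basis and the conclusion. Everything else is formal, with the cyclically symmetric answer falling out of the single identity $\tau \cdot b = \sigma \cdot b$ together with the commutation of the two $\bZ/n$-actions.
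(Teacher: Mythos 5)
Your proof is correct and follows the same route as the paper's: both identify the evaluation functionals $\ev_{\sigma^j b}$ (factored as $\ev_a^j \circ \beta^*$) as a dual family to the lines $X_i$, with non-vanishing on the diagonal justified by semi-simplicity / \cite[Proposition~4.6]{line}, and both conclude from the identity $\tau(\sigma^j b)=\sigma^{j+1}(b)$. The only cosmetic difference is that the paper phrases the characterization of $X_i$ as the joint kernel of the $\ev_{\sigma^j b}$ with $j \neq i$, whereas you phrase it as the pairing matrix being diagonal non-degenerate; these are equivalent.
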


\begin{proof}
Let $\ev_b \colon \cC(\bS^{\{n\}}) \to k$ be the evaluation at $b$ map, defined by $\ev_b(\phi)=\phi(b)$. This map is $\ul{G}(b)$-equivariant. Similarly, let $\ev_a \colon \cC(\bR^{(n-1)}) \to k$ be the evaluation at $a$ map. It is clear that $\ev_b = \ev_a \circ \beta_0^*$. Now, let $\ev_a^i \colon \cC(\bR^{(n-1)})^{\oplus n} \to k$ be the map $\ev_a$ on the $i$th summand and~0 on the remaining summands. We thus have $\ev_b=\ev_a^0 \circ \beta^*$, and so $\ev_{\sigma^i(b)} = \ev_a^i \circ \beta^*$ for $i \in \bZ/n$.

Now, any non-zero $H(a)$-invariant in $L_{\gamma_i(\lambda)} \subset \cC(\bR^{(n-1)})$ has non-zero image under $\ev_a$. (This follows from semi-simplicity, or the explicit description of invariants in \cite[\S 5]{line}.) It follows that $Y_i$ can be described as the joint kernels of $\ev^j_a$, for $j \ne i$, on $A_2(\lambda)^{H(a)}$. It thus follows that $X_i$ is the joint kernels of $\ev_{\sigma^j(b)}$, for $j \ne i$, on $\Delta_{\lambda}^{G(b)}$. Since $\tau(\sigma^i(b))=\sigma^{i+1}(b)$, the result thus follows.
\end{proof}

Let $\ol{X}_i \subset \Delta_{\lambda,\zeta}^{G(b)}$, for $i \in \bZ/N(\lambda)$, be the intersection of $\Delta_{\lambda,\zeta}^{G(b)}$ with the $L_{\gamma_i(\lambda)}$-isotypic component. It is clear from the above discussion that $\ol{X}_i$ is one-dimensional and that $\Delta_{\lambda,\zeta}^{G(b)}$ is the direct sum of these spaces.

\begin{lemma} \label{lem:std-8}
We have $\tau(\ol{X}_i)=\ol{X}_{i+1}$ for $i \in \bZ/N(\lambda)$. Also, $\tau^{N(\lambda)}$ acts by $\zeta$ on $\Delta_{\lambda,\zeta}^{G(b)}$.
\end{lemma}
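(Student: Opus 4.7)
The plan is to establish the two assertions in turn, with the second being the substantive one.

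For the first assertion, the key observation is that the external $\bZ/n$-action (via $\sigma$) and the $G$-action on $\bS^{\{n\}}$ commute pointwise, so on $G(b)$-invariants $\tau$ commutes with $\sigma^{N(\lambda)}$; hence $\tau$ preserves the $\zeta$-eigenspace $\Delta_{\lambda,\zeta}^{G(b)}$. By Lemma~\ref{lem:std-7}, $\tau(X_j)=X_{j+1}$ for $j\in\bZ/n$, and each $X_j$ lies in the $L_{\gamma_j(\lambda)}$-isotypic component of $\Delta_\lambda^{G(b)}$. The $L_{\gamma_i(\lambda)}$-isotypic component equals $\bigoplus_{j\equiv i\,(\mathrm{mod}\,N(\lambda))} X_j$, so $\tau$ maps it to the $L_{\gamma_{i+1}(\lambda)}$-isotypic component. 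Intersecting with $\Delta_{\lambda,\zeta}^{G(b)}$ yields $\tau(\ol{X}_i)=\ol{X}_{i+1}$ for $i \in \bZ/N(\lambda)$.

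For the second assertion, I would prove the stronger statement that $\sigma^{N(\lambda)}=\tau^{N(\lambda)}$ as operators on all of $\Delta_\lambda^{G(b)}$; the claim then follows since $\sigma^{N(\lambda)}$ acts on $\Delta_{\lambda,\zeta}^{G(b)}$ by $\zeta$ by definition. First, show they agree up to a global scalar: pick a nonzero $x_0\in X_0$ and set $x_i:=\tau^i(x_0)$, obtaining a basis of $\Delta_\lambda^{G(b)}$ adapted to $\tau$. By Lemma~\ref{lem:std-7}, $\sigma^{N(\lambda)}(x_0)=c\cdot x_{N(\lambda)}$ for some scalar $c$, and commutativity with $\tau$ propagates this to $\sigma^{N(\lambda)}(x_i)=c\cdot x_{i+N(\lambda)}=c\cdot\tau^{N(\lambda)}(x_i)$ for all $i$, so $\sigma^{N(\lambda)}=c\cdot\tau^{N(\lambda)}$ on $\Delta_\lambda^{G(b)}$.

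Then I would pin down $c=1$ by a single concrete calculation. Take $\bI=(I_1,\ldots,I_n)$ where $I_k$ is the half-open arc of type $\lambda_k$ between $b_k$ and $b_{k+1}$; by Proposition~\ref{prop:merely-ordered}, $\psi_\bI$ is a nonzero element of $\Delta_\lambda^{G(b)}$. Unwinding the definitions (using $\sigma^{-1}(x_1,\ldots,x_n)=(x_2,\ldots,x_n,x_1)$ together with a lift $g\in G[b]$ satisfying $g(b_i)=b_{i-1}$), both $\sigma^{N(\lambda)}\psi_\bI$ and $\tau^{N(\lambda)}\psi_\bI$ evaluate to the characteristic function of the cube whose $k$-th coordinate is the arc of type $\lambda_k=\lambda_{k-N(\lambda)}$ between $b_{k-N(\lambda)}$ and $b_{k-N(\lambda)+1}$; since $\tau^{N(\lambda)}\psi_\bI \ne 0$, this forces $c=1$. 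The main subtlety is that $\sigma$ and $\tau$ do \emph{not} agree on $\Delta_\lambda^{G(b)}$ in general (for $n\ge 3$, $\sigma\psi_\bI$ and $\tau\psi_\bI$ are typically supported on distinct cubes); they come into agreement only after raising to the $N(\lambda)$-th power, which is precisely what the periodicity $\sigma^{N(\lambda)}(\lambda)=\lambda$ makes possible.
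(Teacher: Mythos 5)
Your first paragraph reproduces the paper's own argument for $\tau(\ol X_i)=\ol X_{i+1}$: use the isotypic decomposition $\Delta_\lambda^{G(b)}\cap\Delta_\lambda[\gamma_i(\lambda)]=\bigoplus_{j\equiv i}X_j$ from Lemma~\ref{lem:std-7}, note that $\Aut(\lambda)$ commutes with $\tau$, and intersect with the $\zeta$-eigenspace. That part is fine and matches the paper.

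For the second assertion you do genuinely more than the paper does. The paper simply writes ``$\tau^{N(\lambda)}$ and $\sigma^{N(\lambda)}$ act in the same way on $\Delta_\lambda^{G(b)}$'' and moves on. But Lemma~\ref{lem:std-7} only says that both operators send the \emph{line} $X_i$ to the \emph{line} $X_{i+N(\lambda)}$; on a one-dimensional space two surjections can differ by a nonzero scalar, so a priori $\sigma^{N(\lambda)}=c\cdot\tau^{N(\lambda)}$ for some $c\in k^\times$ that Lemma~\ref{lem:std-7} does not pin down. You correctly isolate this gap, use commutativity to reduce to a single global constant, and then normalize $c=1$ by the explicit $G(b)$-invariant function $\psi_\bI$ built from the arcs between consecutive points of $b$. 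That normalization step works: $\psi_\bI$ is nonzero and $G(b)$-invariant, and a direct coordinate check shows both operators send it to the same characteristic function. (An alternative, perhaps closer to what the authors had in mind but still not written out, is to observe that the functionals $\ev_{\sigma^j(b)}$ form a basis of $(\Delta_\lambda^{G(b)})^*$, that the dual of $\tau$ sends $\ev_{\sigma^j(b)}\mapsto\ev_{\sigma^{j-1}(b)}$ exactly, and likewise for $\sigma^{N(\lambda)}$, whence the two $N(\lambda)$-th powers literally coincide; your more hands-on verification achieves the same thing.)

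One small caution: the direction of the shift for $\sigma^{N(\lambda)}$ depends on whether one takes the pullback action $\sigma^*$ (so $\sigma^*\psi_\bI=\psi_{\sigma^{-1}\bI}$, as the paper writes when introducing $\Delta_\lambda$) or the usual left group action $\phi\mapsto\phi\circ\sigma^{-1}$. These differ by inversion, and only one of them makes $\sigma^{N(\lambda)}$ shift $X_i$ in the \emph{same} direction as $\tau$ (which is what Lemma~\ref{lem:std-7} asserts and what your computation assumes). The paper is not entirely consistent on this point, so your proof inherits the same ambiguity; it is worth making the convention explicit so the reader can verify that $\sigma^{N(\lambda)}\psi_\bI$ and $\tau^{N(\lambda)}\psi_\bI$ are shifted in the same direction rather than opposite ones (with the opposite convention one would obtain $c=1$ for $\sigma^{N(\lambda)}=c\,\tau^{-N(\lambda)}$ instead, and hence eigenvalue $\zeta^{-1}$).

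Overall: correct, and a bit more careful than the paper at the one point where the paper elides a scalar check.
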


\begin{proof}
For a weight $\mu$ and a $\ul{G}$-module $M$, write $M[\mu]$ for the $L_{\mu}$ isotypic piece of $M$. We have
\begin{displaymath}
\Delta_{\lambda}^{G(b)} \cap \Delta_{\lambda}[\gamma_i(\lambda)] = \bigoplus_{j \equiv i \text{ (mod $N(\lambda)$)}} X_j
\end{displaymath}
and so Lemma~\ref{lem:std-7} shows that $\tau$ induces an isomorphism
\begin{displaymath}
\Delta_{\lambda}^{G(b)} \cap \Delta_{\lambda}[\gamma_i(\lambda)] \to \Delta_{\lambda}^{G(b)} \cap \Delta_{\lambda}[\gamma_{i+1}(\lambda)]
\end{displaymath}
for $i \in \bZ/N(\lambda)$. Since the action of $\Aut(\lambda)$ on $\Delta_{\lambda}$ commutes with $G$, and in particular $\tau$, it follows that we get a similar isomorphism after passing to an $\Aut(\lambda)$ isotypic component, and so $\tau(\ol{X}_i)=\ol{X}_{i+1}$. Since $\tau^{N(\lambda)}$ and $\sigma^{N(\lambda)}$ act in the same way on $\Delta_{\lambda}^{G(b)}$, they act the same on $\Delta_{\lambda,\zeta}^{G(b)}$ as well; $\sigma^{N(\lambda)}$ acts by $\zeta$ on this space by definition.
\end{proof}

We can now complete the proof of the theorem.

\begin{lemma} \label{lem:std-9}
If $\Delta_{\lambda,\zeta}$ is not simple then $\ol{A}_2(\lambda)$ is its unique proper non-zero submodule.
\end{lemma}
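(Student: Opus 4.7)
The plan is to show that any proper non-zero $\ul{G}$-submodule $N \subset \Delta_{\lambda,\zeta}$ must coincide, as a subset, with $\Delta_{\lambda,\zeta}[n-1]$; since this subset is then in particular $\ul{G}$-stable and is identified with $\ol{A}_2(\lambda)$ via Theorem~\ref{thm:std}(b), both existence and uniqueness follow at once.

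First I would show $N \subseteq \Delta_{\lambda,\zeta}[n-1]$ as vector subspaces. By semi-simplicity of $\uRep(H)$, $N$ is a direct sum of the $\ul{H}$-isotypic components it meets; by the already-proven parts (a) and (b) of Theorem~\ref{thm:std} the only $\ul{H}$-constituents of $\Delta_{\lambda,\zeta}$ have length $n$ or $n-1$, so $N$ splits as $N[n] \oplus N[n-1]$. If $N[n]$ were non-zero, pick any non-zero vector in it; by Lemma~\ref{lem:std-4} every non-zero element of $\Delta_{\lambda,\zeta}[n] \cong \ol{A}_1(\lambda)$ generates all of $\Delta_{\lambda,\zeta}$ as a $\ul{G}$-module, contradicting properness. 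Hence $N[n]=0$, i.e., $N \subseteq \Delta_{\lambda,\zeta}[n-1]$.

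Next I would use the multiplicity-free structure of the length-$(n-1)$ piece together with the $\tau$-action to pin $N$ down. Since $\Delta_{\lambda,\zeta}[n-1] \cong \bigoplus_{i \in \bZ/N(\lambda)} L_{\gamma_i(\lambda)}$ is multiplicity-free as an $\ul{H}$-module (the $\gamma_i(\lambda)$'s being pairwise distinct for $i$ ranging over $\bZ/N(\lambda)$, by \S\ref{ss:weights}), $N$ must take the form $\bigoplus_{i \in S} L_{\gamma_i(\lambda)}$ for some subset $S \subseteq \bZ/N(\lambda)$. Now fix $b \in \bS^{\{n\}}$ as in \S\ref{ss:std-setup}: the element $\tau \in G[b] \subseteq G$ preserves $N$, hence preserves $N^{G(b)} = \bigoplus_{i \in S} \ol{X}_i$. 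By Lemma~\ref{lem:std-8}, $\tau$ acts on the set of lines $\{\ol{X}_i\}_{i \in \bZ/N(\lambda)}$ by the cyclic shift $i \mapsto i+1$, which is transitive. Since $N \ne 0$ forces $S \ne \varnothing$, $\tau$-invariance gives $S = \bZ/N(\lambda)$, whence $N = \Delta_{\lambda,\zeta}[n-1]$, which is the unique copy of $\ol{A}_2(\lambda)$ inside $\Delta_{\lambda,\zeta}$ in the sense clarified after Theorem~\ref{thm:std}.

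No serious obstacle remains: the two key inputs, the generating property from Lemma~\ref{lem:std-4} and the transitivity of the $\tau$-action on $\ol{X}_i$'s from Lemma~\ref{lem:std-8}, are already in hand. The only mildly subtle point is that $\ol{A}_2(\lambda)$ enters the picture a priori only as an $\ul{H}$-isotypic sum; its $\ul{G}$-stability is not imposed by hand but rather falls out of the same argument that establishes uniqueness, because any proper non-zero $\ul{G}$-submodule is forced to fill it out completely.
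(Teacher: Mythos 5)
Your proof is correct and follows essentially the same strategy as the paper's: first use Lemma~\ref{lem:std-4} to force any proper non-zero submodule into the length-$(n-1)$ part $\ol{A}_2(\lambda)$, then use the transitive $\tau$-action on the lines $\ol{X}_i$ from Lemma~\ref{lem:std-8} to force it to be all of $\ol{A}_2(\lambda)$. The only cosmetic difference is that you invoke the multiplicity-freeness of $\ol{A}_2(\lambda)$ explicitly to write $N = \bigoplus_{i \in S} L_{\gamma_i(\lambda)}$, while the paper argues a little more directly from a single $L_{\gamma_i(\lambda)} \subset M$; both come to the same thing.
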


\begin{proof}
Suppose $M$ is a proper non-zero $\ul{G}$-submodule of $\Delta_{\lambda,\zeta}$. We show $M=\ol{A}_2(\lambda)$. By Lemma~\ref{lem:std-4}, we must have $M \subset \ol{A}_2(\lambda)$, otherwise $M$ would be all of $\Delta_{\lambda,\zeta}$. We now prove the reverse containment. Since $M$ is non-zero, it contains some $L_{\gamma_i(\lambda)}$, and in particular, contains its $G(b)$-invariant space $\ol{X}_i$. Lemma~\ref{lem:std-8} implies that $M$ contains $\ol{X}_j$ for each $j$. Since $L_{\gamma_j(\lambda)}$ is $\ul{H}$-simple and has non-zero intersection with $M$, it follows that $M$ contains $L_{\gamma_j(\lambda)}$. Thus $M$ contains $\ol{A}_2(\lambda)$, as required.
\end{proof}

\section{Standard filtration of induced modules}

\subsection{Statement of results}

Recall that for a weight $\lambda$ we put $\lambda^+=\lambda \wa$ and $\lambda^-=\lambda \wb$, and $I_{\lambda}$ is the induced module $\Ind_H^G(L_{\lambda})$.

\begin{theorem} \label{thm:delta-seq}
For a non-empty weight $\lambda$ we have a natural short exact sequence
\begin{displaymath}
0 \to \Delta_{\lambda^+} \oplus \Delta_{\lambda^-} \to I_{\lambda} \to \Delta_{\lambda} \to 0.
\end{displaymath}
\end{theorem}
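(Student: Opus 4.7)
The plan is to realize the sequence by Frobenius reciprocity on both ends, and then verify exactness through $\ul H$-length bookkeeping combined with the structural results of Theorem~\ref{thm:std}.

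First I would construct the surjection $\alpha\colon I_\lambda \twoheadrightarrow \Delta_\lambda$. The $\ul H$-inclusion $(\alpha_0)_*\colon L_\lambda \hookrightarrow \Delta_\lambda$ of Lemma~\ref{lem:std-1} corresponds under the adjunction $\Hom_G(\Ind_H^G L_\lambda, \Delta_\lambda) = \Hom_H(L_\lambda, \Res\Delta_\lambda)$ to a $\ul G$-map whose image contains the generating copy of $L_\lambda$; Lemma~\ref{lem:std-1} then makes $\alpha$ surjective. Combining Proposition~\ref{prop:ind} with Theorem~\ref{thm:std}(a) and using semisimplicity of $\uRep(H)$, the kernel identifies with $A(\lambda^+) \oplus A(\lambda^-)$ as an $\ul H$-module. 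For the inclusions, the identity $\gamma_{n+1}(\lambda^\pm) = \lambda$ gives a canonical $\ul H$-projection $\Res\Delta_{\lambda^\pm} \twoheadrightarrow L_\lambda$ onto the $i = n+1$ summand of $A_2(\lambda^\pm)$, and the opposite adjunction yields $\ul G$-maps $\iota_\pm \colon \Delta_{\lambda^\pm} \to I_\lambda$. The composition $\alpha \circ \iota_\pm$ vanishes because $\Hom_G(\Delta_{\lambda^\pm}, \Delta_\lambda)$ embeds via restriction into $\Hom_H(L_{\lambda^\pm}, A(\lambda))$, which is zero on length grounds: $\Delta_{\lambda^\pm}$ is generated by $L_{\lambda^\pm}$ (Lemma~\ref{lem:std-1}), while $L_{\lambda^\pm}$ has length $n+1$ but $A(\lambda)$ has only length-$n$ and $(n-1)$ constituents.

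Both sides of $\iota_+ \oplus \iota_- \colon \Delta_{\lambda^+} \oplus \Delta_{\lambda^-} \to \ker(\alpha)$ have $\ul H$-length $4n+4$, so it suffices to verify injectivity. I would first handle each $\iota_\pm$ separately. Decomposing into $\zeta$-eigenspaces, Theorem~\ref{thm:std}(c) limits any nonzero kernel in $\Delta_{\lambda^\pm,\zeta}$ to $\ol A_2(\lambda^\pm)$ or the whole module; direct evaluation $\iota_\pm(x)(e) = f_\pm(x)$ on the $L_\lambda$-component of $\ol A_2(\lambda^\pm) \otimes k_\zeta$ coming from the $\sigma^{N(\lambda^\pm)}$-orbit through $i = n+1$ is nonzero (the $\zeta$-eigenvector of the cyclic shift has nonzero coordinate there), ruling out both possibilities.

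The main obstacle is then ruling out image-overlap. The preimage $K_+ := \iota_+^{-1}(\iota_-(\Delta_{\lambda^-}))$ is a $\ul G$-submodule of $\Delta_{\lambda^+}$ with no length-$(n+1)$ $\ul H$-constituent, since the length-$(n+1)$ simples appearing in $\Delta_{\lambda^\pm}$ are the $L_{\sigma^i(\lambda^\pm)}$ and these have incompatible $\wa$/$\wb$ counts across $\pm$. By Theorem~\ref{thm:std}(c), each $K_{+,\zeta}$ is therefore $0$ or $\ol A_2(\lambda^+)$, and the second option can only occur in the special case $\lambda \in \{\wa^n, \wb^n\}$ when $(\lambda^+, \zeta)$ is special. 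There $\ol A_2(\lambda^+)$ is $\ul G$-simple and (as a $\ul G$-module) isomorphic to $M_{\lambda,(-1)^{\ell(\lambda)+1}}$, whereas $\Delta_{\lambda^-} = M_{\lambda^-, 1}$ is a generic simple whose label has different weight length; by Corollary~\ref{cor:std-isom} these simples are non-isomorphic, so no nonzero map $\ol A_2(\lambda^+) \to \Delta_{\lambda^-}$ exists and $K_+ = 0$. A symmetric argument gives $K_- = 0$, and so $\iota_+ \oplus \iota_-$ is injective and the sequence is exact.
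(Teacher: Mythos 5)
Your approach is genuinely different from the paper's — you build both maps purely by Frobenius reciprocity and the structural results of Theorem~\ref{thm:std}, whereas the paper realizes $I_{\lambda}$, $\Delta_{\lambda^+}$, and $\Delta_{\lambda^-}$ concretely as submodules of $\cC(\bS^{\{n+1\}})$ (Lemma~\ref{lem:delta-seq-0} and~\ref{lem:delta-seq-1}) and proves linear disjointness by exhibiting explicit $G(b)$-equivariant ``integrate-the-last-coordinate'' functionals $\alpha_i,\beta_i$ that separate the two pieces (Lemma~\ref{lem:delta-seq-2}). Your construction of the surjection $I_\lambda\to\Delta_\lambda$, the inclusion maps $\iota_\pm$ via $\gamma_{n+1}(\lambda^\pm)=\lambda$, the vanishing of $\alpha\circ\iota_\pm$, and the injectivity of each individual $\iota_\pm$ are all sound and nicely self-contained.

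However, your final step — ruling out overlap of the images — has a genuine circularity. To show $K_{+,\zeta}=0$ you argue that the option $K_{+,\zeta}=\ol A_2(\lambda^+)$ ``can only occur in the special case $(\lambda^+,\zeta)$,'' and you then identify $\ol A_2(\lambda^+)\cong M_{\lambda,(-1)^{\ell(\lambda)+1}}$ and $\Delta_{\lambda^-}=M_{\lambda^-,1}$. These facts are Theorem~\ref{thm:fine}(a) and (b). But Theorem~\ref{thm:fine}(a) (the irreducibility of generic $\Delta_{\lambda,\zeta}$) is proved in the paper using the classification of simple $\ul G$-modules, which is in turn a corollary of Corollary~\ref{cor:ind-factor}, which is a corollary of Theorem~\ref{thm:delta-seq} itself. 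You also cite ``Corollary~\ref{cor:std-isom}'' for non-isomorphism of simples, but that corollary is about the standard objects $\Delta_{\lambda,\zeta}$, not the simples $M_{\lambda,\zeta}$; the non-isomorphism result for the $M$'s is the corollary that follows Theorem~\ref{thm:fine}, again downstream of the statement you are proving. So as written the argument is circular.

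The gap is repairable within your framework, but requires a different final step. One clean way: if $K_{+,\zeta}$ were nonzero then (by Theorem~\ref{thm:std}(c) and the fact that it has no length-$(n+1)$ constituents) it equals the simple $\ul G$-submodule $\ol A_2(\lambda^+)$ of $\Delta_{\lambda^+,\zeta}$, and via $\iota_-^{-1}\iota_+$ it embeds as a $\ul G$-submodule of $\Delta_{\lambda^-}$. But any $\ul G$-submodule of $\Delta_{\lambda^-}$ having only length-$n$ $\ul H$-constituents is, eigenspace by eigenspace, either $0$ or $\ol A_2(\lambda^-)$ (Theorem~\ref{thm:std}(c) again). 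Now compare the underlying $\ul H$-modules directly: $\ol A_2(\lambda^+)=\bigoplus_i L_{\gamma_i(\lambda^+)}$ has all constituents obtained by deleting one letter from $\lambda^+=\lambda\wa$, and $\ol A_2(\lambda^-)$ all constituents obtained by deleting one letter from $\lambda^-=\lambda\wb$; for the multisets $\{\gamma_i(\lambda^+)\}$ and $\{\gamma_j(\lambda^-)\}$ to agree one would need every deletion in $\lambda^+$ to remove an $\wa$ and every deletion in $\lambda^-$ to remove a $\wb$, forcing $\lambda^+=\wa^{n+1}$ and $\lambda^-=\wb^{n+1}$ simultaneously, which is impossible for $n\ge 1$. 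This avoids Theorem~\ref{thm:fine} entirely and closes the gap without leaving the circle of ideas you used. You should make this (or an equivalent non-circular argument) explicit.
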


Before giving the proof, we note a few corollaries.

\begin{corollary} \label{cor:ind-factor}
Let $\lambda$ be a weight of length $n$ and let $X$ be a $\ul{G}$-module such that every simple $\ul{H}$-module in it has length $\le n$. Then any map $I_{\lambda} \to X$ factors through $\Delta_{\lambda}$.
\end{corollary}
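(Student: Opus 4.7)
The plan is to combine the exact sequence of Theorem~\ref{thm:delta-seq} with the fact that each $\Delta_{\lambda^{\pm}}$ is $\ul{G}$-generated by its length $(n{+}1)$ piece, which $X$ cannot see. More precisely, by Theorem~\ref{thm:delta-seq} the kernel of the surjection $I_{\lambda} \to \Delta_{\lambda}$ is $\Delta_{\lambda^+} \oplus \Delta_{\lambda^-}$, so a map $f \colon I_{\lambda} \to X$ factors through $\Delta_{\lambda}$ if and only if the two composites
\[
f^{\pm} \colon \Delta_{\lambda^{\pm}} \hookrightarrow I_{\lambda} \xrightarrow{f} X
\]
both vanish. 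Thus the task reduces to showing that $\Hom_{\ul{G}}(\Delta_{\lambda^{\pm}}, X) = 0$ whenever every $\ul{H}$-simple constituent of $X$ has length $\le n$.

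To check this vanishing, I would apply Lemma~\ref{lem:std-1} to the weight $\lambda^{\pm}$ (which has length $n+1$): it says that $(\alpha_0)_*(L_{\lambda^{\pm}})$ is an $\ul{H}$-submodule of $\Delta_{\lambda^{\pm}}$ isomorphic to $L_{\lambda^{\pm}}$, and that any nonzero element of it generates $\Delta_{\lambda^{\pm}}$ as a $\ul{G}$-module. Therefore, any $\ul{G}$-linear map $f^{\pm} \colon \Delta_{\lambda^{\pm}} \to X$ is determined by its restriction to this copy of $L_{\lambda^{\pm}}$, and is zero as soon as that restriction is zero.

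Finally, the restriction is an $\ul{H}$-linear map $L_{\lambda^{\pm}} \to X$. Since $L_{\lambda^{\pm}}$ is a simple $\ul{H}$-module of length $n+1$, while by hypothesis every simple $\ul{H}$-constituent of $X$ has length at most $n$, there are no nonzero $\ul{H}$-maps from $L_{\lambda^{\pm}}$ into $X$. Hence $f^{\pm} = 0$, and $f$ factors through $\Delta_{\lambda}$ as claimed.

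There is really no hard step here: the proof is essentially a bookkeeping exercise assembling Theorem~\ref{thm:delta-seq} (to identify the kernel), Lemma~\ref{lem:std-1} (to produce a $\ul{G}$-generating $\ul{H}$-simple sitting in top length), and the length hypothesis on $X$ (to kill $\ul{H}$-maps out of that simple). The only mild subtlety is noticing that the hypothesis on $X$ must be used at length $n+1$ rather than $n$, which is exactly why the bound $\ell(\mu) \le n$ in the statement is the correct one.
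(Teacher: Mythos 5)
Your proof is correct and follows exactly the same route as the paper: identify the kernel as $\Delta_{\lambda^+} \oplus \Delta_{\lambda^-}$ via Theorem~\ref{thm:delta-seq}, note that each $\Delta_{\lambda^{\pm}}$ is generated as a $\ul{G}$-module by a copy of the length $n+1$ simple $L_{\lambda^{\pm}}$ (Lemma~\ref{lem:std-1} applied to the weight $\lambda^{\pm}$), and conclude that this generator must map to zero in $X$ by the length hypothesis. The paper's one-line proof is just a compressed version of your argument; you have correctly supplied the justification (Lemma~\ref{lem:std-1}) for the claim that $\Delta_{\lambda^{\pm}}$ is generated by an $\ul{H}$-submodule of length $n+1$.
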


\begin{proof}
The modules $\Delta_{\lambda^{\pm}} \subset I_{\lambda}$ are generated as $\ul{G}$-modules by $\ul{H}$-modules of length $n+1$. These generating $\ul{H}$-submodules must map to~0 in $X$, and so all of $\Delta_{\lambda^{\pm}}$ must map to zero. The result therefore follows from the theorem.
\end{proof}

\begin{corollary}
Every non-trivial simple $\ul{G}$-module is isomorphic to some $M_{\lambda,\zeta}$.
\end{corollary}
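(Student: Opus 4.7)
Given a non-trivial simple $\ul{G}$-module $M$, the plan is to produce a surjection from some $\Delta_{\lambda,\zeta}$ onto $M$, after which Definition~\ref{defn:M} will identify $M$ with $M_{\lambda,\zeta}$.

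First, $M$ has finite length (since every object of $\uRep(G)$ is a union of finite length subobjects and $M$ is simple), so $\Res^G_H(M)$ has finite length in the semisimple category $\uRep^{\rf}(H)$ and decomposes as a finite direct sum of simples $L_\mu$. Let $n$ be the maximum of $\ell(\mu)$ over the summands appearing, and fix one such $\mu = \lambda$.

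Second, I would rule out the case $n = 0$. In that case $\Res^G_H(M) \cong \bbone^c$, so every vector of $M$ is $H$-fixed, and for any $v \in M$ and $g \in G$ the vector $gv$ is also $H$-fixed, which rearranges to say that $v$ is fixed by $g^{-1}Hg = G(g^{-1}(\infty))$. As $g$ varies, $g^{-1}(\infty)$ ranges over all of $\bS$, and the point stabilizers $\{G(p)\}_{p \in \bS}$ generate $G$ as a group (given $g \in G$, pick $p \in \bS$ distinct from $\infty$ and $g(\infty)$ and use the transitivity of $G(p) \cong H$ on $\bS \setminus \{p\}$ to find $h \in G(p)$ with $hg \in G(\infty)$), so every $v$ would be $G$-fixed. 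Simplicity of $M$ then forces $M \cong \bbone$, contradicting our hypothesis, so $n \ge 1$ and $\lambda$ is non-empty.

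Third, adjunction (induction is both left and right adjoint to restriction) gives
\[
\Hom_{\ul{G}}(I_\lambda, M) \;=\; \Hom_{\ul{H}}(L_\lambda, \Res^G_H(M)) \;\ne\; 0,
\]
and any nonzero element of this space is surjective by the simplicity of $M$. By the maximality of $n$, every simple $\ul{H}$-constituent of $M$ has length $\le n = \ell(\lambda)$, so Corollary~\ref{cor:ind-factor} lets me factor the surjection through $\Delta_\lambda$. The group $\Aut(\lambda)$ acts on $\Delta_\lambda$ commuting with $G$, and over the algebraically closed field $k$ of characteristic zero this produces a $\ul{G}$-module decomposition $\Delta_\lambda = \bigoplus_\zeta \Delta_{\lambda,\zeta}$ into eigenspaces of the generator of $\Aut(\lambda)$. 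The simple image $M$ must then be a simple quotient of a single summand $\Delta_{\lambda,\zeta}$, and Definition~\ref{defn:M} identifies it with $M_{\lambda,\zeta}$. The only step that requires more than routine bookkeeping is the ruling out of $n = 0$, which reduces to the elementary fact that $G$ is generated by its point stabilizers.
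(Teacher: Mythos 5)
Your proof is correct and follows essentially the same route as the paper: use maximality of $n$ plus Frobenius reciprocity to get a surjection $I_{\lambda} \twoheadrightarrow M$, factor it through $\Delta_{\lambda}$ via Corollary~\ref{cor:ind-factor}, decompose $\Delta_{\lambda}$ into $\Aut(\lambda)$-eigenspaces, and invoke the uniqueness of the simple quotient of $\Delta_{\lambda,\zeta}$.

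The one genuine difference is that you explicitly rule out the possibility $n=0$, i.e.\ that $\Res^G_H(M)$ is a sum of trivials. The paper's proof simply sets $\lambda$ to be a weight of maximal length appearing in $M$ and proceeds; if that maximal length were $0$ then $\Delta_\emptyset$ is undefined (Definition~\ref{defn:std} requires $n>0$), so the paper is tacitly assuming this case cannot occur for non-trivial $M$. Your argument for why it cannot occur is sound in spirit: $\infty$-conjugates of $H$ are exactly the point stabilizers $G(p)$, and these generate $G$ (your factorization $g = h^{-1}(hg)$ with $h \in G(p)$, $hg \in G(\infty)$ is exactly right). One small point of rigor worth flagging: in the $\uRep(G)$ framework you need to pass from ``$v$ is fixed by the abstract group $G$'' to ``$kv$ is a trivial $A(G)$-submodule,'' which rests on the identification of group-theoretic invariants with $\ul{U}$-invariants for open $U$ (as in \cite[Proposition~11.17]{repst}, cited in \S\ref{ss:RepH} for $H(a)$); this is standard in the paper's framework but deserves a word. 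Alternatively, one could avoid this entirely by noting via Frobenius reciprocity that a non-trivial $M$ with $\Res^G_H M \cong \bbone^c$ would be both a sub and a quotient of $I_\emptyset = \cC(\bS)$, while the $\bbone$-isotypic part of $\cC(\bS)$ as an $\ul{H}$-module is only 2-dimensional and the argument of Example~\ref{ex:std} / Lemma~\ref{lem:proj-2} shows its unique simple $\ul{G}$-submodule is $\bbone$. Either way, your instinct to address this boundary case is a genuine improvement in completeness over the paper's own write-up.
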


\begin{proof}
Let $M$ be a non-trivial simple $G$-module, and let $\lambda$ be a weight of maximum length such that $L_{\lambda}$ appears in $M$; put $n=\ell(\lambda)$. By Frobenius reciprocity, we have a non-zero map $I_{\lambda} \to M$, which is surjective since $M$ is simple. By Corollary~\ref{cor:ind-factor}, this map induces a surjection $\Delta_{\lambda} \to M$. Since $\Delta_{\lambda}$ is the direct sum of $\Delta_{\lambda,\zeta}$'s, it follows that $M$ is a quotient of some $\Delta_{\lambda,\zeta}$. Since $M_{\lambda,\zeta}$ is the unique simple quotient of $\Delta_{\lambda,\zeta}$, it follows that $M$ is isomorphic to $M_{\lambda,\zeta}$.
\end{proof}

\begin{remark}
If $\lambda=\emptyset$ and we interpret $\Delta_{\lambda}$ to be the trivial representation, then we obtain a sequence like the one in Theorem~\ref{thm:delta-seq}, except that we lose exactness on the left: the intersection of $\Delta_{\wa}$ and $\Delta_{\wb}$ in $I_{\emptyset}$ is a copy of the trivial representation. See Figure~\ref{fig:proj}.
\end{remark}

\subsection{Proof}

We fix a weight $\lambda$ of length $n>0$ in what follows. By Theorem~\ref{thm:std}, $\Delta_{\lambda}$ contains a unique copy of $L_{\lambda}$, which generates it as a $\ul{G}$-module. It follows that there is a unique (up to scaling) non-zero map $I_{\lambda} \to \Delta_{\lambda}$, and it is surjective. Let $K$ be the kernel. We must show that $K$ is naturally isomorphic to $\Delta_{\lambda^+} \oplus \Delta_{\lambda^-}$.

For $\phi \in \cC(\bS^{\{n+1\}})$, let $\phi_{\infty} \in \cC(\bR^{(n)})$ be the function given by
\begin{displaymath}
\phi_{\infty}(x_1, \ldots, x_n) = \phi(x_1, \ldots, x_n, \infty).
\end{displaymath}

\begin{lemma} \label{lem:delta-seq-0}
Identify $\Ind(\cC(\bR^{(n)}))$ with $\cC(\bS^{\{n+1\}})$. For an $\ul{H}$-submodule $M$ of $\cC(\bR^{(n)})$, the $\ul{G}$-module $\Ind(M)$ is identified with the subspace of $\cC(\bS^{\{n+1\}})$ consisting of functions $\phi$ such that $(g\phi)_{\infty}$ belongs to $M$ for all $g \in G$. In particular, if $N$ is a $\ul{G}$-submodule of $\cC(\bS^{\{n+1\}})$ then $N \subset \Ind(M)$ if and only if $\phi_{\infty} \in M$ for all $\phi \in N$.
\end{lemma}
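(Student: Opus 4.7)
The plan is to make the identification $\Ind_H^G(\cC(\bR^{(n)})) \cong \cC(\bS^{\{n+1\}})$ (from the proof of Proposition~\ref{prop:schwartz-ind}) fully explicit, and then read off the description of $\Ind(M)$.

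First I will recall that $\bS^{\{n+1\}} \cong G \times^H \bR^{(n)}$ via the $H$-equivariant inclusion $\bR^{(n)} \hookrightarrow \bS^{\{n+1\}}$, $(x_1,\ldots,x_n) \mapsto (x_1,\ldots,x_n,\infty)$, which identifies $\bR^{(n)}$ with the fiber over the standard basepoint. Under the isomorphism of \cite[Proposition~2.13]{line}, a function $\phi \in \cC(\bS^{\{n+1\}})$ corresponds to the map $\tilde{\phi} \colon G \to \cC(\bR^{(n)})$ defined by
\begin{displaymath}
\tilde{\phi}(g)(x_1,\ldots,x_n) = \phi(g^{-1}x_1, \ldots, g^{-1}x_n, g^{-1}\infty).
\end{displaymath}
A direct check (using $h^{-1}\infty=\infty$ for $h \in H$, and $(h\cdot f)(x)=f(h^{-1}x)$) shows that $\tilde{\phi}$ is left $H$-equivariant, and right $G$-smoothness follows from Schwartzness of $\phi$. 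Comparing with the definition $(g\phi)(y)=\phi(g^{-1}y)$ gives the key identity
\begin{displaymath}
\tilde{\phi}(g) = (g\phi)_{\infty},
\end{displaymath}
so the isomorphism is $\phi \leftrightarrow \big(g \mapsto (g\phi)_\infty\big)$.

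With this in hand, the first assertion of the lemma is immediate: by definition of induction, $\tilde{\phi}$ takes values in $M$ precisely when $\phi \in \Ind(M)$, i.e., when $(g\phi)_\infty \in M$ for every $g \in G$. The ``in particular'' clause then follows because if $N$ is a $\ul{G}$-submodule of $\cC(\bS^{\{n+1\}})$, the set $\{g\phi : g \in G, \phi \in N\}$ is contained in $N$, so the condition ``$(g\phi)_\infty \in M$ for all $g \in G$ and all $\phi \in N$'' collapses to ``$\phi_\infty \in M$ for all $\phi \in N$.''

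There is no real obstacle here; the only thing to be careful about is to match conventions between the two models of induction so that the formula $\tilde{\phi}(g)=(g\phi)_\infty$ comes out correctly. Once that identification is set up, both statements are formal.
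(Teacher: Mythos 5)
Your proof is correct and takes essentially the same route as the paper: make the identification $\Ind_H^G(\cC(\bR^{(n)})) \cong \cC(\bS^{\{n+1\}})$ explicit via $\bS^{\{n+1\}} \cong G \times^H \bR^{(n)}$, derive the key identity $\tilde\phi(g)=(g\phi)_{\infty}$, and then the statement and the ``in particular'' clause both fall out formally. The only cosmetic difference is that the paper first passes through the intermediate description of the induced module as $G$-smooth functions on $G \times^H \bR^{(n)}$, writes the associated function as $\phi'(g,x) = \phi(gx_1,\ldots,gx_n,g\infty)$, and hence lands on $(g^{-1}\phi)_{\infty}$ rather than $(g\phi)_{\infty}$; you work directly with the map $G \to \cC(\bR^{(n)})$, which absorbs the inverse. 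Both normalizations are consistent with left $H$-equivariance (one can check $\tilde\phi(hg)=h\tilde\phi(g)$ using $h\infty=\infty$, as you indicate), and the condition ``for all $g$'' is insensitive to $g\mapsto g^{-1}$, so the two formulations agree.
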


\begin{proof}
The induction $\Ind(\cC(\bR^{(n)}))$ consists of functions $\phi \colon G \to \cC(\bR^{(n)})$ that are left $H$-equivariant and right $G$-smooth. One easily sees that such functions correspond to $G$-smooth functions $G \times^H \bR^{(n)} \to k$, where the domain here is the quotient of the usual product by the relations $(gh,x)=(g,hx)$ for $h \in H$. We have an isomorphism
\begin{displaymath}
G \times^H \bR^{(n)} \to \bS^{\{n+1\}}, \qquad
(g, x_1, \ldots, x_n) \mapsto (gx_1, \ldots, gx_n, g\infty).
\end{displaymath}
In this way, we identify  $\Ind(\cC(\bR^{(n)}))$ with $\cC(\bS^{\{n+1\}})$.

Now, let $\phi \in \cC(\bS^{\{n+1\}})$ be given. The function $\phi' \colon G \times^H \bR^{(n)} \to k$ corresponding to $\phi$ is given by
\begin{displaymath}
\phi'(g, x_1, \ldots, x_n)=\phi(gx_1, \ldots, gx_n, g\infty).
\end{displaymath}
Clearly, $\phi'$ belongs to $\Ind(M)$ if and only if $\phi'(g, -)$ belongs to $M$ for each $g \in G$. As $\phi'(g, -)$ is the function $(g^{-1} \phi)_{\infty}$ the main claim follows.

As for the final sentence the condition is clearly necessary. Suppose that it holds. Let $\phi \in N$ be given. Since $N$ is a $\ul{G}$-submodule, we have $g \phi \in N$ for any $g \in G$. Thus $(g \phi)_{\infty} \in M$ for all $g \in G$ by hypothesis. Hence $\phi \in \Ind(M)$, which proves the claim.
\end{proof}

\begin{lemma} \label{lem:delta-seq-1}
Identify $\Ind(\cC(\bR^{(n)}))$ with $\cC(\bS^{\{n+1\}})$. Then $I_{\lambda}$ is identified with a $G$-submodule of $\cC(\bS^{\{n+1\}})$ that contains $\Delta_{\lambda^+}$ and $\Delta_{\lambda^-}$.
\end{lemma}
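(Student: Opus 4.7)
The plan is to invoke the final sentence of Lemma~\ref{lem:delta-seq-0}: since $\Delta_{\lambda^+}$ is a $\ul{G}$-submodule of $\cC(\bS^{\{n+1\}})$, to prove $\Delta_{\lambda^+} \subset \Ind(L_\lambda) = I_\lambda$ it suffices to check that $\phi_\infty \in L_\lambda$ for every $\phi \in \Delta_{\lambda^+}$. Since the map $\phi \mapsto \phi_\infty$ is $k$-linear (even though it is only $\ul{H}$-equivariant, not $\ul{G}$-equivariant), and since $\Delta_{\lambda^+}$ is $k$-linearly spanned by the generators $\psi_\bI$ for $\bI$ strictly ordered of type $\lambda^+$ (any $A(G)$-orbit of such a generator lies in the $k$-span of its $G$-translates $\psi_{g\bI}$, which are again strictly ordered of type $\lambda^+$), the verification reduces to a single direct computation: $(\psi_\bI)_\infty \in L_\lambda$ for every strictly ordered tuple $\bI = (I_1,\ldots,I_{n+1})$ in $\bS$ of type $\lambda^+$. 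The argument for $\Delta_{\lambda^-}$ is identical with $I_{n+1}$ of type $\wb$.

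I would then split into cases based on whether $\infty \in I_{n+1}$. If $\infty \notin I_{n+1}$, then $\psi_\bI(x_1,\ldots,x_n,\infty)=0$ identically, so $(\psi_\bI)_\infty=0 \in L_\lambda$ trivially (note that by strict ordering, $\bar I_j$'s are pairwise disjoint, so at most one contains $\infty$, and if $\infty$ is the excluded endpoint of $\bar I_{n+1}$ this case still applies). Suppose instead $\infty \in I_{n+1}=(a,b]$, so either $\infty=b$ or $\infty$ is strictly interior. By strict ordering, $I_1,\ldots,I_n$ all lie in the complementary arc $\bS\setminus \bar I_{n+1}$, which is a bounded open arc disjoint from $\infty$, hence sits inside $\bR$ as a bounded open interval. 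On this interval, $I_1,\ldots,I_n$ retain their types and satisfy $I_1\ll\cdots\ll I_n$ in the real-line order.

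The remaining check is that for $x_i \in I_i$ with $i\le n$, the cyclic ordering $(x_1,\ldots,x_n,\infty)\in\bS^{\{n+1\}}$ is automatic: going counterclockwise from $x_n$ one first traverses the portion of $\bar I_{n+1}$ that contains $\infty$ before entering $I_1$, so $\infty$ lies cyclically between $x_n$ and $x_1$. Consequently $(\psi_\bI)_\infty = \phi_{\bI'}$ where $\bI'=(I_1,\ldots,I_n)$ is viewed as a strictly ordered tuple of half-open intervals in $\bR$ of type $\lambda$; by definition $\phi_{\bI'} \in L_\lambda$.

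The primary subtlety is the case where $\infty$ is strictly interior to $I_{n+1}$: then $I_{n+1}\cap \bR$ is disconnected (a union of two rays in $\bR$), and one must be careful that the relevant constraint on $(x_1,\ldots,x_n)$ is still the expected total order in $\bR$, with the $I_i$'s organized in a single bounded interval of $\bR$ separated from $\infty$. Once this bookkeeping around $\infty$ is made explicit, the identification $(\psi_\bI)_\infty=\phi_{\bI'}$ falls out and the lemma follows.
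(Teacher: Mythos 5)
Your end computation---$(\psi_\bI)_\infty = \phi_{\bI'}$ when $\infty \in I_{n+1}$, and $=0$ otherwise---is correct and matches the paper, including your extra care in the case where $\infty$ is interior to $I_{n+1}$. However, the reduction to that computation rests on a false claim. You assert that $\Delta_{\lambda^+}$ is the $k$-linear span of the $\psi_\bI$ with $\bI$ strictly ordered of type $\lambda^+$, on the grounds that ``any $A(G)$-orbit of such a generator lies in the $k$-span of its $G$-translates.'' Neither statement is true: the $A(G)$-action on Schwartz space is convolution (which involves integration), so $A(G)$-submodules are genuinely larger than $k$-spans of $G$-orbits. Concretely, Proposition~\ref{prop:merely-ordered} shows $\psi_\bJ \in \Delta_{\lambda^+}$ for any merely ordered $\bJ$ of type $\lambda^+$; but if two consecutive intervals of $\bJ$ have touching closures, $\psi_\bJ$ cannot be a finite sum $\sum c_i \psi_{\bI_i}$ with each $\bI_i$ strictly ordered. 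Indeed, within each $\bI_i$ consecutive closures are separated by a positive gap, so there is $d>0$ below the minimum gap over the finite family; every $\psi_{\bI_i}$ then vanishes at points of the cube $\bJ$ within distance $d$ of the shared boundary, where $\psi_\bJ$ still equals $1$.

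The correct route to ``check only on generators'' goes through the body of Lemma~\ref{lem:delta-seq-0}, not just its final sentence. Because $\Ind(L_\lambda)$ is a $\ul{G}$-submodule of $\cC(\bS^{\{n+1\}})$ and $\Delta_{\lambda^+}$ is the $\ul{G}$-submodule generated by the strictly ordered $\psi_\bI$, it suffices to show each such generator lies in $\Ind(L_\lambda)$. By the main assertion of Lemma~\ref{lem:delta-seq-0}, $\psi_\bI \in \Ind(L_\lambda)$ if and only if $(g\psi_\bI)_\infty = (\psi_{g\bI})_\infty \in L_\lambda$ for every $g\in G$; since $g\bI$ is again a strictly ordered tuple of type $\lambda^+$, this is exactly the calculation you perform. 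So your verification is the right one, and replacing the false $k$-span argument by this $\ul{G}$-submodule argument completes the proof.
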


\begin{proof}
We apply the final sentence of Lemma~\ref{lem:delta-seq-0} with $M=L_{\lambda}$ and $N=\Delta_{\lambda^+}$. It suffices to show that $\phi_{\infty} \in M$ for the generators $\phi$ of $N$. Thus suppose $\bI$ is a cyclicly ordered tuple of intervals of type $\lambda$, and put $\phi=\psi_{\bI}$. If $\infty \not\in I_{n+1}$ then $\phi_{\infty}=0$, which belongs to $M$. Suppose now that $\infty \in I_{n+1}$. Then $\bJ=(I_1,\ldots,I_n)$ is a totally ordered tuple of intervals in $\bR$ of type $\lambda$, and $\phi_{\infty}=\phi_{\bJ}$, which belongs to $M$. Thus $N \subset \Ind(M)$. Then $\lambda^-$ case is similar.
\end{proof}

The following is the key lemma in the proof of the theorem.

\begin{lemma} \label{lem:delta-seq-2}
$\Delta_{\lambda^+}$ and $\Delta_{\lambda^-}$ are linearly disjoint subspaces of $\cC(\bS^{\{n+1\}})$.
\end{lemma}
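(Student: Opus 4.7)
The plan is to exploit the type-$\wa$ versus type-$\wb$ distinction in the last coordinate: a $\wa$-interval $(c,d]$ gives a left-continuous indicator function, while a $\wb$-interval $[c,d)$ gives a right-continuous one. Every generator $\psi_{\bI}$ of $\Delta_{\lambda^+}$, and hence every $\phi\in\Delta_{\lambda^+}$, is therefore left-continuous in $x_{n+1}$; symmetrically every element of $\Delta_{\lambda^-}$ is right-continuous. An element $\phi$ of the intersection is then continuous in $x_{n+1}$, and because $\phi$ is Schwartz (so finitely-valued) and because for fixed $(x_1,\ldots,x_n)\in\bS^{\{n\}}$ the allowed $x_{n+1}$ fills the connected open arc from $x_n$ to $x_1$, $\phi$ must be constant in $x_{n+1}$. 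In other words $\phi = p^*\tilde\phi$ for some $\tilde\phi\in\cC(\bS^{\{n\}})$, where $p\colon\bS^{\{n+1\}}\to\bS^{\{n\}}$ is the forgetful projection dropping the last coordinate.

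To finish I would show $\tilde\phi=0$. Writing $\phi=\sum_k c_k\,\psi_{\bI_k}$ with the $\bI_k$ strictly ordered of type $\lambda^+$, and letting $d_k$ denote the upper endpoint of $I_{k,n+1}=(c_k,d_k]$, I fix $(x_1,\ldots,x_n)\in\bS^{\{n\}}$ and set $D=\{k:x_i\in I_{k,i}\text{ for all }1\le i\le n\}$. Then for any $x_{n+1}$ in the arc $(x_n,x_1)$,
\[
\tilde\phi(x_1,\ldots,x_n) \;=\; \phi(x_1,\ldots,x_n,x_{n+1}) \;=\; \sum_{k\in D} c_k\,\chi_{I_{k,n+1}}(x_{n+1}).
\]
The strict ordering $\bar I_{k,n+1}\ll\bar I_{k,1}$ forces $d_k$ to lie strictly before $x_1$ in the cyclic order for each $k\in D$, so the sub-arc $(\max_{k\in D} d_k,\,x_1)$ is non-empty. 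Choosing $x_{n+1}$ in this sub-arc makes every $\chi_{I_{k,n+1}}(x_{n+1})$ vanish, giving $\tilde\phi(x_1,\ldots,x_n)=0$. Since $(x_1,\ldots,x_n)$ was arbitrary, $\tilde\phi=0$ and hence $\phi=0$.

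The content really sits in the second step: it is the only place where the strict-ordering hypothesis and the assumption $\ell(\lambda)\ge 1$ are used, and both are essential. The role of $\ell(\lambda)\ge 1$ is to guarantee the existence of an interval $I_{k,1}$ whose lower endpoint bounds $d_k$ away from $x_1$; this bound is exactly what the remark following Theorem~\ref{thm:delta-seq} warns is missing in the degenerate case $\lambda=\emptyset$, where $\Delta_{\wa}\cap\Delta_{\wb}$ contains the constant function.
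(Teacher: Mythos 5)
There is a genuine gap, and it sits where the proposal looks most innocuous: the step from ``every generator $\psi_{\bI}$ is left-continuous in $x_{n+1}$'' to ``every $\phi\in\Delta_{\lambda^+}$ is left-continuous,'' and again at ``Writing $\phi=\sum_k c_k\,\psi_{\bI_k}$ with the $\bI_k$ strictly ordered.'' Both steps assume, implicitly, that $\Delta_{\lambda^+}$ is the $k$-linear span of the strictly ordered generators $\psi_{\bI}$. But $\Delta_{\lambda^+}$ is defined as the $\ul{G}$-submodule (i.e.\ $A(G)$-submodule) generated by these functions, and the $A(G)$-action on Schwartz space involves Euler-characteristic integration of families, not just finite $k$-linear combinations of $G$-translates. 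Indeed Proposition~\ref{prop:merely-ordered} shows that $\Delta_{\lambda}$ contains $\psi_{\bI}$ for \emph{merely} ordered $\bI$, and such functions are demonstrably not finite linear combinations of the strictly ordered ones: for a finite sum $\sum_k c_k\psi_{\bI_k}$ with $\bI_k$ strictly ordered, the support lies inside $\bigcup_k I_{k,1}\times\cdots\times I_{k,n+1}$, whose closure in $\bS^{n+1}$ is disjoint from the locus $\{x_i=x_{i+1}\}$, whereas a merely ordered $\psi_{\bI}$ (say $\chi_{(0,1]\times(1,2]}$ in $\cC(\bS^{\{2\}})$) has support coming arbitrarily close to it. So the crucial ``finite representation'' is not available, and the evaluation argument in the second half does not go through as written. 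The first half --- that all of $\Delta_{\lambda^+}$ is left-continuous in $x_{n+1}$ --- may well be true, but would need to be established by showing the space of functions left-continuous in $x_{n+1}$ is itself a $\ul{G}$-submodule of $\cC(\bS^{\{n+1\}})$, which is not obvious given that the module action is by more than left translation.

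The underlying idea is attractive and genuinely different from the paper's argument (the paper works with $G(b)$-invariants, using the $\ul{H}$-module structure from Theorem~\ref{thm:std} and exhibiting $2n+2$ linearly independent functionals $\alpha_i,\beta_i$ on $(\Delta_{\lambda^+}+\Delta_{\lambda^-})^{G(b)}$), and your diagnosis of why the argument must fail for $\lambda=\emptyset$ is exactly right. To make the continuity approach rigorous you would need either (a) to prove that left-continuity in $x_{n+1}$ really does cut out a $\ul{G}$-submodule, so the conclusion of Step~1 is legitimate, and then (b) replace the finite-presentation step by a structural argument that $p^*\cC(\bS^{\{n\}})\cap\Delta_{\lambda^+}=0$ --- say by using that $p^*\cC(\bS^{\{n\}})$ only has $\ul{H}$-constituents of length $\le n$ while the length-$n$ part of $\Delta_{\lambda^+}$ is $\ol{A}_2(\lambda^+)$, and then checking none of that part can be constant in $x_{n+1}$ via the evaluation maps of Lemma~\ref{lem:std-7}. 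As it stands, the blind proof does not supply these ingredients.
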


\begin{proof}
Let $X=\Delta_{\lambda^+} + \Delta_{\lambda^-}$, where the sum is taken inside of $\cC(\bS^{\{n+1\}})$. Let $b \in \bS^{\{n+1\}}$ with $b_{n+1}=\infty$. We know that $\Delta_{\lambda^{\pm}}$ contains $n+1$ simple $H$-modules of length $n$, and $n+1$ of length $n+1$ (Theorem~\ref{thm:std}). The length $n+1$ modules in $\Delta_{\lambda^+}$ and $\Delta_{\lambda^-}$ are non-isomorphic, and thus intersect to~0. It thus suffices to show $X$ contains $2n+2$ simple $H$-modules of length $n$. For this, it is enough to show that $X^{G(b)}$ has dimension at least $2n+2$.

Our usual trick for constructing co-invariants is to use evaluation maps; see, e.g., \cite[\S 4.2]{line}. Here there are not enough evaluation maps, so we use a variant of this idea: we evaluate at all but one coordinate and integrate the remaining coordinate over an interval. By choosing the intervals correctly, we can ensure that the integral is zero on one summand of $X$ and non-zero on the other, and this will let us show that these functionals are linearly independent.

Define functionals $\alpha_0, \beta_0 \in \cC(\bS^{\{n+1\}})^*$ by
\begin{align*}
\alpha_0(\phi) &= \int_{(b_n,b_{n+1})} \phi(b_1, \ldots, b_n, t) dt \\
\beta_0(\phi) &= \int_{(b_{n+1}, b_1)} \phi(b_1, \ldots, b_n, t) dt.
\end{align*}
Let $\alpha_i$ and $\beta_i$ be the $i$th cyclic shifts of $\alpha_0$ and $\beta_0$, for $i \in \bZ/(n+1)$. It is clear that $\alpha_0$ and $\beta_0$ are $G(b)$-equivariant. We claim that these $2n+2$ functionals are linearly independent on $X$, which will complete the proof.

Let $\bI=(I_1, \ldots, I_{n+1})$ be a cyclicly ordered tuple of intervals of type $\lambda^+$ such that $b_i$ belongs to the interior of $I_i$ for all $i$. We have
\begin{displaymath}
\alpha_0(\phi_{\bI})=\vol((b_n, b_{n+1}) \cap I_{n+1}) = -1, \qquad
\beta_0(\phi_{\bI})=\vol((b_{n+1}, b_1) \cap I_{n+1}) = 0.
\end{displaymath}
Indeed, in the first case the intersection is an open interval, and in the second it is a half-open interval. We also have $\alpha_i(\phi_{\bI})=\beta_i(\phi_{\bI})=0$ for all $i \neq 0$. Taking $\bI$ of type $\lambda^-$ leads to similar results, but with the roles of $\alpha$ and $\beta$ reversed. This completes the proof.
\end{proof}

\begin{proof}[Proof of Theorem~\ref{thm:delta-seq}]
In what follows, we write $\ell(M)$ for the length (i.e., number of simple constituents) of $M$ as an $\ul{H}$-module. We have
\begin{displaymath}
\ell(\Delta_{\lambda})=2n, \qquad \ell(\Delta_{\lambda^{\pm}}) = 2n+2, \qquad \ell(I_{\lambda}) = 6n+4.
\end{displaymath}
The first two formula follow from Theorem~\ref{thm:std}, while the third follows from Proposition~\ref{prop:ind}. We thus see that $\ell(K)=4n+4$. From Lemma~\ref{lem:delta-seq-1}, we see that $\Delta_{\lambda^{\pm}}$ is naturally contained in $I_{\lambda}$. Since these modules are generated by $\ul{H}$-submodules of length $n+1$, the generators must map to~0 in $\Delta_{\lambda}$. We thus find that $\Delta_{\lambda^{\pm}} \subset K$. From Lemma~\ref{lem:delta-seq-2}, we see that $\Delta_{\lambda^+} \oplus \Delta_{\lambda^-} \subset K$. Since the two sides have the same $\ul{H}$-length, this inclusion is an equality.
\end{proof}

\section{Special and generic modules} \label{s:fine}

\subsection{Statement of results} \label{ss:fine-intro}

For an integer $n$, let $\pi(n)$ be the length $\vert n \vert$ weight consisting of all $\wa$'s if $n \ge 0$, and all $\wb$'s if $n \le 0$. Also, put $\epsilon(n)=(-1)^{n+1}$. We say that a pair $(\lambda, \zeta)$ with $\lambda$ non-empty is \defn{special} if $\lambda=\pi(n)$ and $\zeta=\epsilon(n)$ for some integer $n$; otherwise, we say that $(\lambda, \zeta)$ is \defn{generic}. We also apply this terminology to the simple modules $M_{\lambda,\zeta}$, and label the trivial module $\bbone$ as special.

\begin{theorem} \label{thm:fine}
Let $(\lambda, \zeta)$ be given with $\lambda$ non-empty.
\begin{enumerate}
\item If $(\lambda, \zeta)$ is generic then $\Delta_{\lambda,\zeta}=M_{\lambda,\zeta}$, and this module is both projective and injective.
\item Suppose that $(\lambda, \zeta)=(\pi(n), \epsilon(n))$ is special with $n>0$. Then:
\begin{enumerate}[(i)]
\item The standard module $\Delta_{\pi(n),\epsilon(n)}$ has length two.
\item The unique proper non-zero submodule of $\Delta_{\pi(n),\epsilon(n)}$ is isomorphic to $M_{\pi(n-1),\epsilon(n-1)}$ (which is taken to mean $\bbone$ if $n=1$).
\item Let $b \in \bS^{\{n+1\}}$. Then $M_{\lambda,\zeta}^{G(b)}$ is one-dimensional, and the standard generator $\tau$ of $G[b]/G(b)$ acts on this space by $-\epsilon(n)$.
\end{enumerate}
\end{enumerate}
\end{theorem}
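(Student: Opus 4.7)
The plan is to argue (a) and (b) simultaneously by strong induction on $n=\ell(\lambda)$. The base case $n=1$ is covered by part (b): Example~\ref{ex:std} exhibits $\bbone\subset\Delta_\wb$ with simple quotient $M_\wb$, yielding (i)--(ii) for $\lambda=\wb$ (and for $\lambda=\wa$ by transpose), and the identity $\psi_{[\infty,0)}+\psi_{[0,\infty)}=1$ shows that $\tau$ swaps the two summands, giving $\tau\bar v=-\bar v=-\epsilon(-1)\bar v$ on the one-dimensional $M_\wb^{G(b)}$ for $b\in\bS^{\{2\}}$, which is (iii). Throughout, the case $\lambda=\pi(-n)$ of (b) with $n\geq 2$ follows from $\lambda=\pi(n)$ by the transpose functor $(-)^\dag$ (Remark~\ref{rmk:std-transp}).

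\textit{Induction step, (a).} Suppose $(\lambda,\zeta)$ is generic and, for contradiction, $\Delta_{\lambda,\zeta}$ is not simple. By Theorem~\ref{thm:std}(c), $\ol{A}_2(\lambda)=\bigoplus_{i=1}^{N(\lambda)}L_{\gamma_i(\lambda)}$ is a simple $\ul{G}$-submodule whose $\ul{H}$-restriction has only length-$(n-1)$ constituents. By the induction hypothesis, a simple $\ul{G}$-module with no $\ul{H}$-constituents strictly below its top length must be of special type: in the generic case IH (a) gives $\Delta_{\mu,\omega}=M_{\mu,\omega}$, which carries the lower-length part $\ol{A}_2(\mu)$; in the special case IH (b) gives $M_{\mu,\omega}$ with $\ul{H}$-structure equal to $L_\mu$ alone. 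Hence $\ol{A}_2(\lambda)\cong L_{\pi(\pm(n-1))}$, which forces $N(\lambda)=1$ and $\lambda=\pi(\pm n)$. Combining Theorem~\ref{thm:std}(d) ($\tau$ acts by $\zeta$ on $\Delta_{\lambda,\zeta}^{G(b)}$) with IH (b)(iii) on the identified submodule ($\tau$ acts by $-\epsilon(\pm(n-1))=\epsilon(\pm n)$) yields $\zeta=\epsilon(\pm n)$, contradicting genericity. For projectivity and injectivity, Frobenius reciprocity gives one-dimensional $\Hom_{\ul{G}}(I_\lambda,\Delta_{\lambda,\zeta})$ and $\Hom_{\ul{G}}(\Delta_{\lambda,\zeta},I_\lambda)$; the composition $\Delta_{\lambda,\zeta}\to I_\lambda\to\Delta_{\lambda,\zeta}$ traces to a non-zero $\ul{H}$-endomorphism of the unique copy of $L_\lambda$ and hence is a non-zero scalar by $\End_{\ul{G}}(\Delta_{\lambda,\zeta})=k$, exhibiting $\Delta_{\lambda,\zeta}$ as a direct summand of the projective-injective $I_\lambda$.

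\textit{Induction step, (b)(i)--(ii).} Assume $\lambda=\pi(n)$. Frobenius reciprocity produces a unique (up to scalar) non-zero $\ul{G}$-map $f\colon I_{\pi(n-1)}\to\Delta_{\pi(n),\epsilon(n)}$, whose image is either the desired submodule $\ol{A}_2(\pi(n))=L_{\pi(n-1)}$ or all of $\Delta_{\pi(n),\epsilon(n)}$. The second alternative is excluded by a $\tau$-eigenvalue comparison: it would force $\Delta_{\pi(n),\epsilon(n)}$ to be simple and generated over $\ul{G}$ by $L_{\pi(n-1)}$, so on the one-dimensional $G(b_0)$-invariants ($b_0\in\bS^{\{n\}}$) Theorem~\ref{thm:std}(d) would give $\tau$-eigenvalue $\epsilon(n)$, while the description of these invariants as a quotient of $I_{\pi(n-1)}^{G(b_0)}$, combined with IH (b)(iii) at level $n-1$, would produce a different eigenvalue. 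Hence the image of $f$ equals $L_{\pi(n-1)}$, establishing (i); and $f$ factors through the composition $I_{\pi(n-1)}\twoheadrightarrow\Delta_{\pi(n-1),\epsilon(n-1)}\twoheadrightarrow M_{\pi(n-1),\epsilon(n-1)}$, identifying the submodule in (ii).

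\textit{Induction step, (b)(iii), and main obstacle.} Exactness of $G(b)$-invariants (since $G(b)\subset H$ and $\uRep(H)$ is semi-simple) applied to $0\to M_{\pi(n-1),\epsilon(n-1)}\to\Delta_{\pi(n),\epsilon(n)}\to M_{\pi(n),\epsilon(n)}\to 0$ gives $\dim\Delta_{\pi(n),\epsilon(n)}^{G(b)}=n+1$ and $\dim M_{\pi(n-1),\epsilon(n-1)}^{G(b)}=n$, so $\dim M_{\pi(n),\epsilon(n)}^{G(b)}=1$. To identify the $\tau$-eigenvalue, for $b\in\bS^{\{n+1\}}$ with $I_j=(b_{j-1},b_j]$ for $j\in\bZ/(n+1)$, let $t_k^{(j)}$ be the length-$n$ cyclic tuple starting at $I_{j+1+k}$ and skipping $I_j$; the combination $u=\sum_{j\in\bZ/(n+1)}\sum_{k\in\bZ/n}(-\epsilon(n))^j\epsilon(n)^{-k}\psi_{t_k^{(j)}}$ satisfies $\sigma u=\epsilon(n)u$ and $\tau u=-\epsilon(n)u$ via the direct checks $\sigma\psi_{t_k^{(j)}}=\psi_{t_{k+1}^{(j)}}$ and $\tau\psi_{t_k^{(j)}}=\psi_{t_k^{(j-1)}}$, so $u\in\Delta_{\pi(n),\epsilon(n)}^{G(b)}$ with the claimed $\tau$-eigenvalue; computing $\alpha_0^*u$ confirms a non-zero length-$n$ $\ul{H}$-component, hence non-zero image in the quotient. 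The principal obstacle is the $\tau$-eigenvalue comparison ruling out the surjective case in (b)(i)--(ii): the available Theorem~\ref{thm:std}(d) data must be combined delicately with the IH at level $n-1$ to extract a sign contradiction without introducing circularity, and the sign bookkeeping in the construction of $u$ is a close second.
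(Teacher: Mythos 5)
Your overall strategy—a joint induction on $\ell(\lambda)$, handling the generic case via the shape of $\ol{A}_2(\lambda)$ together with a $\tau$-eigenvalue computation, and the special case via a Frobenius-reciprocity map $I_{\pi(n-1)}\to\Delta_{\pi(n),\epsilon(n)}$—matches the paper's proof of part (a) closely, but your treatment of part (b)(i)--(ii) has a genuine gap, and it is exactly the one you flagged as the "principal obstacle."

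The $\tau$-eigenvalue comparison you propose to rule out surjectivity of $f$ cannot give a contradiction, because the two eigenvalues in play are \emph{equal}. Theorem~\ref{thm:std}(d) gives $\tau$-eigenvalue $\zeta=\epsilon(n)$ on $\Delta_{\pi(n),\epsilon(n)}^{G(b_0)}$ for $b_0\in\bS^{\{n\}}$, while your induction hypothesis (b)(iii) at level $n-1$ gives $\tau$-eigenvalue $-\epsilon(n-1)$ on $M_{\pi(n-1),\epsilon(n-1)}^{G(b_0)}$; but
\begin{displaymath}
-\epsilon(n-1)=-(-1)^n=(-1)^{n+1}=\epsilon(n),
\end{displaymath}
so the two numbers coincide. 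This is not an accident: it is precisely the consistency condition required for $M_{\pi(n-1),\epsilon(n-1)}$ to embed into $\Delta_{\pi(n),\epsilon(n)}$ as its unique proper submodule. So rather than excluding one alternative, the eigenvalue check confirms that both alternatives are numerically compatible, and you are left with no way to distinguish "image $=\ol{A}_2$" from "$f$ surjective." (There is also a secondary logical slip: if $f$ were surjective, it would not force $\Delta_{\pi(n),\epsilon(n)}$ to be simple—surjectivity and simplicity are independent here—so even the framing of the dichotomy needs repair.)

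What the paper does instead, and what you would need, is a \emph{constructive} exhibition of a nonzero non-surjective map $\Delta_{\pi(n-1),\epsilon(n-1)}\to\Delta_{\pi(n),\epsilon(n)}$. The paper defines $d_n=\sum_{i=1}^{n+1}(-1)^{i+1}p_i^*$ on Schwartz spaces (a Čech-type differential), computes $d_n(\theta_{\bJ})=(-1)^n\theta_{\bI}$ on explicit generators (Lemma~\ref{lem:fine-1}), and deduces via Lemma~\ref{lem:fine-2} that $d_n$ sends $\Delta_{\pi(n),\epsilon(n)}$ into $\Delta_{\pi(n+1),\epsilon(n+1)}$, is nonzero, and cannot be surjective by looking at $\ul{H}$-constituents. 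That produces a proper nonzero submodule at each stage and simultaneously establishes (b)(i)--(iii): once the image of $d_{n-1}$ is shown to be the unique proper submodule, (ii) follows, and (iii) follows from the isomorphism $M_{\pi(n),\epsilon(n)}^{G(b)}\cong\Delta_{\pi(n+1),\epsilon(n+1)}^{G(b)}$ together with Theorem~\ref{thm:std}(d) at level $n+1$, with no need for your separate element $u$.

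Two further points, less serious. First, for part (a), your argument that $\Delta_{\lambda,\zeta}$ is projective by exhibiting it as a summand of $I_\lambda$ requires that the composite $\Delta_{\lambda,\zeta}\to I_\lambda\to\Delta_{\lambda,\zeta}$ be nonzero; the sentence "traces to a non-zero $\ul{H}$-endomorphism of the unique copy of $L_\lambda$" does not by itself establish this, since the unit of $\Res\dashv\Ind$ followed by the counit of $\Ind\dashv\Res$ is a scalar that must be separately shown nonzero. The paper sidesteps this by working with the projective cover $P$ of $M_{\lambda,\zeta}$: it is a summand of $I_\lambda$ (hence injective), has no $\ul{H}$-constituents of length $n+1$, so by Corollary~\ref{cor:ind-factor} the surjection $I_\lambda\to P$ factors through $\Delta_\lambda$, forcing $P=\Delta_{\lambda,\zeta}$. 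Second, in (b)(iii), once (b)(i)--(ii) is in hand, the short-exact-sequence argument already pins down the one-dimensionality and the $\tau$-eigenvalue via $\Delta_{\pi(n+1),\epsilon(n+1)}^{G(b)}$; the explicit vector $u$ is superfluous and its sign bookkeeping need not be checked.
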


We note that if $(\lambda,\zeta)$ is generic with $\ell(\lambda)=n$ then $M_{\lambda,\zeta}=\Delta_{\lambda,\zeta}$ contains $\ul{H}$-simples of lengths $n$ and $n-1$. On the other hand, if $(\lambda,\zeta)$ is special then $M_{\lambda,\zeta}$ is irreducible as an $\ul{H}$-module, as it is simply isomorphic to $L_{\lambda}$ by Theorem~\ref{thm:std}.

\begin{corollary}
The simples $M_{\lambda,\zeta}$ and $M_{\mu,\omega}$ are isomorphic if and only if $\omega=\zeta$ and $\mu$ is a cyclic shift of $\lambda$ (i.e., $\mu=\sigma^i(\lambda)$ for some $i$).
\end{corollary}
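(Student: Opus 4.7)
The plan is to deduce this corollary from Theorem~\ref{thm:fine} together with the already-established Corollary~\ref{cor:std-isom}. The ``if'' direction is immediate: if $\mu = \sigma^i(\lambda)$ and $\omega = \zeta$, then pull-back by an appropriate power of the shift operator $\sigma$ on $\cC(\bS^{\{n\}})$ yields an isomorphism $\Delta_\mu \cong \Delta_\lambda$ that intertwines the $\Aut(\lambda)$-actions, so $\Delta_{\mu,\omega} \cong \Delta_{\lambda,\zeta}$ and passing to unique simple quotients gives $M_{\mu,\omega} \cong M_{\lambda,\zeta}$.

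For the converse, I will first rule out the mixed case where one pair is generic and the other is special. The key observation, already recorded after the statement of Theorem~\ref{thm:fine}, is that the $\ul{H}$-module structure distinguishes the two types: a special $M_{\pi(n),\epsilon(n)}$ is isomorphic to the single simple $L_{\pi(n)}$ (use Theorem~\ref{thm:fine}(b) together with Theorem~\ref{thm:std} to see $\Delta_{\pi(n),\epsilon(n)}/L_{\pi(n-1)} \cong L_{\pi(n)}$), whereas a generic $M_{\lambda,\zeta} = \Delta_{\lambda,\zeta}$ contains the two non-isomorphic $\ul{H}$-simples $L_\lambda$ and $L_{\gamma_1(\lambda)}$, which have lengths $\ell(\lambda)$ and $\ell(\lambda)-1$ respectively. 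Since $\Res^G_H$ is an invariant of isomorphism, the two pairs $(\lambda,\zeta)$ and $(\mu,\omega)$ must be of the same type.

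It then remains to handle the two homogeneous cases. In the both-generic case, the hypothesis reads $\Delta_{\lambda,\zeta} \cong \Delta_{\mu,\omega}$ by Theorem~\ref{thm:fine}(a), so Corollary~\ref{cor:std-isom} applied verbatim delivers the conclusion. In the both-special case, write $(\lambda,\zeta) = (\pi(n), \epsilon(n))$ and $(\mu,\omega) = (\pi(m), \epsilon(m))$; from $L_{\pi(n)} \cong L_{\pi(m)}$ as $\ul{H}$-modules and the classification of simple $\ul{H}$-modules \cite[Theorem~4.3(b)]{line}, we conclude $\pi(n) = \pi(m)$, hence $n = m$ and therefore $\lambda = \mu$, $\zeta = \omega$.

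I do not anticipate any real obstacle: the statement is essentially a bookkeeping consequence of results already in hand, and the only care required is in separating the generic and special cases by their underlying $\ul{H}$-structure, which is immediate from Theorem~\ref{thm:std} and Theorem~\ref{thm:fine}.
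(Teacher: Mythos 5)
Your proof is correct and follows essentially the same route as the paper's: use $\Delta_\mu \cong \Delta_{\sigma^i(\mu)}$ for the ``if'' direction, separate the generic and special cases by noting that a special simple is $\ul{H}$-irreducible while a generic simple is not, reduce the generic case to Corollary~\ref{cor:std-isom}, and read off $\lambda = \mu$ from the $\ul{H}$-module in the special case.
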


\begin{proof}
If $\omega=\zeta$ and $\mu$ is a cyclic shift of $\lambda$ then we have seen that $\Delta_{\lambda,\zeta}$ is isomorphic to $\Delta_{\mu,\omega}$, and so it follows that $M_{\lambda,\zeta}$ is isomorphic to $M_{\mu,\omega}$. Now suppose that $M_{\lambda,\zeta}$ is isomorphic to $M_{\mu,\omega}$. By the preceding remarks, $(\lambda,\zeta)$ and $(\mu,\omega)$ are both generic or both special. If both are generic then the result follows from Corollary~\ref{cor:std-isom}. Now suppose both are special. Then as $\ul{H}$-modules we have $M_{\lambda,\zeta}=L_{\lambda}$ and $M_{\mu,\omega}=L_{\mu}$, and so $\lambda=\mu$. Since $\zeta=(-1)^{\ell(\lambda)+1}$ and $\omega=(-1)^{\ell(\mu)+1}$, we also have $\omega=\zeta$.
\end{proof}

\begin{corollary}
For $n>0$, we have $M_{\pi(n), \epsilon(n)} \cong \lw^n(M_{\wa})$.
\end{corollary}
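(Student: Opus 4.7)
Proposal: I propose to prove the isomorphism using the restriction functor $\Res^G_H$ to the semisimple category $\uRep(H)$, where the relevant computation becomes tractable. The strategy is to reduce the claim to a statement about $\lw^n L_\wa$ in $\uRep(H)$, then use simplicity of the restriction to conclude simplicity of $\lw^n M_\wa$ and identify the result via the classification.

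By Theorem~\ref{thm:std}(b), $\Delta_\wa \cong L_\wa \oplus \bbone$ as $\ul H$-modules, so passing to the simple quotient gives $\Res^G_H(M_\wa) \cong L_\wa$. Since restriction is a symmetric tensor functor, it commutes with $\lw^n$, yielding $\Res^G_H(\lw^n M_\wa) \cong \lw^n L_\wa$. The key claim is then that $\lw^n L_\wa \cong L_{\pi(n)}$ in $\uRep(H)$. To prove this, I would embed $L_\wa^{\uotimes n} \hookrightarrow \cC(\bR)^{\uotimes n} = \cC(\bR^n)$ and observe that the sign-isotypic piece $\cC(\bR^n)_{\mathrm{sgn}}$ is naturally isomorphic to $\cC(\bR^{(n)})$: antisymmetric Schwartz functions vanish on the diagonal loci, and on the open locus $\bR^n_{\mathrm{dist}} = \bigsqcup_{\pi \in S_n} \pi \cdot \bR^{(n)}$ they are determined by their restriction to $\bR^{(n)}$. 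Hence $\lw^n L_\wa$ embeds into $\cC(\bR^{(n)})$. On a strictly ordered generator $\psi_{I_1} \otimes \cdots \otimes \psi_{I_n}$ with $I_1 \ll \cdots \ll I_n$ of type $\wa$, only the identity permutation contributes to the antisymmetrization's restriction to $\bR^{(n)}$, yielding (up to scalar) $\psi_\bI$ with $\bI$ of type $\pi(n)$, a generator of $L_{\pi(n)}$. For the reverse containment, one decomposes arbitrary generators into sums of disjoint-interval generators via relations like $\psi_{(a,c]} = \psi_{(a,b]} + \psi_{(b,c]}$ (with symmetric tensor products contributing zero under antisymmetrization), and checks that the antisymmetrized restriction always lies in $L_{\pi(n)}$.

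With $\Res^G_H(\lw^n M_\wa) \cong L_{\pi(n)}$ established and simple in $\uRep(H)$, the module $\lw^n M_\wa$ itself is simple in $\uRep(G)$ (any $\ul G$-submodule is an $\ul H$-submodule, hence is zero or the whole thing). Its categorical dimension is $\binom{\dim M_\wa}{n} = \binom{-1}{n} = (-1)^n \neq 0$, so it is a special simple rather than generic; combined with the $\ul H$-restriction $L_{\pi(n)}$, this identifies it uniquely as $M_{\pi(n), \epsilon(n)}$ via Theorem~\ref{thm:fine}. The main obstacle is the key claim $\lw^n L_\wa = L_{\pi(n)}$ — specifically, verifying that the antisymmetrized restriction of every generator of $L_\wa^{\uotimes n}$ lands inside the specific simple $L_{\pi(n)}$ rather than a larger subobject of $\cC(\bR^{(n)})$; this requires careful bookkeeping of the $S_n$-action and the overlap structure of intervals, but is manageable since any overlap can be resolved into disjoint pieces on which the calculation reduces to the ordered case.
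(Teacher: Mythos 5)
Your proposal follows essentially the same route as the paper: restrict to $H$, identify $\Res^G_H(\lw^n M_\wa)$ with $\lw^n L_\wa \cong L_{\pi(n)}$, deduce irreducibility of $\lw^n M_\wa$ from simplicity of its restriction, and then pin down which simple it is. The one place you do more work than necessary is the key identity $\lw^n L_\wa \cong L_{\pi(n)}$, which the paper simply cites as \cite[Proposition~8.5]{line}; your sketch via the sign-isotypic piece of $\cC(\bR^n)$ is a plausible re-derivation of that result but needn't be redone here. Your final identification step via the categorical dimension $\binom{-1}{n}=(-1)^n\neq 0$ is a perfectly valid variant of the paper's argument, which instead appeals directly to Theorem~\ref{thm:fine}: since the restriction to $H$ is a single simple $L_{\pi(n)}$ (rather than simples of two adjacent lengths), the $\ul{G}$-simple cannot be generic, hence must be $M_{\pi(n),\epsilon(n)}$.
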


\begin{proof}
The $\ul{H}$-module underlying $M_{\wa}$ is $L_{\wa}$. We have $\lw^n(L_{\wa})=L_{\pi(n)}$ by \cite[Proposition~8.5]{line}. Thus $\lw^n(M_{\wa})$ is a $\ul{G}$-module whose underlying $\ul{H}$-module is $L_{\pi(n)}$. It is therefore irreducible as a $\ul{G}$-module, and must be isomorphic to $M_{\pi(n),\epsilon(n)}$ by the classification of irreducibles and Theorem~\ref{thm:fine}.
\end{proof}

Theorem~\ref{thm:fine} also implies the decomposition of $\cC=\uRep^{\rf}(G)$ stated in \S \ref{ss:results}(b). We now explain. Recall that $\cC_{\rm gen}$ (resp.\ $\cC_{\rm sp}$) is the full subcategory of $\cC$ spanned by modules whose simple constituents are all generic (resp.\ special). These categories are clearly orthogonal in the sense that $\Hom_{\cC}(M,N)=\Hom_{\cC}(N,M)=0$ for $M \in \cC_{\rm gen}$ and $N \in \cC_{\rm sp}$. Let $M$ be a given object of $\cC$. Since every generic simple is projective and injective, any such constituent of $M$ splits off as a summand. We thus obtain a canonical decomposition $M=M_{\rm gen} \oplus M_{\rm sp}$ with $M_{\rm gen} \in \cC_{\rm gen}$ and $M_{\rm sp} \in \cC_{\rm sp}$; to see that this is canonical, simply note that $M_{\rm gen}$ is the maximal subobject of $M$ belonging to $\cC_{\rm gen}$, and similarly for $M_{\rm sp}$. We thus find that $\cC=\cC_{\rm gen} \oplus \cC_{\rm sp}$, and that $\cC_{\rm gen}$ is semi-simple. The precise structure of $\cC_{\rm sp}$ will be determined in \S \ref{s:special} below.

\subsection{The special case}\label{ss:fine-special}

For $1 \le i \le n+1$, let $p_i \colon \bS^{\{n+1\}} \to \bS^{\{n\}}$ be the projection away from the $i$th coordinate. Define $d_n \colon \cC(\bS^{\{n\}}) \to \cC(\bS^{\{n+1\}})$ to be $\sum_{i=1}^{n+1} (-1)^{i+1} p_i^*$.

\begin{proposition} \label{prop:fine-aux}
The map $d_n$ carries $\Delta_{\pi(n),\epsilon(n)}$ into $\Delta_{\pi(n+1),\epsilon(n+1)}$. The sequence of maps
\begin{displaymath}
\xymatrix@C=3em{
0 \ar[r] & \bbone \ar[r]^-{d_0} & \Delta_{\pi(1),\epsilon(1)} \ar[r]^-{d_1} & \Delta_{\pi(2),\epsilon(2)} \ar[r]^-{d_2} & \cdots }
\end{displaymath}
is an exact complex.
\end{proposition}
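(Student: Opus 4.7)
The plan has three parts: (i) show $d_n$ sends $\Delta_{\pi(n),\epsilon(n)}$ into $\Delta_{\pi(n+1),\epsilon(n+1)}$; (ii) verify $d\circ d = 0$; (iii) deduce exactness by induction using the length-two structure of Theorem~\ref{thm:fine}(b).

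For (i), I would compute $d_n \psi_{\bI}$ on a strictly ordered generator $\bI = (I_1, \ldots, I_n)$ of type $\pi(n) = \wa^n$ with $I_k = (a_k, b_k]$ and open gaps $G_k = (b_k, a_{k+1})$. The indicator $p_i^* \psi_{\bI}$ forces $x_j \in I_j$ for $j < i$ and $x_j \in I_{j-1}$ for $j > i$, while leaving $x_i$ free in the open counterclockwise arc between its neighbors. Splitting that arc into the three sub-regions where $x_i$ lies in $I_{i-1}$, in $G_{i-1}$, or in $I_i$ gives $p_i^* \psi_{\bI} = A_i + B_i + C_i$ (with cyclic conventions at $i = 1, n+1$); a direct inspection shows $A_i = C_{i-1}$ as functions for $i = 2, \ldots, n+1$, so these pieces cancel in the alternating sum, and one is left with
\begin{displaymath}
d_n \psi_{\bI} = A_1 + (-1)^{n+2} C_{n+1} + \sum_{i=1}^{n+1} (-1)^{i+1} B_i.
\end{displaymath}
Each $B_i$ has a single gap factor $G_m$, which the Euler-characteristic identity $\psi_{G_m} = \int_{b_m < s < t < a_{m+1}} \psi_{(s, t]} \,ds\,dt$ rewrites as an integral of $\psi_{\bJ}$ for strictly ordered tuples $\bJ$ of type $\wa^{n+1}$; Proposition~\ref{prop:merely-ordered} then places $B_i$ in $\Delta_{\pi(n+1)}$. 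The boundary terms $A_1, C_{n+1}$ have two coordinates sharing a common $I_k$; parametrizing $I_n = (a_n, s] \sqcup (s, b_n]$ yields
\begin{displaymath}
A_1 = -\int_{I_n} \psi_{((s, b_n],\, I_1, \ldots, I_{n-1},\, (a_n, s])} \,ds,
\end{displaymath}
whose integrand is ordered (though the last two entries share the endpoint $s$) and so lies in $\Delta_{\pi(n+1)}$ by the same proposition; a symmetric formula handles $C_{n+1}$. For the eigenvalue compatibility, the relations $p_1 \circ \sigma = p_{n+1}$ and $p_i \circ \sigma = \sigma \circ p_{i-1}$ for $i \ge 2$ give, after a short calculation,
\begin{displaymath}
\sigma^* \circ d_n + d_n \circ \sigma^* = p_{n+1}^* \circ (1 - \epsilon(n) \sigma^*),
\end{displaymath}
whose right side vanishes on the $\epsilon(n)$-eigenspace since $\epsilon(n)^2 = 1$, so $\sigma^* d_n \phi = \epsilon(n+1) d_n \phi$.

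Part (ii) is the standard cosimplicial identity: the relations $p_j^* p_i^* = p_i^* p_{j-1}^*$ for $i < j$, combined with the alternating signs, force the terms in $d_{n+1} d_n$ to cancel pairwise.

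For (iii), I would induct on $n$. In the base case $d_0(1) = 1 \in \cC(\bS)$ is nonzero and generates the trivial subrepresentation $\bbone \subset \Delta_{\pi(1), \epsilon(1)}$ (compare Example~\ref{ex:std}), so $\ker d_0 = 0$ and $\im d_0$ is the unique proper nonzero submodule. For the inductive step, assume $\im d_{n-1} = M_{\pi(n-1), \epsilon(n-1)}$, the unique proper nonzero submodule of $\Delta_{\pi(n), \epsilon(n)}$ (Theorem~\ref{thm:fine}(b)). Then $d \circ d = 0$ gives $\ker d_n \supseteq M_{\pi(n-1), \epsilon(n-1)}$; meanwhile the pieces $A_1, C_{n+1}, B_i$ in the explicit formula have pairwise disjoint supports and so cannot cancel, making $d_n \psi_{\bI}$ nonzero and hence $d_n$ nontrivial on $\Delta_{\pi(n), \epsilon(n)}$. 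Therefore $\ker d_n$ is a proper submodule, and by the length-two structure one must have $\ker d_n = M_{\pi(n-1), \epsilon(n-1)} = \im d_{n-1}$; then $\im d_n \cong M_{\pi(n), \epsilon(n)}$ must coincide with the unique proper nonzero submodule of $\Delta_{\pi(n+1), \epsilon(n+1)}$. The main obstacle throughout is part (i): the boundary contributions $A_1$ and $C_{n+1}$ require the integration argument of Proposition~\ref{prop:merely-ordered} applied to a tuple that is merely ordered (not strictly ordered), which is where the construction is most delicate.
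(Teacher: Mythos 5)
Your overall strategy differs from the paper's in an interesting way. The paper does not decompose $p_i^*\psi_{\bI}$ for a generic strictly ordered $\bI$; instead it picks a \emph{partition} $\bS = I_1 \sqcup \cdots \sqcup I_{n+1}$ into type-$\wa$ half-open intervals (so there are no gaps at all), works with the symmetrized generator $\theta_{\bJ}$ (the projection of $\psi_{\bJ}$ to the $\epsilon(n)$-eigenspace), and proves the clean pointwise identity $d_n(\theta_{\bJ}) = (-1)^n\theta_{\bI}$ by checking values at three kinds of points. Since $\theta_{\bJ}$ actually lies in and generates $\Delta_{\pi(n),\epsilon(n)}$ (Lemma~\ref{lem:fine-2}), both the containment and the nonvanishing of $d_n$ on the eigenspace fall out at once. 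The paper also never invokes $d\circ d=0$: nonvanishing plus non-surjectivity, combined with the length-at-most-two bound from Theorem~\ref{thm:std}(c), force $\ker d_n$ and $\im d_{n-1}$ to both equal the unique proper nonzero submodule.

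Your version has several concrete problems. First and most serious: you compute $d_n\psi_{\bI}$ for a strictly ordered $\bI$, observe it is nonzero, and conclude ``hence $d_n$ is nontrivial on $\Delta_{\pi(n),\epsilon(n)}$.'' But $\psi_{\bI}$ does not lie in the $\epsilon(n)$-eigenspace — it is a sum of components across all eigenspaces $\Delta_{\pi(n),\zeta}$, and your commutation identity $\sigma^*d_n + d_n\sigma^* = p_{n+1}^*(1 - \epsilon(n)\sigma^*)$ controls $d_n$ only on the $\epsilon(n)$-component; on the other eigencomponents $d_n\phi$ is not even an eigenvector. So $d_n\psi_{\bI}\neq 0$ does not imply $d_n|_{\Delta_{\pi(n),\epsilon(n)}}\neq 0$. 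This is precisely the step the paper's choice of $\theta_{\bJ}$ is engineered to handle. Second, the endpoint/Euler-characteristic bookkeeping in your decomposition is off: with $I_k = (a_k,b_k]$ the gap between consecutive intervals is the half-open interval $(b_k,a_{k+1}]$, not the open interval $(b_k,a_{k+1})$, since the left endpoint $a_{k+1}$ of $I_{k+1}$ is excluded from $I_{k+1}$. With the open convention you silently drop the point $a_{k+1}$ from the decomposition of the arc, and your proposed identity $\psi_{G_m}=\int\psi_{(s,t]}\,ds\,dt$ in fact integrates to $0$ (the region in $(s,t)$ is an open interval times a half-open interval, of Euler volume $(-1)\cdot 0 = 0$). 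With the half-open convention the $B_i$ pieces are already characteristic functions of ordered tuples of type $\wa^{n+1}$ and need no integral at all — the integral trick of Proposition~\ref{prop:merely-ordered} is only needed for the boundary terms $A_1$ and $C_{n+1}$. Third, your inductive step cites Theorem~\ref{thm:fine}(b) to identify the unique proper nonzero submodule, but that theorem is proved \emph{from} this proposition; you should cite only Theorem~\ref{thm:std}(c) (length $\le 2$) and get the length-exactly-two and the identification from the argument itself, as the paper does. Your cosimplicial identity and the $d\circ d=0$ check are both correct and make nice supplementary observations, but neither substitutes for the missing nonvanishing argument.
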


Before proving the proposition, we show how it implies the special case of the theorem.

\begin{proof}[Proof of Theorem~\ref{thm:fine}(b)]
No map in the complex is surjective (since $\Delta_{\pi(n),\epsilon(n)}$ contains simple $\ul{H}$-modules not appearing in $\Delta_{\pi(n-1),\epsilon(n-1)}$ by Theorem~\ref{thm:std}), and so no map in the complex is zero (besides the leftmost one). Thus the terms of the complex are not simple, and thus of length two by Theorem~\ref{thm:std}. It follows that the image of $d_{n-1}$ must be the unique proper non-zero submodule of $\Delta_{\pi(n),\epsilon(n)}$; since this is a simple quotient of $\Delta_{\pi(n-1),\epsilon(n-1)}$, it must be $M_{\pi(n-1),\epsilon(n-1)}$.

Now, let $b \in \bS^{\{n+1\}}$ be given. We have an exact sequence
\begin{displaymath}
0 \to M_{\pi(n),\epsilon(n)} \to \Delta_{\pi(n+1),\epsilon(n+1)} \to M_{\pi(n+1),\epsilon(n+1)} \to 0.
\end{displaymath}
Since $\uRep(G(b))$ is semi-simple, formation of $G(b)$-invariants is exact. As the rightmost module has no invariants, we obtain an isomorphism $M_{\pi(n),\epsilon(n)}^{G(b)} \cong \Delta^{G(b)}_{\pi(n+1),\epsilon(n+1)}$. We have already seen in Theorem~\ref{thm:std} that the standard generator of $G[b]/G(b)$ acts on this space by $\epsilon(n+1)$.

This completes the proof of this case of the theorem when $\lambda$ consists of all $\wa$'s. Of course, the case when $\lambda$ consists of all $\wb$'s is exactly the same.
\end{proof}

Let $\bI=(I_1,\ldots,I_n)$ be an ordered tuple of intervals in $\bS$ of type $\pi(n)$. By Proposition~\ref{prop:merely-ordered} $\psi_{\bI}$ belongs to $\Delta_{\pi(n)}$. Let $\theta_{\bI}$ be $n$ times the projection of $\psi_{\bI}$ to $\Delta_{\pi(n),\epsilon(n)}$. That is, if $n$ is odd then $\theta_{\bI}=\sum_{i \in \bZ/n} (\sigma^i)^*(\psi_{\bI})$, while if $n$ is even then $\theta_{\bI}=\sum_{i \in \bZ/n} (-1)^i (\sigma^i)^*(\psi_{\bI})$. The following lemma is the key computation needed to prove the proposition.

\begin{lemma} \label{lem:fine-1}
Let $\bS = I_1 \sqcup \cdots \sqcup I_{n+1}$ be a decomposition where each $I_i$ is a half-open interval of type $\wa$. Put $\bJ=(I_1, \ldots, I_n)$ and $\bI=(I_1, \ldots, I_{n+1})$. Then $d_n(\theta_{\bJ})=(-1)^n \cdot \theta_{\bI}$.
\end{lemma}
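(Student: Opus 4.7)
I would prove the lemma by comparing both sides pointwise on $\bS^{\{n+1\}}$. The building blocks $\psi_{\bI}$, $(\sigma^s)^*\psi_{\bI}$, and $p_m^*(\sigma^i)^*\psi_{\bJ}$ are all invariant under the pointwise stabilizer $G(a)$ of $a = \{a_1, \ldots, a_{n+1}\}$, so both $d_n(\theta_{\bJ})$ and $\theta_{\bI}$ factor through the ``pattern map'' sending $x = (x_1, \ldots, x_{n+1})$ to the sequence $f = (f(1), \ldots, f(n+1))$ defined by $x_k \in I_{f(k)}$. Realizable patterns are exactly the cyclic nondecreasing sequences in $\{1, \ldots, n+1\}^{n+1}$. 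The value of $\theta_{\bI}(x)$ is $(-\epsilon(n))^s$ if $f$ is the cyclic shift $f(k) \equiv k + s \pmod{n+1}$, and $0$ otherwise.

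Expanding
\[
d_n(\theta_{\bJ}) = \sum_{m=1}^{n+1} \sum_{i=0}^{n-1} (-1)^{m+1} \epsilon(n)^i \, p_m^*(\sigma^i)^*\psi_{\bJ},
\]
the summand at $(m,i)$ evaluates to $1$ on pattern $f$ exactly when no $x_k$ with $k \ne m$ lies in $I_{n+1}$ and the $n$-tuple $(x_1, \ldots, \widehat{x_m}, \ldots, x_{n+1})$ matches the $i$th cyclic shift of $\bJ$. I would organize the comparison by $N := |\{k : f(k) = n+1\}|$. If $N \ge 2$, both sides vanish. If $N = 1$, say $x_{m_0} \in I_{n+1}$, the pattern is bijective (a cyclic shift of $(1, \ldots, n+1)$ by some $s$), only $m = m_0$ contributes, and a direct computation of the unique matching $i$ produces the coefficient $(-1)^n(-\epsilon(n))^s$, matching $(-1)^n\theta_{\bI}(x)$.

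The main case is $N = 0$, where $\theta_{\bI}(x) = 0$ and one must show the LHS cancels. A nonzero term then requires exactly one ``doubled'' interval $I_{j_0}$ (with $j_0 \le n$) containing precisely two coordinates $x_{m_1}, x_{m_2}$ ($m_1 < m_2$), the remaining $x_k$'s bijecting onto $\{I_1, \ldots, I_n\} \setminus \{I_{j_0}\}$. Since $I_{j_0}$ contains no other coordinates, one of the two cyclic arcs from $x_{m_1}$ to $x_{m_2}$ must lie entirely inside $I_{j_0}$ and host no other $x_k$, which forces either (a) $m_2 = m_1 + 1$ (no intermediate $x_k$ in the ``short'' cyclic arc), or (b) $(m_1, m_2) = (1, n+1)$ (no $x_k$ with index outside $[m_1, m_2]$). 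In (a) the two contributions share the same shift index $i$ and carry opposite signs $(-1)^{m_1+1}$ and $(-1)^{m_2+1} = -(-1)^{m_1+1}$, so they cancel. In (b) one checks that the shifts satisfy $i_2 \equiv i_1 - 1 \pmod n$, and the cancellation reduces to the identity $1 + (-1)^n \epsilon(n) = 0$, which holds because $\epsilon(n) = (-1)^{n+1}$. The main obstacle is establishing this cyclic-order dichotomy in the $N=0$ case and correctly bookkeeping how the shift indices $i_1, i_2$ depend on which of $x_{m_1}, x_{m_2}$ is removed; everything else is direct computation.
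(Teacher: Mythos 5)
Your strategy -- a direct pointwise verification on $\bS^{\{n+1\}}$, organized by the pattern recording which $I_j$ each coordinate falls into -- is sound, and the computations you sketch work out. There is one small gap in the case analysis: $N=1$ does not force the pattern to be bijective, since the remaining $n$ coordinates could still double up some $I_j$ with $j\le n$ (leaving another $I_{j'}$ empty). In that subcase, however, $\theta_{\bJ}(x_1,\ldots,\hat{x}_{m_0},\ldots,x_{n+1})=0$ as well (the $n$-tuple's pattern is not a cyclic shift), and $\theta_{\bI}(x)=0$ too, so both sides vanish. You should record that observation before passing to the bijective subcase; otherwise the argument is complete.

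Your route differs from the paper's. The paper shows three things about $\alpha := d_n(\theta_{\bJ})$: (a) its value at the ``identity'' pattern $x_i \in I_i$ is $(-1)^n$; (b) it vanishes whenever two coordinates lie in the same interval; (c) $\sigma^*\alpha = \epsilon(n+1)\alpha$. Statements (a) and (c) together determine $\alpha$ at every cyclic-shift pattern without further computation, and (b) handles the rest. You instead compute at every pattern directly, so in particular you re-derive what (c) would have given you. Your version is more explicit and slightly longer; the economy of the paper's proof comes entirely from exploiting the eigenvector property (c).

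One feature of your approach is genuinely preferable. In your $N=0$ case (b), the wraparound pair $(m_1,m_2)=(1,n+1)$, you correctly observe that the two shift indices differ by one and that cancellation requires the identity $1 + (-1)^n\epsilon(n)=0$. The paper instead asserts that $\theta_{\bJ}(\hat{x}_i)=\theta_{\bJ}(\hat{x}_{i+1})$ for any cyclically adjacent pair in the same interval and cancels by the alternating sign $(-1)^{m+1}$. For $i=n+1$ this is not literally correct: the two $n$-tuples $(x_1,\ldots,x_n)$ and $(x_2,\ldots,x_{n+1})$ have patterns that differ by a unit cyclic shift, so the two $\theta_{\bJ}$-values differ by a factor of $\epsilon(n)$, while the coefficients at $m=n+1$ and $m=1$, namely $(-1)^{n+2}$ and $(-1)^2$, differ by $(-1)^n$; the two discrepancies cancel, which is exactly your identity. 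Your explicit bookkeeping of the shift indices is the careful way to see this.
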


\begin{proof}
Put $\alpha=d_n(\theta_{\bJ})$. It suffices to establish the following three statements:
\begin{enumerate}
\item $\alpha(x_1, \ldots, x_{n+1})=(-1)^n$ if $x_i \in I_i$ for all $1 \le i \le n+1$.
\item $\alpha(x_1, \ldots, x_{n+1})=0$ if two $x_i$'s belong to the same $I_j$.
\item $\sigma^*(\alpha)=\epsilon(n+1) \alpha$.
\end{enumerate}
Indeed, this implies that $\alpha=(-1)^n \cdot \theta_{\bI}$.

(a) Suppose $x_i \in I_i$ for all $1 \le i \le n+1$. By definition,
\begin{equation} \label{eq:alpha}
\alpha(x_1, \ldots, x_{n+1})=\sum_{i=1}^{n+1} (-1)^{i+1} \theta_{\bJ}(x_1, \ldots, \hat{x}_i, \ldots, x_{n+1})
\end{equation}
In this sum, only the $i=n+1$ term is non-zero, and it is equal to $(-1)^n$. We thus find $\alpha(x_1, \ldots, x_{n+1})=(-1)^{n+1}$, as required.

(b) Suppose $x \in \bS^{\{n+1\}}$ is given and two coordinates belongs to the same $I_i$. Since the $x_i$'s are ordered, there are indices $i$ and $j$ such that $x_i$ and $x_{i+1}$ both belong to $I_j$. If $j=n+1$ then every term in the sum in \eqref{eq:alpha} vanishes, as some coordinate belongs to $I_{n+1}$. Suppose $j \ne n+1$. All terms but the $i$ and $i+1$ ones in the sum in \eqref{eq:alpha} vanish, as there are two coordinates in the same interval in $\bJ$. Since $x_i$ and $x_{i+1}$ belong to the same interval in $\bJ$, we have
\begin{displaymath}
\theta_{\bJ}(x_1, \ldots, \hat{x}_i, \ldots, x_{n+1}) =
\theta_{\bJ}(x_1, \ldots, \hat{x}_{i+1}, \ldots x_{n+1}).
\end{displaymath}
Thus the $i$ and $i+1$ terms in the sum in \eqref{eq:alpha} cancel. Hence $\alpha(x)=0$, as required.

(c) We have
\begin{align*}
& (\sigma^* \alpha)(x_1, \ldots, x_{n+1})
= \alpha(x_{n+1}, x_1, \ldots, x_n) \\
&= \theta_{\bJ}(x_1, \ldots, x_n) + \sum_{i=1}^n (-1)^i \theta_{\bJ}(x_{n+1}, x_1, \ldots, \hat{x}_i, \ldots, x_n) \\
&= \theta_{\bJ}(x_1, \ldots, x_n) + \sum_{i=1}^n (-1)^{i+n+1} \theta_{\bJ}(x_1, \ldots, \hat{x}_i, \ldots, x_n, x_{n+1}) \\
&= \sum_{i=1}^{n+1} (-1)^{i+n+1} \theta_{\bJ}(x_1, \ldots, \hat{x}_i, \ldots, x_{n+1})
= (-1)^n \alpha(x_1, \dots, x_{n+1}).
\end{align*}
In the third step, we used that $\theta_{\bJ}$ is an eigenvector for $\sigma^*$ of eigenvalue $\epsilon(n)=(-1)^{n+1}$. We thus see that $\alpha$ is an eigenvector for $\sigma^*$ with eigenvalue $\epsilon(n+1)$.
\end{proof}

\begin{lemma} \label{lem:fine-2}
With the same notation as Lemma~\ref{lem:fine-1}, $\theta_{\bJ}$ generates $\Delta_{\pi(n),\epsilon(n)}$ as a $\ul{G}$-module.
\end{lemma}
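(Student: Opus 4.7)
The plan is to apply Lemma~\ref{lem:std-4}, which asserts that any non-zero element of $\Delta_{\pi(n),\epsilon(n)}[n]$ generates $\Delta_{\pi(n),\epsilon(n)}$ as a $\ul{G}$-module. Thus it suffices to establish (i) $\theta_{\bJ}$ lies in the length-$n$ isotypic piece $\Delta_{\pi(n),\epsilon(n)}[n]$, and (ii) $\theta_{\bJ} \ne 0$. Both facts will follow from direct computations with the restriction maps $\alpha_i^*$ and $\beta_i^*$ of \S\ref{ss:std-setup}. By $G$-equivariance, I may first rotate $\bS$ so that $\infty$ lies in the interior of $I_{n+1}$; under this normalization, $I_1,\ldots,I_n$ are bounded half-open intervals sitting in strict linear order $I_1<I_2<\cdots<I_n$ on $\bR$, while $I_{n+1}$ is the complementary (unbounded) piece containing $\infty$.

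For (i), observe that $\beta^*$ is $\ul{H}$-linear into $\cC(\bR^{(n-1)})^{\oplus n}$, which has no length-$n$ isotypic content; so $\beta^*$ vanishes on $\Delta_{\pi(n)}[n]$, while by Lemma~\ref{lem:std-6} it maps $\Delta_{\pi(n)}[n-1]$ isomorphically to $A_2(\pi(n))$. Writing $\theta_{\bJ} = \theta_{\bJ}^{[n]} + \theta_{\bJ}^{[n-1]}$ for the $\ul{H}$-isotypic decomposition, the relation $\beta^*(\theta_{\bJ})=0$ then forces $\theta_{\bJ}^{[n-1]}=0$, and hence $\theta_{\bJ}=\theta_{\bJ}^{[n]} \in \Delta_{\pi(n),\epsilon(n)}[n]$. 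To establish $\beta^*(\theta_{\bJ})=0$, note that the image of each $\beta_i$ consists of tuples in $\bS^{\{n\}}$ in which some coordinate equals $\infty$; and since $\infty \in I_{n+1}$ while $\bJ=(I_1,\ldots,I_n)$ omits $I_{n+1}$, the function $\psi_{\bJ}$ together with each of its cyclic shifts $(\sigma^j)^*\psi_{\bJ}$ vanishes at any such tuple. Hence $\beta_i^*(\theta_{\bJ})=0$ for every $i$, as desired.

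For (ii), I compute $\alpha_0^*(\theta_{\bJ})$ directly. For $x=(x_1,\ldots,x_n) \in \bR^{(n)}$, the term $\psi_{\bJ}(\sigma^i(x))$ equals $1$ precisely when $x_k \in I_{((k+i-1) \bmod n)+1}$ for every $k$. Combined with the linear orders $x_1<\cdots<x_n$ on $\bR$ and $I_1<\cdots<I_n$, the sequence of containing indices must be non-decreasing in $\{1,\ldots,n\}$; this rules out every cyclic wrap-around and leaves only $i=0$ with $x_k \in I_k$ for all $k$. Thus $\alpha_0^*(\theta_{\bJ}) = \phi_{\bJ}$, the characteristic function of the non-empty product $I_1 \times \cdots \times I_n$ in $\bR^{(n)}$, which is manifestly non-zero. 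The main obstacle is this combinatorial case analysis: the placement $\infty \in I_{n+1}$ is essential, as it packs $I_1,\ldots,I_n$ into a single linearly-ordered block of $\bR$ and thereby obstructs any cyclic wrap-around for $i \ne 0$.
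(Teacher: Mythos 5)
Your argument is correct, and the crux — that $\alpha_0^*(\theta_{\bJ}) = \phi_{\bJ}$, because the cyclic wrap‑around $k \mapsto k+i \pmod n$ of interval indices is incompatible with $x_1 < \cdots < x_n$ and $I_1 < \cdots < I_n$ in $\bR$ unless $i=0$ — is exactly the computation the paper relies on (it is asserted without justification there, so your verification is welcome detail). Where you diverge is in the routing around this fact. The paper goes directly: since $\alpha_0^*$ is $\ul{H}$‑equivariant and $\phi_{\bJ}$ generates $L_{\pi(n)}$, the $\ul{H}$‑module generated by $\theta_{\bJ}$ contains $L_{\pi(n)}$, and Theorem~\ref{thm:std}(c) then forces the $\ul{G}$‑module generated to be all of $\Delta_{\pi(n),\epsilon(n)}$. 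You instead insert an extra step (i), showing $\beta^*(\theta_{\bJ})=0$ to conclude $\theta_{\bJ}\in\Delta_{\pi(n),\epsilon(n)}[n]$, so that Lemma~\ref{lem:std-4} applies verbatim. This extra step is correct but not actually needed: once $\alpha_0^*(\theta_{\bJ}) \ne 0$, semisimplicity of $\uRep(H)$ already puts $L_{\pi(n)}$ inside $\langle\theta_{\bJ}\rangle_{\ul{H}}$, which is all Lemma~\ref{lem:std-4} (or Theorem~\ref{thm:std}(c)) requires. A small note: you say $\infty$ should lie in the \emph{interior} of $I_{n+1}$, but the paper's weaker hypothesis $\infty \in I_{n+1}$ already suffices, since $I_1,\ldots,I_n$ then avoid $\infty$ and are genuine half‑open intervals in $\bR$ in the stated linear order.
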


\begin{proof}
Without loss of generality, we suppose $\infty$ belongs to $I_{n+1}$. Let $i \colon \bR^n \to \bS^{\{n\}}$ denote the standard inclusion. Then $i^*(\theta_{\bJ})=\phi_{\bJ}$ generates $L_{\lambda}$ as an $\ul{H}$-module. It follows that the $\ul{G}$-submodule of $\Delta_{\pi(n),\epsilon(n)}$ generated by $\theta_{\bJ}$ contains $L_{\lambda}$, and is thus all of $\Delta_{\pi(n),\epsilon(n)}$ by Theorem~\ref{thm:std}.
\end{proof}

\begin{proof}[Proof of Proposition~\ref{prop:fine-aux}]
Let $\bI$ and $\bJ$ be as in Lemma~\ref{lem:fine-1}. By that lemma, $d_n(\theta_{\bJ})=(-1)^n \cdot \theta_{\bI}$ belongs to $\Delta_{\pi(n+1),\epsilon(n+1)}$. By Lemma~\ref{lem:fine-2}, it follows that $d_n$ maps all of $\Delta_{\pi(n),\epsilon(n)}$ into $\Delta_{\pi(n+1),\epsilon(n+1)}$. Note that $d_n(\Delta_{\pi(n),\epsilon(n)})$ is a non-zero by the computation of $d_n(\theta_{\bJ})$.

The proposition now essentially follows from Theorem~\ref{thm:std}. Since $d_n$ cannot be surjective (by comparing the underlying $\ul{H}$-modules), each $\Delta_{\pi(n),\epsilon(n)}$ must have length two, and the image of $d_n$ must be the unique proper non-zero submodule of $\Delta_{\pi(n+1),\epsilon(n+1)}$. Thus the complex is exact.
\end{proof}

\subsection{The generic case}

We now prove Theorem~\ref{thm:fine}(a), in two steps.

\begin{lemma}
If $(\lambda,\zeta)$ is generic then $\Delta_{\lambda,\zeta}$ is irreducible.
\end{lemma}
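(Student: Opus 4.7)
The plan is to argue by contradiction, using Theorem~\ref{thm:std}(c) to pin down the only possible proper submodule of $\Delta_{\lambda,\zeta}$. Suppose $(\lambda,\zeta)$ is generic yet $\Delta_{\lambda,\zeta}$ is reducible; then its unique proper non-zero $\ul{G}$-submodule $\ol{A}_2(\lambda)$ is itself $\ul{G}$-simple. I would proceed by induction on $n = \ell(\lambda)$. The base case $n = 1$ is vacuous since every pair with $\ell(\lambda) = 1$ is automatically special. For $n \ge 2$, the submodule $\ol{A}_2(\lambda)$ is non-trivial (its $\ul{H}$-restriction contains $L_{\gamma_i(\lambda)}$ with $\ell(\gamma_i(\lambda)) = n-1 \ge 1$), so by the classification of non-trivial simple $\ul{G}$-modules already in hand, $\ol{A}_2(\lambda) \cong M_{\mu,\omega}$ for some non-empty weight $\mu$. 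On the $\ul{H}$-side, $\ol{A}_2(\lambda) = \bigoplus_{i=1}^{N(\lambda)} L_{\gamma_i(\lambda)}$ is a sum of $N(\lambda)$ pairwise non-isomorphic simples, all of length $n-1$.

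Next I would match this to the $\ul{H}$-structure of $M_{\mu,\omega}$. If $(\mu,\omega)$ is generic with $\ell(\mu) < n$, the inductive hypothesis gives $M_{\mu,\omega} = \Delta_{\mu,\omega}$, whose $\ul{H}$-restriction contains simples of two distinct lengths $\ell(\mu)$ and $\ell(\mu)-1$ by Theorem~\ref{thm:std}(a,b) --- impossible to match a single length $n-1$. If $(\mu,\omega)$ is generic with $\ell(\mu) \ge n$, then $M_{\mu,\omega}$, being a quotient of $\Delta_{\mu,\omega}$ whose only possible proper submodule is $\ol{A}_2(\mu)$ (of length $\ell(\mu)-1$), always contains the $\ol{A}_1(\mu)$ piece with simples of length $\ell(\mu) \ge n > n-1$, again incompatible. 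Hence $(\mu,\omega)$ must be special; then $M_{\mu,\omega} = L_\mu$ is a single $\ul{H}$-simple, which forces $N(\lambda) = 1$. Therefore $\lambda = \pi(\pm n)$ is constant, $\mu = \gamma_1(\lambda) = \pi(\pm(n-1))$, and $\omega = \epsilon(n-1) = (-1)^n$.

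To close the argument I would compare the action of the standard generator $\tau$ of $G[b]/G(b)$ on $G(b)$-invariants for $b \in \bS^{\{n\}}$. Because $G(b)$ is a product of copies of $H$, $\uRep(G(b))$ is semisimple and formation of $G(b)$-invariants is exact, so the inclusion $M_{\mu,\omega} \hookrightarrow \Delta_{\lambda,\zeta}$ induces an inclusion $M_{\mu,\omega}^{G(b)} \hookrightarrow \Delta_{\lambda,\zeta}^{G(b)}$. Both spaces are one-dimensional --- the target by Theorem~\ref{thm:std}(d) with $N(\lambda) = 1$, the source by the already-proven Theorem~\ref{thm:fine}(b)(iii) applied to $M_{\pi(\pm(n-1)),\epsilon(n-1)}$ --- so the inclusion is an equality. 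Comparing eigenvalues, $\tau$ acts on the source by $-\epsilon(n-1) = (-1)^{n+1} = \epsilon(n)$ and on the target by $\zeta$, whence $\zeta = \epsilon(n)$. But then $(\lambda,\zeta) = (\pi(\pm n), \epsilon(n))$ is special --- the desired contradiction.

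The main obstacle I anticipate is the ``self-referential'' subcase where $\ol{A}_2(\lambda)$ could match a generic simple of length $n-1$, i.e., a chain of reducible standard modules persisting down the cyclic-contraction ladder; induction on $\ell(\lambda)$ is the cleanest way to break this, since the hypothesis forces each $\Delta_{\mu,\omega}$ of smaller length to be simple and thus contribute $\ul{H}$-simples of two distinct lengths, destroying the match. A direct combinatorial alternative --- counting the number of $\wa$'s in each $\gamma_i(\lambda)$ immediately rules out non-constant $\lambda$ --- could be used, but the constant-$\lambda$ subcase still ultimately needs the content of the inductive hypothesis.
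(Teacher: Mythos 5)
Your proof is correct and follows essentially the same strategy as the paper's: induct on $\ell(\lambda)$, use Theorem~\ref{thm:std}(c) and the classification of simples to identify the unique proper submodule as some $M_{\mu,\omega}$, invoke the inductive hypothesis to exclude generic $(\mu,\omega)$ by comparing the lengths of $\ul{H}$-constituents, force $\lambda$ to be constant, and finish by comparing $\tau$-eigenvalues via Theorem~\ref{thm:std}(d) and Theorem~\ref{thm:fine}(b)(iii). The only cosmetic differences are that your $\ell(\mu)$ case-split (in particular the $\ell(\mu)\geq n$ branch) is superfluous --- the observation that $M_{\mu,\omega}$ always contains $L_\mu$ as an $\ul H$-constituent forces $\ell(\mu)=n-1$ immediately --- and you reach $\lambda=\pi(\pm n)$ via $N(\lambda)=1$ rather than by noting $\ol A_2(\lambda)=L_{\pi(n-1)}$.
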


\begin{proof}
Consider the following statement:
\begin{itemize}
\item[$(S_n)$] If $\Delta_{\lambda,\zeta}$ is reducible, with $\ell(\lambda)=n$, then $(\lambda,\zeta)$ is special.
\end{itemize}
We prove the statement $(S_n)$ by induction on $n$. The statement is vacuously true for $n \le 1$ since then any $(\lambda,\zeta)$ is special.

Suppose now that $(S_{n-1})$ holds, and let $\Delta_{\lambda,\zeta}$ be reducible with $\ell(\lambda)=n$. We show that $(\lambda,\zeta)$ is special. By Theorem~\ref{thm:std}, $\Delta_{\lambda,\zeta}$ has a unique non-zero proper submodule $M$, which is irreducible and isomorphic to $\ol{A}_2(\lambda)$ as an $\ul{H}$-module. Now, $M$ must be isomorphic to $M_{\mu,\omega}$ for some $(\mu,\omega)$ where $\ell(\mu)=n-1$. If $(\mu,\omega)$ is not special then $M_{\mu,\omega}=\Delta_{\mu,\omega}$ has $\ul{H}$-simples of length $n-1$ and $n-2$, and thus cannot be $M$. Thus $(\mu,\omega)$ must be special. Without loss of generality, say $(\mu,\omega)=(\pi(n-1),\epsilon(n-1))$. Since $\ol{A}_2(\lambda)=L_{\pi(n-1)}$, it follows that $\lambda=\pi(n)$.

Now, let $b \in \bS^{\{n\}}$, and let $\tau$ be the standard generator of $G[b]/G(b)$. By Theorem~\ref{thm:std}, $\tau$ acts by $\zeta$ on $\Delta_{\lambda,\zeta}^{G(b)}=M^{G(b)}$. On the other hand, by Theorem~\ref{thm:fine}(b) (which we have already proved in \S \ref{ss:fine-special}), $\tau$ acts by $\epsilon(n)$ on $M_{\pi(n-1),\epsilon(n-1)}^{G(b)}$. Thus $\zeta=\epsilon(n)$, and so $(\lambda,\zeta)$ is special.
\end{proof}

\begin{lemma}
If $(\lambda,\zeta)$ is generic then $\Delta_{\lambda,\zeta}$ is both projective and injective.
\end{lemma}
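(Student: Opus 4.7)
The plan is to realize $\Delta_{\lambda,\zeta}$ as a direct summand of the induced module $I_\lambda$; since $I_\lambda$ is both projective and injective, $\Delta_{\lambda,\zeta}$ will then inherit both properties at once. By Frobenius reciprocity combined with the description $\Res^G_H \Delta_{\lambda,\zeta} \cong \ol{A}_1(\lambda) \oplus \ol{A}_2(\lambda)$ from Theorem~\ref{thm:std}(b) (in which $L_\lambda$ appears with multiplicity one), both $\Hom_G(\Delta_{\lambda,\zeta}, I_\lambda)$ and $\Hom_G(I_\lambda, \Delta_{\lambda,\zeta})$ are one-dimensional. I will choose non-zero maps $s$ and $\pi$ in these two spaces, taking $\pi$ to be the composition of the surjection $I_\lambda \to \Delta_\lambda$ from Theorem~\ref{thm:delta-seq} with projection onto the $\zeta$-eigenspace of $\sigma^{N(\lambda)}$. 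Since the corollary after Theorem~\ref{thm:std} gives $\End_G(\Delta_{\lambda,\zeta}) = k$, the composition $\pi \circ s$ is a scalar, so to split $\pi$ it suffices to show this scalar is non-zero.

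The core of the argument is to derive a contradiction from $\pi \circ s = 0$. Since the preceding lemma gives that $\Delta_{\lambda,\zeta}$ is simple in the generic case, such an $s$ would be injective and embed $\Delta_{\lambda,\zeta}$ as a submodule of $\ker \pi$. Using Theorem~\ref{thm:delta-seq}, $\ker \pi$ fits in a short exact sequence
\begin{displaymath}
0 \to \Delta_{\lambda^+} \oplus \Delta_{\lambda^-} \to \ker \pi \to \bigoplus_{\omega \ne \zeta} \Delta_{\lambda,\omega} \to 0,
\end{displaymath}
so the hypothetical embedding either factors through $\Delta_{\lambda^+} \oplus \Delta_{\lambda^-}$, or else projects non-trivially onto some $\Delta_{\lambda,\omega}$ with $\omega \ne \zeta$.

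I will rule out both alternatives. The first is immediate from $\ul{H}$-length considerations: $\Res \Delta_{\lambda,\zeta}$ contains the length-$(\ell(\lambda)-1)$ simples coming from $\ol{A}_2(\lambda)$, whereas $\Res(\Delta_{\lambda^+} \oplus \Delta_{\lambda^-}) = A(\lambda^+) \oplus A(\lambda^-)$ has only simples of lengths $\ell(\lambda)$ and $\ell(\lambda)+1$. For the second alternative I obtain an injection $\Delta_{\lambda,\zeta} \hookrightarrow \Delta_{\lambda,\omega}$ with $\omega \ne \zeta$. If $(\lambda,\omega)$ is again generic, the previous lemma makes $\Delta_{\lambda,\omega}$ simple and forces the map to be an isomorphism, contradicting Corollary~\ref{cor:std-isom}. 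If $(\lambda,\omega)$ is special then necessarily $\lambda = \pi(n)$ and $\omega = \epsilon(n)$, so the image must sit inside the unique proper submodule $\ol{A}_2(\lambda) = L_{\pi(n-1)}$ given by Theorem~\ref{thm:std}(c); this module has $\ul{H}$-length one, whereas $\Delta_{\lambda,\zeta}$ has $\ul{H}$-length two, again a contradiction.

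The main technical obstacle is precisely this case analysis ruling out the embedding into $\ker \pi$; everything else reduces to Frobenius reciprocity and the structural facts about standard modules already in hand. Both projectivity and injectivity will then follow simultaneously from the splitting $I_\lambda \cong \Delta_{\lambda,\zeta} \oplus \ker \pi$.
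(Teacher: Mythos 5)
Your argument is correct, and it reaches the conclusion by a genuinely different route than the paper's. Both proofs ultimately realize $\Delta_{\lambda,\zeta}$ as a direct summand of $I_\lambda$, but the paper works top-down from the projective cover: it lets $P$ be the projective cover of $M_{\lambda,\zeta}$, observes $P$ is a summand of $I_\lambda$ (hence injective too), shows $\Res^G_H P$ contains no $\ul{H}$-simples of length $n+1$ by computing $\Hom_{\ul{G}}(P,I_\mu)$ for $\ell(\mu)=n+1$, and then applies Corollary~\ref{cor:ind-factor} to factor $I_\lambda \twoheadrightarrow P$ through $\Delta_\lambda$, whence $P$ is an indecomposable summand of $\Delta_\lambda=\bigoplus_\omega\Delta_{\lambda,\omega}$ and must be $\Delta_{\lambda,\zeta}$. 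You instead exhibit the splitting directly: you pick the natural surjection $\pi\colon I_\lambda\to\Delta_{\lambda,\zeta}$ and a nonzero $s\colon\Delta_{\lambda,\zeta}\to I_\lambda$ (both unique up to scalar by Frobenius reciprocity), use $\End_{\ul{G}}(\Delta_{\lambda,\zeta})=k$ to reduce the question to whether $\pi\circ s\ne 0$, and rule out $\pi\circ s=0$ by pushing a hypothetical embedding into $\ker\pi$ through the short exact sequence of Theorem~\ref{thm:delta-seq} and the $\Aut(\lambda)$-eigenspace decomposition of $\Delta_\lambda$, killing each alternative by $\ul{H}$-length comparisons, simplicity of the generic $\Delta$'s, and Corollary~\ref{cor:std-isom}. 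Your route bypasses the projective-cover machinery and Corollary~\ref{cor:ind-factor} at the cost of a somewhat longer case analysis; the paper's version is more compact once those tools are available, but yours is more self-contained given what is established immediately before this lemma.
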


\begin{proof}
Let $n=\ell(\lambda)$ and let $P$ be the projective cover of $M_{\lambda,\zeta}$. Since there is a surjection $I_{\lambda} \to M_{\lambda,\zeta}$, it follows that $P$ is a summand of $I_{\lambda}$. In particular, $P$ is also injective. (This is true in any pre-Tannakian category; see \cite[Proposition~6.1.3]{EGNO}.)

Suppose that $\mu$ is a weight of length $n+1$. Then
\begin{displaymath}
0=\Hom_{\ul{G}}(P, I_{\mu})=\Hom_{\ul{H}}(P, L_{\mu}).
\end{displaymath}
The first $\Hom$ space counts the multiplicity of $M_{\lambda,\zeta}$ in $I_{\mu}$, and this must vanish since $M_{\lambda,\zeta}=\Delta_{\lambda,\zeta}$ has an $\ul{H}$-simple of length $n-1$, but $I_{\mu}$ does not by Proposition~\ref{prop:ind}. The second equality above is Frobenius reciprocity. We conclude that $P$ is concentrated in degrees $n$ and $n-1$.

By Corollary~\ref{cor:ind-factor}, we find that the quotient map $I_{\lambda} \to P$ factors through $\Delta_{\lambda}$, and so $P$ is a summand of $\Delta_{\lambda}$. Since $\Delta_{\lambda}=\bigoplus \Delta_{\lambda,\omega}$ is the indecomposable decomposition of $\Delta_{\lambda}$, we must have $P=\Delta_{\lambda,\zeta}$.
\end{proof}

\section{The special block} \label{s:special}

\subsection{Initial remarks}

Recall that $\cC_{\rm sp}$ is the full subcategory of $\cC=\uRep^{\rf}(G)$ spanned by objects whose simple constituents are all special. Our goal in this section is to determine the structure of this category.

Let $n$ be an integer (possibly negative). Recall that $\pi(n)$ is the word of length $\vert n \vert$ consisting of all $\wa's$ if $n \ge 0$, and all $\wb$'s if $n \le 0$; also, we put $\epsilon(n)=(-1)^{n+1}$. We now introduce some additional notation:
\begin{itemize}
\item We let $M(n)$ be the simple $M_{\pi(n),\epsilon(n)}$ for $n \ne 0$, and we put $M(0)=\bbone$.
\item For $n>0$, we let $\Delta(n)=\Delta_{\pi(n),\epsilon(n)}$. For $n \le 0$, we let $\Delta(n)=\Delta(1-n)^{\vee}$.
\item For $n \le 0$, we let $\nabla(n)=\Delta_{\pi(n-1),\epsilon(n-1)}$. For $n>0$, we let $\nabla(n)=\nabla(1-n)^{\vee}$.
\end{itemize}
The definitions of $\Delta(n)$ and $\nabla(n)$ may appear somewhat strange, but we will see that they work very well; see Figure~\ref{fig:proj} for some motivation. Here are some small examples of how the indexing works:
\begin{align*}
\Delta(-1) &= \Delta_{\wa\wa,-1}^{\vee} & \Delta(0) &= \Delta_{\wa,+1}^{\vee} & \Delta(1) &= \Delta_{\wa,+1} & \Delta(2) &= \Delta_{\wa\wa,-1} \\
\nabla(-1) &= \Delta_{\wb\wb,-1} & \nabla(0) &= \Delta_{\wb,+1} & \nabla(1) &= \Delta_{\wb,+1}^{\vee} & \nabla(2) &= \Delta^{\vee}_{\wb\wb,-1}
\end{align*}
Note that $\Delta_{\wa,+1}=\Delta_{\wa}$, so the sign could be omitted in this case. The following propositions give some basic information about these objects.  Recall that $(-)^{\dag}$ denotes transpose (see \S \ref{ss:transp}).

\begin{proposition} \label{prop:dual}
For any $n \in \bZ$, we have $M(n)^{\vee} \cong M(-n)$.
\end{proposition}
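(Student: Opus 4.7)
The case $n=0$ is immediate, since $M(0)=\bbone$ and $\bbone^{\vee}\cong\bbone = M(0) = M(-0)$. In what follows I assume $n\ne 0$, and the plan is to identify $M(n)^{\vee}$ via its underlying $\ul H$-module. The restriction functor $\Res^G_H$ is a symmetric monoidal functor between rigid pre-Tannakian categories, hence commutes with duality, so $\Res^G_H(M(n)^{\vee})\cong (\Res^G_H M(n))^{\vee}$ in $\uRep(H)$.

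By Theorem~\ref{thm:fine}(b) (and the remark immediately afterwards about $M_{\lambda,\zeta}$ being $\ul H$-irreducible in the special case), the $\ul H$-module underlying $M(n)$ is the simple $L_{\pi(n)}$. The computation therefore reduces to identifying $L_{\pi(n)}^{\vee}$ in $\uRep(H)$. For this I would appeal to the corresponding identification in the semi-simple setting of \cite{line}, namely $L_{\lambda}^{\vee}\cong L_{\lambda^{\dag}}$ for every weight $\lambda$. Since $\pi(n)^{\dag}=\pi(-n)$ (the reversal fixes a constant word, and the swap $\wa\leftrightarrow\wb$ exchanges the all-$\wa$ with the all-$\wb$ case), this yields $\Res^G_H(M(n)^{\vee})\cong L_{\pi(-n)}$ as an $\ul H$-module.

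To conclude, $M(n)^{\vee}$ is a simple $\ul G$-module whose restriction to $H$ is the simple $\ul H$-module $L_{\pi(-n)}$. By Theorem~\ref{thm:std}(b), every generic simple $M_{\lambda,\zeta}=\Delta_{\lambda,\zeta}$ has $\ul H$-length $2N(\lambda)\ge 2$, so $M(n)^{\vee}$ cannot be generic; it must be special. The classification of special simples identifies them with $\bZ$, and for $m\ne 0$ the underlying $\ul H$-module $L_{\pi(m)}$ recovers the index $m$ (via the letter of $\pi(m)$ and its length), so $M(n)^{\vee}\cong M_{\pi(-n),\epsilon(-n)}=M(-n)$. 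The only non-routine input is the computation of $L_\lambda^{\vee}$ in $\uRep(H)$ from the predecessor paper; everything else is a direct packaging of the classification of simple $\ul G$-modules with the monoidality of $\Res^G_H$.
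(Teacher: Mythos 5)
Your argument follows the same route as the paper's: compute the underlying $\ul{H}$-module of $M(n)^{\vee}$, note it is simple, and invoke the classification of simple $\ul{G}$-modules. The paper's proof cites \cite[Proposition~4.16]{line} directly for $L_{\pi(n)}^{\vee} \cong L_{\pi(-n)}$, then finishes exactly as you do.

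One small caution: the general formula you cite from \cite{line}, ``$L_{\lambda}^{\vee} \cong L_{\lambda^{\dag}}$,'' is not quite right. The symbol $\lambda^{\dag}$ denotes reversal-plus-letter-swap and is the formula for the (covariant) \emph{transpose} functor, $L_{\lambda}^{\dag} \cong L_{\lambda^{\dag}}$; the \emph{dual} instead satisfies $L_{\lambda}^{\vee} \cong L_{\lambda^{\vee}}$, where $\lambda^{\vee}$ is just the letter-swap without reversal (this is the $\lambda^{\vee}$ appearing in the proof of Lemma~\ref{lem:std-2}). These two operations differ on non-palindromic words, e.g.\ $(\wa\wb\wb)^{\vee} = \wb\wa\wa$ but $(\wa\wb\wb)^{\dag} = \wa\wa\wb$. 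For the constant word $\pi(n)$ they coincide, so your computation lands in the right place and the proof is valid as written; but the cited general statement should be corrected to avoid trouble if the same citation is reused for a non-constant weight.
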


\begin{proof}
From \cite[Proposition~4.16]{line}, we have $L_{\pi(n)}^{\vee} \cong L_{\pi(-n)}$. Thus $M(n)^{\vee}$ is a simple $\ul{G}$-module with underlying $\ul{H}$-module $L_{\pi(-n)}$, and must therefore be isomorphic to $M(-n)$.
\end{proof}

\begin{proposition}
For any $n \in \bZ$, we have non-split exact sequences
\begin{displaymath}
0 \to M(n-1) \to \Delta(n) \to M(n) \to 0
\end{displaymath}
and
\begin{displaymath}
0 \to M(n) \to \nabla(n) \to M(n-1) \to 0
\end{displaymath}
\end{proposition}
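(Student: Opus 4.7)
The plan is to reduce everything to Theorem~\ref{thm:fine}(b), which handles the ``all-$\wa$'' case, and then propagate to the remainder by duality together with the analogous ``all-$\wb$'' statement. Four cases arise, indexed by the sign of $n$ crossed with $\Delta$ versus $\nabla$.

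The $\Delta$-sequence for $n > 0$ is essentially a restatement of Theorem~\ref{thm:fine}(b)(i)--(ii): $\Delta(n) = \Delta_{\pi(n),\epsilon(n)}$ has length two, with unique proper non-zero submodule $M_{\pi(n-1),\epsilon(n-1)}$ (interpreted as $\bbone = M(0)$ when $n=1$) and simple quotient $M_{\pi(n),\epsilon(n)} = M(n)$; under our conventions these are $M(n-1)$ and $M(n)$ respectively. For the $\nabla$-sequence in the range $n \le 0$, I would invoke the analogous ``all-$\wb$'' version of Theorem~\ref{thm:fine}(b), which holds by the symmetry remark at the end of its proof. Writing $m = n - 1 \le -1$, this says $\nabla(n) = \Delta_{\pi(m),\epsilon(m)}$ has length two with submodule $M_{\pi(m+1),\epsilon(m+1)} = M(n)$ (understood as $\bbone$ when $n = 0$) and simple quotient $M_{\pi(m),\epsilon(m)} = M(n-1)$.

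The two remaining cases follow by dualization. For $n \le 0$, applying $(-)^{\vee}$ to the $\Delta$-sequence for the index $1-n \ge 1$ and using the definition $\Delta(n) := \Delta(1-n)^{\vee}$ together with Proposition~\ref{prop:dual} yields the claimed $0 \to M(n-1) \to \Delta(n) \to M(n) \to 0$. The identical maneuver, applied to the $\nabla$-sequence just established for $n \le 0$, supplies the $\nabla$-sequence for $n > 0$ by way of $\nabla(n) := \nabla(1-n)^{\vee}$.

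Finally, non-splitness reduces to indecomposability of the middle term. The corollary to Theorem~\ref{thm:std} gives $\End_{\ul G}(\Delta_{\lambda,\zeta}) = k$, so $\Delta(n)$ for $n > 0$ and $\nabla(n)$ for $n \le 0$ are indecomposable; indecomposability is preserved under $(-)^{\vee}$, so all four middle terms are indecomposable. A length-two indecomposable module cannot be the direct sum of its two composition factors, so neither short exact sequence splits. I do not anticipate a real obstacle here; the only care required is bookkeeping, namely verifying that the index shifts $k \mapsto -k$ produced by Proposition~\ref{prop:dual} land exactly on the indices asserted in the statement.
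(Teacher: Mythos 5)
Your proof is correct and follows essentially the same route as the paper's: invoke Theorem~\ref{thm:fine}(b) (together with its all-$\wb$ counterpart, justified by the remark closing that theorem's proof) for the two cases in which the middle term is a genuine standard module $\Delta_{\lambda,\zeta}$, then propagate to the other two cases by duality using Proposition~\ref{prop:dual}, with non-splitness coming from indecomposability. Your explicit note that indecomposability is preserved under $(-)^{\vee}$ is a small but legitimate refinement, since the paper's terse closing line ``$\Delta_{\lambda,\zeta}$ is always indecomposable'' does not by itself address the dualized middle terms.
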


\begin{proof}
The first sequence follows from Theorem~\ref{thm:fine}(b) for $n>0$, and then for $n \le 0$ by duality. The second sequence is similar. These sequences are non-split since $\Delta_{\lambda,\zeta}$ is always indecomposable.
\end{proof}

\begin{proposition} \label{prop:sp-transp}
For any $n \in \bZ$, we have $M(n)^{\dag} \cong M(-n)$ and $\Delta(n)^{\dag} = \nabla(1-n)$.
\end{proposition}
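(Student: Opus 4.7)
The plan is to reduce both statements to the already-noted identities $L_\lambda^\dag \cong L_{\lambda^\dag}$ (from \S\ref{ss:transp}), $\Delta_{\lambda,\zeta}^\dag \cong \Delta_{\lambda^\dag,\zeta^{-1}}$ (Remark~\ref{rmk:std-transp}), and $M_{\lambda,\zeta}^\dag \cong M_{\lambda^\dag,\zeta^{-1}}$ (noted after Definition~\ref{defn:M}), combined with two elementary combinatorial observations about the indexing.

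First I would record the two observations. Directly from the definition of $(-)^\dag$ on weights (reversal plus swap of $\wa$ and $\wb$), we have $\pi(n)^\dag = \pi(-n)$ for every $n \in \bZ$. Also, $\epsilon(n) = (-1)^{n+1} \in \{\pm 1\}$ satisfies $\epsilon(-n) = \epsilon(n)$, and in particular $\epsilon(n)^{-1} = \epsilon(-n)$.

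Next, I would handle the case $n > 0$. For the simple, $M(n)^\dag = M_{\pi(n),\epsilon(n)}^\dag \cong M_{\pi(n)^\dag,\epsilon(n)^{-1}} = M_{\pi(-n),\epsilon(-n)} = M(-n)$. For the standard module, the same calculation with $\Delta$ in place of $M$ gives $\Delta(n)^\dag \cong \Delta_{\pi(-n),\epsilon(-n)}$; since $1-n \le 0$, the definition of $\nabla$ in non-positive degree gives $\nabla(1-n) = \Delta_{\pi(-n),\epsilon(-n)}$, so $\Delta(n)^\dag \cong \nabla(1-n)$. The case $n = 0$ is immediate: $M(0) = \bbone$ is self-transposed, and one checks $\Delta(0)^\dag \cong \nabla(1)$ either directly or as the $n \to 0$ limit of the argument below.

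Finally, for $n \le 0$ I would leverage the fact that transpose, being induced by a continuous group automorphism, is a symmetric monoidal auto-equivalence and hence commutes with duality: $(X^\vee)^\dag \cong (X^\dag)^\vee$ naturally. Then, since $\Delta(n) = \Delta(1-n)^\vee$ with $1-n > 0$, applying the case already proved yields
\begin{displaymath}
\Delta(n)^\dag \cong (\Delta(1-n)^\dag)^\vee \cong \nabla(n)^\vee = \nabla(1-n),
\end{displaymath}
where the last equality is the definition of $\nabla$ in positive degree. Similarly, for $n < 0$ we get $M(n)^\dag \cong M(-n)$ by dualizing (since $M(n) \cong M(-n)^\vee$ by Proposition~\ref{prop:dual}) and applying the positive case. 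There is no real obstacle here; the only thing to be careful about is bookkeeping signs and the piecewise definitions of $\Delta(n)$ and $\nabla(n)$, together with verifying once that $(-)^\dag$ and $(-)^\vee$ commute, which follows from $(-)^\dag$ being symmetric monoidal.
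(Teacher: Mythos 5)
Your proof is correct and follows essentially the same route as the paper: settle $n>0$ using $\pi(n)^\dag=\pi(-n)$ and $\epsilon(n)^{-1}=\epsilon(-n)$ together with the known transpose formulas for $L$, $\Delta$, and $M$, then deduce the $n\le 0$ cases from the fact that $(-)^\dag$, being a symmetric tensor auto-equivalence, commutes with duality. The only cosmetic difference is that for the simple modules you invoke $M_{\lambda,\zeta}^\dag\cong M_{\lambda^\dag,\zeta^{-1}}$ directly, whereas the paper argues a line more briefly by matching underlying $\ul H$-modules; both are fine.
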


\begin{proof}
Since $M(n)^{\dag}$ has underlying $\ul{H}$-module $L_{\pi(n)}^{\dag} \cong L_{\pi(-n)}$, it is necessarily isomorphic to $M(-n)$. For $n>0$, we have $\Delta(n)^{\dag} \cong \Delta_{\pi(-n),\epsilon(n)}$ by Remark~\ref{rmk:std-transp}, which is $\nabla(1-n)$. Since transpose is a tensor functor, it commutes with duality, and so the result for $n \le 0$ follows as well.
\end{proof}

\begin{remark}
If we put $M^* = (M^{\vee})^{\dag}$ then $M(n)^*=M(n)$ and $\Delta(n)^*=\nabla(n)$. The presence of several closely related $\bZ/2$ actions has a similar feel to shifted Weyl group actions, and makes it impossible to choose an indexing which looks natural for all three involutions. For example, some of the results above would look more natural if we indexed the $M$'s by half-integers instead of integers.
\end{remark}

\subsection{Projectives}

For $n \in \bZ$, we let $P(n)$ be the projective cover of $M(n)$. We now determine the structure of these objects.

\begin{theorem} \label{thm:proj}
Let $n \in \bZ$ be given. Then we have the following.
\begin{enumerate}
\item $P(n)$ is the injective envelope of $M(n)$.
\item We have $P(n)^{\vee} \cong P(-n)$ and $P(n)^{\dag} \cong P(-n)$.
\item We have a short exact sequence
\begin{displaymath}
0 \to \Delta(n+1) \to P(n) \to \Delta(n) \to 0.
\end{displaymath}
\item We have a short exact sequence
\begin{displaymath}
0 \to \nabla(n) \to P(n) \to \nabla(n+1) \to 0.
\end{displaymath}
\end{enumerate}
\end{theorem}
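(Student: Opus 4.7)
My plan is to establish (c) first (the computational core), deduce (b) from (c) via duality, then derive (d) from (c) via the transpose functor, and obtain (a) as a formal corollary. Starting with (c) for $n \geq 1$, I realize $P(n)$ as the $\cC_{\rm sp}$-summand of $I_{\pi(n)}$. By Frobenius reciprocity, $\dim \Hom_G(I_{\pi(n)}, M(m)) = \dim \Hom_H(L_{\pi(n)}, M(m)|_H)$; since $M(m)|_H = L_{\pi(m)}$ for special $m$, this equals $1$ when $m = n$ and $0$ for every other special $m$. Combined with the orthogonal decomposition $\cC = \cC_{\rm gen} \oplus \cC_{\rm sp}$ from \S\ref{ss:fine-intro}, this shows the $\cC_{\rm sp}$-summand of $I_{\pi(n)}$ is exactly $P(n)$. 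Applying the exact projection $\cC \to \cC_{\rm sp}$ to the short exact sequence
\[
0 \to \Delta_{\pi(n+1)} \oplus \Delta_{\pi(n)\wb} \to I_{\pi(n)} \to \Delta_{\pi(n)} \to 0
\]
from Theorem~\ref{thm:delta-seq} then yields the claimed sequence: the weight $\pi(n)\wb$ is generic, so $\Delta_{\pi(n)\wb}$ contributes nothing to the special part, while the special parts of $\Delta_{\pi(n+1)}$ and $\Delta_{\pi(n)}$ are $\Delta(n+1)$ and $\Delta(n)$ by definition.

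The case $n = 0$ requires a direct calculation, because $I_\emptyset = \cC(\bS)$ lies entirely in $\cC_{\rm sp}$ and the Theorem~\ref{thm:delta-seq} sequence degenerates. Frobenius reciprocity identifies $\cC(\bS) = P(0)$, and its socle is $M(0)$ directly (the constant functions embed $\bbone$ into $\cC(\bS)$, and $P(0)$ has simple socle as an indecomposable projective-injective); this already gives (b) at $n = 0$. Example~\ref{ex:std} provides the inclusion $\Delta_\wa \hookrightarrow P(0)$, and dualizing it via $P(0)^\vee \cong P(0)$ identifies the quotient with $\Delta_\wa^\vee = \Delta(0)$. For $n < 0$, (c) follows by applying $(-)^\vee$ to (c) for $-n > 0$, using $\Delta(m)^\vee = \Delta(1-m)$ (immediate from the definitions) together with (b) below.

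For (b), the transpose case $P(n)^\dag = P(-n)$ is immediate, since transpose is an auto-equivalence preserving projective covers and $M(n)^\dag = M(-n)$ by Proposition~\ref{prop:sp-transp}. For $P(n)^\vee = P(-n)$, the adjunction identity $\Hom(X, P^\vee) \cong \Hom(P, X^\vee)$ (valid in any rigid symmetric monoidal category) realizes $\Hom(-, P^\vee)$ as a composition of exact functors when $P$ is projective, so $P^\vee$ is injective and hence projective by \cite[Proposition~6.1.3]{EGNO}. Then $P(n)^\vee$ is indecomposable projective-injective with socle $M(-n)$ (from dualizing $P(n) \twoheadrightarrow M(n)$); its top is $M(-n)$, the dual of the socle of $P(n)$, which equals $M(n)$ by (c) for $n \geq 0$ (the submodule $\Delta(n+1) \subset P(n)$ has socle $M(n)$, forcing the simple socle of $P(n)$ to be $M(n)$). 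Hence $P(n)^\vee \cong P(-n)$ for $n \geq 0$, and the case $n < 0$ follows by applying $(-)^\vee$ once more.

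Part (d) follows from (c) by applying $(-)^\dag$: Proposition~\ref{prop:sp-transp} gives $\Delta(m)^\dag = \nabla(1-m)$, so transposing (c) for $-n$ yields (d) for $n$. Part (a) is then immediate: $P(n)$ is projective in the pre-Tannakian category $\cC$, hence injective by \cite[Proposition~6.1.3]{EGNO}, with simple socle $M(n)$ established in the proof of (b), so it is the injective envelope of $M(n)$. The main obstacle in this plan is the $n = 0$ case of (c), which does not admit the uniform special-block projection argument that handles $n \neq 0$ (the ambient $I_\emptyset$ is entirely in the special block and the Theorem~\ref{thm:delta-seq} sequence degenerates), and so requires the direct analysis of $\cC(\bS)$ sketched above.
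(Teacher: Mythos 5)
Your approach is broadly correct and diverges from the paper's argument at one genuine point, namely the proof of (c) for $n\geq 1$: where the paper establishes that $P(n)$ equals the whole summand $I$ (rather than possibly equalling $\Delta(n)$) via a contradiction using $\Ext^1(M(n),M(n+1))\neq 0$ (witnessed by $\nabla(n+1)$), you instead compute $\dim\Hom_{\ul G}(I_{\pi(n)},M(m))=\delta_{m,n}$ for all special $m$ by Frobenius reciprocity, conclude the $\cC_{\rm sp}$-part of $I_{\pi(n)}$ is exactly $P(n)$, and then apply the exact projection $\cC\to\cC_{\rm sp}$ to the sequence from Theorem~\ref{thm:delta-seq}. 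Your route is arguably cleaner; the paper's $\Ext$ argument is replaced by a direct multiplicity computation. What the paper's approach buys is that it needs only the single $\Hom$-vanishing $\Hom(P(n-1),M(n+1))=0$, while yours implicitly invokes the classification of projectives in $\cC_{\rm sp}$ as direct sums of $P(m)$'s (which is available, but is an ingredient). The remaining parts (a), (b), (d) follow by the same formal duality and transpose arguments used in the paper, in a slightly reordered way, and are fine.

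There is one gap, at $n=0$. You establish the inclusion $\Delta(1)\hookrightarrow P(0)$ and (by dualizing, using $P(0)^\vee\cong P(0)$) a surjection $P(0)\twoheadrightarrow\Delta(0)$; but these two maps do not by themselves assemble into the exact sequence $0\to\Delta(1)\to P(0)\to\Delta(0)\to 0$. You also need to know that $\Delta(1)$ lies in the kernel of the surjection, after which a length count finishes the job. The paper does this by observing that the $\ul H$-constituent $L_{\wa}$ generates $\Delta(1)$ but does not appear in $\Delta(0)$, so the composite $\Delta(1)\to P(0)\to\Delta(0)$ must vanish. (An alternative fix in your framework: the cokernel of $\Delta(1)\hookrightarrow P(0)$ is a length-two non-split extension of $\bbone$ and $M(-1)$, and $\Hom(P(0),M(-1))=0$ rules out $M(-1)$ occurring as a quotient, forcing the cokernel to be $\Delta(0)$ rather than $\nabla(0)$.) Also, a small misattribution: Example~\ref{ex:std} concerns $\Delta_{\wb}$, not $\Delta_{\wa}$; the inclusion $\Delta_{\wa}\subset\cC(\bS)$ you need is immediate from Definition~\ref{defn:std}.
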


Parts~(c) and (d) of the theorem are depicted for $n=0,1$ in Figure~\ref{fig:proj}. We prove two special cases of the theorem before giving the general proof.

\begin{figure}
\begin{tikzpicture}
\node (a) at (0,3) {$\bbone$};
\node (b) at (-1.5,1.5) {$M_{\wb,1}$};
\node (c) at (1.5,1.5) {$M_{\wa,1}$};
\node (d) at (0,0) {$\bbone$};
\draw (a) -- (b);
\draw (a) -- (c);
\draw (d) -- (b);
\draw (d) -- (c);
\draw[decorate,decoration={brace,amplitude=10pt,mirror},xshift=12pt,yshift=-12pt] (0,0) -- (1.5,1.5) node [black,midway,xshift=15pt,yshift=-14pt] {$\Delta_{\wa}$};
\draw[decorate,decoration={brace,amplitude=10pt},xshift=-12pt,yshift=-12pt] (0,0) -- (-1.5,1.5) node [black,midway,xshift=-15pt,yshift=-14pt] {$\Delta_{\wb}$};
\draw[decorate,decoration={brace,amplitude=10pt},xshift=-12pt,yshift=12pt] (-1.5,1.5) -- (0,3) node [black,midway,xshift=-15pt,yshift=14pt] {$\Delta_{\wa}^{\vee}$};
\draw[decorate,decoration={brace,amplitude=10pt,mirror},xshift=12pt,yshift=12pt] (1.5,1.5) -- (0,3) node [black,midway,xshift=15pt,yshift=14pt] {$\Delta_{\wb}^{\vee}$};
\end{tikzpicture}
\qquad\qquad\qquad
\begin{tikzpicture}
\node (a) at (0,3) {$M_{\wa,1}$};
\node (b) at (-1.5,1.5) {$\bbone$};
\node (c) at (1.5,1.5) {$M_{\wa \wa,-1}$};
\node (d) at (0,0) {$M_{\wa,1}$};
\draw (a) -- (b);
\draw (a) -- (c);
\draw (d) -- (b);
\draw (d) -- (c);
\draw[decorate,decoration={brace,amplitude=10pt,mirror},xshift=12pt,yshift=-12pt] (0,0) -- (1.5,1.5) node [black,midway,xshift=22pt,yshift=-15pt] {$\Delta_{\wa\wa,-1}$};
\draw[decorate,decoration={brace,amplitude=10pt},xshift=-12pt,yshift=-12pt] (0,0) -- (-1.5,1.5) node [black,midway,xshift=-15pt,yshift=-15pt] {$\Delta_{\wb}^{\vee}$};
\draw[decorate,decoration={brace,amplitude=10pt},xshift=-12pt,yshift=12pt] (-1.5,1.5) -- (0,3) node [black,midway,xshift=-15pt,yshift=15pt] {$\Delta_{\wa}$};
\draw[decorate,decoration={brace,amplitude=10pt,mirror},xshift=12pt,yshift=12pt] (1.5,1.5) -- (0,3) node [black,midway,xshift=22pt,yshift=15pt] {$\Delta_{\wb\wb,-1}^{\vee}$};
\end{tikzpicture}
\caption{Diagrams of $P(0)$ and $P(1)$. Submodules are at the bottom and quotients at the top. The right diagram shows that $P(1)$ contains $\Delta_{\wa\wa,-1}$ as a sub, with quotient $\Delta_{\wa}$. In general, $P(n)$, for $n \ge 1$, contains $\Delta_{\pi(n),\epsilon(n)}$ as a sub with quotient $\Delta_{\pi(n-1),\epsilon(n-1)}$, and a similar patterns holds for $n \le -1$. The left picture above shows that $P(0)$ behaves differently: $\Delta_{\wa}$ is a sub, with quotient $\Delta_{\wa}^{\vee}$. This picture motivates our definition of $\Delta(n)$, which handles these cases uniformly. }
\label{fig:proj}
\end{figure}
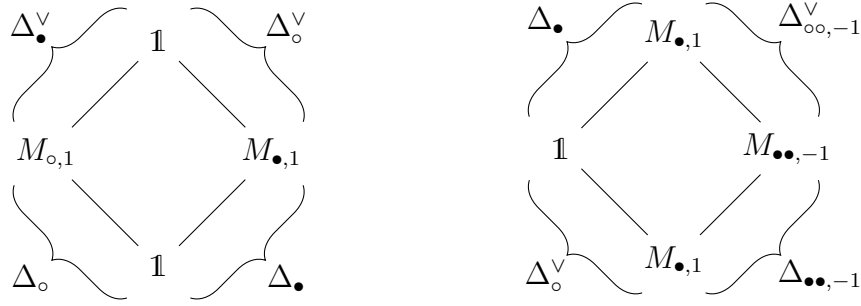

\begin{lemma} \label{lem:proj-1}
Theorem~\ref{thm:proj}(c) holds for $n>0$.
\end{lemma}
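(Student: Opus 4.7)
The plan is to apply the block decomposition $\cC = \cC_{\mathrm{gen}} \oplus \cC_{\mathrm{sp}}$ (from \S\ref{ss:fine-intro}) to the standard filtration of $I_{\pi(n)}$ supplied by Theorem~\ref{thm:delta-seq}:
\[
0 \to \Delta_{\pi(n+1)} \oplus \Delta_{\pi(n)\wb} \to I_{\pi(n)} \to \Delta_{\pi(n)} \to 0.
\]
Since the decomposition $\cC = \cC_{\mathrm{gen}} \oplus \cC_{\mathrm{sp}}$ is functorial and exact, applying its projection onto $\cC_{\mathrm{sp}}$ produces a short exact sequence, and it then suffices to identify the special-block piece of each of the three terms.

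The standard modules are handled directly. For each weight $\lambda$, the decomposition $\Delta_{\lambda} = \bigoplus_{\zeta} \Delta_{\lambda,\zeta}$ means $(\Delta_{\lambda})_{\mathrm{sp}}$ is the sum of the summands indexed by special pairs $(\lambda,\zeta)$. Since $n>0$, the weight $\pi(n)\wb$ is not of the form $\pi(m)$ for any $m$, so every pair $(\pi(n)\wb,\zeta)$ is generic and $(\Delta_{\pi(n)\wb})_{\mathrm{sp}} = 0$. By contrast, $(\Delta_{\pi(n+1)})_{\mathrm{sp}} = \Delta_{\pi(n+1),\epsilon(n+1)} = \Delta(n+1)$ and $(\Delta_{\pi(n)})_{\mathrm{sp}} = \Delta(n)$ directly from the definitions.

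The key step is to identify $(I_{\pi(n)})_{\mathrm{sp}}$ with $P(n)$. Since $I_{\pi(n)}$ is projective, its special-block part is a projective object of $\cC_{\mathrm{sp}}$, hence a direct sum of the indecomposable projectives $P(m)$ in some multiplicities. To compute these multiplicities I would use Frobenius reciprocity: the number of copies of $P(m)$ is
\[
\dim \Hom_{\ul{G}}(I_{\pi(n)}, M(m)) = \dim \Hom_{\ul{H}}(L_{\pi(n)}, M(m)).
\]
By Theorem~\ref{thm:fine}(b), $M(m) \cong L_{\pi(m)}$ as an $\ul{H}$-module, so this equals $\delta_{n,m}$. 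Therefore $(I_{\pi(n)})_{\mathrm{sp}} = P(n)$.

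Feeding these identifications into the projected exact sequence yields
\[
0 \to \Delta(n+1) \to P(n) \to \Delta(n) \to 0,
\]
as desired. The main (and really only) non-bookkeeping step is the Frobenius-reciprocity computation pinning down $(I_{\pi(n)})_{\mathrm{sp}} = P(n)$; everything else is just unpacking the block decomposition and the definitions of $\Delta(n)$ and $\Delta(n+1)$.
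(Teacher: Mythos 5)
Your proof is correct and takes a genuinely different (and shorter) route from the paper's. Both start from the standard filtration $0 \to \Delta_{\pi(n+1)} \oplus \Delta_{\pi(n)^-} \to I_{\pi(n)} \to \Delta_{\pi(n)} \to 0$ supplied by Theorem~\ref{thm:delta-seq}, but the paper splits off the generic-simple summands term by term, arrives at a subsequence $0 \to \Delta(n+1) \to I \to \Delta(n) \to 0$ with $I$ a summand of $I_{\pi(n)}$, and then identifies $I = P(n)$ by an indirect argument: it writes $P(n) = e(I)$ for an idempotent $e$, shows (via multiplicity of $L_{\pi(n+1)}$ and indecomposability of the two $\Delta$'s) that $e(I)$ is either $\Delta(n)$ or all of $I$, and finally rules out $e(I) = \Delta(n)$ by an $\Ext^1$ contradiction. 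You bypass all of that by observing that $(I_{\pi(n)})_{\rm sp}$ is projective in $\cC_{\rm sp}$, hence a sum of indecomposable projectives $P(m)$, whose multiplicities are $\dim\Hom_{\ul{G}}(I_{\pi(n)}, M(m)) = \dim\Hom_{\ul{H}}(L_{\pi(n)}, L_{\pi(m)}) = \delta_{n,m}$. This Hom computation is exactly the one the paper uses much later to prove Proposition~\ref{prop:I-decomp}(c); you have brought it forward and used it to shortcut this lemma. The trade-off is that the paper's approach makes the idempotent structure of $I$ explicit, while yours is more efficient once the block decomposition from Theorem~\ref{thm:fine} is in hand. One small note: you correctly restrict to $n>0$; for $n=0$ left-exactness of the sequence fails (the two $\Delta$'s share a copy of $\bbone$ inside $I_{\emptyset}$), which is why the paper has a separate Lemma~\ref{lem:proj-2}.
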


\begin{proof}
From Theorem~\ref{thm:delta-seq}, we have a short exact sequence
\begin{displaymath}
0 \to \Delta_{\pi(n+1)} \oplus \Delta_{\pi(n)^-} \to I_{\pi(n)} \to \Delta_{\pi(n)} \to 0,
\end{displaymath}
where $\pi(n)^-$ is the word $\pi(n)$ with $\wb$ appended to the end. Now, $\Delta_{\pi(n)^-}$ is a generic simple, and thus splits off $I_{\lambda}$ as a summand. The module $\Delta_{\pi(n)}$ is the sum of the modules $\Delta_{\pi(n),\zeta}$ as $\zeta$ varies over all $n$th roots of unity. Except for $\zeta=\epsilon(n)$, these are generic simples, and thus split off. Similarly for the $\Delta_{\pi(n+1)}$ term. We thus find that $I_{\pi(n)}$ has a summand $I$ that fits into a short exact sequence
\begin{displaymath}
0 \to \Delta(n+1) \to I \to \Delta(n) \to 0.
\end{displaymath}
Since $I$ surjects onto $M(n)$, we see that $P(n)$ is a summand of $I$.

Let $e$ be the idempotent of $I$ such that $P=e(I)$. The $\ul{H}$-simple $L_{\pi(n+1)}$ occurs in $\Delta(n+1)$ with multiplicity one, and does not occur in $\Delta(n)$. It follows that $e$ must map this $L_{\pi(n+1)}$ into $\Delta(n+1)$. Since this $L_{\pi(n+1)}$ generates $\Delta(n+1)$ by Theorem~\ref{thm:std}, we see that $e$ maps $\Delta(n+1)$ into itself. Since $\Delta(n+1)$ and $\Delta(n)$ are indecomposable, it follows that $P$ must be either $\Delta(n)$ or all of $I$.

Suppose now that $P=\Delta(n)$. The kernel of the map $P \to M(n)$ is then $M(n-1)$, and so we have a presentation
\begin{displaymath}
P(n-1) \to P(n) \to M(n) \to 0.
\end{displaymath}
We thus see that $\Ext^1(M(n), M(n+1))$ is a subquotient of $\Hom(P(n-1), M(n+1))=0$. However, we know that this $\Ext^1$ group does not vanish, due to the extension afforded by $\nabla(n+1)$, and so we have a contradiction. Thus $P=I$, which completes the proof.
\end{proof}

\begin{lemma} \label{lem:proj-2}
Theorem~\ref{thm:proj}(c) holds for $n=0$.
\end{lemma}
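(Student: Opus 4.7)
The plan is to identify $P(0)$ with the Schwartz space $\cC(\bS) = I_{\emptyset}$, determine its Loewy structure, and then read off the required exact sequence.

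First I would show $P(0) = I_{\emptyset}$. Integration $\epsilon \colon \cC(\bS) \to \bbone$ is surjective and $\cC(\bS) = \Ind_H^G(\bbone)$ is projective (it is the induction of a semi-simple module), so $P(0)$ is a direct summand of $\cC(\bS)$. It remains to check $\cC(\bS)$ is indecomposable. By Frobenius reciprocity, $\End_{\ul{G}}(\cC(\bS)) = \Hom_{\ul{H}}(\bbone, \Res \cC(\bS))$, and since $\Res \cC(\bS) = \cC(\bR) \oplus \bbone$ contains $\bbone$ with multiplicity two (using $\cC(\bR) = L_{\emptyset} \oplus L_{\wa} \oplus L_{\wb}$ from \cite[Theorem~4.7]{line}), this endomorphism ring is two-dimensional. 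It contains the identity and the composition $\iota \circ \epsilon$, where $\iota \colon \bbone \to \cC(\bS)$ is the inclusion of constants; since $\epsilon \circ \iota = \vol(\bS) = 0$, this composition is a non-zero nilpotent, so $\End_{\ul{G}}(\cC(\bS)) \cong k[x]/(x^2)$ is local and $\cC(\bS)$ is indecomposable.

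Next, I would work out the Loewy structure of $P(0)$. For any simple $\ul{G}$-module $S$, Frobenius reciprocity gives $\Hom_{\ul{G}}(S, \cC(\bS)) = \Hom_{\ul{H}}(\Res S, \bbone)$, which is non-zero only for $S = \bbone$, and similarly for maps out; hence both the socle and the top of $P(0)$ equal $\bbone$. By the remark following Theorem~\ref{thm:delta-seq}, $\Delta_{\wa}$ and $\Delta_{\wb}$ embed in $\cC(\bS)$ and meet exactly in the submodule of constants, so the length-three submodule $K = \Delta_{\wa} + \Delta_{\wb}$ coincides with $\ker(\epsilon) = \operatorname{rad}(P(0))$, and the maps $\Delta_{\wa}/\bbone \cong M_{\wa,1}$ and $\Delta_{\wb}/\bbone \cong M_{\wb,1}$ assemble into a surjection $K \twoheadrightarrow M_{\wa,1} \oplus M_{\wb,1}$ identifying this direct sum with $K/\operatorname{rad}(K)$. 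Thus $\operatorname{rad}(P(0))/\operatorname{rad}^2(P(0)) = M_{\wa,1} \oplus M_{\wb,1}$, and from this one reads off $\dim \Ext^1_{\ul{G}}(\bbone, M_{\wb,1}) = 1$.

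Finally, the inclusion $\Delta(1) = \Delta_{\wa} \subset P(0)$ is given, and the quotient $P(0)/\Delta_{\wa}$ has length two with composition factors $\bbone$ and $M_{\wb,1}$. It contains $M_{\wb,1}$ in its socle, via the embedding $\Delta_{\wb}/(\Delta_{\wa} \cap \Delta_{\wb}) = \Delta_{\wb}/\bbone \cong M_{\wb,1} \hookrightarrow P(0)/\Delta_{\wa}$, so it is a non-split extension of $\bbone$ by $M_{\wb,1}$. Since $\Delta(0) = \Delta_{\wa}^{\vee}$ is visibly such an extension (dualize $0 \to \bbone \to \Delta_{\wa} \to M_{\wa,1} \to 0$), and such non-split extensions are unique up to isomorphism by the one-dimensionality of $\Ext^1_{\ul{G}}(\bbone, M_{\wb,1})$, we conclude $P(0)/\Delta_{\wa} \cong \Delta(0)$, giving the desired short exact sequence. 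The main obstacle is the last identification: both the indecomposability of $\cC(\bS)$ and the control over $\Ext^1(\bbone, M_{\wb,1})$ needed to rule out alternative extensions rely essentially on having the two embeddings $\Delta_{\wa}, \Delta_{\wb} \hookrightarrow \cC(\bS)$ together with the precise intersection $\Delta_{\wa} \cap \Delta_{\wb} = \bbone$.
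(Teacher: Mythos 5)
Your proof is essentially correct, but it takes a different route from the paper's, and there is one small gap worth flagging. The paper, after identifying $P(0)=\cC(\bS)$ via the socle computation (yours via the endomorphism ring — both work), simply observes that $\cC(\bS)$ is self-dual (as $\Ind_H^G(\bbone)^{\vee}\cong\Ind_H^G(\bbone)$), so the sub $\Delta_{\wa}$ gives a quotient $\Delta_{\wa}^{\vee}=\Delta(0)$; exactness of $0\to\Delta(1)\to P(0)\to\Delta(0)\to 0$ then follows because $L_{\wa}$, which generates $\Delta(1)$, does not occur in $\Delta(0)$, so the composite vanishes, and one finishes by counting lengths. You instead compute the full Loewy structure of $P(0)$, extract $\dim\Ext^1_{\ul G}(\bbone,M_{\wb,1})=1$, and invoke uniqueness of non-split extensions. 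This is more work, but it gives more information (the entire Loewy series, and the $\Ext^1$ group), and it avoids invoking self-duality of $\cC(\bS)$. The gap: you conclude that $P(0)/\Delta_{\wa}$ is a non-split extension of $\bbone$ by $M_{\wb,1}$ merely from the fact that $M_{\wb,1}$ embeds into it — but the split module $\bbone\oplus M_{\wb,1}$ also has $M_{\wb,1}$ in its socle. You need to additionally rule out splitting, e.g.\ by noting that any quotient of $P(0)$ has head contained in $\bbone$ (so $M_{\wb,1}$ cannot be a direct summand), or equivalently that $\Hom_{\ul G}(P(0),M_{\wb,1})=0$ since $\bbone$ is not a constituent of $M_{\wb,1}$. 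With that one line added, your argument is complete.
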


\begin{proof}
Since $\cC(\bS) \cong I_{\emptyset}$, Proposition~\ref{prop:ind}, gives an $\ul{H}$-decomposition
\begin{displaymath}
\cC(\bS) = L^{\oplus 2}_{\emptyset} \oplus L_{\wa} \oplus L_{\wb}.
\end{displaymath}
We have seen in Example~\ref{ex:std} that there is a unique trivial $\ul{G}$-submodule of $\cC(\bS)$. This is in fact the socle of $\cC(\bS)$: indeed, any other simple $\ul{G}$-submodule would be generated by $L_{\wa}$ or $L_{\wb}$, but these generate $\Delta_{\wa}$ and $\Delta_{\wb}$ (Theorem~\ref{thm:std}), which are not simple. It follows that $\cC(\bS)$ is indecomposable. Since it is projective (being an induction) and surjects onto the trivial representation, it is thus $P(0)$.

Now, $\cC(\bS)$ contains $\Delta_{\wa}=\Delta(1)$ as a submodule. Since $\cC(\bS)$ is self-dual, it thus admits $\Delta(1)^{\vee}=\Delta(0)$ as a quotient. We claim that the sequence
\begin{displaymath}
0 \to \Delta(1) \to P(0) \to \Delta(0) \to 0
\end{displaymath}
is exact. Indeed, $L_{\wa}$ does not occur in $\Delta(0)$, and thus belongs to the kernel of $P(0) \to \Delta(0)$. Since $L_{\wa}$ generates $\Delta_{\wa}=\Delta(1)$, it follows that the above composition is zero. It is therefore exact by length reasons.
\end{proof}

\begin{proof}[Proof of Theorem~\ref{thm:proj}]
Suppose $n \ge 0$. Then (c) holds by Lemmas~\ref{lem:proj-1} and~\ref{lem:proj-2}. Since $P(n)$ is a summand of $I_{\pi(n)}$, which is projective and injective, it follows that $P(n)$ is injective. By (c) we see that $P(n)$ contains $M(n)$ as a submodule. Since $P(n)$ is indecomposable injective, it must therefore be the injective envelope of $M(n)$. It thus follows that $P(n)^{\vee}$ is the projective envelope of $M(n)^{\vee} \cong M(-n)$, and so $P(n)^{\vee} \cong P(-n)$. Since $(-)^{\dag}$ is a covariant equivalence of abelian categories, it follows that $P(n)^{\dag}$ is the projective cover of $M(n)^{\dag} \cong M(-n)$, and so $P(n)^{\dag} \cong P(-n)$. This proves (a,b,c) for $n \ge 0$, and the remaining cases follow from duality. (d) now follows from (c) by taking transpose.
\end{proof}

\subsection{Maps of projectives}\label{s:maps-of-projectives}

The spaces
\begin{displaymath}
\Hom(P(n), P(n-1)), \qquad \Hom(P(n), P(n+1))
\end{displaymath}
are each one dimensional, since $M(n)$ has multiplicity one in $P(n-1)$ and $P(n+1)$. Let $\alpha_n$ and $\beta_n$ be bases of these spaces.

\begin{proposition} \label{prop:P-homs}
We have the following:
\begin{enumerate}
\item $\alpha_{n-1} \alpha_n=0$.
\item $\beta_{n+1} \beta_n=0$.
\item $\beta_{n-1} \alpha_n = c(n) \cdot \alpha_{n+1} \beta_n$ for some non-zero scalar $c(n)$.
\end{enumerate}
\end{proposition}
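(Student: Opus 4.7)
My plan is to read off all three identities from the Loewy structure of $P(n)$. By Theorem~\ref{thm:proj} (see Figure~\ref{fig:proj}), $P(n)$ has Loewy layers $M(n)$, $M(n-1) \oplus M(n+1)$, $M(n)$ from top to bottom, with $\Delta(n+1)$ and $\nabla(n)$ as submodules (via the two short exact sequences in the theorem) and $\Delta(n)$ and $\nabla(n+1)$ as the corresponding quotients. The first step is to factor $\alpha_n$ and $\beta_n$ explicitly. Since $M(n)$ appears with multiplicity one in $P(n-1)$, sits inside the submodule $\Delta(n) \subset P(n-1)$ (coming from $0 \to \Delta(n) \to P(n-1) \to \Delta(n-1) \to 0$), and generates $\Delta(n)$ as a $\ul{G}$-module, the nonzero map $\alpha_n$ must factor as
\[
\alpha_n\colon P(n) \twoheadrightarrow \Delta(n) \hookrightarrow P(n-1),
\]
where the surjection is the quotient from $0 \to \Delta(n+1) \to P(n) \to \Delta(n) \to 0$. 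In particular, $\ker(\alpha_n) = \Delta(n+1)$ and $\im(\alpha_n) = \Delta(n) \subset P(n-1)$. The symmetric argument (or applying the transpose functor together with Proposition~\ref{prop:sp-transp}) gives $\beta_n\colon P(n) \twoheadrightarrow \nabla(n+1) \hookrightarrow P(n+1)$ with $\ker(\beta_n) = \nabla(n)$ and $\im(\beta_n) = \nabla(n+1) \subset P(n+1)$.

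Parts (a) and (b) are then immediate: in $P(n-1)$ one has $\im(\alpha_n) = \Delta(n) = \ker(\alpha_{n-1})$, and in $P(n+1)$ one has $\im(\beta_n) = \nabla(n+1) = \ker(\beta_{n+1})$, so both compositions vanish.

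For (c), I will trace both endomorphisms. The composition $\beta_{n-1}\alpha_n$ is
\[
P(n) \twoheadrightarrow \Delta(n) \hookrightarrow P(n-1) \twoheadrightarrow \nabla(n) \hookrightarrow P(n),
\]
whose middle step $\Delta(n) \to \nabla(n)$ is the quotient by $\Delta(n) \cap \nabla(n-1) = \mathrm{soc}(\Delta(n)) = M(n-1)$; its image is thus $M(n)$, embedded as the socle of $\nabla(n) \subset P(n)$, which is also the socle of $P(n)$. The symmetric analysis shows $\alpha_{n+1}\beta_n$ factors through the socle $M(n)$ of $\Delta(n+1) \subset P(n)$, which coincides with $\mathrm{soc}(P(n))$. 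Hence both compositions lie in the one-dimensional space of $\ul{G}$-maps $P(n) \to P(n)$ factoring as head-to-socle through $M(n)$, and so they are scalar multiples of one another. Both are nonzero because $\Delta(n) \not\subset \nabla(n-1)$ (the factor $M(n)$ lies in $\Delta(n)$ but not in $\nabla(n-1)$) and similarly $\nabla(n+1) \not\subset \Delta(n+2)$, which gives $c(n) \neq 0$. The only delicate point is this non-vanishing verification; the rest is direct bookkeeping with the two short exact sequences for $P(n)$.
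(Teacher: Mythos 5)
Your proof is correct and follows essentially the same route as the paper's: both factor $\alpha_n$ through the quotient $P(n)\twoheadrightarrow\Delta(n)\hookrightarrow P(n-1)$ and $\beta_{n-1}$ through $P(n-1)\twoheadrightarrow\nabla(n)\hookrightarrow P(n)$, and both conclude (c) by noting that both sides land in the one-dimensional space of self-maps of $P(n)$ killing the head. The only cosmetic difference is in (a) and (b): the paper argues directly from $\Hom(P(n),P(n-2))=0$ because $M(n)$ is not a constituent of $P(n-2)$, while you deduce it from $\operatorname{im}(\alpha_n)=\Delta(n)=\ker(\alpha_{n-1})$; both are valid and the latter is just a slightly more explicit version of the same observation.
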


\begin{proof}
Since $M(n)$ is not a constituent of $P(n-2)$, we have $\Hom(P(n), P(n-2))=0$, and so (a) follows. A similar argument gives (b). Now, we have a diagram
\begin{displaymath}
\xymatrix@R=1em{
&&& \nabla(n) \ar@{^(->}[rd] \\
P(n) \ar[rr]^{\alpha_n} \ar@{->>}[rd] && P(n-1) \ar@{->>}[ru] \ar[rr]^{\beta_{n-1}} && P(n) \\
& \Delta(n) \ar@{^(->}[ru] }
\end{displaymath}
that commutes up to non-zero scalars. The kernel of $P(n-1) \to \nabla(n)$ is $\nabla(n-1)$, which does not contain $M(n)$ as a constituent. It follows that the $M(n)$ in $\Delta(n)$ has non-zero image in $\nabla(n)$. Thus the composition $\beta_{n-1} \alpha_n$ is non-zero. However, it does contain $M(n)$ in its kernel. A similar analysis shows the same for $\alpha_{n+1} \beta_n$. The space of maps $P(n) \to P(n)$ killing $M(n)$ is one-dimensional, and so the result follows.
\end{proof}

\begin{proposition} \label{prop:alpha-beta}
It is possible to choose $\alpha_n$ and $\beta_n$ such that $c(n)=1$ for all $n$.
\end{proposition}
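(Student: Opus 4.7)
The plan is to exploit the freedom in choosing the generators $\alpha_n$ and $\beta_n$. Since each Hom space $\Hom(P(n), P(n\pm 1))$ is one-dimensional, any other choice of generator differs by a nonzero scalar, and rescaling is the only gauge freedom we have. So the statement really amounts to showing that the cocycle $\{c(n)\}$ is trivializable by such rescalings.

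Concretely, I would keep the $\beta_n$'s fixed and only rescale the $\alpha_n$'s. If I replace $\alpha_n$ by $u_n \alpha_n$ with $u_n \in k^{\times}$, the left-hand side of the relation in Proposition~\ref{prop:P-homs}(c) picks up a factor of $u_n$, while the right-hand side picks up a factor of $u_{n+1}$ (the $\alpha_{n+1}$ on the right gets rescaled, but $\beta_n$ does not). Thus the new structure constant is $c'(n) = c(n) \cdot u_n/u_{n+1}$, and setting $c'(n) = 1$ gives the recursion
\begin{displaymath}
u_{n+1} = c(n)\, u_n.
\end{displaymath}

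Since Proposition~\ref{prop:P-homs}(c) guarantees each $c(n) \in k^{\times}$, I can set $u_0 = 1$ and solve this recursion in both directions over $\bZ$ (using $u_{n} = u_{n+1}/c(n)$ for $n < 0$). With the $\alpha_n$'s rescaled by these $u_n$'s, every $c(n)$ becomes $1$, which is the desired conclusion. There is no real obstacle here beyond bookkeeping: the argument is purely a normalization, with no compatibility condition to check (because each relation involves a distinct pair $(\alpha_n, \alpha_{n+1})$ among the $\alpha$'s and only the single $\beta_n$, so there is no loop among the $u_n$'s that could force a nontrivial product to be $1$).
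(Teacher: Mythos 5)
Your proof is correct and takes essentially the same approach as the paper's: both trivialize the scalar $c(n)$ by solving the same first-order recurrence over $\bZ$, with no loop obstruction to worry about. The only cosmetic difference is that you rescale the $\alpha_n$'s while holding the $\beta_n$'s fixed, whereas the paper holds the $\alpha_n$'s fixed and rescales the $\beta_n$'s; the two choices are entirely symmetric.
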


\begin{proof}
Choose the $\alpha_n$'s arbitrarily, let $\beta'_n$ be an arbitrary non-zero map $P(n) \to P(n+1)$, and let $c'(n)$ be the scalar such that $\beta'_{n-1}\alpha_n = c'(n) \alpha_{n+1} \beta'_n$. Put $\beta_0=\beta'_0$. For $n<0$, define $\beta_n=(c'(n+1) \cdots c'(0))^{-1} \beta'_n$. For $n>0$, define $\beta_n=c(1) \cdots c(n) \beta'_n$. This leads to $c(n)=1$ for all $n$.
\end{proof}

\subsection{Description of the category}

Let $R=k[x,y]/(x^2,y^2)$, and $\bZ$-grade $R$ by letting $x$ and $y$ have degrees $-1$ and $+1$. As a $k$-vector space, $R$ is four dimensional, with basis $1, x, y, xy$. Let $\Mod_R$ denote the category of $\bZ$-graded $R$-algebras. For $n \in \bZ$, we let $R(n)$ be the free $R$-module of rank one with generator of degree $n$. The following theorem, which is the main result of \S \ref{s:special}, provides a complete description of $\cC_{\rm sp}$.

\begin{theorem} \label{thm:spequiv}
We have an equivalence of $k$-linear categories
\begin{displaymath}
\Phi \colon \cC_{\rm sp} \to \Mod_R, \qquad P(n) \mapsto R(n).
\end{displaymath}
\end{theorem}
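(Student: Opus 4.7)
The strategy: both $\cC_{\rm sp}$ and $\Mod_R$ are abelian categories with enough projectives, with complete lists of indecomposable projectives $\{P(n)\}_{n \in \bZ}$ (by Theorem~\ref{thm:proj}) and $\{R(n)\}_{n \in \bZ}$ respectively. The plan is to first exhibit an equivalence $\Phi_0$ between the additive subcategories spanned by these projectives, and then extend by right exactness to all of $\cC_{\rm sp}$.

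After normalizing via Proposition~\ref{prop:alpha-beta} so that $c(n)=1$ for all $n$, I would compare morphism spaces. From Theorem~\ref{thm:proj}(c,d), $P(n)$ has composition factors $M(n-1), M(n), M(n), M(n+1)$. Since $\dim\Hom_{\ul G}(P(n), M) = [M:M(n)]$ in any finite-length category with enough projectives, counting multiplicities of $M(n)$ in $P(m)$ yields
\[
\dim\Hom_{\ul G}(P(n), P(m)) = \begin{cases} 2 & \text{if } m=n,\\ 1 & \text{if } |m-n|=1,\\ 0 & \text{otherwise,}\end{cases}
\]
with bases $\{\id_{P(n)},\alpha_{n+1}\beta_n\}$ on the diagonal (using that $\alpha_{n+1}\beta_n = \beta_{n-1}\alpha_n$ is a nonzero nilpotent endomorphism by Proposition~\ref{prop:P-homs}(c)) and $\{\alpha_n\}$, $\{\beta_n\}$ off-diagonal. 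These dimensions match $\dim R_{n-m} = \dim\Hom_R(R(n), R(m))$, and the relations $\alpha_{n-1}\alpha_n = 0$, $\beta_{n+1}\beta_n = 0$, $\beta_{n-1}\alpha_n = \alpha_{n+1}\beta_n$ of Proposition~\ref{prop:P-homs} exactly mirror $y^2=0$, $x^2=0$, $xy=yx$ in $R$. Hence sending $P(n) \mapsto R(n)$, $\alpha_n \mapsto (1_n \mapsto y \cdot 1_{n-1})$, $\beta_n \mapsto (1_n \mapsto x \cdot 1_{n+1})$ defines an equivalence $\Phi_0\colon \cP \xrightarrow{\sim} \cP'$ of the additive projective subcategories.

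To extend to $\Phi\colon \cC_{\rm sp} \to \Mod_R$: every $M \in \cC_{\rm sp}$ has finite length, hence admits a projective presentation $P^1 \to P^0 \twoheadrightarrow M$ with $P^i \in \cP$; set $\Phi(M) = \coker\bigl(\Phi_0(P^1 \to P^0)\bigr)$. Morphisms lift between such presentations uniquely up to chain homotopy, making $\Phi$ a well-defined $k$-linear right-exact functor extending $\Phi_0$. Full faithfulness on $\cC_{\rm sp}$ follows from that of $\Phi_0$ by a five-lemma argument comparing presentations. Essential surjectivity uses that every finitely generated graded $R$-module likewise admits a finite presentation by $R(n)$'s, each of which lies in the image of $\Phi_0$.

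The most delicate step is verifying that $\Phi$ on nonprojective objects is well-defined and functorial, independently of the chosen presentation. A cleaner conceptual framing that sidesteps this bookkeeping is to identify both categories with the category of finitely presented contravariant additive functors out of $\cP$, taking values in $\Vect$: on the $\cC_{\rm sp}$ side via $M \mapsto \Hom_{\ul G}(-,M)|_{\cP}$, and on the $\Mod_R$ side via $N \mapsto \Hom_R(-,N)|_{\cP'}$. Since $\Phi_0\colon \cP \simeq \cP'$ is an equivalence, restriction along $\Phi_0$ identifies the two functor categories, which directly yields the desired equivalence $\Phi$.
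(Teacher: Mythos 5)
Your proof is correct and follows essentially the same route as the paper: normalize $\alpha_n,\beta_n$ via Proposition~\ref{prop:alpha-beta}, match generators and relations of $\cP$ against those of the $R(n)$'s using Proposition~\ref{prop:P-homs} to get an equivalence $\Phi_0$ on the projective subcategories, and then extend to the whole category because $\cP$ and $\cP'$ contain projective generators. Your version spells out the dimension count and the extension step (which the paper leaves terse), and you also correctly assign $\alpha_n \mapsto y$, $\beta_n \mapsto x$ to match the sign of the index shift — note the paper's printed assignment appears to have $x$ and $y$ swapped there.
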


\begin{proof}
Let $\cP$ be the full subcategory of $\cC_{\rm sp}$ spanned by the $P(n)$'s, and similarly let $\cQ$ be the full subcategory of $\Mod_R$ spanned by the $R(n)$'s. Choose $\alpha_n$'s and $\beta_n$'s as in Proposition~\ref{prop:alpha-beta}. Define $\Phi_0 \colon \cP \to \cQ$ by
\begin{displaymath}
\Phi_0(P(n))=R(n), \qquad
\Phi_0(\alpha_n) = x, \qquad
\Phi_0(\beta_n) = y,
\end{displaymath}
and extended linearly. In the second equation above, $x$ denotes the multiplication-by-$x$ map $R(n) \to R(n+1)$. It follows from Proposition~\ref{prop:P-homs} that $\Phi_0$ is a well-defined equivalence of categories. Since $\cP$ and $\cQ$ contain projective generators of their respective categories, $\Phi_0$ extends to an equivalence $\Phi$ as in the statement of the theorem.
\end{proof}

\subsection{Indecomposables} \label{ss:indecomp}

A graded $R$-module is exactly a representation of the quiver
\begin{displaymath}
\xymatrix@C=3em{
\cdots \ar@<2pt>[r]^{b_{-2}} & v_{-1} \ar@<2pt>[r]^{b_{-1}} \ar@<2pt>[l]^{a_{-1}} & v_0 \ar@<2pt>[r]^{b_0} \ar@<2pt>[l]^{a_0} & v_1 \ar@<2pt>[r]^{b_1} \ar@<2pt>[l]^{a_1} & \cdots \ar@<2pt>[l]^{a_2} }
\end{displaymath}
such that the relations
\begin{displaymath}
a_{n-1} a_n=0, \qquad b_{n+1} b_n=0, \qquad a_{n+1} b_n = b_{n-1} a_n
\end{displaymath}
hold. For $n \le m$, define $I^+(n,m)$ to be the following representation. The vector space at $v_i$ is $k$ for $n \le i \le m$, and~0 elsewhere. The map $a_i$ is the identity if $n \le i \le m-1$ and $i$ has the same parity as $n$, and is~0 otherwise. The map $b_i$ is the identity if $n+1 \le i \le m$ and $i$ has the same parity as $n$, and is~0 otherwise. We define $I^-(n,m)$ similarly, but now the non-zero morphisms are those where $i$ and $n$ have opposite parity.

\begin{example}
The following illustrates $I^+(0, 4)$
\begin{center}
\begin{tikzpicture}
\node (a) at (0,2.5) {$k$};
\node (b) at (-1.5,1.5) {$k$};
\node (c) at (1.5,1.5) {$k$};
\node (d) at (3,2.5) {$k$};
\node (e) at (-3,2.5) {$k$};
\draw [->] (a) -- node[above] {$\scriptstyle a_2$} (b);
\draw [->] (a) -- node[above] {$\scriptstyle b_2$}(c);
\draw [->] (e) -- node[above] {$ \scriptstyle b_0$}(b);
\draw [->] (d) -- node[above] {$ \scriptstyle a_4$}(c);
\end{tikzpicture}
\end{center}
and the following illustrates $I^-(0, 5)$
\begin{center}
\begin{tikzpicture}
\node (a) at (0,2.5) {$k$};
\node (b) at (-1.5,1.5) {$k$};
\node (c) at (1.5,1.5) {$k$};
\node (d) at (3,2.5) {$k$};
\node (e) at (-3,2.5) {$k$};
\node (f) at (-4.5,1.5) {$k$};
\draw [->] (a) -- node[above] {$\scriptstyle a_3$} (b);
\draw [->] (a) -- node[above] {$\scriptstyle b_3$}(c);
\draw [->] (e) -- node[above] {$ \scriptstyle b_1$}(b);
\draw [->] (d) -- node[above] {$ \scriptstyle a_5$}(c);
\draw [->] (e) -- node[above] {$ \scriptstyle a_1$}(f);
\end{tikzpicture}
\end{center}
In the above diagrams, the drawn arrows are the identity, and all others vanish.
\end{example}

The modules $I^{\pm}(n,m)$ are known as \defn{zigzag modules}, and they are easily seen to be indecomposable. We note that $I^+(n,n)=I^-(n,n)$, but otherwise these modules are different. These modules appear quite often: e.g., in the theory of supergroups \cite[\S 2.2.3]{GQS} and \cite[\S 5.1]{Heidersdorf}, quivers \cite{Gabriel}, the modular representation theory of the Klein 4-group \cite[\S 2]{CM} and \cite{Johnson}, and elsewhere \cite[\S 2.1]{CdS}.

Let $J^{\pm}(n,m)$ be the object of $\cC_{\rm sp}$ corresponding to $I^{\pm}(n,m)$ under the equivalence in Theorem~\ref{thm:spequiv}. This is an indecomposable object of length $m-n+1$, with simple constituents $M(i)$ for $n \le i \le m$, each with multiplicity one.

\begin{proposition} \label{prop:indecomp}
We have the following:
\begin{enumerate}
\item Every non-projective indecomposable in $\cC_{\rm sp}$ is isomorphic to some $J^{\pm}(n,m)$.
\item The categorical dimension of $J^{\pm}(n,m)$ is $(-1)^n$ if $n \equiv m \!\pmod{2}$, and~0 otherwise.
\end{enumerate}
\end{proposition}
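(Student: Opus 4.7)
For part (a), I would use the equivalence $\Phi \colon \cC_{\rm sp} \to \Mod_R$ of Theorem~\ref{thm:spequiv} to reduce to classifying finite-length indecomposable $\bZ$-graded modules over $R = k[x,y]/(x^2, y^2)$, and then dichotomize based on the action of $xy$. The first step is: if $xy$ acts non-trivially on some $m \in M_i$, then $\{m, xm, ym, xym\}$ is linearly independent (the identity $xym = yxm$ forces $xm, ym \ne 0$), so the cyclic submodule $Rm$ is isomorphic to $R(i)$. Since $R$ is a graded Frobenius algebra with one-dimensional socle $k \cdot xy$, the module $R(i)$ is also injective, so $Rm$ splits off as a summand; an indecomposable $M$ with $xy \ne 0$ must therefore equal $P(i) = R(i)$. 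The second step classifies indecomposable graded modules over the radical-square-zero algebra $R' = R/(xy) = k[x,y]/(x^2, y^2, xy)$. A standard separated-quiver analysis (applicable since $\mathrm{rad}(R')^2 = 0$) shows that finite-dimensional indecomposables over $R'$ are interval representations of a bipartite quiver of type $A_\infty^\infty$, which splits into two components by the parity of the index. These interval representations are precisely the zigzag modules $J^+(n, m)$ and $J^-(n, m)$.

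For part (b), I would use that categorical dimension is additive on short exact sequences: since $J^\pm(n, m)$ has simple constituents $M(n), M(n+1), \ldots, M(m)$ each with multiplicity one,
\[
\dim J^\pm(n, m) = \sum_{i=n}^m \dim M(i).
\]
Restriction $\uRep(G) \to \uRep(H)$ is a symmetric tensor functor and hence preserves categorical dimension. The underlying $\ul{H}$-module of $M(i)$ is $L_{\pi(i)}$ (since in the special case $M_{\lambda, \zeta}$ is $\ul{H}$-irreducible by Theorem~\ref{thm:fine}), so $\dim M(i) = (-1)^{\ell(\pi(i))} = (-1)^{|i|} = (-1)^i$ by \cite[Corollary~5.7]{line} (this is consistent with $\dim \bbone = 1$ at $i = 0$). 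The alternating sum $\sum_{i=n}^m (-1)^i$ telescopes to $(-1)^n$ when $m \equiv n \pmod 2$ (odd number of terms, pair up from the second) and vanishes otherwise.

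The principal obstacle is Step~2 of part~(a): rigorously classifying the indecomposable graded $R'$-modules. While this follows from the general theory of string modules over special biserial (or radical-square-zero) algebras, a fully self-contained argument requires additional bookkeeping about walks in the quiver and the resulting interval representations. In practice I would either appeal to this general theory directly or cite a closely related setting: the algebra $R$ coincides, as an abstract algebra, with the group algebra of the Klein $4$-group in characteristic $2$, whose indecomposable representations are classified in \cite{CM}, \cite{Johnson}, and the graded structure restricts the indecomposables further to zigzag modules. Alternatively, the same classification appears in \cite[\S 5]{Heidersdorf} for $\Rep(\GL(1|1))$, where an analogous algebra and identical module-theoretic picture arise.
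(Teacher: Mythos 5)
Your proof of part (b) is essentially identical to the paper's: additivity of categorical dimension over short exact sequences, $\dim M(i) = (-1)^i$, and a telescoping alternating sum. No issues there.

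For part (a) you take a genuinely different route. The paper simply cites Germoni \cite[\S 5.2,~5.3]{Germoni}, who classifies the finite-dimensional graded indecomposables over the exterior algebra $k\langle x,y\rangle/(x^2,y^2,xy+yx)$, together with the observation that this algebra's graded module category is equivalent to that of $R=k[x,y]/(x^2,y^2)$ (replace $x$ by $-x$ on odd-degree elements). You instead propose a two-step reduction entirely over $R$: first split off free summands using the graded Frobenius structure of $R$ (one-dimensional socle $k\cdot xy$), and then classify indecomposables over the radical-square-zero quotient $R'=R/(xy)$ via the separated quiver. Your Frobenius step is correct and is in fact a cleaner conceptual argument than is usually spelled out for why the indecomposable projectives are exactly the free rank-one modules (the little linear-independence check that $xym$ and $m$ cannot be proportional, because $x\cdot xym=0$ while $x\cdot m = xm \ne 0$, is exactly right). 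The remaining classification over $R'$ is where you candidly flag a gap: it does follow from the representation theory of radical-square-zero (equivalently special biserial, string) algebras, and the graded picture indeed gives interval representations of a doubly-infinite $A_\infty$ quiver, but you do not fully work it out. In this sense your proposal carries a small outstanding obligation that the paper discharges by a single citation. One point you should address if you finish your route: over $R'$ one also gets infinite-dimensional (band-type) indecomposables in the ungraded theory, so you should explicitly note that finite-length graded modules over this quiver admit only interval (string) indecomposables, which is exactly the content you're promising the separated-quiver analysis delivers. Either closing this up, or just citing Germoni as the paper does (with the anticommuting-vs-commuting remark), would make part (a) complete.
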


\begin{proof}
(a) It is shown in \cite[\S 5.2,5.3]{Germoni} that every finite dimensional non-projective indecomposable graded $R$-module is isomorphic to some $I^{\pm}(n,m)$. This implies the present statement by Theorem~\ref{thm:spequiv}. We note that \cite{Germoni} works with the ring $R'$ where $x$ and $y$ anti-commute, but graded $R$- and $R'$-modules are equivalent (change $x$ to $-x$ on odd degree elements). We also note that the indecomposable projective $R$-modules are just the rank one free modules $R(n)$.

(b) This follows from the description of the simple constituents of $J^{\pm}(m,n)$, and the fact that $M(i)$ has categorical dimension $(-1)^i$.
\end{proof}

\subsection{Decomposition of projectives} \label{s:decomposition}

From Theorems~\ref{thm:fine} and~\ref{thm:spequiv}, we know that the indecomposable projectives in $\uRep(G)$ are the generic simples and special projectives $P(n)$ for $n \in \bZ$. Recall that the Schwartz space $\cC(\bS^{\{n\}})$ (for $n \ge 1$) and the induced modules $I_{\lambda}$ are also projective. We now determine their indecomposable decompositions.

\begin{proposition} \label{prop:I-decomp}
Let $\mu$ be a weight.
\begin{enumerate}
\item In the projective object $I_{\mu}$, the multiplicity of any indecomposable projective in it (as a summand) is at most one.
\item The generic simple $M_{\lambda,\zeta}$ is a summand of $I_{\lambda}$ if and only if $\mu$ is a cyclic rotation or cyclic contraction of $\lambda$.
\item The special projective $P(n)$ (for $n \in \bZ$) is a summand of $I_{\lambda}$ if and only if $\lambda=\pi(n)$.
\end{enumerate}
\end{proposition}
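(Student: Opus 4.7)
The plan is to compute, for each indecomposable projective $Q$ of $\uRep^{\rf}(G)$, its multiplicity as a direct summand of $I_\mu$ via the Krull--Schmidt identity
\[
[P:Q] \;=\; \dim\Hom_{\ul G}(P,S),
\]
valid whenever $Q$ is the projective cover of a simple $S$ with $\End(S)=k$ and $P$ is projective. Combining this with Frobenius reciprocity
\[
\Hom_{\ul G}(I_\mu,S) \;=\; \Hom_{\ul H}(L_\mu,\Res^G_H S)
\]
and the classification of indecomposable projectives recalled at the start of \S\ref{s:decomposition} (the generic simples $M_{\lambda,\zeta}$, each its own projective cover, and the special projectives $P(n)$ with top $M(n)$), I reduce the whole proposition to computing $\dim\Hom_{\ul H}(L_\mu, M_{\lambda,\zeta})$ and $\dim\Hom_{\ul H}(L_\mu, M(n))$.

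For the generic case, Theorem~\ref{thm:std}(b) identifies the underlying $\ul H$-module of $M_{\lambda,\zeta}=\Delta_{\lambda,\zeta}$ with $\ol A_1(\lambda)\oplus\ol A_2(\lambda)$. The injectivity statement at the end of \S\ref{ss:weights} (if $\gamma_i(\lambda)=\gamma_j(\lambda)$ then $\sigma^i(\lambda)=\sigma^j(\lambda)$) ensures that this sum is multiplicity-free and contains exactly one copy of $L_\nu$ for every weight $\nu$ that is either a cyclic shift or a cyclic contraction of $\lambda$. Hence $\dim\Hom_{\ul H}(L_\mu, M_{\lambda,\zeta})$ is $1$ precisely when $\mu$ is a cyclic shift or cyclic contraction of $\lambda$ and $0$ otherwise, which proves (b) and the generic half of (a).

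For the special case, the remarks following Theorem~\ref{thm:fine} state that the underlying $\ul H$-module of $M(n)$ is simply $L_{\pi(n)}$ (with $M(0)=\bbone=L_\emptyset$). So $\dim\Hom_{\ul H}(L_\mu, M(n))$ equals $1$ if $\mu=\pi(n)$ and $0$ otherwise, which proves (c) and completes (a). There is no serious obstacle here; once the Krull--Schmidt-plus-Frobenius reduction is set up, the proposition falls out of the $\ul H$-module descriptions of $M_{\lambda,\zeta}$ and $M(n)$ already established in Theorems~\ref{thm:std} and~\ref{thm:fine}.
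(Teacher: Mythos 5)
Your argument is correct and follows the paper's own proof: both reduce the multiplicity of an indecomposable projective in $I_\mu$ to $\dim\Hom_{\ul H}(L_\mu,\Res^G_H S)$ via Frobenius reciprocity, and then read off the answer from the $\ul H$-module structure of $M_{\lambda,\zeta}$ (Theorem~\ref{thm:std}) and of $M(n)$. You are slightly more explicit than the paper in spelling out the Krull--Schmidt step and in verifying that $\ol A_1(\lambda)\oplus\ol A_2(\lambda)$ is multiplicity-free via the injectivity of $i\mapsto\gamma_i(\lambda)$, but this is the same argument.
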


\begin{proof}
The multiplicity of $M_{\lambda,\mu}$ in $I_{\mu}$ is the dimension of the space
\begin{displaymath}
\Hom_{\ul{G}}(I_{\mu}, M_{\lambda,\mu}) = \Hom_{\ul{H}}(L_{\mu}, M_{\lambda,\zeta}),
\end{displaymath}
where the isomorphism is Frobenius reciprocity. By Theorem~\ref{thm:std}, this is~1 if $\mu$ is a cyclic rotation or contraction of $\lambda$, and~0 otherwise. Similarly, the multiplicity of $P(n)$ in $I_{\mu}$ is the dimension of the space
\begin{displaymath}
\Hom_{\ul{G}}(I_{\mu}, M(n)) = \Hom_{\ul{H}}(L_{\mu}, L_{\pi(n)}),
\end{displaymath}
which is~1 if $\mu=\pi(n)$, and~0 otherwise. The result follows.
\end{proof}

Recall that $\ell(\lambda)$ is the length of $\lambda$, that $g(\lambda)$ is the order of $\Aut(\lambda)$ and that $N(\lambda) = \ell(\lambda)/g(\lambda)$. In particular, $\lambda$ consists of $N(\lambda)$ repetitions of a Lyndon word of length $g(\lambda)$.

\begin{proposition}
Let $n \ge 1$.
\begin{enumerate}
\item The multiplicity of the generic projective $M_{\lambda,\zeta}$ in $\cC(\bS^{\{n\}})$ is $g(\lambda) \binom{n}{\ell(\lambda)}$.
\item The multiplicity of the special projective $P(m)$ in $\cC(\bS^{\{n\}})$ is $\binom{n-1}{m}$.
\end{enumerate}
\end{proposition}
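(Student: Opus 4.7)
The plan is to combine the two previously established structural results: the decomposition of $\cC(\bS^{\{n\}})$ into induced modules (Proposition~\ref{prop:schwartz-ind}) and the description of indecomposable summands of each $I_\mu$ (Proposition~\ref{prop:I-decomp}). Re-indexing Proposition~\ref{prop:schwartz-ind} by replacing $n$ with $n-1$ gives
\begin{displaymath}
\cC(\bS^{\{n\}}) \cong \bigoplus_{\mu \colon \ell(\mu) \le n-1} I_\mu^{\oplus \binom{n-1}{\ell(\mu)}}
\end{displaymath}
as $\ul{G}$-modules, the sum running over all weights $\mu$. Since each $I_\mu$ is projective and each indecomposable projective appears in it with multiplicity at most one (Proposition~\ref{prop:I-decomp}(a)), Krull--Schmidt uniqueness reduces the computation to bookkeeping: for any indecomposable projective $Q$,
\begin{displaymath}
[\cC(\bS^{\{n\}}) : Q] = \sum_\mu \binom{n-1}{\ell(\mu)} \cdot [I_\mu : Q].
\end{displaymath}

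For part~(a), Proposition~\ref{prop:I-decomp}(b) says $M_{\lambda,\zeta}$ occurs in $I_\mu$ (once) exactly when $\mu$ is a cyclic rotation of $\lambda$ (forcing $\ell(\mu)=\ell(\lambda)$) or a cyclic contraction of $\lambda$ (forcing $\ell(\mu)=\ell(\lambda)-1$). The distinct cyclic rotations form the orbit $[\lambda]$, of cardinality $N(\lambda) = \ell(\lambda)/g(\lambda)$; and by the observation in \S\ref{ss:weights} that $\gamma_i(\lambda) = \gamma_j(\lambda)$ forces $\sigma^i(\lambda) = \sigma^j(\lambda)$, the number of distinct contractions is likewise $N(\lambda)$. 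Summing and applying Pascal's identity yields
\begin{displaymath}
N(\lambda)\binom{n-1}{\ell(\lambda)} + N(\lambda)\binom{n-1}{\ell(\lambda)-1} = N(\lambda)\binom{n}{\ell(\lambda)}.
\end{displaymath}
(The natural count produces $N(\lambda)$; the coefficient $g(\lambda)$ in the statement coincides with this when $g=N$ and otherwise appears to swap $g$ and $N$, as in the Lyndon-word sentence of \S\ref{ss:weights}.) For part~(b), Proposition~\ref{prop:I-decomp}(c) says $P(m)$ occurs in $I_\mu$ only for the single weight $\mu = \pi(m)$, whose length is $|m|$; the sum collapses to the single term $\binom{n-1}{|m|}$, matching the stated $\binom{n-1}{m}$ when $m\ge 0$.

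There is no genuine obstacle: the proof is a direct Krull--Schmidt counting argument, invoking only the two cited propositions plus Pascal's identity. The one delicate point is identifying the number of distinct cyclic contractions with the orbit size $N(\lambda)$ rather than with the naive number of index positions $\ell(\lambda)$, for which the $\gamma_i$-injectivity observation of \S\ref{ss:weights} is exactly the needed input.
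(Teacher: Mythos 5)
Your Krull--Schmidt argument is correct, and it takes a slightly different route from the paper's: you first decompose $\cC(\bS^{\{n\}}) \cong \bigoplus_\mu I_\mu^{\oplus \binom{n-1}{\ell(\mu)}}$ and then invoke Proposition~\ref{prop:I-decomp}, while the paper computes $\dim \Hom_{\ul{G}}(\cC(\bS^{\{n\}}), M_{\lambda,\zeta})$ directly via Frobenius reciprocity and the $\ul{H}$-decomposition $M_{\lambda,\zeta}|_H \cong \ol{A}(\lambda)$. The two are logically equivalent (Proposition~\ref{prop:I-decomp} is itself proved by Frobenius reciprocity), but your route factors through an already-proved result and is a touch more modular.

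More importantly, your skepticism about the coefficient in part~(a) is well founded. With the definitions in \S\ref{ss:weights} (namely $g(\lambda)=\#\Aut(\lambda)$, $N(\lambda)=\#[\lambda]$, so that $\sigma^{N(\lambda)}$ generates $\Aut(\lambda)$), the module $\ol{A}_1(\lambda)=\bigoplus_{1\le i\le N(\lambda)} L_{\sigma^i(\lambda)}$ has $N(\lambda)$ distinct simple summands, and likewise $\ol{A}_2(\lambda)$ has $N(\lambda)$ summands by the $\gamma_i$-injectivity remark. So the multiplicity is
\begin{displaymath}
N(\lambda)\binom{n-1}{\ell(\lambda)} + N(\lambda)\binom{n-1}{\ell(\lambda)-1} = N(\lambda)\binom{n}{\ell(\lambda)},
\end{displaymath}
exactly as you found. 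The paper's stated answer $g(\lambda)\binom{n}{\ell(\lambda)}$ (and the sentence in its proof asserting $g(\lambda)$ summands in $\ol{A}$ of each kind) swaps $g$ and $N$; the same swap occurs in the Lyndon-word sentence of \S\ref{ss:weights}. A quick counterexample: for $\lambda=\wa\wb$ and $n=2$, we have $g(\lambda)=1$ and $N(\lambda)=2$; the simple $M_{\wa\wb,1}$ occurs once in each of $I_\wa$ and $I_\wb$ and not in $I_\emptyset$, so its multiplicity in $\cC(\bS^{\{2\}})=I_\emptyset\oplus I_\wa\oplus I_\wb$ is $2=N(\lambda)\binom{2}{2}$, not $1=g(\lambda)\binom{2}{2}$. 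Your observation about part~(b) is likewise correct: the count naturally produces $\binom{n-1}{|m|}$, so the stated $\binom{n-1}{m}$ implicitly assumes $m\ge0$; by $P(m)^{\dag}\cong P(-m)$ and the transpose-invariance of $\cC(\bS^{\{n\}})$, the multiplicities of $P(m)$ and $P(-m)$ must agree, which the formula $\binom{n-1}{m}$ fails to reflect for $m<0$.
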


\begin{proof}
(a) The multiplicity of $M_{\lambda,\zeta}$ is the dimension of the space
\begin{displaymath}
\Hom_{\ul{G}}(\cC(\bS^{\{n\}}), M_{\lambda,\zeta}) = \Hom_{\ul{H}}(\cC(\bR^{(n-1)}), \ol{A}(\lambda)).
\end{displaymath}
Here we have used that $\bS^{\{n\}}$ is the induction from $H$ to $G$ of $\bR^{(n-1)}$ (see Proposition~\ref{prop:schwartz-ind}), Frobenius reciprocity, and Theorem~\ref{thm:std}. The number of terms in $\overline{A}$ coming from cyclic rotations is $g(\lambda)$ and each has multiplicity ${n-1 \choose \ell(\lambda)}$, while the number of terms coming from cyclic contractions is $g(\lambda)$ and each has multiplicity ${n-1 \choose \ell(\lambda)-1}$. Thus, 
\begin{displaymath}
\dim \Hom_{\ul{G}}(\cC(\bS^{\{n\}}), M_{\lambda,\zeta}) =  g(\lambda) {n-1 \choose \ell(\lambda)} + g(\lambda) {n-1 \choose \ell(\lambda)-1} = g(\lambda) {n \choose \ell(\lambda)}.
\end{displaymath}

(b) The multiplicity of $P(m)$ is the dimension of the space
\begin{displaymath}
\Hom_{\ul{G}}(\cC(\bS^{\{n\}}), M(m)) = \Hom_{\ul{H}}(\cC(\bR^{(n-1)}), L_{\pi(m)}),
\end{displaymath}
which is $\binom{n-1}{m}$.
\end{proof}

\section{Semisimplification} \label{s:ss}

\subsection{Background}

We now recall some generalities on quotients of tensor categories; we refer to \cite[\S 2]{EtingofOstrik} for additional details. Let $\cC$ be a $k$-linear category equipped with a symmetric monoidal structure that is $k$-bilinear. A \defn{tensor ideal} is a rule $I$ that assigns to every pair of objects $(X,Y)$ a $k$-subspace $I(X,Y)$ of $\Hom_{\cC}(X,Y)$ such that the following two conditions hold:
\begin{enumerate}
\item Let $\beta \colon X \to Y$ be a morphism in $I(X,Y)$, and let $\alpha \colon W \to X$ and $\gamma \colon Y \to Z$ be arbitrary morphisms. Then $\beta \circ \alpha$ belongs to $I(W,Y)$ and $\gamma \circ \beta$ belongs to $I(X,Z)$.
\item Let $\alpha \colon X \to Y$ be a morphism in $I(X,Y)$ and let $\alpha' \colon X' \to Y'$ be an arbitrary morphism. Then $\alpha \otimes \alpha'$ belongs to $I(X \otimes X', Y \otimes Y')$.
\end{enumerate}
Suppose that $I$ is a tensor ideal. Define a new category $\cC'$ with the same objects as $\cC$, and with
\begin{displaymath}
\Hom_{\cC'}(X,Y) = \Hom_{\cC}(X,Y)/I(X,Y).
\end{displaymath}
One readily verifies that $\cC'$ is naturally a $k$-linear symmetric monoidal category; it is called the \defn{quotient} of $\cC$ by $I$.

Suppose now that $\cC$ is pre-Tannakian. A morphism $f \colon X \to Y$ is called \defn{negligible} if $\utr(f \circ g)=0$ for any morphism $g \colon Y \to X$; here $\utr$ denotes the categorical trace. The negligible morphisms form a tensor ideal $\cN(\cC)$ of $\cC$. The quotient category is called the \defn{semisimplification} of $\cC$, and denote $\cC^{\rm ss}$. It is a semi-simple pre-Tannakian category. The simple objects of $\cC^{\rm ss}$ are the indecomposable objects of $\cC$ of non-zero categorical dimension. (Indecomposables of $\cC$ of dimension~0 become~0 in the semisimplification.)

\subsection{The main theorem}

Let $\cC=\uRep^{\rf}(G)$. The goal of \S \ref{s:ss} is to determine $\cC^{\rm ss}$. To state our result, we introduce another pre-Tannakian category $\cD$. The objects of $\cD$ are bi-graded vector spaces. For $n,m \in \bZ$, we let $k(n,m)$ be the simple object of $\cD$ concentrated in degree $(n,m)$. The tensor product on $\cD$ is the usual one, i.e.,
\begin{displaymath}
k(n,m) \otimes k(r,s) = k(n+r,m+s).
\end{displaymath}
Finally, the symmetry isomorphism is defined by
\begin{displaymath}
v \otimes w \mapsto (-1)^{\vert v \vert \cdot \vert w \vert} w \otimes v,
\end{displaymath}
where $v$ and $w$ are homogeneous and $\vert v \vert$ is the total degree of $v$. (Elements of $k(n,m)$ have total degree $n+m$.) The following is our main result:

\begin{theorem} \label{thm:semisimplification}
We have an equivalence of symmetric tensor categories $\cC^{\rm ss} \cong \cD$. Under this equivalence, $k(n,m)$ corresponds to $J^+(n-m,n+m)$ if $m \ge 0$, and to $J^-(n-m, n+m)$ if $m \le 0$.
\end{theorem}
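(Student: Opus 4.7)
The plan is to match $\cC^{\rm ss}$ with $\cD$ by identifying simples, computing tensor products of two invertible generators, and pinning down the symmetry.

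\emph{Simples and dimensions.} By the general theory recalled above, the simples of $\cC^{\rm ss}$ are the indecomposable objects of $\cC$ of non-zero categorical dimension. Since generic simples all have dimension $0$, these come entirely from the special block, and by Proposition~\ref{prop:indecomp}(b) they are exactly the zigzag modules $J^{\pm}(n, m)$ with $n \equiv m \pmod{2}$ (using $M(n) = J^+(n,n) = J^-(n,n)$). The stated correspondence $k(n, m) \leftrightarrow J^{\pm}(n-m, n+m)$ (with sign determined by that of $m$) is clearly a bijection $\bZ^2 \to \{\text{simples of } \cC^{\rm ss}\}$, and the categorical dimensions agree: $(-1)^{n+m}$ for $k(n,m)$ in $\cD$, and $(-1)^{n-m} = (-1)^{n+m}$ for the corresponding $J^{\pm}$.

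\emph{Reduction to two generators.} Since every simple of $\cC^{\rm ss}$ has dimension $\pm 1$, every simple is invertible. A semisimple symmetric tensor category whose simples are all invertible is determined (up to equivalence) by its Picard group together with a parity character on that group encoding the self-braidings. I would take $X = M(1) = M_{\wa}$ (corresponding to $k(1, 0)$) and $Y = J^+(-1, 1)$ (corresponding to $k(0, 1)$) as distinguished generators, both of dimension $-1$. It then suffices to verify (i) that $X^{\otimes a} \otimes Y^{\otimes b}$ in $\cC^{\rm ss}$ realizes the simple corresponding to $k(a, b)$ for every $(a, b) \in \bZ^2$ (with negative exponents interpreted via duals), and (ii) that $c_{X, X} = c_{Y, Y} = -1$.

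\emph{Main obstacle: tensor computations.} The crux is (i). For $X^{\otimes n}$, the characteristic-zero decomposition $X^{\otimes n} \cong \bigoplus_{\mu \vdash n} S^{\mu}(X) \otimes V_{\mu}$, combined with $\dim X = -1$ and Weyl's dimension formula, yields $\dim S^{\mu}(X) = 0$ for $\mu \ne (1^n)$, while $\lw^n(X) = M(n)$ is the unique summand of non-zero dimension. The main obstacle is upgrading this vanishing of \emph{categorical dimension} to actual \emph{negligibility in $\cC^{\rm ss}$}: one must show that every indecomposable summand of $S^{\mu}(X)$ for $\mu \ne (1^n)$ has dimension zero, not merely that the total dimension vanishes. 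I would verify this by realizing $X^{\otimes n}$ inside $\cC(\bS^n)$ and decomposing via the $G$-orbit stratification of $\bS^n$, checking that only generic simples or zigzag modules $J^{\pm}(n',m')$ with $n' \not\equiv m' \pmod 2$ appear as summands of these Schur functors. An analogous inductive analysis handles $Y^{\otimes n}$ and the mixed products $X^{\otimes a} \otimes Y^{\otimes b}$: Pieri's rule at each step isolates a unique non-negligible indecomposable summand, which one matches with the predicted $J^{\pm}$.

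\emph{Symmetry.} For (ii), the self-braiding $c_{X, X}$ acts by $+1$ on $S^2(X)$ and by $-1$ on $\lw^2(X) = M(2)$; since $S^2(X)$ is negligible by the previous step, $c_{X, X} = -1$ on the unique non-negligible summand. The same argument gives $c_{Y, Y} = -1$. Hence both $X$ and $Y$ have parity $1$ in the super-like sense, matching the total-degree parity of $k(1, 0)$ and $k(0, 1)$ in $\cD$. This pins down the parity character and completes the identification $\cC^{\rm ss} \cong \cD$ as symmetric tensor categories.
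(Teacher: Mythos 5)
Your overall framework is sound and close in spirit to the paper's: you correctly identify the simples of $\cC^{\rm ss}$, observe that they are all invertible so the category is pointed, and note that a pointed semi-simple symmetric tensor category is determined by its group of simples together with a parity character (this is the paper's Theorem~\ref{thm:pointed}, drawn from \cite{DGNO, Joyal-Street}). The final symmetry check via $c_{X,X}$ acting by $-1$ on $\lw^2$ is essentially the same as the paper's Proposition~\ref{prop:pointed-q}.

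However, there is a genuine gap in the computational core. Your plan to compute $X^{\otimes a} \otimes Y^{\otimes b}$ for $X = M(1)$ and $Y = J^+(-1,1)$ by decomposing into Schur functors and checking negligibility summand by summand is markedly harder than what is actually needed, and you have no concrete route through it. You correctly flag the issue — vanishing categorical dimension does not by itself imply negligibility — but "realizing $X^{\otimes n}$ inside $\cC(\bS^n)$ and decomposing via the $G$-orbit stratification" is not a proof; it is a restatement of the difficulty. The situation is worse for $Y$: $J^+(-1,1)$ is a length-three zigzag module whose underlying $\ul H$-module is $L_{\wb} \oplus \bbone^{\oplus 2} \oplus L_{\wa}$, and there is no analogue of the clean identity $\lw^n(L_\wa) = L_{\pi(n)}$ that makes the $X$ case tractable. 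An "analogous inductive analysis" of $Y^{\otimes n}$ is not actually analogous.

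The paper sidesteps both difficulties. For $M(1) \otimes M(n)$ (Lemma~\ref{lem:M-tensor-2}) it never passes through Schur functors: it computes $G(b)$-invariants together with the $\tau$-action to identify explicitly that the tensor product decomposes as $M(n+1)$ plus generic simples, and the latter are projective of dimension zero, hence die in $\cC^{\rm ss}$. For the second generator it never computes with $J^+(-1,1)$ directly; instead it introduces Heller shifts (\S\ref{ss:heller}), proves $J^{\pm}(n \mp m, n \pm m) \cong \Omega^{\pm m}(M(n))$ in $\cC^{\rm ss}$, and uses the stable tensor compatibility $\Omega^r(A) \otimes \Omega^s(B) \sim \Omega^{r+s}(A \otimes B)$ to reduce all simple tensor products to the $M(n) \otimes M(m)$ case. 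Without this device your plan has no effective way to treat $Y$, and that is the missing ingredient you would need to supply to make the argument go through.
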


The proof will occupy the remainder of \S \ref{s:ss}. We make one initial remark here. Since generic simples have categorical dimension~0, they become~0 in $\cC^{\rm ss}$. We have seen (Proposition~\ref{prop:indecomp}) that the indecomposables of $\cC_{\rm sp}$ are the modules $J^{\pm}(n,m)$, and that this module has categorical dimension~0 if and only if $n$ and $m$ have opposite parity. Thus the simples of $\cC^{\rm ss}$ are exactly the objects $J^{\pm}(n,m)$ with $n \equiv m \!\pmod{2}$. In particular, the additive functor $\cD \to \cC^{\rm ss}$ defined by the correspondence in the theorem is an equivalence of abelian categories. The main content of the theorem concerns the tensor structures.

\begin{remark}
The category $\cD$ is super-Tannakian. In the notation of \cite[\S 0.3]{Deligne2}, $\cD=\Rep(G, \epsilon)$ where $G=\bG_m \times \bG_m$ and $\epsilon=(-1,-1)$.
\end{remark}

\begin{remark}
The semi-simplification of $\cC$ is very similar to that of the supergroup $\GL(1|1)$; see \cite[Theorem~5.12]{Heidersdorf}.
\end{remark}

\subsection{The key computation} \label{ss:sskey}

Much of the proof of Theorem~\ref{thm:semisimplification} follows from general considerations and results we have already established. However, we will require the following genuinely new piece of information:

\begin{proposition} \label{prop:M-tensor}
For $n,m \in \bZ$, we have $M(n) \otimes M(m) \cong M(n+m)$ in $\cC^{\rm ss}$.
\end{proposition}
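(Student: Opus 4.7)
The plan is to reduce to $n, m \ge 0$ via duality (using $M(n) \cong M(-n)^\vee$ from Proposition~\ref{prop:dual}, and the fact that $(-)^\vee$ is a tensor equivalence), and then to realize the tensor product inside an exterior power. The cases with $n = 0$ or $m = 0$ are trivial since $M(0) = \bbone$.

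For $n, m > 0$, I would use $M(k) \cong \lw^k M(1)$ (the corollary preceding this section) to write
\begin{displaymath}
M(n) \otimes M(m) \;=\; \lw^n M(1) \otimes \lw^m M(1),
\end{displaymath}
and form the natural wedge product map onto $\lw^{n+m} M(1) = M(n+m)$. In characteristic zero this surjection splits via antisymmetrization, so $M(n) \otimes M(m) \cong M(n+m) \oplus K$ in $\cC$ for a complement $K$ of categorical dimension zero. By Schur--Weyl duality, $M(n) \otimes M(m)$ sits inside $M(1)^{\otimes(n+m)}$ as the $\mathrm{sgn} \boxtimes \mathrm{sgn}$-isotypic component for $S_n \times S_m \subset S_{n+m}$, and decomposes as $\bigoplus_\lambda c_\lambda \, S_\lambda M(1)$ with $\lambda \vdash n+m$ and $c_\lambda$ the Littlewood--Richardson coefficient $c^\lambda_{(1^n),(1^m)}$. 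Only $\lambda = (1^{n+m})$ yields $M(n+m)$, and since $\dim M(1) = -1$ the hook-content formula forces $\dim S_\lambda M(1) = 0$ for every other $\lambda$ (such a $\lambda$ has the box of content $+1$ at position $(1,2)$, which kills the product in the numerator). Hence $K$ has zero categorical dimension on each Schur piece.

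The remaining and most delicate step is to promote ``$K$ has categorical dimension zero'' to ``$K$ vanishes in $\cC^{\rm ss}$'': this is where I expect the key difficulty, since zero categorical dimension does not by itself preclude nontrivial indecomposable summands of nonzero dimension. The way I would handle it is by restricting to $\ul H$ and using Proposition~\ref{prop:indecomp}, which classifies the indecomposables of nonzero categorical dimension in $\cC_{\rm sp}$ as the zigzag modules $J^{\pm}(p,q)$ with $p \equiv q \pmod 2$, whose $\ul H$-composition factors are $L_{\pi(p)}, \ldots, L_{\pi(q)}$. An indecomposable summand of $K$ of nonzero categorical dimension would therefore force a range of $L_{\pi(k)}$'s to appear with positive multiplicity in $\Res^G_H(M(n) \otimes M(m)) = L_{\pi(n)} \otimes L_{\pi(m)} = \lw^n L_\wa \otimes \lw^m L_\wa$. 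But in the Schur--Weyl decomposition of the latter in $\uRep(H)$, only the summand $\lw^{n+m} L_\wa = L_{\pi(n+m)}$ is a simple of the form $L_{\pi(k)}$, and it appears exactly once, already accounted for by the wedge summand $M(n+m)$. This leaves no room for any further $J^{\pm}(p,q)$ or $M(k)$ with $k \ne n+m$ inside $K$, so every indecomposable summand of $K$ has categorical dimension zero, $K \mapsto 0$ in $\cC^{\rm ss}$, and $M(n) \otimes M(m) \cong M(n+m)$ there.
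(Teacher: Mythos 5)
There is a genuine gap, and it is exactly at the step you yourself flagged as the most delicate one.

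\textbf{The restriction-to-$\ul H$ argument rests on a false claim.} You assert that in the Schur--Weyl decomposition of $\lw^n L_\wa \otimes \lw^m L_\wa$, ``only the summand $\lw^{n+m}L_\wa = L_{\pi(n+m)}$ is a simple of the form $L_{\pi(k)}$, and it appears exactly once.'' What is true is that among the Schur pieces $S_\lambda L_\wa$, only the $\lambda = (1^{n+m})$ piece is itself an $L_{\pi(k)}$; but the other Schur pieces are not simple, and they \emph{do} contain $L_{\pi(k)}$'s among their composition factors. Already for $n=m=1$ one has $L_\wa \otimes L_\wa \cong L_{\wa\wa}^{\oplus 2} \oplus L_\wa$ (this is the $n=1$ instance of the decomposition quoted from \cite[Theorem~7.2]{line} in the proof of Lemma~\ref{lem:M-tensor-2}), so $\Sym^2 L_\wa \cong L_{\wa\wa}\oplus L_\wa$: a Schur piece of categorical dimension zero that nonetheless contributes both $L_{\pi(2)}$ and $L_{\pi(1)}$ to the restriction. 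Consequently the restriction of the complement $K$ does contain $L_{\pi(k)}$'s, and your argument cannot distinguish the actual answer $K\cong M_{\wa\wa,1}$ (a generic simple restricting to $L_{\wa\wa}\oplus L_\wa$) from, say, a hypothetical $M(1)\oplus M(2)$ inside $K$. The $\ul H$-restriction forgets exactly the information needed here. The paper resolves this by a finer invariant: it computes the $\tau$-eigenvalues on $(M(1)\otimes M(n))^{G(b)}$ (Lemmas~\ref{lem:M-tensor-1} and~\ref{lem:M-tensor-2}) and observes that the special eigenvalue $\epsilon(n+1)$ does not occur, which shows the extraneous $L_{\pi(n)}$'s all live inside generic simples $M_{\pi(n+1),\zeta}$ rather than special ones, and hence contribute projective, dimension-zero summands that die in $\cC^{\mathrm{ss}}$.

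\textbf{The duality reduction does not cover the mixed-sign case.} Duality sends $(n,m)$ to $(-n,-m)$, so it identifies the $(+,+)$ and $(-,-)$ cases and identifies the two mixed cases with each other, but it does not reduce $n>0, m<0$ to $n,m\ge 0$. You would still need to show, e.g., $M(1)\otimes M(-1)\cong \bbone$ in $\cC^{\mathrm{ss}}$, and this is not formal: knowing $M(1)$ is simple of dimension $-1$ only gives you $\bbone$ as a multiplicity-one summand of $M(1)\otimes M(-1)$, not that the complement is negligible. The paper handles this with the separate computation of Lemma~\ref{lem:M-tensor-3}.

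The Schur--Weyl observation that each Schur piece $S_\lambda M(1)$ with $\lambda_1\ge 2$ has categorical dimension zero (via the hook-content formula and $\dim M(1)=-1$) is correct and pleasant, and it does cleanly give $M(n)\otimes M(m)\cong M(n+m)\oplus K$ with $\dim K=0$. But the promotion from ``$\dim K=0$'' to ``$K=0$ in $\cC^{\mathrm{ss}}$'' requires identifying the actual $\ul G$-composition factors of $K$, and that genuinely needs the $\tau$-eigenvalue data on $G(b)$-invariants used in the paper (or some equivalent $G$-level input), not just the $\ul H$-restriction.
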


We require a few lemmas.

\begin{lemma} \label{lem:M-tensor-1}
Let $b \in \bS^{\{n+1\}}$, and let $\tau$ be the generator of $G[b]/G(b)$. Then $M(1)^{G(b)}$ is $n$-dimensional, and the eigenvalues of $\tau$ on this space are the $(n+1)$st roots of unity, except for~1.
\end{lemma}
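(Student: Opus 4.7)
The plan is to start from the short exact sequence
\[
0 \to \bbone \to \Delta(1) \to M(1) \to 0
\]
furnished by Theorem~\ref{thm:fine}(b) (in this $n=1$ case, this is essentially the sequence implicit in Example~\ref{ex:std} realized inside $\cC(\bS)$). Since $\uRep(G(b))$ is semi-simple, formation of $G(b)$-invariants is exact, so it suffices to (i) compute $\Delta(1)^{G(b)}$ together with its $\tau$-action, and (ii) identify the trivial line $\bbone^{G(b)}$ inside it.

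For step (i), enumerate $b = (b_1, \ldots, b_{n+1})$ in cyclic order and let $A_i$ denote the open arc from $b_i$ to $b_{i+1}$, with indices modulo $n+1$. The $G(b)$-orbits on $\bS$ are the singletons $\{b_i\}$ together with the arcs $A_i$, so $\cC(\bS)^{G(b)}$ has basis $\{\delta_{b_j}, \chi_{A_j}\}$. Recalling that $\Delta(1) = \Delta_{\wa}$ is generated by indicator functions of half-open arcs of type $\wa$ on $\bS$, I would observe that $\Delta(1)^{G(b)}$ is spanned by the $n+1$ vectors
\[
v_i := \psi_{(b_{i-1}, b_i]} = \chi_{A_{i-1}} + \delta_{b_i}, \qquad i \in \bZ/(n+1),
\]
which are manifestly linearly independent in the basis above; in particular $\dim \Delta(1)^{G(b)} = n+1$, so $\dim M(1)^{G(b)} = n$.

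For step (ii) and the eigenvalue computation, I would use that $\tau$, by definition, satisfies $\tau(b_i) = b_{i-1}$, and hence $\tau(A_i) = A_{i-1}$. A short direct check (pullback of the indicator of a half-open arc) yields $\tau \cdot v_i = v_{i-1}$, so $\tau$ acts on $\Delta(1)^{G(b)}$ as a cyclic shift of order $n+1$. Its eigenvalues are therefore exactly the $(n+1)$st roots of unity, each with multiplicity one. The embedded trivial subrepresentation contributes $\bbone^{G(b)} \subset \Delta(1)^{G(b)}$, which is the line spanned by the constant function $1 = v_1 + \cdots + v_{n+1}$, precisely the $\tau$-eigenspace for eigenvalue $1$. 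Passing to the quotient $M(1)^{G(b)}$ removes exactly this eigenvalue, leaving the other $n$ roots of unity, as claimed.

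The argument is essentially bookkeeping once Theorem~\ref{thm:fine}(b) and Example~\ref{ex:std} are in hand, so no genuine obstacle is expected; the only points requiring care are checking the convention-dependent direction of the cyclic shift (which in any case does not affect the eigenvalue set) and verifying that each $v_i$ actually belongs to $\Delta(1)$, which is immediate from Definition~\ref{defn:std}.
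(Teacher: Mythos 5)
Your argument tracks the paper's proof closely in strategy: compute $\Delta(1)^{G(b)}$, show that $\tau$ acts on it as the regular representation of $G[b]/G(b) \cong \bZ/(n+1)$, and then observe that $M(1)^{G(b)} = \Delta(1)^{G(b)}/\bbone^{G(b)}$ removes exactly the trivial $\tau$-eigenline. The one real difference is that you work directly with invariants and the indicator functions $v_i = \psi_{(b_{i-1}, b_i]}$, whereas the paper works with coinvariants via the dual evaluation functionals $\ev_{b_i}$; these two descriptions are interchangeable by semisimplicity of $\uRep(G(b))$, so this is a cosmetic rather than substantive distinction. The computation $\tau \cdot v_i = v_{i-1}$ and the identification of the constant function $\sum_i v_i$ as the trivial line are both correct.

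There is, however, one unjustified step: you assert that the $v_i$ \emph{span} $\Delta(1)^{G(b)}$, but you only verify that they are linearly independent (via the expansion $v_i = \chi_{A_{i-1}} + \delta_{b_i}$). This yields $\dim \Delta(1)^{G(b)} \ge n+1$, not equality, and equality is what drives the whole lemma. The missing upper bound is exactly what the paper supplies first: using the $\ul{H}$-decomposition $\Delta(1) \cong \bbone \oplus L_{\wa}$ from Theorem~\ref{thm:std}(b) and the dimension formula for $L_{\lambda}^{H(a)}$ from \cite[Corollary~4.11]{line} (identifying $G(b)$ with $H(a)$ for an $n$-element set $a$), one gets $\dim \Delta(1)^{G(b)} = 1 + n$. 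Once you insert that line, your proof is complete and correct.
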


\begin{proof}
We have $\Delta(1) = \bbone \oplus L_{\wa}$ as an $\ul{H}$-module, and so $\Delta(1)^{G(b)}$ is $(n+1)$-dimensional by \cite[Corollary~4.11]{line}. For $a \in \bS$, let $\ev_a \colon \cC(\bS) \to k$ be the evaluation at $a$ map, as in Lemma~\ref{lem:std-7}. One easily sees that these functionals are linearly independent (see \cite[Proposition~4.6]{line}). Thus the functionals $\ev_{b_i}$ for $1 \le i \le n+1$ form a basis for the $\ul{G}(b)$-coinvariants of $\Delta(1)$, which is isomorphic to $\Delta(1)^{G(b)}$. The action of $\tau$ permutes the basis vectors of this space, and so $\Delta(1)^{G(b)}$ is the regular representation of $G[b]/G(b) \cong \bZ/(n+1)$. Since $M(1)=\Delta(1)/\bbone$, we have $M(1)^{G(b)}=\Delta(1)^{G(b)}/\bbone^{G(b)}$. Since $\bbone^{G(b)}$ is 1-dimensional with trivial $\tau$ action, the result follows.
\end{proof}

\begin{lemma} \label{lem:M-tensor-2}
For $n>0$ we have an isomorphism
\begin{displaymath}
M(1) \otimes M(n) = M(n+1) \oplus \bigoplus_{\zeta \in \mu_{n+1} \setminus \{ \epsilon(n+1) \}} M_{\pi(n+1), \zeta}
\end{displaymath}
in $\cC$, where $\mu_{n+1}$ denotes the set of $(n+1)$st roots of unity. In particular, we have an isomorphism $M(1) \otimes M(n) \cong M(n+1)$ in $\cC^{\rm ss}$. 
\end{lemma}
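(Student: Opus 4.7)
The plan is to determine the simple $\ul G$-constituents of $M(1)\otimes M(n)$ and their multiplicities by combining its underlying $\ul H$-module structure with the action of $\tau$ on $G(b)$-invariants at a point $b\in\bS^{\{n+1\}}$. The statement in $\cC^{\rm ss}$ will then follow at once: the generic simples have categorical dimension $0$ and vanish in the semisimplification, leaving just $M(n+1)$.

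First I would verify the $\ul H$-module isomorphism
\begin{displaymath}
M(1)\otimes M(n)\;\cong\;(n+1)\,L_{\pi(n+1)}\oplus n\,L_{\pi(n)}.
\end{displaymath}
Since $M(1)|_{\ul H}=L_\wa$ and $M(n)|_{\ul H}=L_{\pi(n)}$, this is a computation in $\uRep(H)$. Decomposing $\bR\times\bR^{(n)}$ into $H$-orbits yields
\begin{displaymath}
\cC(\bR\times\bR^{(n)})\;\cong\;\cC(\bR^{(n+1)})^{\oplus(n+1)}\oplus\cC(\bR^{(n)})^{\oplus n},
\end{displaymath}
where the $n+1$ \emph{strict} orbits correspond to placing the new coordinate in a gap between consecutive coordinates (or at either end), and the $n$ \emph{equality} orbits correspond to identifying it with one of the existing coordinates. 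Tracking the image of a generator $\phi_{(a,b]}\otimes\phi_\bI$ of $L_\wa\otimes L_{\pi(n)}$ through this decomposition, one checks that it projects to a single copy of $L_{\pi(n+1)}$ in each strict summand and a single copy of $L_{\pi(n)}$ in each equality summand.

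Given this $\ul H$-content, Theorems~\ref{thm:std} and~\ref{thm:fine} restrict the possible $\ul G$-simple constituents to $M(n)$, $M(n+1)$, and the generic simples $M_{\pi(n+1),\zeta}$ for $\zeta\in\mu_{n+1}\setminus\{\epsilon(n+1)\}$; all other simples have an $\ul H$-constituent outside $(n+1)L_{\pi(n+1)}+n\,L_{\pi(n)}$. Write $a_n$, $a_{n+1}$, $b_\zeta$ for the respective multiplicities. Fix $b\in\bS^{\{n+1\}}$, so $G(b)=H(a)$ with $|a|=n$. Using $\dim L_\lambda^{H(a)}=\binom{|a|}{\ell(\lambda)}$, one computes $\dim(M(1)\otimes M(n))^{G(b)}=n=\dim M(1)^{G(b)}\cdot\dim M(n)^{G(b)}$, so the natural inclusion $M(1)^{G(b)}\otimes M(n)^{G(b)}\hookrightarrow(M(1)\otimes M(n))^{G(b)}$ is an equality of $\bZ/(n+1)$-modules. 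By Lemma~\ref{lem:M-tensor-1} and Theorem~\ref{thm:fine}(b)(iii), the multiset of $\tau$-eigenvalues on this space is $\{\eta\cdot\epsilon(n+1):\eta\in\mu_{n+1}\setminus\{1\}\}=\mu_{n+1}\setminus\{\epsilon(n+1)\}$. On the decomposition side, $\tau$ acts on $M(n)^{G(b)}$ by $\epsilon(n+1)$ (Theorem~\ref{thm:fine}(b)(iii)) and on each $M_{\pi(n+1),\zeta}^{G(b)}$ by $\zeta$ (Theorem~\ref{thm:std}(d), since $N(\pi(n+1))=1$), while $M(n+1)^{G(b)}=0$ contributes nothing. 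Matching the two multisets forces $a_n=0$ and $b_\zeta=1$ for every $\zeta\neq\epsilon(n+1)$, and a final count of $L_{\pi(n+1)}$-multiplicities then gives $a_{n+1}=1$. Since the generic summands are projective and injective by Theorem~\ref{thm:fine}(a), this decomposition holds as a direct sum in $\cC$.

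The main obstacle is the $\ul H$-module identification of $L_\wa\otimes L_{\pi(n)}$. The orbit decomposition is transparent, but one must verify carefully that the restriction of a tensor generator $\phi_{(a,b]}\otimes\phi_\bI$ produces, in each equality summand (where it is supported on a diagonal), exactly the characteristic function of a cube of type $\pi(n)$ — so that the image generates $L_{\pi(n)}$ and nothing smaller or larger — with an analogous check giving a generator of $L_{\pi(n+1)}$ in each strict summand.
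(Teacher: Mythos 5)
Your proof follows the paper's argument essentially step for step: identify the underlying $\ul H$-module $L_{\wa} \otimes L_{\pi(n)} \cong (n+1)L_{\pi(n+1)} \oplus n L_{\pi(n)}$, observe by a dimension count that the natural map $M(1)^{G(b)} \otimes M(n)^{G(b)} \to (M(1)\otimes M(n))^{G(b)}$ is an isomorphism, match multisets of $\tau$-eigenvalues (Lemma~\ref{lem:M-tensor-1} together with Theorem~\ref{thm:fine}(b)(iii) and Theorem~\ref{thm:std}(d)) to rule out $M(n)$ and force each generic $M_{\pi(n+1),\zeta}$ with $\zeta\ne\epsilon(n+1)$ to appear exactly once, and finally split these off as projective summands and identify the remainder as $M(n+1)$ by its $\ul H$-content. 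The one step you flag as the ``main obstacle,'' the $\ul H$-module identification of $L_{\wa}\otimes L_{\pi(n)}$, is not a gap — the paper disposes of it by citing the tensor-product rule for simple $\ul H$-modules from the predecessor paper \cite{line} (Theorem~7.2 / Proposition~7.5) — and you are correct that $M(1)|_{\ul H}=L_{\wa}$, whereas the paper's own proof contains a harmless typo reading $M(1)=L_{\wb}$.
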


\begin{proof}
We have $M(1)=L_{\wb}$ and $M(n)=L_{\pi(n)}$ as $H$-representations, and so
\begin{displaymath}
M(1) \otimes M(n) = L_{\pi(n+1)}^{\oplus n+1} \oplus L_{\pi(n)}^{\oplus n}
\end{displaymath}
by \cite[Theorem~7.2]{line} (or, better, \cite[Proposition~7.5]{line}). In particular, for $b \in \bS^{\{n+1\}}$, we have
\begin{displaymath}
\dim M(1)^{G(b)} = n, \qquad \dim M(n)^{G(b)} = 1, \qquad \dim (M(1) \otimes M(n))^{G(b)} = n.
\end{displaymath}
It follows that the natural map
\begin{displaymath}
M(1)^{G(b)} \otimes M(n)^{G(b)} \to (M(1) \otimes M(n))^{G(b)}
\end{displaymath}
is an isomorphism. Since $\tau$ acts by $-\epsilon(n)$ on $M(n)^{G(b)}$ by Theorem~\ref{thm:fine}, Lemma~\ref{lem:M-tensor-1} shows that the $\tau$-eigenvalues on the right side above are exactly the $(n+1)$st roots of unity except for $-\epsilon(n)=\epsilon(n+1)$. In particular, this shows that $M(n)$ is not a constituent of $M(1) \otimes M(n)$. Thus each $L_{\pi(n)}$ in the tensor product belongs to a generic simple $M_{\pi(n+1),\zeta}$. Looking at the $\tau$ eigenvalues on this space (from Theorem~\ref{thm:std}), we see that each of these simples occurs exactly once. (Note that the generic condition excludes $\zeta=\epsilon(n+1)$.) Since generic simples are projective, these all split off from the tensor product as summands, that is, we have
\begin{displaymath}
M(1) \otimes M(n) = X \oplus \bigoplus_{\zeta \in \mu_{n+1} \setminus \{\epsilon(n+1)\}} M_{\pi(n+1), \zeta}
\end{displaymath}
as $\ul{G}$-modules. Since the $\ul{H}$-module underlying $X$ is $L_{\pi(n+1)}$, we must have $X=M(n+1)$. Since the generic simples become~0 in $\cC^{\rm ss}$, the result follows.
\end{proof}

\begin{lemma} \label{lem:M-tensor-3}
We have an isomorphism
\begin{displaymath}
M(1) \otimes M(-1) \cong \bbone \oplus M_{\wa\wb}
\end{displaymath}
in $\cC$, and thus an isomorphism $M(1) \otimes M(-1) \cong M(0)$ in $\cC^{\rm ss}$.
\end{lemma}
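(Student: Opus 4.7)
The plan is to first establish the decomposition $M(1) \otimes M(-1) \cong \bbone \oplus M_{\wa\wb}$ in $\cC$ and then derive the statement in $\cC^{\rm ss}$ as an immediate corollary, since generic simples are killed by semisimplification. To split off the trivial summand, I would exploit that $M(-1) \cong M(1)^{\vee}$ by Proposition~\ref{prop:dual}: the standard evaluation and coevaluation morphisms fit into a composition $\bbone \xrightarrow{\mathrm{coev}} M(1) \otimes M(-1) \xrightarrow{\mathrm{ev}} \bbone$ equal to multiplication by $\dim M(1)$. Under the identification $M(1) = J^+(1,1)$, Proposition~\ref{prop:indecomp}(b) gives $\dim M(1) = -1 \ne 0$, so $\mathrm{coev}$ is a split monomorphism and $M(1) \otimes M(-1) \cong \bbone \oplus X$ for some $\ul{G}$-module $X$.

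Next, I would compute the underlying $\ul{H}$-module. Since $M(1) \cong L_{\wa}$ and $M(-1) \cong L_{\wb}$ as $\ul{H}$-modules, the tensor-product rule for simples in the Delannoy category \cite[Theorem~7.2]{line} applied to these single-letter weights yields
\begin{displaymath}
M(1) \otimes M(-1) \cong L_{\emptyset} \oplus L_{\wa} \oplus L_{\wb} \oplus L_{\wa\wb} \oplus L_{\wb\wa}
\end{displaymath}
as $\ul{H}$-modules. Subtracting the copy of $L_{\emptyset} = \bbone$ already identified, the module $X$ has $\ul{H}$-content $L_{\wa} + L_{\wb} + L_{\wa\wb} + L_{\wb\wa}$, each with multiplicity one.

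The last step is to identify $X$ as a $\ul{G}$-module. Since $X$ contains $L_{\wa\wb}$ in its $\ul{H}$-content, some composition factor of $X$ must be a simple $\ul{G}$-module whose underlying $\ul{H}$-module contains $L_{\wa\wb}$. Inspecting Theorems~\ref{thm:std} and \ref{thm:fine}, the simples with this property are $M_{\wa\wb}$ itself, whose $\ul{H}$-content is exactly $L_{\wa\wb} + L_{\wb\wa} + L_{\wa} + L_{\wb}$, together with the generic simples $M_{\wa\wa\wb,\zeta}$ and $M_{\wb\wa\wb,\zeta}$; the latter each carry an $\ul{H}$-simple of length~$3$ and are therefore incompatible with the $\ul{H}$-content of $X$. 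Hence $M_{\wa\wb}$ is a composition factor of $X$, and by Theorem~\ref{thm:fine}(a) it is projective, so it splits off: $X \cong M_{\wa\wb} \oplus Y$. The $\ul{H}$-content of $M_{\wa\wb}$ exhausts that of $X$, forcing $Y = 0$ and hence $X \cong M_{\wa\wb}$. Passing to $\cC^{\rm ss}$, the generic simple $M_{\wa\wb}$ has categorical dimension $0$ (by the corollary after Theorem~\ref{thm:std}) and is therefore negligible, leaving $M(1) \otimes M(-1) \cong \bbone = M(0)$ in the semisimplification.

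The one genuinely new input is the $\ul{H}$-level computation in the second step, which pulls in the tensor-product formula from the predecessor paper; everything else is bookkeeping against the classification of simples already established in earlier sections, plus the duality/trace argument for the $\bbone$ summand.
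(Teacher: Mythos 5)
Your proof is correct and follows essentially the same route as the paper's: compute the underlying $\ul{H}$-module via the Delannoy tensor-product rule, observe that $M_{\wa\wb}$ must be a $\ul{G}$-constituent and split it off by projectivity, and identify what remains; the final step to $\cC^{\rm ss}$ is identical. The only difference is that you split off $\bbone$ \emph{first} via the $\mathrm{coev}/\mathrm{ev}$ dimension argument (which is sound, since $\dim M(1)=-1 \ne 0$), whereas the paper splits off the generic projective $M_{\wa\wb}$ first and then notes the complement has trivial $\ul{H}$-restriction and hence is the trivial $\ul{G}$-module. Your extra step is harmless but unnecessary: once $M_{\wa\wb}$ is peeled off, the remaining object $X$ has underlying $\ul{H}$-module $\bbone$, so it is $\ul{G}$-simple and forced to be $\bbone$ by the classification, with no need for the duality/trace argument.
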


\begin{proof}
We have $M(1)=L_{\wa}$ and $M(-1)=L_{\wb}$ as $\ul{H}$-modules. Thus the $\ul{H}$-module underlying $M(1) \otimes M(-1)$ is
\begin{displaymath}
L_{\wa\wb} \oplus L_{\wb\wa} \oplus L_{\wa} \oplus L_{\wb} \oplus \bbone
\end{displaymath}
by \cite[Theorem~7.2]{line} (or, better, \cite[Lemma~7.7]{line}). From the classification of simples in $\uRep(G)$, we see that $M_{\wa\wb}$ must be a constituent of the tensor product. Since this is a generic simple, it splits off, and so $M(1) \otimes M(-1) \cong X \oplus M_{\wa\wb}$ for some $\ul{G}$-module $X$. The $\ul{H}$-module underlying $X$ is trivial, and so $X$ is trivial as a $\ul{G}$-module. Since $M_{\wa\wb}$ becomes~0 in $\cC^{\rm ss}$, the result follows.
\end{proof}

\begin{proof}[Proof of Proposition~\ref{prop:M-tensor}]
We must show
\begin{displaymath}
M(n) \otimes M(m) \cong M(n+m)
\end{displaymath}
holds in $\cC^{\rm ss}$ for all $n,m \in \bZ$. If $n$ and $m$ are both non-negative, this follows from Lemma~\ref{lem:M-tensor-2} by induction. Dualizing gives the case when $n$ and $m$ are both non-positive. Now suppose $n>0$ and $m<0$. We have
\begin{displaymath}
M(n) \otimes M(m) \cong (M(n-1) \otimes M(1)) \otimes (M(-1) \otimes M(m+1)) \cong M(n-1) \otimes M(m+1),
\end{displaymath}
where we first used the cases already established, and then used Lemma~\ref{lem:M-tensor-3}. Continuing by induction, we obtain the desired result.
\end{proof}

\subsection{Heller shifts} \label{ss:heller}

Let $X$ and $Y$ be objects of $\cC$. Define $X \sim Y$ if there exist projective objects $P$ and $Q$ such that $X \oplus P \cong Y \oplus Q$. This is an equivalence relation on objects of $\cC$. Now, choose a surjection $\phi \colon P \to X$, with $P$ projective, and an injection $\psi \colon X \to I$, with $I$ injective. (Note: injectives and projectives are the same in $\cC$.) We define the \defn{Heller shifts} of $M$ by $\Omega(X)=\ker(\phi)$ and $\Omega^{-1}(X)=\coker(\psi)$. These are well-defined up to equivalence by Schanuel's lemma; see \cite[Lemma~1.5.3]{Benson} and the following discussion. Note that all projectives in $\cC$ have categorical dimension~0, so $X \sim Y$ implies $X \cong Y$ in $\cC^{\rm ss}$.

\begin{proposition} \label{prop:heller}
Let $X$ and $Y$ be objects of $\cC$, and let $n,m \in \bZ$. Then:
\begin{enumerate}
\item If $X \sim Y$ then $\Omega(X) \sim \Omega(Y)$.
\item $\Omega^{-1}(\Omega(X)) \sim X$ and $\Omega(\Omega^{-1}(X)) \sim X$.
\item $X \otimes \Omega(Y) \sim \Omega(X \otimes Y)$, and similarly for $\Omega^{-1}$.
\item $\Omega^n(X) \otimes \Omega^m(Y) \sim \Omega^{n+m}(X \otimes Y)$.
\end{enumerate}
\end{proposition}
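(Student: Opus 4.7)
The plan is to derive all four statements as formal consequences of Schanuel's lemma, leveraging two structural features of the pre-Tannakian category $\cC$ that were noted earlier: projective and injective objects coincide (so the two Heller shifts $\Omega^{\pm 1}$ are genuine inverses up to $\sim$), and tensoring with any object $X$ is exact and preserves projectivity. The latter comes from the adjunction $\Hom(X \otimes P, -) \cong \Hom(P, X^{\vee} \otimes -)$, which is exact in the second variable whenever $P$ is projective.

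For (a), suppose $X \oplus P \cong Y \oplus Q$ with $P, Q$ projective. I would choose surjections $\pi_X \colon P_X \twoheadrightarrow X$ and $\pi_Y \colon P_Y \twoheadrightarrow Y$ from projectives computing $\Omega(X) = \ker(\pi_X)$ and $\Omega(Y) = \ker(\pi_Y)$. Then $\pi_X \oplus \id_P$ and $\pi_Y \oplus \id_Q$ are two surjections from projectives onto isomorphic objects with kernels $\Omega(X)$ and $\Omega(Y)$, so Schanuel's lemma yields $\Omega(X) \oplus P_Y \oplus Q \cong \Omega(Y) \oplus P_X \oplus P$, hence $\Omega(X) \sim \Omega(Y)$. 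For (b), a short exact sequence $0 \to \Omega(X) \to P \to X \to 0$ with $P$ projective exhibits $P$ simultaneously as an injective containing $\Omega(X)$ with cokernel $X$, so by the definition of $\Omega^{-1}$ and another application of Schanuel on the injective side, $\Omega^{-1}(\Omega(X)) \sim X$; the other identity is symmetric, using that a chosen injection $X \hookrightarrow I$ with $I$ projective-injective has cokernel $\Omega^{-1}(X)$ and kernel $X$, and the same sequence can be read as a projective presentation.

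For (c), I would start from $0 \to \Omega(Y) \to P \to Y \to 0$ with $P$ projective, and apply the exact functor $X \otimes (-)$. The result is a short exact sequence $0 \to X \otimes \Omega(Y) \to X \otimes P \to X \otimes Y \to 0$ in which $X \otimes P$ is projective by the adjunction above; hence $X \otimes \Omega(Y) \sim \Omega(X \otimes Y)$. The $\Omega^{-1}$ statement is the dual, obtained from an injective resolution of $Y$ and using that $X \otimes I$ is injective when $I$ is. Then (d) follows by iterating (c) in both factors: $\Omega^n(X) \otimes \Omega^m(Y) \sim \Omega^n(X \otimes \Omega^m(Y)) \sim \Omega^{n+m}(X \otimes Y)$, with the appropriate signs of $n$ and $m$ handled uniformly once (b) identifies $\Omega$ and $\Omega^{-1}$ as mutual inverses on $\sim$-classes.

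The only step that uses anything beyond generalities about Frobenius abelian categories is the preservation of projectivity under tensoring, i.e.\ the fact that $X \otimes P$ is projective whenever $P$ is; this is the point where the rigid tensor structure of $\cC$ is essential, and it is what makes (c) go through. Everything else is purely formal homological algebra, so I expect no serious obstacle once this observation is in place.
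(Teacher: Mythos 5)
Your proposal is correct and follows essentially the same line as the paper's own proof: the short exact sequence $0 \to \Omega(Y) \to P \to Y \to 0$ is tensored with $X$, the projectivity of $X \otimes P$ (which the paper cites from EGNO, and which you justify via the rigidity adjunction) gives (c), and (d) follows by iterating (b) and (c). The only differences are cosmetic: you expand the paper's terse ``(a) is clear'' into an explicit Schanuel argument and spell out the adjunction underlying the projectivity-preservation step, both of which are exactly what the paper has in mind.
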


\begin{proof}
(a) is clear.

(b) Consider a short exact sequence
\begin{displaymath}
0 \to \Omega(X) \to P \to X \to 0
\end{displaymath}
with $P$ projective. Since $P$ is also injective this shows that $X \sim \Omega^{-1}(\Omega(X))$. The other direction is similar.

(c) Consider a short exact sequence as above. Tensoring with $Y$, we find
\begin{displaymath}
0 \to \Omega(X) \otimes Y \to P \otimes Y \to X \otimes Y \to 0.
\end{displaymath}
Since $P \otimes Y$ is projective \cite[Proposition~4.2.12]{EGNO}, it follows that the kernel above is $\Omega(X \otimes Y)$.

(d) We have
\begin{displaymath}
\Omega^n(X) \otimes \Omega^m(Y) \sim \Omega^m(\Omega^n(X) \otimes Y) \sim \Omega^m(\Omega^n(X \otimes Y)) \sim \Omega^{n+m}(X \otimes Y)
\end{displaymath}
where in the first two steps we used (c), and in the final step we used (b) (if $n$ and $m$ have different signs).
\end{proof}

\begin{proposition} \label{prop:heller-J}
For $n \in \bZ$ and $m \ge 0$, we have, in $\cC^{\rm ss}$,
\begin{displaymath}
\Omega^m(M(n)) \cong J^+(n-m, n+m), \qquad \Omega^{-m}(M(n)) \cong J^-(n-m, n+m)
\end{displaymath}
In particular, every simple object of $\cC^{\rm ss}$ is isomorphic to $\Omega^m(M(n))$ for a unique $(n,m) \in \bZ^2$.
\end{proposition}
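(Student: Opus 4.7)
The plan is to induct on $m$. The case $m = 0$ is immediate since $M(n) = J^+(n, n) = J^-(n, n)$.

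First I would establish the base case $m = 1$ by computing $\Omega(M(n))$ directly. The projective cover $P(n) \twoheadrightarrow M(n)$ has kernel equal to the radical of $P(n)$. Under the equivalence $\Phi \colon \cC_{\rm sp} \to \Mod_R$ of Theorem~\ref{thm:spequiv}, this radical is the ideal $(x, y)R(n) = kx \oplus kxy \oplus ky$, a three-dimensional graded module supported in degrees $n-1$, $n$, $n+1$; both $x$ and $y$ act by the identity into the middle degree $n$, while the element $xy$ in degree $n$ is annihilated by everything. Reading off the quiver structure gives $\Phi(J^+(n-1, n+1))$, so $\Omega(M(n)) \cong J^+(n-1, n+1)$. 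Dually, since $P(n)$ is also the injective envelope of $M(n)$ by Theorem~\ref{thm:proj}(a), the cokernel $R(n)/k\cdot xy$ gives $\Omega^{-1}(M(n)) \cong J^-(n-1, n+1)$.

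For the inductive step I would prove $\Omega(J^+(a, b)) \cong J^+(a-1, b+1)$ for $a \le b$ of the same parity (and symmetrically for $\Omega^{-1}$ and $J^-$). Under $\Phi$, the module $I^+(a, b)$ is presented by generators $e_k$ at the source positions $k \in \{a, a+2, \ldots, b\}$ subject to the relations $x e_a = 0$, $y e_b = 0$, and $y e_k = x e_{k+2}$ for $a \le k \le b-2$. Its projective cover is therefore $P = \bigoplus_k R(k)$ (summed over source positions), and the kernel is generated by the elements $x \cdot 1_{R(a)}$ in degree $a-1$, the differences $y \cdot 1_{R(k)} - x \cdot 1_{R(k+2)}$ in degree $k+1$, and $y \cdot 1_{R(b)}$ in degree $b+1$. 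Computing each cyclic submodule and summing shows the kernel has dimension $b-a+3$, is supported in every degree from $a-1$ to $b+1$, and has nonzero quiver arrows precisely where $I^+(a-1, b+1)$ does (sources at positions of parity $a-1$, sinks at parity $a$). Iterating with the base case yields $\Omega^m(M(n)) \cong J^+(n-m, n+m)$. For the $\Omega^{-m}$ formula I would argue by the symmetric computation with injective envelopes, or deduce it via duality using $\Omega(X)^{\vee} \cong \Omega^{-1}(X^{\vee})$ together with $J^+(a, b)^{\vee} \cong J^-(-b, -a)$ (which I would verify by matching heads and socles under the involution $M(k)^{\vee} = M(-k)$).

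The uniqueness claim is then bookkeeping using Proposition~\ref{prop:indecomp}: the simples of $\cC^{\rm ss}$ are exactly the indecomposables $J^{\pm}(a, b)$ with $a \equiv b \pmod 2$ (subject to the identification $J^+(a, a) = J^-(a, a) = M(a)$), and these are in bijection with $\bZ^2$ via $(n, m) \mapsto J^+(n-m, n+m)$ for $m \ge 0$ and $(n, m) \mapsto J^-(n+m, n-m)$ for $m \le 0$. The main technical obstacle will be the inductive step, namely identifying the kernel of the projective cover of $J^+(a, b)$ in $\Mod_R$; while elementary, the computation requires careful tracking of which arrows in the resulting quiver are nonzero and matching parities against the definition of $I^+(a-1, b+1)$.
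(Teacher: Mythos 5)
Your proposal is correct. The paper's own proof is essentially a citation: it invokes Theorem~\ref{thm:spequiv} to reduce to graded modules over $R = k[x,y]/(x^2,y^2)$ and then refers to Johnson's computation of Heller shifts for the Klein four-group in characteristic~2, where the group algebra is exactly this $R$; your explicit identification of the kernel of the projective cover of $I^+(a,b)$, and the induction $\Omega(J^+(a,b)) \cong J^+(a-1,b+1)$, is precisely the computation the paper defers to that reference, carried out directly.
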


\begin{proof}
By Theorem~\ref{thm:spequiv}, it is equivalent to prove the analogous result for $R$-modules. This is well-known: for example, the same argument used to prove \cite[Theorem~1]{Johnson} applies here.
\end{proof}

\begin{proposition} \label{prop:heller-tensor}
For integers $n$, $m$, $r$, and $s$, we have an isomorphism in $\cC^{\rm ss}$
\begin{displaymath}
\Omega^r(M(m)) \otimes \Omega^s(M(n)) \cong \Omega^{r+s}(M(m+n)).
\end{displaymath}
\end{proposition}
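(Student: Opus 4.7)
The plan is to combine Proposition~\ref{prop:heller} (formal properties of Heller shifts) with a slight strengthening of Proposition~\ref{prop:M-tensor}: namely, the claim $M(m)\otimes M(n)\sim M(m+n)$ in $\cC$, rather than the weaker isomorphism in $\cC^{\rm ss}$. Granting this strengthening, Proposition~\ref{prop:heller}(d) gives
\[
\Omega^r(M(m))\otimes\Omega^s(M(n))\sim\Omega^{r+s}(M(m)\otimes M(n)),
\]
while iterated use of Proposition~\ref{prop:heller}(a) turns $M(m)\otimes M(n)\sim M(m+n)$ into
\[
\Omega^{r+s}(M(m)\otimes M(n))\sim\Omega^{r+s}(M(m+n)).
\]
Chaining these yields the desired $\sim$-equivalence in $\cC$, which I then pass to isomorphism in $\cC^{\rm ss}$.

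To lift Proposition~\ref{prop:M-tensor} to $\cC$, I revisit the proofs of Lemmas~\ref{lem:M-tensor-2} and~\ref{lem:M-tensor-3}. They produce actual decompositions in $\cC$,
\[
M(1)\otimes M(n)\cong M(n+1)\oplus Q_n,\qquad M(1)\otimes M(-1)\cong\bbone\oplus M_{\wa\wb},
\]
in which the extra summands $Q_n$ and $M_{\wa\wb}$ are generic simples. By Theorem~\ref{thm:fine}(a) every generic simple is projective in $\cC$, so these identities already give $M(1)\otimes M(n)\sim M(n+1)$ and $M(1)\otimes M(-1)\sim M(0)$ in $\cC$. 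The induction used in the proof of Proposition~\ref{prop:M-tensor} (invoking $M(n)^\vee\cong M(-n)$ from Proposition~\ref{prop:dual} for the sign-mixed case) then propagates verbatim with $\sim$ in place of $\cong$, once one notes that $\otimes$ respects $\sim$ because the tensor product of a projective with any object is again projective. This delivers $M(m)\otimes M(n)\sim M(m+n)$ in $\cC$ for all $m,n\in\bZ$.

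The final descent from $\sim$ in $\cC$ to isomorphism in $\cC^{\rm ss}$ requires that every projective of $\cC$ have categorical dimension $0$. The indecomposable projectives are the generic simples (dimension $0$ by \S\ref{ss:results}) and the special projectives $P(n)$; for the latter, Theorem~\ref{thm:proj}(c) combined with $\dim M(k)=(-1)^{|k|}$ and the observation that $|k|$ and $|k-1|$ always have opposite parities yields $\dim P(n)=\dim\Delta(n)+\dim\Delta(n+1)=0$. Hence every projective is sent to $0$ by the quotient functor $\cC\to\cC^{\rm ss}$, and $\sim$ in $\cC$ implies isomorphism in $\cC^{\rm ss}$. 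There is no serious obstacle in this argument; the only nontrivial point is recognizing that Proposition~\ref{prop:M-tensor} already holds at the $\sim$-level in $\cC$ with essentially the same proof.
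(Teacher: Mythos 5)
Your proposal is correct and matches the paper's proof, which is stated tersely but makes exactly the same observation: the proof of Proposition~\ref{prop:M-tensor} actually establishes $M(m)\otimes M(n)\sim M(m+n)$ in $\cC$ (not merely in $\cC^{\rm ss}$), and the result then follows from Proposition~\ref{prop:heller}. Your closing verification that projectives in $\cC$ have categorical dimension $0$ (so that $\sim$ implies isomorphism in $\cC^{\rm ss}$) is a fact the paper already records in the paragraph introducing Heller shifts, so you are not adding a new hypothesis, just rechecking it.
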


\begin{proof}
This follows from Propositions~\ref{prop:M-tensor} and~\ref{prop:heller}. (Note that the proof of Proposition~\ref{prop:M-tensor} actually shows $M(n) \otimes M(m) \sim M(n+m)$.)
\end{proof}

\subsection{Pointed tensor categories} \label{ss:pointed}

The simple objects of $\cC^{\rm ss}$ and $\cD$ can both be indexed by $\bZ^2$, and Proposition~\ref{prop:heller-tensor} shows that tensor products of simples decompose in the same way in both categories. To actually construct an equivalence of symmetric tensor categories directly would require a good deal of additional work. We circumvent this by appealing to the general theory of pointed tensor categories.

Let $\cT$ be a semisimple $k$-linear pre-Tannakian category. We say that $\cT$ is \defn{pointed} if its simple objects are invertible, i.e., given a simple object $S$ we have $S \otimes S' \cong \bbone$ for some $S'$ (necessarily the dual of $S$). Suppose this is the case. It follows that the tensor product of two simples is again simple. Let $A(\cT)$ be the set of isomorphism classes of simple objects. This forms an abelian group under tensor product.

The group $A(\cT)$ carries a natural piece of extra structure. Let $S$ be a simple object. Then $S \otimes S$ is a simple object, and so (by Schur's lemma) the symmetry isomorphism $\beta_{S,S}$ must be multiplication by a scalar. Since our category is symmetric (as opposed to braided), this scalar must be $\pm 1$. We can thus define a function
\begin{displaymath}
q_{\cT} \colon A(\cT) \to \bZ/2, \qquad (-1)^{q_{\cT}(S)} = \beta_{S,S}.
\end{displaymath}
One easily sees that $q_{\cT}$ is in fact a group homomorphism. (In the general braided theory, $q$ will be a quadratic form valued in $k^{\times}$, but in the symmetric case the situation simplifies; see \cite[\S 2.11]{DGNO} and \cite[Example~2.45]{DGNO}.) The following is the main result we require:

\begin{theorem} \label{thm:pointed}
Let $\cT$ and $\cT'$ be semi-simple pointed pre-Tannakian categories. Given an isomorphism
\begin{displaymath}
\phi \colon (A(\cT), q_{\cT}) \to (A(\cT'), q_{\cT'})
\end{displaymath}
there exists an equivalence $\Phi \colon \cT \to \cT'$ of symmetric tensor categories inducing $\phi$; moreover, $\Phi$ is unique up to isomorphism.
\end{theorem}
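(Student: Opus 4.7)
The plan is to prove Theorem~\ref{thm:pointed} via the Eilenberg–MacLane cohomological classification of pointed symmetric tensor categories. First I would pick, for each $a \in A(\cT)$, a representative simple $S_a$ with $S_0 = \bbone$, and a family of isomorphisms $\mu_{a,b}: S_a \otimes S_b \to S_{a+b}$ normalized so that $\mu_{0,a}$ and $\mu_{a,0}$ agree with the unit constraints. The associativity constraint of $\cT$, compared with these choices, produces a scalar 3-cocycle $\alpha: A^3 \to k^\times$ via $\mu_{a+b,c}\circ(\mu_{a,b}\otimes\id) = \alpha(a,b,c)\cdot\mu_{a,b+c}\circ(\id\otimes\mu_{b,c})$, and the symmetry $\beta$ produces $c: A^2 \to k^\times$ via $\mu_{b,a}\circ\beta_{S_a,S_b} = c(a,b)\mu_{a,b}$. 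The hexagon axioms make $(\alpha,c)$ into an abelian 3-cocycle, the symmetric axiom forces $c(a,b)c(b,a)=1$, and by definition $c(a,a) = (-1)^{q_\cT(a)}$. Any other choice of representatives or of the $\mu_{a,b}$'s alters $(\alpha,c)$ only by an abelian 2-coboundary, so it defines a class $[(\alpha,c)] \in H^3_{\mathrm{sym}}(A,k^\times)$ that classifies $\cT$ up to symmetric tensor equivalence (over a prescribed Picard group).

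Next I would carry out the cohomology calculation $H^3_{\mathrm{sym}}(A,k^\times) \cong \Hom(A,\bZ/2)$, with the isomorphism sending $[(\alpha,c)]$ to $a \mapsto \tfrac{1-c(a,a)}{2}$, i.e., to $q_\cT$. For surjectivity, each $q \in \Hom(A,\bZ/2)$ yields the bihomomorphism $s_q(a,b)=(-1)^{q(a)q(b)}$, so the pair $(1,s_q)$ is a symmetric abelian 3-cocycle realizing $q$ (this is also the structure used in the model category $\cD(A,q)$ of $A$-graded vector spaces with symmetry $v \otimes w \mapsto (-1)^{q(|v|)q(|w|)} w \otimes v$). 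Injectivity is the classical Eilenberg–MacLane computation: the universal-coefficients sequence for symmetric abelian cohomology has a $\Ext^1(A,k^\times)$ term which vanishes because $k^\times$ is divisible (as $k$ is algebraically closed of characteristic~0), so the class is detected solely by $a\mapsto c(a,a)$.

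Given the calculation, the theorem falls out. If $\phi\colon (A(\cT),q_\cT) \to (A(\cT'),q_{\cT'})$ is the hypothesized isomorphism, pulling back the cocycle data of $\cT'$ along $\phi^{-1}$ yields a symmetric abelian 3-cocycle on $A(\cT)$ with the same $q$-invariant, hence cohomologous to the one coming from $\cT$. A cobounding 2-cochain $\gamma \in C^2(A,k^\times)$ provides scalars $\gamma(a,b)$ rescaling the $\mu_{a,b}$'s so that the associator and braiding cocycles match term by term; extending additively then defines the required equivalence $\Phi\colon \cT \to \cT'$ as a symmetric tensor functor inducing $\phi$. For uniqueness up to isomorphism, given two such equivalences $\Phi,\Phi'$, the composite $\Phi'^{-1}\circ\Phi$ is a tensor autoequivalence of $\cT$ acting as the identity on $A(\cT)$; its cocycle difference is a coboundary, and such a coboundary is precisely the datum of a monoidal natural isomorphism $\Phi \Rightarrow \Phi'$.

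The main obstacle is the cohomological identification $H^3_{\mathrm{sym}}(A,k^\times) \cong \Hom(A,\bZ/2)$: extracting the abelian cocycle and verifying that the symmetric (as opposed to merely braided) axiom collapses the expected quadratic form invariant to the linear $\bZ/2$-valued homomorphism $q_\cT$ requires careful bookkeeping of the Eilenberg–MacLane hexagons, though after that the existence and uniqueness of $\Phi$ both become routine cochain-level constructions.
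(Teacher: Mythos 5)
The paper proves this by citing DGNO (Proposition~2.41, Example~2.45) and Joyal--Street (Theorem~3.3), whose content is precisely the Eilenberg--MacLane abelian-cohomology classification of pointed symmetric tensor categories that you carry out by hand; so your route is the paper's route, unfolded rather than cited. One small misattribution: divisibility of $k^\times$ is not what you need for injectivity of $H^3_{\mathrm{sym}}(A,k^\times)\to\Hom(A,\bZ/2)$, since stably $H_4(K(A,3))=0$ kills the relevant $\Ext$ term without any hypothesis on the coefficients; where divisibility genuinely enters is your uniqueness step, via $H^2_{\mathrm{sym}}(A,k^\times)\cong\Ext^1(A,k^\times)=0$, which is what guarantees that the abelian $2$-cocycle comparing two candidate equivalences is a coboundary. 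Note also that in the intended application $A(\cT)=\bZ^2$ is infinite, which is why the paper appeals to Joyal--Street rather than the finite-group statement in DGNO; your cochain-level argument imposes no finiteness assumption, so it already covers this.
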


\begin{proof}
When $A(\cT)$ is finite this is the symmetric case of \cite[Proposition~2.41]{DGNO} as explained by \cite[Example~2.45]{DGNO}. But the assumption that $A(\cT)$ be finite is unnecessary as is clear from the more general version of \cite[Proposition~2.41]{DGNO} given in \cite[Theorem 3.3]{Joyal-Street}.
\end{proof}

There is one other general fact that will be useful:

\begin{proposition} \label{prop:pointed-q}
Let $\cT$ be a semi-simple pointed pre-Tannakian category, and let $S$ be a simple object. Then the categorical dimension of $S$ is $q_{\cT}([S])$.
\end{proposition}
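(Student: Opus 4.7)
The plan is to compute $\dim(S)$ by decomposing $S \otimes S$ into its symmetric and exterior squares. Since $S$ is invertible, $S \otimes S$ is again simple, and so by Schur's lemma the symmetry $\beta_{S,S} \colon S \otimes S \to S \otimes S$ is multiplication by a scalar $\epsilon$. By definition of $q_{\cT}$ this scalar is $(-1)^{q_{\cT}([S])} \in \{\pm 1\}$.

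Next, in characteristic zero we have a canonical decomposition $S \otimes S = \Sym^2(S) \oplus \lw^2(S)$, where the two summands are the $(+1)$- and $(-1)$-eigenspaces of $\beta_{S,S}$. Since $S \otimes S$ is simple, exactly one of these summands is zero: if $\epsilon = +1$ then $\lw^2(S) = 0$, and if $\epsilon = -1$ then $\Sym^2(S) = 0$. Taking categorical dimensions and using the standard identities $\dim \lw^2(S) = \tfrac{1}{2} \dim(S)(\dim(S) - 1)$ and $\dim \Sym^2(S) = \tfrac{1}{2}\dim(S)(\dim(S) + 1)$ (valid because $\mathrm{char}(k) = 0$), we conclude that $\dim(S)(\dim(S) - 1) = 0$ in the first case and $\dim(S)(\dim(S) + 1) = 0$ in the second.

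Finally, invertibility of $S$ gives $\dim(S) \cdot \dim(S^{\vee}) = \dim(\bbone) = 1$, so $\dim(S)$ is a unit in $k$, and in particular nonzero. Combining with the previous step, $\dim(S) = +1$ when $\epsilon = +1$ and $\dim(S) = -1$ when $\epsilon = -1$, i.e., $\dim(S) = (-1)^{q_{\cT}([S])}$, which is the content of the proposition under the identification $\bZ/2 \to \{\pm 1\}$, $m \mapsto (-1)^m$. There is no substantial obstacle here; the only points to check are the simplicity of $S \otimes S$ (immediate from invertibility) and the nonvanishing of $\dim(S)$ (immediate from $\dim(S) \dim(S^{\vee}) = 1$).
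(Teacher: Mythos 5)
Your proof is correct, and it takes a genuinely different route from the paper. The paper deduces Proposition~\ref{prop:pointed-q} from Theorem~\ref{thm:pointed}: using that theorem, it identifies $\cT$ with an explicit category of $A(\cT)$-graded vector spaces with the sign-twisted symmetry, and then reads off the categorical dimension of a simple object directly in that model. Your argument instead works intrinsically in $\cT$: decompose $S \otimes S$ into symmetric and exterior squares (valid in characteristic zero), observe that simplicity of $S \otimes S$ forces one of the two summands to vanish according to the sign of $\beta_{S,S}$, and then use the polynomial dimension formulas $\dim \lw^2(S) = \tfrac{1}{2}\dim(S)(\dim(S)-1)$, $\dim \Sym^2(S) = \tfrac{1}{2}\dim(S)(\dim(S)+1)$ together with $\dim(S)\dim(S^\vee)=1$ to pin down $\dim(S) \in \{\pm 1\}$. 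What your approach buys is self-containment: it avoids the classification theorem for pointed symmetric categories (which in the paper rests on DGNO and Joyal--Street), and in fact it never uses semisimplicity of $\cT$, only that $S$ is a simple invertible object in a pre-Tannakian category over a field of characteristic zero. The paper's approach is arguably more natural in context, since Theorem~\ref{thm:pointed} is invoked anyway in the proof of Theorem~\ref{thm:semisimplification}, but for this particular proposition your argument is cleaner and more elementary.
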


\begin{proof}
Put $A=A(\cT)$ and $q=q_{\cT}$. Let $\cT'$ be the category of $A$-graded vector spaces, with its usual tensor product. Define a symmetric structure on $\cT'$ by
\begin{displaymath}
v \otimes w \mapsto (-1)^{q(\vert v \vert) q(\vert w \vert)} w \otimes v,
\end{displaymath}
where $v$ and $w$ are homogeneous elements, and $\vert v \vert \in A$ denotes the degree of $v$. Then one readily verifies that $A(\cT')=A$ and $q_{\cT'}=q$, and so by Theorem~\ref{thm:pointed} there is an equivalence $\cT \cong \cT'$. Letting $k(a)$ denote the simple of $\cT'$ concentrated in degree $a$, one easily sees that the categorical dimension of $k(a)$ is $q(a)$, which completes the proof.
\end{proof}

\begin{proof}[Proof of Theorem~\ref{thm:semisimplification}]
Theorem~\ref{prop:heller-tensor} shows that $\cC^{\rm ss}$ is a pointed tensor category. Indeed, every simple of $\cC^{\rm ss}$ is of the form $\Omega^m(M(n))$ for $n,m \in \bZ$, and the proposition shows
\begin{displaymath}
\Omega^m(M(n)) \otimes \Omega^{-m}(M(-n)) \cong \bbone.
\end{displaymath}
The same proposition shows that the map $\bZ^2 \to A(\cC^{\rm ss})$ given by $(n,m) \mapsto [\Omega^m(M(n))]$ is a group isomorphism. By Proposition~\ref{prop:indecomp} and~\ref{prop:heller-J}, the object $\Omega^m(M(n))$ has categorical dimension $(-1)^{n+m}$, and so $q_{\cC^{\rm ss}}(n,m) = n+m$ by Proposition~\ref{prop:pointed-q}.

We also have an isomorphism $\bZ^2 \to A(\cD)$ by $(n,m) \mapsto [k(n,m)]$. One readily verifies that $q_{\cD}(n,m)=n+m$. Thus by Theorem~\ref{thm:pointed}, we have an equivalence of symmetric tensor categories $\cD \to \cC^{\rm ss}$ which maps $k(n,m)$ to $\Omega^m(M(n))$.
\end{proof}

\section{Delannoy loops} \label{s:loops}

In \cite[\S 3.4, \S 9]{line} we gave a description of the category $\uRep(H)$ in terms of Delannoy paths. In this section we give a similar description of $\uRep(G)$ in terms of a natural generalization that we call Delannoy loops.

\subsection{Delannoy loops}

An \defn{$(n,m)$-Delannoy path} is a path in the plane from $(0,0)$ to $(n,m)$ composed of steps of the form $(1,0)$, $(0,1)$, and $(1,1)$. The \defn{Delannoy number} $D(n,m)$ is the number of $(n,m)$-Delannoy paths, and the \defn{central Delannoy number} $D(n)$ is $D(n,n)$. For example, $D(2)=13$; see Figure~\ref{fig:delannoy}. The Delannoy numbers are well-known in the literature; see, e.g., \cite{Banderier}.

\begin{figure}
\def\delannoy#1{\begin{tikzpicture}[scale=0.5]
\draw[step=1, color=gray!50] (0, 0) grid (2,2);
\draw[line width=2pt] #1;
\end{tikzpicture}}
\begin{center}
\delannoy{(0,0)--(1,0)--(2,0)--(2,1)--(2,2)} \quad
\delannoy{(0,0)--(1,0)--(2,1)--(2,2)} \quad
\delannoy{(0,0)--(1,0)--(1,1)--(2,1)--(2,2)} \quad
\delannoy{(0,0)--(1,1)--(2,1)--(2,2)} \quad
\delannoy{(0,0)--(1,0)--(1,1)--(2,2)} \quad
\delannoy{(0,0)--(1,0)--(1,1)--(1,2)--(2,2)} \quad
\delannoy{(0,0)--(1,1)--(2,2)}
\end{center}
\vskip.5\baselineskip
\begin{center}
\delannoy{(0,0)--(0,1)--(1,1)--(2,1)--(2,2)} \quad
\delannoy{(0,0)--(0,1)--(1,1)--(2,2)} \quad
\delannoy{(0,0)--(1,1)--(1,2)--(2,2)} \quad
\delannoy{(0,0)--(0,1)--(1,1)--(1,2)--(2,2)} \quad
\delannoy{(0,0)--(0,1)--(1,2)--(2,2)} \quad
\delannoy{(0,0)--(0,1)--(0,2)--(1,2)--(2,2)}
\end{center}
\caption{The thirteen $(2,2)$-Delannoy paths.}
\label{fig:delannoy}
\end{figure}

An \defn{$(n,m)$ Delannoy loop} is an oriented unbased loop on a toroidal grid with points labeled by $\bZ/n \times  \bZ/m$ composed of steps of the form $(1,0)$, $(0,1)$, and $(1,1)$, and which loops around the torus exactly once in each of the $x$-direction and the $y$-direction. Loops are allowed to touch themselves at vertices (though it turns out that this can happen at most once), and are not required to pass through $(0,0)$.

The set of Delannoy loops will be denoted $\Lambda(n,m)$. The \defn{circular Delannoy number} $C(n,m)$ is the number of $(n,m)$-Delannoy loops, and the \defn{central circular Delannoy number} $C(n)$ is $C(n,n)$. For example, $C(2)=16$; see Figure~\ref{fig:circular-delannoy}. Although we will see that circular Delannoy numbers have a number of natural combinatorial properties, they have not previously appeared in the OEIS and do not seem to have been studied elsewhere.

The group $\bZ/n \times \bZ/m$ acts on Delannoy loops by cyclically permuting the coordinates. The actions of $\bZ/n$ and $\bZ/m$ individually are free, but the product action is not free.

\begin{figure}
\def\delannoy#1{\begin{tikzpicture}[scale=0.5]
\draw[step=1, color=gray!50] (0, 0) grid (2,2);
\draw[line width=2pt] #1;
\end{tikzpicture}}
\begin{center}
\delannoy{(0,0)--(1,0)--(2,0)--(2,1)--(2,2)} \quad
\delannoy{(0,0)--(1,0)--(1,1)--(1,2)--(2,2)}, \quad
\delannoy{(0,0)--(0,1)--(1,1)--(2,1)--(2,2)} \quad
\delannoy{(1,0)--(1,1)--(2,1) (0,1)--(1,1)--(1,2)}, \quad
\delannoy{(0,0)--(1,0)--(2,1)--(2,2)} \quad
\delannoy{(0,0)--(1,1)--(1,2)--(2,2)}, \quad
\delannoy{(0,0)--(0,1)--(1,1)--(2,2)} \quad
\delannoy{(0,1)--(1,2) (1,0)--(1,1)--(2,1)}, \quad
\end{center}
\vskip.5\baselineskip
\begin{center}
\delannoy{(0,0)--(1,1)--(2,1)--(2,2)} \quad
\delannoy{(0,1)--(1,1)--(1,2)  (1,0)--(2,1)}, \quad
\delannoy{(0,0)--(1,0)--(1,1)--(2,2)} \quad
\delannoy{(0,0)--(0,1)--(1,2)--(2,2)}, \quad
\delannoy{(0,0)--(1,1)--(2,2)} \quad
\delannoy{(0,1)--(1,2) (1,0)--(2,1)}, \quad
\delannoy{(0,0)--(1,0)--(1,1)--(2,1)--(2,2)} \quad
\delannoy{(0,0)--(0,1)--(1,1)--(1,2)--(2,2)} \quad
\end{center}
\caption{The 16 $(2,2)$-Delannoy loops, paired up by the $\bZ/2$-action on the $x$-coordinate. These form five orbits under $\bZ/2 \times \bZ/2$, with the first and second pairs, the third and fourth pairs, and and the fifth and sixth pairs each combining into orbits of size $4$, and the seventh and eigth pairs each forming their own orbit of size $2$.}
\label{fig:circular-delannoy}
\end{figure}

\subsection{Orbits}

As in \cite[Proposition~3.5]{line}, the $H$-orbits on $\bR^{(n)} \times \bR^{(m)}$ are naturally in bijective correspondence with $(n,m)$-Delannoy paths. In particular, the number of orbits is the Delannoy number $D(n,m)$. Here is a description of a bijection. Given a point $(x,y) \in \bR^{(n)} \times \bR^{(m)}$ we break up $\bR$ into the points $x \cup y$ and intervals $\bR - (x \cup y)$. For each interval $I_k$ we have a maximal $a_k$ with $x_{a_k} < I_k$ and a maximal $b_k$ with $x_{b_k} <I$, except for the leftmost interval where we define $a_0 = b_0 = 0$. Then look at the path which goes from $(0,0) = (a_0,b_0)$ to $(a_1, b_1)$, etc. Between each interval there is a point that lies in $x \cup y$ and hence lies in $x$, in $y$, or in both. From the description of the path it is clear that it steps by $(1,0)$ when we go past a point in just $x$, by $(0,1)$ when we go past a point in just $y$, and by $(1,1)$ when we go past a point in both. Hence this path is a Delannoy path. See \cite[Figure 2]{line} for an example.

We now prove the analogous result in the circle case.

\begin{proposition}
The $G$-orbits on $\bS^{\{n\}} \times \bS^{\{m\}}$ are in bijective correspondence with $(n,m)$-Delannoy loops. This correspondence is natural, and, in particular, equivariant for $\bZ/n \times \bZ/m$. Hence, the number of orbits is the circular Delannoy number $C(n,m)$.
\end{proposition}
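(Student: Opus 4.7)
The plan is to mimic the strategy used in the $H$-case (\cite[Proposition~3.5]{line}) but adapt it to the circle. Given $(x,y) \in \bS^{\{n\}} \times \bS^{\{m\}}$, let $k = \#(x \cap y)$, so that $x \cup y$ consists of $n + m - k$ distinct points which cut $\bS$ into $n + m - k$ arcs. Traversing $\bS$ counterclockwise, associate to each arc $A$ the element $(a,b) \in \bZ/n \times \bZ/m$ where $x_a$ (resp.\ $y_b$) is the most recently crossed point of $x$ (resp.\ of $y$) prior to $A$. Consecutive arcs are joined by an edge of one of three types: a $(1,0)$-step when the intervening boundary point is in $x \setminus y$, a $(0,1)$-step when it is in $y \setminus x$, and a $(1,1)$-step when it lies in $x \cap y$. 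This yields a closed, unbased loop $\ell(x,y)$ on the torus $\bZ/n \times \bZ/m$; by construction it takes exactly $n$ steps that increment the first coordinate and exactly $m$ steps that increment the second, so it wraps around once in each direction and is a Delannoy loop.

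First I would verify that $\ell(x,y)$ depends only on the $G$-orbit: any $g \in G$ is an orientation-preserving self-homeomorphism of $\bS$, so it preserves the cyclic order of $x \cup y$ and the subset $x \cap y$, which is all that enters the construction. Next I would check equivariance for $\bZ/n \times \bZ/m$. The generator $\sigma \in \bZ/n$ relabels $(x_1,\ldots,x_n)$ as $(x_n,x_1,\ldots,x_{n-1})$, which replaces the index $a$ of the ``most recently crossed $x$-point'' by $a+1$, so it translates $\ell(x,y)$ by $(1,0)$; the argument for $\bZ/m$ is symmetric.

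Then I would show that $\ell$ induces a bijection between $G \backslash (\bS^{\{n\}} \times \bS^{\{m\}})$ and $\Lambda(n,m)$. For surjectivity, given a Delannoy loop $L$, read off from $L$ the cyclic sequence of step-types together with the induced labeling, place points on $\bS$ realizing this cyclic pattern (with the prescribed coincidences), and check that the resulting $(x,y)$ satisfies $\ell(x,y)=L$. For injectivity, suppose $\ell(x,y)=\ell(x',y')$. The loop records precisely the cyclic pattern of labelled points in $x \cup y$ (including which $x_i$ equals which $y_j$), and it is a standard fact that $G = \Aut(\bS,<)$ acts transitively on finite cyclically ordered subsets of $\bS$ with a prescribed label pattern and coincidence structure; so there is a $g \in G$ carrying $(x,y)$ to $(x',y')$.

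The principal technical point is the final transitivity statement: one must produce a homeomorphism of $\bS$ matching up two finite configurations with the same combinatorial type. This follows by conjugating against a fixed basepoint $\infty$ to reduce to the analogous assertion for $H$ acting on $\bR$, which is elementary. Granting this, the bijection follows, and the count $\#\Lambda(n,m) = C(n,m)$ is immediate by definition.
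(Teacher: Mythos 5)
Your construction is the same as the paper's: cut $\bS$ by $x\cup y$, label each arc by the last‑crossed $x$‑ and $y$‑indices, and read off $(1,0)$, $(0,1)$, $(1,1)$ steps from whether each boundary point lies in $x$ only, $y$ only, or both. The paper then simply says the bijection argument is the same as in the line case, whereas you spell out orbit‑invariance, $\bZ/n\times\bZ/m$‑equivariance, and the surjectivity/injectivity check (reducing transitivity on finite cyclically ordered configurations to the $H$‑action on $\bR$ by fixing a basepoint), so you are supplying the details the paper delegates to \cite[Proposition~3.5]{line}; the approach is essentially identical and correct.
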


\begin{proof}
Suppose that $(x,y)$ is an element of $\bS^{\{n\}} \times \bS^{\{m\}}$. Again $\bS - (x \cup y)$ is a collection of intervals. To each interval we again assign the point $(a, b)$ where $x_a$ is the largest of the $x$ which is smaller than $I$ in cyclic ordering and similarly for $y_b$. Again between intervals we pass through a point which is either in $x$, in $y$, or in both, and we step by $(1,0)$, $(0,1)$, or $(1,1)$ respectively. Since each point in $x$ and each point in $y$ is crossed exactly once, this loop will go around the torus exactly once in each of the $x$ and $y$ directions, so this gives a Delannoy loop $p(x,y)$. The proof that this gives a bijection is the same as in the line case.
\end{proof}

\begin{remark}
If we fix a vertex $(a,b)$, then the intervals assigned to $(a,b)$ are exactly the intersection of the intervals $(x_a,x_{a+1})$ and $(y_b, y_{b+1})$. Such an intersection can be empty, a single interval, or a union of two intervals. This is why it's possible for a Delannoy loop to intersect itself at a vertex. In the $(2,2)$ case, this phenomenon occurs for a single $\bZ/2 \times \bZ/2$ orbit, consisting of the first two pairs in Figure \ref{fig:circular-delannoy}.
\end{remark}

\subsection{Representations}

As in the line case, we can describe an equivalent category to $\uPerm^{\circ}(G)$ directly using Delannoy loops. Since the proof is essentially the same as in the line case we content ourselves with giving a correct statement.
 
Let $\cC(n,m)$ be the vector space with basis indexed by the set of Delannoy loops $\Lambda(n,m)$. We write $[p]$ for the basis vector of $\cC(n,m)$ corresponding to $p \in \Lambda(n,m)$. We will define a composition law on the $\cC$'s. Let $p_1 \in \cC(n,m)$, $p_2 \in \cC(m,\ell)$, and $p_3 \in \cC(n,\ell)$ be given. 

We define $(n,m,\ell)$ Delannoy loops similarly to above: they are loops in a toroidal grid labeled by $\bZ/n \times \bZ/m \times \bZ/\ell$ that can take steps which increase by one in any non-empty subset of the three coordinates.  If $q$ is in $\Lambda(n,m,\ell)$, we can use the projection maps $\pi_{i,j}$ to produce $\pi_{1,2}(q) \in \Lambda(n,m)$, $\pi_{1,3}(q) \in \Lambda(n,\ell)$, and $\pi_{1,2}(q) \in \Lambda(m,\ell)$ (see \cite[\S 9.1]{line} for details). We say that $q$ is \emph{compatible with} a triple $p_{1,2}, p_{2,3}, p_{1,3}$ of Delannoy loops if 
\begin{displaymath}
\pi_{1,2}(q)=p_{1,2}, \quad \pi_{2,3}(q)=p_{2,3}, \quad \pi_{1,3}(q)=p_{1,3},
\end{displaymath}

Let
\begin{displaymath}
\epsilon(q, p_1,p_2,p_3) = \begin{cases}
(-1)^{\ell(q)+\ell(p_3)} & \text{if $q$ is compatible with $(p_1,p_2, p_3)$} \\
0 & \text{otherwise} \end{cases}
\end{displaymath}

We define a $k$-bilinear composition law
\begin{displaymath}
\cC(n,m) \times \cC(m,\ell) \to \cC(n,\ell)
\end{displaymath}
by
\begin{displaymath}
[p_1] \circ [p_2] = \sum_{q \in \Gamma(n,m,\ell), p_3 \in \Gamma(n,\ell)} \epsilon(q,p_1,p_2,p_3) [p_3].
\end{displaymath}

\begin{proposition} \label{prop:equiv}
We have the following:
\begin{enumerate}
\item The composition law in $\cC$ is associative and has identity elements.
\item There is a fully faithful functor $\Phi \colon \cC \to \uRep(G)$ defined on objects by $\Phi(X_n)=\cC(\bS^{\{n\}})$ and on morphisms by $\Phi([p])=A_p$.
\item $\Phi$ identifies $\cC$ with the full subcategory of $\uRep(G)$ spanned by the $\cC(\bS^{\{n\}})$'s.
\item $\Phi$ identifies the additive envelope of $\cC$ with $\uPerm^{\circ}(G)$.
\end{enumerate}
\end{proposition}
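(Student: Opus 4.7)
The plan is to follow the strategy used in \cite{line} for the Delannoy path version of this proposition; the two arguments are structurally identical because, while the global shapes of $\bR^{(n)}$ and $\bS^{\{n\}}$ differ, the local geometry of a $G$-orbit on a product space is in both cases a product of open intervals, so the Euler-characteristic computations transfer directly. I would proceed in three steps.

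\textbf{Step 1 (morphism spaces).} By the tensor-product property $\cC(\bS^{\{n\}}) \uotimes \cC(\bS^{\{m\}}) = \cC(\bS^{\{n\}} \times \bS^{\{m\}})$ and the construction of $\uPerm^\circ(G)$ (whose morphisms between basic objects are by definition $G$-invariant integral operators, equivalently $G$-invariant Schwartz kernels), the space $\Hom(\cC(\bS^{\{n\}}), \cC(\bS^{\{m\}}))$ is identified with $\cC(\bS^{\{n\}} \times \bS^{\{m\}})^G$. The preceding orbit classification gives a basis indexed by $p \in \Lambda(n,m)$: the indicator function $\psi_{O_p}$ of the corresponding $G$-orbit $O_p \subset \bS^{\{n\}} \times \bS^{\{m\}}$. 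Define $A_p$ as the integral operator with kernel $\psi_{O_p}$, extend $\Phi$ linearly, and this simultaneously supplies the morphism assignment in (b), the bijection on Hom spaces required for full faithfulness, and part (c).

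\textbf{Step 2 (composition formula).} The composition $A_{p_2} \circ A_{p_1}$ has integral kernel
\begin{displaymath}
K(x,z) = \int_{y \in \bS^{\{m\}}} \psi_{O_{p_1}}(x,y)\,\psi_{O_{p_2}}(y,z)\,dy,
\end{displaymath}
which is $G$-invariant Schwartz and therefore expands in the basis $\{\psi_{O_{p_3}}\}$. Evaluating at a generic $(x,z) \in O_{p_3}$, the coefficient of $[p_3]$ is the Euler characteristic of the fiber
\begin{displaymath}
\{y \in \bS^{\{m\}} : (x,y,z) \in O_{p_1} \times_{\bS^{\{m\}}} O_{p_2}\}.
\end{displaymath}
This fiber decomposes, by $G$-orbit on the triple product $\bS^{\{n\}} \times \bS^{\{m\}} \times \bS^{\{\ell\}}$, into locally closed strata indexed by the $(n,m,\ell)$-Delannoy loops $q$ compatible with $(p_1, p_2, p_3)$. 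Each stratum is a product of open intervals whose number of factors equals the number of ``free'' coordinates of $y$ not determined by $(x,z)$, namely $\ell(q) - \ell(p_3)$; thus its Euler characteristic is $(-1)^{\ell(q) - \ell(p_3)} = (-1)^{\ell(q) + \ell(p_3)}$, which is exactly $\epsilon(q, p_1, p_2, p_3)$. This proves $A_{p_2} \circ A_{p_1}$ matches $[p_1] \circ [p_2]$ under $\Phi$, giving full functoriality.

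\textbf{Step 3 (remaining parts).} Part (a) is now immediate from (b): associativity and the existence of identity morphisms hold in $\uRep(G)$, are preserved under the fully faithful $\Phi$, and so hold in $\cC$. The identity of $\cC(\bS^{\{n\}})$ corresponds to the diagonal loop in $\Lambda(n,n)$ consisting entirely of $(1,1)$-steps, which indexes the diagonal $G$-orbit. Part (d) is formal: $\uPerm^\circ(G)$ is by construction the additive closure within $\uRep(G)$ of the full subcategory spanned by the $\cC(\bS^{\{n\}})$'s, and a fully faithful $k$-linear functor extends uniquely to an equivalence between additive envelopes. The main obstacle is the sign bookkeeping in Step 2; since the local geometry of each orbit in the triple product matches the line case verbatim, I would cite the analogous computation in \cite{line} rather than reprove it.
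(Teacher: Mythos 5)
Your proposal is correct and matches the approach the paper intends. The paper in fact gives essentially no proof, stating only that ``the proof is essentially the same as in the line case'' and referring to \cite[\S 9]{line}; your three steps (identifying $\Hom$ spaces with $G$-invariant Schwartz kernels indexed by $\Lambda(n,m)$, computing the composition via Euler characteristics of fibers stratified by compatible triple loops $q$, and then deducing (a) and (d) formally) reconstruct that argument faithfully, including the correct handling of the sign $(-1)^{\ell(q)-\ell(p_3)}=(-1)^{\ell(q)+\ell(p_3)}$ and, implicitly, the circle-specific subtlety that the paper's remark emphasizes — that more than one $q$ may be compatible with a given triple $(p_1,p_2,p_3)$ — since your sum is taken over all strata of the fiber rather than a single one.
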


\begin{remark}
In the line case it turned out that given Delannoy paths $p_1$ and $p_2$, and $p_3$ there is at most one $q$ such that $q$ is compatible with $(p_1,p_2,p_3)$, and hence each $[p_3]$ only appeared once in the sum. In the circle setting it is possible for $(p_1,p_2,p_3)$ to be compatible with \emph{two} such $q$. For example, there exactly two $(1,1,1)$ Delannoy loops that have no diagonal steps (one for each of the cyclic orderings on the coordinates, and they both project down in all three directions to the unique non-diagonal $(1,1)$ loop.
\end{remark}

\begin{remark}
In the line case, we also had that $\Phi$ identifies the additive--Karoubian envelope of the path category with $\uRep^{\rf}(H)$. In the circle case $\uRep^{\rf}(G)$ is not semisimple, and so one needs to be more careful about what kind of completion to apply.
\end{remark}

\subsection{Enumeration}

We have the following combinatorial formula for circular Delannoy numbers in terms of ordinary Delannoy numbers. We give two proofs of this formula, one by counting and one using representation theory. 

\begin{proposition}
If $n,m>0$ then we have 
\begin{align*}
C(n,m) &= n\left(D(n,m-1) + D(n-1,m-1)\right) \\
&= n\left(D(n,m) - D(n-1,m)\right)
\end{align*}
\end{proposition}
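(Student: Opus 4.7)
The approach is a double-counting argument. Introduce the set $P(n,m)$ of pairs $(L,s)$, where $L$ is an $(n,m)$-Delannoy loop and $s$ is a distinguished \emph{vertical advance} of $L$: that is, a step of $L$ of type $(0,1)$ or $(1,1)$. I will compute $|P(n,m)|$ in two ways and then combine with the standard Delannoy recurrence.

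First, every $(n,m)$-Delannoy loop has exactly $m$ vertical advances. Indeed, the loop winds once around the $y$-direction of the torus, so its total $y$-displacement (when unwrapped to $\bZ^2$) equals $m$, and each step contributes either $0$ or $1$ to the $y$-displacement. Hence $|P(n,m)| = m \cdot C(n,m)$.

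Second, giving an element of $P(n,m)$ is equivalent to giving a \emph{based} loop on the torus whose first step is a vertical advance: the base vertex $v$ is the source of $s$, and the based loop is the cyclic traversal of $L$ beginning with $s$. Unwrapping from $v$ identifies such a based loop with a pair $(v,P)$, where $v \in \bZ/n \times \bZ/m$ and $P$ is a Delannoy path from $(0,0)$ to $(n,m)$ in $\bZ^2$ whose first step is a vertical advance. Paths starting with $(0,1)$ biject with $(n,m-1)$-Delannoy paths, and paths starting with $(1,1)$ biject with $(n-1,m-1)$-Delannoy paths. Since $v$ may be any of the $nm$ vertices of the torus, we obtain $|P(n,m)| = nm \cdot (D(n,m-1)+D(n-1,m-1))$. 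Equating the two counts and dividing by $m$ yields the first equality, and the second equality follows immediately from $D(n,m) = D(n-1,m) + D(n,m-1) + D(n-1,m-1)$, the recurrence obtained by conditioning on the first step of a Delannoy path.

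The only point requiring genuine care, rather than routine verification, is the claim implicit in the second count: that the assignment from $(v,P)$ pairs to based loops on the torus is a bijection, with no collapsing caused by rotational symmetries or self-touching of the unbased loop. This is in fact transparent because a based loop is by definition an ordered traversal with a specified starting vertex, and such data is unambiguous regardless of the underlying geometric shape of the loop; the only place the possibly-symmetric unbased loop plays a role is in the first count, where the observation is simply that every unbased loop contributes exactly $m$ based configurations with a distinguished vertical advance.
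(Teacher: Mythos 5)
Your argument is correct. It is close in spirit to the paper's ``counting proof'' but organizes the enumeration differently. The paper uses the free $\bZ/n$-action directly: each orbit (of size $n$) has a unique representative passing through $(0,0)$ whose outgoing step is a vertical advance, and discarding that step identifies such representatives with $(n,m-1)$- or $(n-1,m-1)$-Delannoy paths, giving $C(n,m)/n = D(n,m-1)+D(n-1,m-1)$. You instead double-count pairs $(L,s)$ with $s$ a distinguished vertical advance of the loop $L$: once as $m\cdot C(n,m)$ (each loop has exactly $m$ vertical advances since the unwrapped $y$-displacement is $m$), and once as $nm\cdot\big(D(n,m-1)+D(n-1,m-1)\big)$ via the manifest bijection between based loops and pairs $(v,P)$ consisting of a start vertex and an unwrapped Delannoy path with a prescribed first step. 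The two routes yield the same identity, but your double count has a mild advantage: it sidesteps both the freeness of the $\bZ/n$-action (which the paper asserts earlier without proof) and the uniqueness argument for the orbit representative (which, while true, requires observing that among the vertices of a loop at a given $y$-level there is exactly one whose outgoing step increments $y$). In exchange, the paper's version is a little shorter, dividing $C(n,m)$ directly by $n$ rather than introducing the auxiliary set $P(n,m)$ and dividing by $m$ at the end.
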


The two formulas are equal to each other using the standard recurrence relation for ordinary Delannoy numbers, $D(n,m) = D(n,m-1) + D(n-1,m) + D(n-1,m-1)$. We will prove the first version.

\begin{proof}[Counting proof]
Each orbit of $\bZ/n$ contains a unique representative which passes through $(0,0)$ and immediately takes a vertical or diagonal step. If we ignore this first step then we obtain an ordinary $(n,m-1)$-Delannoy path if the first step is vertical, and an ordinary $(n-1,m-1)$-Delannoy path if the first step is diagonal.
\end{proof}

\begin{proof}[Representation theoretic proof]
Second we give the representation theoretic proof, using that Delannoy loops form a basis for $\Hom_{\ul{G}}(\cC(\bS^{\{n\}}), \cC(\bS^{\{m\}}))$. Recall that $\bS^{\{k\}} \cong \Ind_H^G \cC(\bR^{(k-1)})$ (see the proof of Proposition~\ref{prop:schwartz-ind}) and
\begin{displaymath}
\Res_H^G (\cC(\bS^{\{k\}})) \cong \cC(\bR^{(k)})^{\oplus k} \oplus  \cC(\bR^{(k-1)})^{\oplus k}
\end{displaymath}
(see \S \ref{ss:std-setup}). Now we use Frobenius reciprocity, and the fact that $\Hom( \cC(\bR^{(a)}),  \cC(\bR^{(b)}))$ has dimension $D(a,b)$ to compute
\begin{align*}
C(n,m) &= \dim \Hom_{\ul{G}}(\cC(\bS^{\{n\}}), \cC(\bS^{\{m\}})) \\
&= \dim \Hom_{\ul{G}}(\cC(\bS^{\{n\}}), \Ind_H^G \cC(\bR^{(m-1)})) \\ 
&= \dim \Hom_{\ul{H}}(\Res_H^G \cC(\bS^{\{n\}}), \cC(\bR^{(m-1)})) \\
&= \dim \Hom_{\ul{H}}(\cC(\bR^{(n)})^{\oplus n} \oplus \cC(\bR^{(n-1)})^{\oplus n}, \cC(\bR^{(m-1)})) \\
&= n\left(D(n,m-1) + D(n-1,m-1)\right) \qedhere
\end{align*}
\end{proof}

\begin{figure}
$$\begin{array}{c|cccccccccc}
m \backslash n& 0 & 1 & 2 & 3 & 4 & 5 & 6 & 7 & 8 & 9 \\ \hline
0 & 1 &  1 & 1 &  1 & 1 & 1 &  1 & 1 & 1 & 1\\
1 & 1 &  2& 4& 6& 8& 10& 12& 14& 16& 18\\
2 & 1 &  4& 16& 36& 64& 100& 144& 196& 256& 324\\
3 & 1 &  6& 36& 114& 264& 510& 876& 1386& 2064& 2934\\
4 & 1 &  8& 64& 264& 768& 1800& 3648& 6664& 11264& 17928\\
5 & 1 & 10& 100& 510& 1800& 5010& 11820& 24710& 47120& 83610\\
6 & 1 & 12& 144& 876& 3648& 11820& 32016& 75852& 162048& 318924 \\
7 & 1 & 14& 196& 1386& 6664& 24710& 75852& 201698& 479248& 1040382\\
8 & 1 & 16& 256& 2064& 11264& 47120& 162048& 479248& 1257472& 2994192\\
9 & 1 & 18& 324& 2934& 17928& 83610& 318924& 1040382& 2994192& 7777314\\
\end{array}$$
\caption{A table of values of the circular Delannoy numbers.}
\label{fig:circular-table}
\end{figure}

Using the proposition, we can easily compute $C(n,m)$ for small $n$ and $m$, as shown in Figure \ref{fig:circular-table}. We can also extract a nice formula for $C(n,m)$, using the representation-theoretic formula for the ordinary Delannoy numbers (see \cite[Remark~4.9]{line}):
\begin{displaymath}
D(n,m) = \sum_{k=0}^{\min(m,n)} \binom{n}{k} \binom{m}{k} 2^k.
\end{displaymath}

\begin{proposition} \label{prop:circular-sum-formula}
If $n,m \geq 1$ then
\begin{displaymath}
C(n,m) = \sum_{k=1}^{\min(m,n)} \binom{n}{k} \binom{m}{k} k 2^k
\end{displaymath}
\end{proposition}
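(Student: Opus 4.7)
The plan is to derive the formula purely algebraically from the previous proposition. Start from the second form of the recurrence:
\begin{displaymath}
C(n,m) = n\bigl(D(n,m) - D(n-1,m)\bigr).
\end{displaymath}
Now substitute the closed-form expression
\begin{displaymath}
D(n,m) = \sum_{k=0}^{\min(m,n)} \binom{n}{k}\binom{m}{k} 2^k
\end{displaymath}
into both terms. The $k=0$ contributions cancel, and with the convention that $\binom{n-1}{k}=0$ for $k \geq n$, the upper summation indices can be aligned to $\min(m,n)$ in both sums.

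Next I would combine the two sums using the standard Pascal identity $\binom{n}{k} - \binom{n-1}{k} = \binom{n-1}{k-1}$, obtaining
\begin{displaymath}
C(n,m) = n \sum_{k=1}^{\min(m,n)} \binom{n-1}{k-1}\binom{m}{k} 2^k.
\end{displaymath}
Finally, apply the absorption identity $n\binom{n-1}{k-1} = k\binom{n}{k}$ to each term, which immediately yields the claimed formula. There is no genuine obstacle here; the whole argument is a two-line manipulation once the previous proposition and the explicit formula for $D(n,m)$ are in hand. (If one prefers, one could instead start from the first version $C(n,m)=n(D(n,m-1)+D(n-1,m-1))$ and use $\binom{m-1}{k-1}+\binom{m-1}{k}=\binom{m}{k}$ together with the same absorption identity, but the route through the second version is shorter.)
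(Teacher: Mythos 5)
Your proof is correct and follows the same route as the paper: start from $C(n,m)=n\bigl(D(n,m)-D(n-1,m)\bigr)$, substitute the closed form for $D(n,m)$, cancel the $k=0$ terms, apply $\binom{n}{k}-\binom{n-1}{k}=\binom{n-1}{k-1}$, and finish with the absorption identity $n\binom{n-1}{k-1}=k\binom{n}{k}$.
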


\begin{proof}
Letting $N=\min(n,m)$, we have
\begin{align*}
C(n,m) = & n\left(D(n,m) - D(n-1,m)\right) \\
&= n \sum_{k=1}^N \bigg[ \binom{n}{k} \binom{m}{k} 2^k - \binom{n-1}{k} \binom{m}{k} 2^k \bigg] \\
&= n \sum_{k=1}^N \binom{n-1}{k-1} \binom{m}{k}  2^k = \sum_{k=1}^N \binom{n}{k} \binom{m}{k} k 2^k
\end{align*}
Note that the $k=0$ terms in the sums defining the two Delannoy numbers cancel, which is why we can start from $k=1$.
\end{proof}

This formula for $C(n,m)$ can also be proved directly using the decomposition of $\cC(\bS^{\{n\}})$ into indecomposable projectives from \S \ref{s:decomposition} and counting the dimensions of the Hom spaces between projectives from \S \ref{s:maps-of-projectives}. This derivation is somewhat messy due to the contributions from the special block, so we do not work it out here.

The recurrence for ordinary Delannoy numbers immediately yields the generating function $(1-x-y-xy)^{-1}$, and from our formula for $C(n,m)$ in terms of $D(n,m)$ it is easy to derive the following generating function for circular Delannoy numbers with $n,m \geq 1$.

\begin{displaymath}
\sum_{n,m=1}^\infty C(n,m) x^n y^m = \frac{2xy}{(1-x-y-xy)^2}
\end{displaymath}


\begin{thebibliography}{DGNO}

\bibitem[Be]{Benson} D. J. Benson. Representations and Cohomology, I: Basic representation theory of finite groups and associative algebras. Cambridge University Press, 1995. \DOI{10.1017/CBO9780511623615}

\bibitem[BS]{Banderier} Cyril Banderier, Sylviane Schwer. Why Delannoy numbers? \textit{J.\ Statis.\ Plann.\ Inference} \textbf{135} (2005), no.~1, pp.~40--54. \DOI{10.1016/j.jspi.2005.02.004} \arxiv{math/0411128}

%\bibitem[Cam]{Cameron} Peter J.\ Cameron. Oligomorphic permutation groups. London Mathematical Society Lecture Note Series, vol. 152, Cambridge University Press, Cambridge, 1990.

\bibitem[CdS]{CdS} Gunnar Carlsson, Vin de Silva. Zigzag Persistence. \textit{Found.\ Comput.\ Math.} \textbf{10} (2010), pp.~367--405. \DOI{10.1007/s10208-010-9066-0} \arxiv{0812.0197}

\bibitem[CM]{CM} Sunil Chebolu, J\'an Min\'ac. Representations of the miraculous Klein group. \textit{Math.\ Newsl.} \textbf{22} (2012), no.~1, pp.~135--145. \arxiv{1209.4074}

\bibitem[CO1]{ComesOstrik1} Jonathan Comes, Victor Ostrik. On blocks of Deligne's category $\uRep(S_t)$. \textit{Adv.\ Math.} \textbf{226} (2011), no.~2, pp.~1331--1377. \DOI{10.1016/j.aim.2010.08.010} \arxiv{0910.5695}

\bibitem[CO2]{ComesOstrik} Jonathan Comes, Victor Ostrik. On Deligne’s category $\uRep^{ab}(S_d)$. \textit{Algebra Number Theory} \textbf{8} (2014), pp.~473--496. \DOI{10.2140/ant.2014.8.473} \arxiv{1304.3491}
%
%\bibitem[Del1]{Deligne1} P. Deligne. Categories Tannakiennes. In ``The Grothendick Festschrift, Vol.II,'' \textit{Prog.\ Math.} \textbf{87} (1990), pp.~111--195. \href{http://doi.org/10.1007/978-0-8176-4575-5_3}{\color{purple}{\tiny\tt DOI:10.1007/978-0-8176-4575-5\_3}}
%
\bibitem[Del1]{Deligne2} P. Deligne. Cat\'egories tensorielles. \emph{Mosc. Math. J.} {\bf 2} (2002), no.~2, pp.~227--248. \\ \DOI{10.17323/1609-4514-2002-2-2-227-248} \\
Available at: {\tiny\url{https://www.math.ias.edu/files/deligne/Tensorielles.pdf}}

\bibitem[Del2]{Deligne3} P. Deligne. La cat\'egorie des repr\'esentations du groupe sym\'etrique $S_t$, lorsque $t$ n’est pas un entier naturel. In: Algebraic Groups and Homogeneous Spaces, in: Tata Inst. Fund. Res. Stud. Math., Tata Inst. Fund. Res., Mumbai, 2007, pp.~209--273. \\
Available at: {\tiny\url{https://www.math.ias.edu/files/deligne/Symetrique.pdf}}

\bibitem[DGNO]{DGNO} Vladimir Drinfeld, Shlomo Gelaki, Dmitri Nikshych, Victor Ostrik. On braided fusion categories I. \textit{Selecta Math.\ N.S.} \textbf{16} (2010), pp.~1-119. \DOI{10.1007/s00029-010-0017-z} \arxiv{0906.0620}

\bibitem[Eti1]{Etingof1} Pavel Etingof. Representation theory in complex rank, I. \textit{Transform.\ Groups} \textbf{19} (2014), no.~2, pp.~359--381. \DOI{10.1007/s00031-014-9260-2} \arxiv{1401.6321}

\bibitem[Eti2]{Etingof2} Pavel Etingof. Representation theory in complex rank, II. \textit{Adv.\ Math.} \textbf{300} (2016), pp.~473--504. \DOI{10.1016/j.aim.2016.03.025} \arxiv{1407.0373}

\bibitem[EAH]{EntovaAizenbudHeidersdorf} Inna Entova-Aizenbud, Thorsten Heidersdorf. Deligne categories for the finite general linear groups, part 1: universal property. \arxiv{2208.00241}

\bibitem[EGNO]{EGNO} Pavel Etingof, Shlomo Gelaki, Dmitri Nikshych, Victor Ostrik. Tensor categories. Mathematical Surveys and Monographs, \textbf{205}. American Mathematical Society, Providence, RI, 2015.

\bibitem[EO]{EtingofOstrik} Pavel Etingof, Victor Ostrik. On semisimplification of tensor categories. In ``Representation Theory and Algebraic Geometry,'' eds.\ Vladimir Baranovsky, Nicolas Guay, Travis Schedler. Trends in Mathematics. Birkh\"auser. \DOI{10.1007/978-3-030-82007-7\_1} \arxiv{1801.04409}

\bibitem[Ga]{Gabriel} Peter Gabriel. Unzerlegbare darstellungen I. Manuscripta Math.~\textbf{6} (1972), pp.~71--103. \DOI{10.1007/BF01298413}

\bibitem[Ge]{Germoni} J\'er\^ome Germoni. Indecomposable representations of special linear Lie superalgebras. \textit{J.\ Algebra} \textbf{209} (1998), no.~2, pp.~367--401. \DOI{10.1006/jabr.1998.7520}

\bibitem[GQS]{GQS} Gerhard Gotz, Thomas Quella, Volker Schomerus. Representation theory of $\mathrm{sl}(2|1)$. \textit{J.\ Algebra} \textbf{312} (2007), no.~2, pp.~829--848. \DOI{10.1016/j.jalgebra.2007.03.012} \arxiv{hep-th/0504234}

\bibitem[Har]{Harman} Nate Harman. Stability and periodicity in the modular representation theory of symmetric groups. \arxiv{1509.06414v3}

\bibitem[Har2]{Harman2} Nate Harman. Deligne categories as limits in rank and characteristic. \arxiv{1601.03426}

\bibitem[Hei]{Heidersdorf} Thorsten Heidersdorf. On supergroups and their semisimplified representation categories. \textit{Algebr.\ Represent.\ Theory} \textbf{22} (2019), pp.~937--959. \DOI{10.1007/s10468-018-9806-4} \arxiv{1512.03420}

\bibitem[HS1]{repst} Nate Harman, Andrew Snowden. Oligomorphic groups and tensor categories. \arxiv{2204.04526}

\bibitem[HS2]{discrete} Nate Harman, Andrew Snowden. Discrete pre-Tannakian categories. In preparation.

\bibitem[HSS]{line} Nate Harman, Andrew Snowden, Noah Snyder. The Delannoy category. \arxiv{2211.15392}

\bibitem[Jo]{Johnson} D.~L.~Johnson. Indecomposable representations of the four-group over fields of characteristic 2. \textit{J.\ London Math.\ Soc.} \textbf{44} (1969), no.~1, pp.~295--298. \DOI{10.1112/jlms/s1-44.1.295}

\bibitem[JS]{Joyal-Street} Andr\'e Joyal, Ross Street. Braided tensor categories. \textit{Adv.\ Math.} \textbf{102} (1993), no.~1, pp.~20--78. \DOI{10.1006/aima.1993.1055}

\bibitem[Kno1]{Knop} Friedrich Knop. A construction of semisimple tensor categories. \textit{C.~R.~Math.\ Acad.\ Sci.\ Paris C} \textbf{343} (2006), no.~1, pp.~15--18. \DOI{10.1016/j.crma.2006.05.009} \arxiv{math/0605126}

\bibitem[Kno2]{Knop2} Friedrich Knop. Tensor envelopes of regular categories. \textit{Adv.\ Math.} \textbf{214} (2007), pp.~571--617. \DOI{10.1016/j.aim.2007.03.001} \arxiv{math/0610552}

\bibitem[Sn]{colored} Andrew Snowden. Measures for the colored circle. \arxiv{2302.08699}

\bibitem[Vir]{Viro} O.~Y.~Viro. Some integral calculus based on Euler characteristic. In Topology and geometry---Rohlin Seminar, Lecture Notes in Math., vol.~1346, pp.~127--138. Springer, Berlin, 1988. \\ \DOI{10.1007/BFb0082775}

\end{thebibliography}
\end{document}